\newtheorem{iflargepthennotmonotone}{Theorem}
\newtheorem{ogbetter}[iflargepthennotmonotone]{Theorem}
\newtheorem{ohyeah1}[iflargepthennotmonotone]{Theorem}
\newtheorem{lima}[iflargepthennotmonotone]{Theorem}
\newtheorem{limc}[iflargepthennotmonotone]{Theorem}
\newtheorem{oldschool}[iflargepthennotmonotone]{Theorem}
\newtheorem{erniesquestion}[iflargepthennotmonotone]{Theorem}
\newtheorem{dirichlet}[iflargepthennotmonotone]{Theorem}
\newtheorem{godsgift}[iflargepthennotmonotone]{Theorem}
\newtheorem{Theoremonevtwo}[iflargepthennotmonotone]{Theorem}
\newtheorem{generallowertight}[iflargepthennotmonotone]{Theorem}
\newtheorem{powers}[iflargepthennotmonotone]{Theorem}
\newtheorem{nonperiodiclower}[iflargepthennotmonotone]{Theorem}
\newtheorem{lisl}{Lemma}
\newtheorem{test}[lisl]{Lemma}
\newtheorem{gcd1}[lisl]{Lemma}
\newtheorem{gabhigherpower}[lisl]{Lemma}
\newtheorem{neverodd}[lisl]{Lemma}
\newtheorem{pdoesntdivide}[lisl]{Lemma}
\newtheorem{pdoesntdivide3}[lisl]{Lemma}
\newtheorem{xbisxab}[lisl]{Lemma}
\newtheorem{primeorpower}[lisl]{Lemma}
\newtheorem{lcmbound}[lisl]{Lemma}
\newtheorem{bored}[lisl]{Lemma}
\newtheorem{bore}[lisl]{Lemma}
\newtheorem{pnotinsigma3}[lisl]{Lemma}
\newtheorem{pfreeinterval}[lisl]{Lemma}
\newtheorem{decreasingintervals}[lisl]{Lemma}
\newtheorem{diszero}[lisl]{Lemma}
\newtheorem{pdividesxab}[lisl]{Lemma}
\newtheorem{expo}[lisl]{Lemma}
\newtheorem{expolicit}[lisl]{Lemma}
\newtheorem{priemtellen}[lisl]{Lemma}
\newtheorem{carmichael}[lisl]{Lemma}
\newtheorem{notmuchleft}[lisl]{Lemma}
\newtheorem{dat}[lisl]{Lemma}
\newtheorem{baker}[lisl]{Lemma}
\newtheorem{kron}[lisl]{Lemma}
\newtheorem{gregmartin}[lisl]{Lemma}
\newtheorem{prodoverprimes}[lisl]{Lemma}
\newtheorem{gammaslim}[lisl]{Lemma}
\newtheorem{xpsolution}[lisl]{Lemma}
\newtheorem{xpnietklein}[lisl]{Lemma}
\newtheorem{pxabopnieuw}[lisl]{Lemma}
\newtheorem{jamaarhoezodan}[lisl]{Lemma}
\newtheorem{upperlinc}[lisl]{Lemma}
\newtheorem{lowerlinc}[lisl]{Lemma}
\newtheorem{cbounds}[lisl]{Lemma}
\newtheorem{tnogood}[lisl]{Lemma}
\newtheorem{polproperties}[lisl]{Lemma}
\newtheorem{frob}[lisl]{Lemma}
\newtheorem{even}[lisl]{Lemma}
\newtheorem{derange}[lisl]{Lemma}
\newtheorem{easyasabc}[lisl]{Lemma}
\newtheorem{largedlargesum}[lisl]{Lemma}
\newtheorem{pdivpowersum}[lisl]{Lemma}
\newtheorem{whendismultiple}[lisl]{Lemma}
\newtheorem{somebettercdvalues}[lisl]{Lemma}
\newtheorem{radineqs}[lisl]{Lemma}
\newtheorem{corollary}{Corollary}
\newtheorem{infinite}[corollary]{Corollary}
\newtheorem{finitedsjonnie}[corollary]{Corollary}
\newtheorem{stormer}[corollary]{Corollary}
\newtheorem{linnik}[corollary]{Corollary}
\newtheorem{somecdvalues}[corollary]{Corollary}
\newtheorem{vabepsilongeneral}[corollary]{Corollary}
\newtheorem{vabepsilonclassical}[corollary]{Corollary}
\begin{document}

\vspace*{-2cm}

\Large
 \begin{center}
On the non-monotonicity of the denominator of generalized harmonic sums  

\hspace{10pt}

\large
Wouter van Doorn \\

\hspace{10pt}

\end{center}

\normalsize

\vspace{-10pt}

\centerline{\bf Abstract}

Let $\displaystyle \sum_{i=a}^b \frac{1}{i} = \frac{u_{a,b}}{v_{a,b}}$ with $u_{a,b}$ and $v_{a,b}$ coprime. In their influential monograph \mbox{\cite[p. 34]{cnt}}, Erd\H os and Graham ask, among many other questions, the following: Does there, for every fixed $a$, exist a $b$ such that $v_{a,b} < v_{a,b-1}$? If so, what is the least such $b = b(a)$? In this paper we will investigate these problems in a more general setting, answer the first question in the affirmative and obtain the bounds $a + 0.54\log(a) < b(a) \le 4.374(a-1)$, which hold for all large enough $a$.

\tableofcontents

\section{Introduction} \label{intro}
\subsection{Introduction}
Let $\{r_i\}_{i \in \mathbb{N}}$ be a fixed periodic sequence of integers, not all equal to $0$, with period $t$. That is, for every $i \in \mathbb{N}$ we have $r_{i+t} = r_i$ and for at least one (and therefore for infinitely many) $i$, $r_i \neq 0$. For a given positive integer $a$, we shall be concerned with sums of the form $\displaystyle \sum_{i=a}^b \frac{r_i}{i}$. More precisely, if $u_{a,b} \in \mathbb{Z}$ and $v_{a,b} \in \mathbb{N}$ are coprime integers for which $\displaystyle \frac{u_{a,b}}{v_{a,b}} = \sum_{i=a}^b \frac{r_i}{i}$, we will be interested in whether $v_{a,b} < v_{a,b-1}$ holds for some $b$. \\

Paul Erd\H os and Ronald Graham asked this question in \mbox{\cite{cnt}} for the case where $r_i = 1$ for all $i$, and this was answered in the affirmative independently by Peter Shiu in \mbox{\cite{dhn}} and in unpublished work (predating the current manuscript) by the author. Even though the pre-print \mbox{\cite{dhn}} only explicitly deals with $a = 1$, their methods can be used for arbitrary $a \in \mathbb{N}$ as well. In personal communication Ernie Croot then asked about the far more general result where $r_i \in A$ for some fixed finite set $A$. This generalization turns out to be false, however. So it seems natural to ask for a reasonable condition on the $r_i$ that does guarantee that the inequality $v_{a,b} < v_{a,b-1}$ holds for some $b$, and it will turn out that periodicity is sufficient. \\

Note that, in common vernacular, $v_{a,b} < v_{a,b-1}$ means that the fraction was simplified. Since a fraction can be simplified precisely when both numerator and denominator share a prime divisor, we would like to get a handle on the prime factorizations of $u_{a,b}$ and $v_{a,b}$. However, even in the special case of the harmonic numbers $H_n$, where $r_i = 1$ for all $i$, $a = 1$ and $b = n$, surprisingly little is known about this. \\

For example, in \mbox{\cite{har1}} it was conjectured that for every prime $p$ the numerator of $H_n$ is only finitely often divisible by $p$, and this is still unsolved. In the other direction, we have a well-known eponymous theorem by Wolstenholme (\mbox{\cite{wol}}) stating that for any prime number $p \ge 5$, the numerator of $H_{p-1}$ is divisible by $p^2$. Various generalizations and extensions of this result are known and can be found in \mbox{\cite{wol2}}. Let $L_n$ be the least common multiple of $1,2,\ldots,n$. In \mbox{\cite{dhn}} Shiu shows that for every sequence of odd primes $p_1, p_2, .., p_k$ there exists a positive integer $n$ such that the denominator of $H_n$ is a divisor of $\frac{L_n}{p_1p_2 \cdots p_k}$, as long as the terms $\theta_i = \frac{\log(p_1)}{\log(p_i)}$ are rationally independent for $1 \le i \le k$. The latter is unfortunately not known for $k \ge 3$, although it would follow from conjectures like Schanuel's Conjecture. In the other direction it is often conjectured (see e.g. \mbox{\cite{cnt}}, \mbox{\cite{dhn}} and \mbox{\cite{chi0}}) that there exist infinitely many $n$ for which the denominator of $H_n$ is equal to $L_n$, and this too is not yet solved. \\

Even though here we will focus on the inequality $v_{a,b} < v_{a,b-1}$, in a series of papers (\mbox{\cite{chi0}}, \mbox{\cite{chi1}}, \mbox{\cite{chi2}}, \mbox{\cite{chi3}}, \mbox{\cite{chi4}}, \mbox{\cite{chi5}}, \mbox{\cite{chi6}}, \mbox{\cite{chi7}}, \mbox{\cite{chi8}}), Chen, Wu and Yan prove various results on the density of $b$ for which equality occurs. For example, in \mbox{\cite{chi5}} it is shown that, as long as $|r_i| = 1$ for all $i$, the density of $n$ for which $v_{1, n} = v_{1, n+1}$ is $1$. The same result is obtained in \mbox{\cite{chi6}}, for the case of $r_i \in \{0, 1\}$ with $r_i = 1$ if, and only if, $i \equiv k \pmod{t}$ for a specific residue class $k \pmod{t}$. 

\subsection{Overview of results}
The main theorem we obtain in Section \ref{upper} is that for every $a \in \mathbb{N}$ there exist infinitely many integers $b > a$ for which $v_{a,b} < v_{a,b-1}$. Furthermore, if we denote by $b(a)$ the smallest such $b$, then there exists an effective constant $c$, which only depends on the sequence $\{r_i\}_{i \in \mathbb{N}}$, such that $b(a) < ca$. For example, in the original case $r_1 = t = 1$ we have the upper bound $b(a) \le 4.374(a-1)$, which is true for all $a \ge 6$. \\

In Section \ref{lower} we will look at lower bounds and prove that $b(a) > a + (\frac{1}{2} - \epsilon)\log(a)$ holds for all $\epsilon > 0$ and all large enough $a$. This lower bound turns out to be close to optimal, because for $t > 1$ there are infinitely many $a$ with $b(a) < a + t^3\log(a)$. We may therefore deduce that the lower limit ${\displaystyle\liminf_{a \rightarrow \infty}} \left(\frac{b(a)-a}{\log a}\right)$ then exists and is bounded between $\frac{1}{2}$ and $t^3$. We can reduce $t^3$ to $20 \log(\log(2t))$ in the case where $r_i \neq 0$ for all $i$ with $\gcd(i, t) = 1$, and to $2$ if $r_i \neq 0$ for all $i$. We will end this section with even further improvements when $r_i = 1$ for all $i$, and show $0.54 < {\displaystyle\liminf_{a \rightarrow \infty}} \left(\frac{b(a)-a}{\log a}\right) < 0.61$ in that case. \\

In Section \ref{noperiod} we will consider two possible generalizations. First we will look at sums of the form $\displaystyle \frac{u_{a,b}}{v_{a,b}} = \sum_{i=a}^b \frac{r_i}{i^d}$, where $d$ is a positive integer, and we define $b_d(a)$ to be the smallest positive integer $b$ for which $v_{a,b} < v_{a,b-1}$. We will then show that, if at least two out of $r_1, r_2, r_3, r_4, r_5$ are non-zero and $d$ is large enough, then $b_d(a)$ is finite for all $a$. Afterwards, we will focus on the case where all $r_i$ are equal to $1$ and prove that there exists a constant $c_d = O\big(\log^{10}(d)\big)$ so that for every $a$, $b_d(a) \le c_da$. We will furthermore calculate this constant $c_d$ for all $d < 120$. Finally, we will look at what happens when the sequence $\{r_i\}_{i \in \mathbb{N}}$ is no longer assumed to be periodic. For example, if we only assume $r_i = \pm 1$, then it is possible that $v_{a,b}$ is a monotone increasing function of $b$. In fact, we will see that there are very few results in this paper that generalize to the non-periodic case. Two results that however do generalize, are the lower bound $b(a) > a + (\frac{1}{2} - \epsilon)\log(a)$, and a theorem stating that if the $r_i$ are non-zero and remain bounded, then a function similar to $u_{1,b}$ has arbitrarily large prime divisors.

\subsection{Notation and definitions}
Recall that $u_{a,b}$ and $v_{a,b} \ge 1$ are coprime integers with $\displaystyle \frac{u_{a,b}}{v_{a,b}} = \sum_{i=a}^b \frac{r_i}{i}$. Here, $r_1, r_2, \ldots$ is a given periodic sequence of integers, which are not all equal to $0$. The integer $a$ should be viewed as fixed, but arbitrary, and $b(a)$ denotes the smallest integer $b > a$ such that $v_{a,b} < v_{a,b-1}$. Instead of directly dealing with the sequence $v_{a,b}$ however, we shall instead work with the more robust sequence $L_{a,b}$, defined as the least common multiple of all integers $i \in \{a, a+1, .., b \}$ for which $r_i \neq 0$. We then define $X_{a,b}$ as $X_{a,b} = L_{a,b} \displaystyle \sum_{i=a}^b \frac{r_i}{i}$ and abbreviate $L_{1,n}$ and $X_{1,n}$ to $L_n$ and $X_n$ respectively. With $g_{a,b}$ defined as the greatest common divisor of $X_{a,b}$ and $L_{a,b}$, we get $v_{a,b} = \frac{L_{a,b}}{g_{a,b}}$. All of these values clearly depend on the sequence of $r_i$, and this dependence is always implicit; the sequence of $r_i$ should be viewed as fixed. \\

The letters $p$ and $q$ are reserved for prime numbers, $t$ will always refer to the period of the sequence of $r_i$, and most other (Roman) letters will generally denote integers, often non-negative. Whenever we say that $p^k$ \textit{exactly divides} an integer $n$, we mean that $n$ is divisible by $p^k$, but not by $p^{k+1}$. If the prime $p$ is fixed or understood, then $e(n)$ denotes the non-negative integer $k$ such that $p^k$ exactly divides $n$. If $p$ does not divide $n$ at all, then $e(n) = 0$, while $e(0) = \infty$ for all $p$. When confusion might arise, we will use a subscript like $e_p(n)$, to emphasize the dependence on the prime $p$. \\

$O\big(f(x)\big)$ and $o\big(f(x)\big)$ are the familiar Big-O and Little-o notations, while $x | y$ reads `$x$ divides $y$'. The symbols $\mathbb{R}$, $\mathbb{Z}$ and $\mathbb{N}$ represent the set of real numbers, the set of integers and the set of positive integers respectively. The greek letter $\lambda = \lambda(t)$ will be the Carmichael function; the smallest positive integer such that $p^{\lambda} \equiv 1 \pmod{t}$ for all $p$ with $\gcd(p, t) = 1$. The dependence of $\lambda$ on $t$ will always be implicit and we have $\lambda | \varphi(t)$, where $\varphi$ is Euler's totient function. The number of primes smaller than or equal to $n$ is denoted by $\pi(n)$, and we often make use of the prime number theorem which states ${\displaystyle \lim_{n \rightarrow \infty}} \frac{\pi(n) \log(n)}{n} = 1$. We will refer to both the prime number theorem and its generalization to arithmetic progressions by the acronym PNT. Finally, $\epsilon$ will denote a small, positive real number.

\newpage

\section{Upper bounds} \label{upper}
\subsection{Proof strategy}
Our goal in this section is to prove that $b(a)$ is finite and, moreover, that there exists a constant $c$ such that for every $a$ we have $b(a) < ca$. For pedagogical purposes we will first prove this in Section \ref{iflarge} by assuming the existence of a certain large prime divisor $p$ of $X_n$, for some $n \in \mathbb{N}$. This furthermore motivates the next step of the proof: trying to find such a large prime divisor. That such a prime exists is immediate when $r_1 = t = 1$, initially leading to a bound of $b(a) \le 6a$ in that case. In Section \ref{return} we will look at some examples and prove that when $r_i = 1$ and $a \ge 6$, we can tighten the bound to $b(a) \le 4.374(a-1)$. \\

To find this large prime divisor of $X_n$, we first have to show a lower bound on $X_n$ itself. We will do this in Section \ref{expgrowth} where we initially prove that there exists a constant $c_0$ such that $|X_n| > c_0^n$ holds for all large enough $n$. This follows from some estimates on $\frac{X_n}{L_n}$ and the fact that $L_n$ grows exponentially fast. However, in the end we not only would like to prove $b(a) < ca$, we actually want to give an explicit value for this constant $c$ as well. So phrases like `for large enough $n$' will generally not suffice. Therefore, we take some time to find an interval that we can write down explicitly, where $|X_n|$ is large enough for our purposes for sufficiently many $n$ in that interval. \\

Section \ref{largeprimedivisors} is then aimed at proving that the prime divisors of $X_n$ get arbitrarily large. If we let $r = \max_i |r_i|$ and define $m = 1 + \max(r,t)$ (although any integer larger than $\max(r,t)$ also works), then our proof will actually show that for every interval $I$ of length at least $e^{6m}$, there exists an $n \in I$ for which $X_n$ is divisible by a prime $p \ge m$. \\

To prove this, we split up the primes into three subsets $\Sigma_1$, $\Sigma_2$ and $\Sigma_3$. The first subset contains the primes larger than or equal to $m$, so it would suffice to find an $n \in I$ for which the largest divisor of $X_n$ containing only primes from $\Sigma_2$ or $\Sigma_3$ is smaller than $|X_n|$. Then we will see that the largest divisor of $X_n$ containing only primes from $\Sigma_3$ is always small in a certain congruence class. And finally, let $2 \le p_1 < p_2 < \ldots < p_y < m$ be the primes in $\Sigma_2$. We will construct a nesting sequence of intervals $I \supset I_1 \supset I_2 \supset \ldots \supset I_y$, for which the largest power of $p_{\sigma(j)}$ that divides $X_n$ is small for all $n \in I_j$, where $\sigma: \{1,2,\ldots,y\} \rightarrow \{1,2,\ldots,y\}$ is a permutation. And so for all $n \in I_y$ and all $p_j \in \Sigma_2$, the largest power of $p_j$ that divides $X_n$ is small. Combining these estimates on the powers of primes from $\Sigma_2$ and $\Sigma_3$ that divide $X_n$ then implies that $X_n$ must have a prime divisor from $\Sigma_1$ as well. \\

Write $n = lp^k$ with $\gcd(l, p) = 1$ and $p \ge m$ a prime that divides $X_n$. By Section \ref{largeprimedivisors} such $n$ and $p$ exist. Then by setting $b = np^{\lambda k_1}$ for some suitable $k_1$, it turns out that in order to show $v_{a,b} < v_{a,b-1}$, we need to check $\gcd(l, X_{a,b-1}) < p$. Now, in the case that $r_i \neq 0$ for all $i$ with $\gcd(i, t) = 1$, we have $l < p$, so this condition is trivially satisfied. This will allow us to calculate an explicit upper bound in Section \ref{dirichlet} for the constant $c$ for which $b(a) < ca$ holds for all $a$, when $\gcd(i, t) = 1$ implies $r_i \neq 0$. This $c$ turns out to grow doubly exponential in $m$. \\

In the general case it is possible that $l > p$, which makes it more difficult to check the condition $\gcd(l, X_{a,b-1}) < p$. So our goal is to make sure that $\gcd(l, X_{a,b-1})$ is small and we therefore need some information on the prime divisors of $l$ and $X_{a,b-1}$. Section \ref{primebound} is then dedicated to proving that for every prime $q \notin \Sigma_3$ there are intervals $I$ such that for all $n \in I$, $e_q(X_n)$ is small. \\

In Section \ref{dioph} we then pick a prime $q \notin \Sigma_3$ such that $e_q(l)$ is large. Using results from Section \ref{primebound} we can ensure that, if $b-1$ is contained in a certain interval, then $e_q(X_{a,b-1})$ is small. This makes $\gcd(l, X_{a,b-1})$ small as well, which accomplishes our goal. These intervals are of the form $[c_qq^{\lambda k_2}, (c_q + 1)q^{\lambda k_2})$, where $c_q$ is a constant and $k_2$ can be any integer. So when we now choose $b = np^{\lambda k_1}$, for some $k_1$, then we need the inequalities $c_qq^{\lambda k_2} < np^{\lambda k_1} \le (c_q + 1)q^{\lambda k_2}$ to hold. When we take logarithms, we end up with a linear form in logarithms and, using a well-known Diophantine approximation result by Dirichlet, these inequalities can be satisfied infinitely often, implying that $b(a)$ is finite. \\

Finally, by using an extension of a result by Baker, we also have a lower bound for the linear form in logarithms that we encountered in Section \ref{dioph}. In Section \ref{final} we then use this lower bound to give an explicit linear upper bound for $b(a)$. In this general case the constant $c$ grows triply exponential in $m$.

\subsection{Under the assumption of a large prime divisor} \label{iflarge}
Let $r = \max_i |r_i|$ and define $i_1$ to be the smallest positive integer such that $r_{i_1} \neq 0$. Now let $p > \max(r, t)$ be a prime number that divides $X_i$ for some integer $i \ge i_1$ and let $n = n(p)$ be the smallest such $i$. In Section \ref{largeprimedivisors} we will prove that such a prime $p$ actually exists, but for now we will simply assume we have one at our disposal. \\

Necessarily we see that $p$ does not divide $X_{n-1}$ and $r_n \neq 0$. Since $p > \max(r,t)$, this implies $0 < |r_n| < p$. Write $n = lp^{k}$ with $\gcd(l, p) = 1$ and recall that $\lambda$ is such that $q^{\lambda} \equiv 1 \pmod{t}$, whenever $\gcd(q, t) = 1$. Now we set $b = np^{\lambda k_1} = lp^{\lambda k_1 + k}$, where $k_1$ is an integer for which $p^{\lambda k_1 + k} \ge \max(a, 2t)$. We then have the following theorem.

\begin{iflargepthennotmonotone}\label{Theorem1} 
If $\gcd(l, X_{a,b-1}) < p$, then $v_{a,b} < v_{a,b-1}$. Furthermore, if the condition $\gcd(l, X_{a,b-1}) < p$ is satisfied for the smallest $k_1$ such that $p^{\lambda k_1 + k} \ge \max(a, 2t)$ holds, then $b(a) \le \max(a-1,2t-1)lp^{\lambda}$. 
\end{iflargepthennotmonotone}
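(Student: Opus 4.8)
The plan is to analyze the $p$-adic valuation of $X_{a,b}$ and $L_{a,b}$ at the prime $p$, and exploit the fact that $p$ exactly divides $L_{a,b}$ to a high power (coming from the term $n = lp^k$, pushed up to $b = lp^{\lambda k_1 + k}$) while $p$ does \emph{not} divide $X_{a,b}$. First I would verify that, for $b = lp^{\lambda k_1 + k}$, the integer $b$ is the unique multiple of $p^{\lambda k_1 + k}$ in the interval $[a,b]$ with $r_b \neq 0$, so that $e_p(L_{a,b}) = \lambda k_1 + k$ exactly, whereas $e_p(L_{a,b-1}) \le \lambda k_1 + k - 1$ (here the choice of $b$ as a multiple of $p^{\lambda k_1+k}$ and $p > t$ ensuring $r_b = r_n \neq 0$ are used). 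Then I would split the sum defining $X_{a,b} = L_{a,b}\sum_{i=a}^b r_i/i$ into the single term $i = b$ and the remaining sum, observing that $L_{a,b}/b$ carries exactly $p^0$ at $p$ times a unit coming from $1/l$-type contributions, while every other term $L_{a,b} \cdot r_i/i$ is divisible by $p$ because $e_p(i) < e_p(L_{a,b})$ for $i \neq b$ and also for $i = n = lp^k$ once $k_1 \ge 1$. This shows $e_p(X_{a,b}) = 0$, i.e. $p \nmid X_{a,b}$.

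Next, since $v_{a,b} = L_{a,b}/g_{a,b}$ with $g_{a,b} = \gcd(X_{a,b}, L_{a,b})$, and $p \nmid X_{a,b}$, the prime $p$ divides $v_{a,b}$ to the \emph{full} power $e_p(L_{a,b}) = \lambda k_1 + k$; on the other hand $e_p(v_{a,b-1}) \le e_p(L_{a,b-1}) \le \lambda k_1 + k - 1 < e_p(v_{a,b})$. Thus, to conclude $v_{a,b} < v_{a,b-1}$ it suffices to control every \emph{other} prime $q \neq p$: I would show $e_q(v_{a,b}) \le e_q(v_{a,b-1})$ for all such $q$. For $q$ with $e_q(L_{a,b}) = e_q(L_{a,b-1})$ this needs $e_q(g_{a,b}) \ge e_q(g_{a,b-1})$, which is where the hypothesis $\gcd(l, X_{a,b-1}) < p$ enters: one relates $X_{a,b}$ and $X_{a,b-1}$ via the recurrence $X_{a,b} = \frac{L_{a,b}}{L_{a,b-1}} X_{a,b-1} + L_{a,b}\frac{r_b}{b}$, note $L_{a,b}/L_{a,b-1}$ is a power of $p$ (namely $p^{\lambda k_1 + k - e_p(L_{a,b-1})}$) times possibly other prime powers that must be accounted for, and the term $L_{a,b} r_b/b = L_{a,b} r_b/(lp^{\lambda k_1+k})$ reduces to $(L_{a,b}/b) r_b$, an integer whose prime-$q$ content is at least that of $L_{a,b-1}$ for primes $q \mid l$. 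Carefully, the hypothesis $\gcd(l, X_{a,b-1}) < p$ guarantees no prime power dividing $l$ can simultaneously divide $X_{a,b-1}$ to a degree that would block the needed valuation inequality; combined with $L_{a,b} = l \cdot (L_{a,b}/l)$ and $p \nmid l$, this yields $e_q(X_{a,b}) \ge \min(e_q(L_{a,b}), e_q(L_{a,b-1}))$ in the relevant cases, hence $v_{a,b} \mid v_{a,b-1}\cdot(\text{stuff killed by the }p\text{-drop})$, giving the strict inequality. The main obstacle will be this bookkeeping: ensuring that when passing from $b-1$ to $b$, no prime other than $p$ is \emph{introduced} into the denominator in a way that outweighs the guaranteed loss of the factor $p$ — and it is precisely the smallness condition $\gcd(l,X_{a,b-1}) < p$ that rules this out, since $l$ collects exactly the part of $L_{a,b}$ coprime to $p$ that could reappear.

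For the second assertion, I would simply take $k_1$ to be the \emph{least} integer with $p^{\lambda k_1 + k} \ge \max(a, 2t)$, apply the first part, and bound $b$ explicitly: minimality of $k_1$ gives $p^{\lambda(k_1-1)+k} < \max(a,2t)$ when $k_1 \ge 1$ (and $p^k < \max(a,2t)$ handled directly when $k_1 = 0$, using $p^{\lambda k_1 + k} \ge \max(a,2t)$ forces $k_1 \ge 1$ unless $p^k$ already exceeds the bound, in which case $n \ge \max(a,2t)$ and the estimate is even easier), so $p^{\lambda k_1 + k} < \max(a,2t) \cdot p^{\lambda}$, whence $b = l p^{\lambda k_1 + k} < l p^{\lambda} \max(a,2t)$. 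Tightening $\max(a,2t)$ to $\max(a-1, 2t-1)$ comes from observing that $b(a)$ is defined as the \emph{smallest} witness and $b$ here is a strict multiple of $p^{\lambda k_1+k} \ge \max(a,2t) > \max(a-1,2t-1)$, so one may replace the loose bound by the sharper $b(a) \le \max(a-1,2t-1) l p^{\lambda}$ after checking the boundary case $p^{\lambda k_1+k} = \max(a,2t)$ cannot be improved but the product still satisfies the claimed inequality. I expect this final step to be essentially a routine computation once the valuation analysis of the first part is in place.
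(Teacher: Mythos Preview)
Your plan rests on two claims that are both false, and this inverts the whole mechanism. First, you assert that $b=lp^{\lambda k_1+k}$ is the \emph{unique} multiple of $p^{\lambda k_1+k}$ in $[a,b]$ with nonzero $r$-value, so that $e_p(L_{a,b-1})\le \lambda k_1+k-1$ and $e_p(X_{a,b})=0$. In fact the opposite holds: since $p\mid X_n$ by hypothesis, there \emph{must} exist some $1\le j<l$ with $r_{jp^k}\ne 0$ (otherwise the sum $X_n$ would have a single term not divisible by $p$, namely $L_nr_n/n$, and could not be $\equiv 0\pmod p$). Then $r_{jp^{\lambda k_1+k}}=r_{jp^k}\ne 0$ and $a\le p^{\lambda k_1+k}\le jp^{\lambda k_1+k}<b$, so $p^{\lambda k_1+k}\mid L_{a,b-1}$ already. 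The paper proves exactly this and in fact shows $L_{a,b}=L_{a,b-1}$.

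Second, and more seriously, even if your valuation claims were correct, the conclusion would go the wrong way: having $e_p(v_{a,b})>e_p(v_{a,b-1})$ tends to make $v_{a,b}$ \emph{larger}, not smaller. You never use the hypothesis $p\mid X_n$, which is the entire point of the setup. The actual mechanism is the reverse of what you describe: because $p\mid X_n$, the congruence $\sum_{i=1}^l r_{ip^k}/i\equiv 0\pmod p$ transfers to $X_{a,b}\equiv 0\pmod p$, while $X_{a,b-1}\not\equiv 0\pmod p$. Hence $e_p(g_{a,b})\ge e_p(g_{a,b-1})+1$, and since $L_{a,b}=L_{a,b-1}$ this already gives a factor of $p$ drop in $v_{a,b}$. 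The condition $\gcd(l,X_{a,b-1})<p$ then bounds, for primes $q\mid l$, how much $e_q(g_{a,b})$ can fall below $e_q(g_{a,b-1})$ (via $e_q(g_{a,b})\ge e_q(g_{a,b-1})-\min(e_q(X_{a,b-1}),e_q(b))$), so the total loss at primes dividing $l$ is at most $\gcd(l,X_{a,b-1})<p$, which is beaten by the gain of $p$. Your bound for the second assertion is essentially correct; the sharpening from $\max(a,2t)$ to $\max(a-1,2t-1)$ is simply because minimality of $k_1$ gives $p^{\lambda(k_1-1)+k}<\max(a,2t)$, i.e.\ $\le\max(a-1,2t-1)$.
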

 
\begin{proof}
Let us first remark that the second part can be quickly seen, because for the smallest possible $k_1$, we have $p^{\lambda (k_1 - 1) + k} \le \max(a-1, 2t-1)$, implying $b = lp^{\lambda k_1 + k} \le \max(a-1, 2t-1)lp^{\lambda}$. Now, recall that we in general have $v_{a,b} = \frac{L_{a,b}}{g_{a,b}}$. And thus, if $L_{a,b} = L_{a,b-1}$, then $v_{a,b} < v_{a,{b-1}}$ holds true, precisely when $g_{a,b} > g_{a,b-1}$. We claim that, indeed, $L_{a,b}$ and $L_{a,b-1}$ are equal while $g_{a,b}$ is larger than $g_{a,b-1}$. We start with the first part of this claim, but before we do so, we need some properties. 

\begin{test}\label{prelimprop}
There exists a positive integer $j$ with $1 \le j < l$ for which $r_{jp^k} \neq 0$. Furthermore, $p^k$ exactly divides $L_n$ and $p^{\lambda k_1 + k}$ exactly divides $L_{a,b}$.
\end{test}

\begin{proof}
As we will do a lot in this paper, we look at $X_n \pmod{p}$ and remove the terms in the sum which are divisible by $p$. Since $r_n \neq 0$, $L_n$ must be divisible by $n = lp^k$, and therefore by $p^k$. Therefore, if $i \in [1, n]$ is an integer such that $\frac{L_nr_i}{i}$ does not vanish modulo $p$, then $p^k$ divides $i$. Now assume by contradiction that $r_{jp^k}$ is equal to $0$ for all $j < l$. This implies in particular that $p^k$ exactly divides $L_n$. Moreover, there would only be one $i \in [1, n]$ for which $\frac{L_nr_i}{i}$ does not vanish modulo $p$, namely $i = lp^k = n$ itself. So by applying $0 < |r_n| < p$, we would then get the following:
\begin{align*}
X_n &= L_n \sum_{i=1}^n \frac{r_i}{i} &\\
&= \sum_{i=1}^n \frac{L_n r_i}{i} & \\
&\equiv \sum_{i=1}^l \frac{L_nr_{ip^k}}{ip^k} &\pmod{p} \\
&\equiv \frac{L_nr_n}{lp^k} &\pmod{p} \\
&\not\equiv 0 &\pmod{p}
\end{align*}

And this would contradict the assumption that $p$ divides $X_n$. So this proves the first property, which in turn implies that $jp^k$ and therefore $p^k$ divides $L_{n-1}$, so that $p$ does not divide $\frac{L_n}{L_{n-1}}$. \\

For the other two properties, recall that $L_n$ is divisible by $p^k$. Furthermore, since $b = np^{\lambda k_1} \equiv n \pmod{t}$, we see $r_b = r_n \neq 0$, which implies that $L_{a,b}$ is divisible by $p^{\lambda k_1 + k}$. To prove that these are also the largest powers of $p$ dividing $L_n$ and $L_{a,b}$, assume by contradiction that $p^{\lambda k_1 + k + 1}$ divides $L_{a,b}$. We will show that this implies that $L_n$ is divisible by $p^{k+1}$, which will lead to a contradiction. If $p^{\lambda k_1 + k + 1}$ divides $L_{a,b}$, then there exists a positive integer $g$ with $a \le g \le b$ such that $g$ is divisible by $p^{\lambda k_1 + k + 1}$ and $r_g \neq 0$. Now we can choose $h = gp^{-\lambda k_1} \le bp^{-\lambda k_1} = n$ and note that $h \equiv g \pmod{t}$ by definition of $\lambda$, so $r_h = r_g$, which we assumed to be non-zero. Furthermore, $h$ would be divisible by $p^{k+1}$ and, since $r_h \neq 0$, so would $L_n$. However, $\frac{L_n r_n}{n}$ would then vanish modulo $p$ and we would get $X_n = \frac{L_n}{L_{n-1}}X_{n-1} + \frac{L_n r_n}{n} \equiv \frac{L_n}{L_{n-1}}X_{n-1} \pmod{p}$. This is impossible, since it contradicts the assumption that $n$ is the smallest $i$ for which $p$ divides $X_i$. 
\end{proof}

We will now prove that $L_{a,b}$ and $L_{a,b-1}$ are equal to each other, in which case $v_{a,b} < v_{a,b-1}$ is equivalent with $g_{a,b} > g_{a,b-1}$. 

\begin{lisl}\label{lisl}
With $b = lp^{\lambda k_1 + k} \ge l\max(a, 2t)$, we get $L_{a,b} = L_{a,b-1}$.
\end{lisl} 

\begin{proof}
Since $L_{a,b} = \text{lcm}(b, L_{a,b-1}) = \text{lcm}(lp^{\lambda k_1 + k}, L_{a,b-1})$ with $\gcd(l, p^{\lambda k_1 + k}) = 1$, it suffices to show that both $l$ and $p^{\lambda k_1 + k}$ divide $L_{a,b-1}$. \\

We observe $l | (b - lt)$ and we claim that this implies $l | L_{a,b-1}$. To see this, first note $r_{b - lt} = r_b = r_n \neq 0$. Secondly, $b > b - lt \ge l\max(a, 2t) - lt = l\max(a - t,t) \ge 2\max(a - t,t) \ge a$, where $l \ge 2$ follows from Lemma \ref{prelimprop}. And so we conclude that $b - lt$, which is a multiple of $l$, is contained in the interval $[a,b-1]$ and must therefore divide $L_{a,b-1}$. \\

To show that $p^{\lambda k_1 + k}$ divides $L_{a,b-1}$, we use the existence of a positive integer $j < l$ for which $r_{jp^{k}} \neq 0$, as guaranteed by Lemma \ref{prelimprop}. We then see that $r_{jp^{\lambda k_1 + k}} \neq 0$ as well, while $a \le p^{\lambda k_1 + k} \le jp^{\lambda k_1 + k} < lp^{\lambda k_1 + k} = b$. And so $L_{a,b-1}$ is divisible by $jp^{\lambda k_1 + k}$, and in particular by $p^{\lambda k_1 + k}$. 
\end{proof}

Now it suffices to show $g_{a,b} > g_{a,b-1}$. Morally, this holds because $p | X_n$ implies $p | X_{a,b}$ as well.

\begin{pdividesxab} \label{p-ness}
The prime $p$ divides $X_{a,b}$, while $p$ does not divide $X_{a,b-1}$.
\end{pdividesxab}

\begin{proof} Let us take a look at $X_n \pmod{p}$ again.
\begin{align*}
X_n &= L_n \sum_{i=1}^n \frac{r_i}{i} & \\
&\equiv L_n \sum_{i=1}^l \frac{r_{ip^{k}}}{ip^{k}} &\pmod{p} \\
&\equiv \frac{L_n}{p^{k}} \sum_{i=1}^l \frac{r_{ip^{k}}}{i} &\pmod{p}
\end{align*}

By Lemma \ref{prelimprop}, $p^k$ exactly divides $L_n$, so for this final sum to be congruent to $0 \pmod{p}$ we must have $\displaystyle \sum_{i=1}^l \frac{r_{ip^{k}}}{i} \equiv 0 \pmod{p}$. Now let us use this knowledge in the analogous sum for $X_{a,b}$.
\begin{align*}
X_{a,b} &= L_{a,b} \sum_{i=a}^b \frac{r_i}{i} & \\
&\equiv L_{a,b} \sum_{i=1}^{l} \frac{r_{ip^{\lambda k_1 + k}}}{ip^{\lambda k_1 + k}} &\pmod{p} \\
&\equiv \frac{L_{a,b}}{p^{\lambda k_1 + k}} \sum_{i=1}^{l} \frac{r_{ip^{k}}}{i} &\pmod{p} \\
&\equiv 0 &\pmod{p}
\end{align*}

And indeed we see that $p$ divides $X_{a,b}$ as well. On the other hand, note that $p$ does not divide $\frac{L_{a,b}r_b}{lp^{\lambda k_1 + k}}$ by Lemma \ref{prelimprop}. From this observation the inequality $X_{a,b-1} = X_{a,b} - \frac{L_{a,b}r_b}{lp^{\lambda k_1 + k}} \not\equiv X_{a,b} \pmod{p}$ follows, and we conclude that $p$ does not divide $X_{a,b-1}$. 
\end{proof}

Now we are almost ready to finish up our proof, but before we do so, we need one last lemma.

\begin{gcd1}\label{Lemma1} 
For all primes $q$ we have $e_q(g_{a,b}) \ge e_q(g_{a,b-1}) - \min\big(e_q(X_{a,b-1}), e_q(b)\big)$.
\end{gcd1}

\begin{proof}
Let us fix the prime $q$ for this proof. From $e(X_{a,b-1}) \ge e(g_{a,b-1})$ the inequality $e(g_{a,b}) \ge e(g_{a,b-1}) - e(X_{a,b-1})$ immediately follows, since $e(g_{a,b})$ is non-negative. It therefore suffices to show $e(g_{a,b}) \ge e(g_{a,b-1}) - e(b)$.
\begin{align*}
e(g_{a,b}) &=\min \big(e(X_{a,b}), e(L_{a,b})\big) \\
&= \min\left(e\left(\frac{L_{a,b}}{L_{a,b-1}}X_{a,b-1} + \frac{L_{a,b}r_b}{b}\right), e(L_{a,b})\right) \\
&\ge \min\left(e(X_{a,b-1}), e\left(\frac{L_{a,b}r_b}{b}\right), e\left(L_{a,b-1}\right) \right) \\
&\ge \min\big(e(X_{a,b-1}), e\left(L_{a,b-1}\right) \big) - e(b) \qedhere
\end{align*}
\end{proof}

We will now calculate $g_{a,b}$ to finish the proof of Theorem \ref{Theorem1}.
\begin{align*}
g_{a,b} &= \prod_{q \text{ prime}} q^{e_q(g_{a,b})} \\
&= p^{e_p(g_{a,b})} \prod_{q | l} q^{e_q(g_{a,b})} \prod_{q \nmid b} q^{e_q(g_{a,b})}  \\
&\ge p^{e_p(g_{a,b-1})+1} \prod_{q | l} q^{e_q(g_{a,b-1}) - \min(e_q(X_{a,b-1}), e_q(b))} \prod_{q \nmid b} q^{e_q(g_{a,b-1})}  \\
&= \frac{p}{\gcd(l, X_{a,b-1})} g_{a,b-1} \\
&> g_{a,b-1} \qedhere
\end{align*}
\end{proof}

\subsection{Some examples and a return to the classical case} \label{return}
Since $\gcd(l, X_{a,b-1}) \le l \le n$, it is worth pointing out that as soon as we find an integer $n$ and a prime $p > \max(r,t,n)$ such that $p$ divides $X_n$, then the condition in Theorem \ref{Theorem1} is satisfied and $b(a)$ is finite for all $a$. In practice in turns out that, regardless of the sequence $r_1, r_2, \ldots$ that is chosen, one very often quickly finds such positive integers $n$ for which $X_n$ is divisible by a prime $p > \max(r, t, n)$. As an instructive example, let us look at all possible sequences of $r_i$ for which $\max(r, t) \le 2$. \\

Without loss of generality we assume that the first non-zero $r_i$ is positive, and for $t = 2$ we may assume $r_1 \neq r_2$. With these assumptions there are $12$ distinct sequences with $\max(r, t) \le 2$. We have tabulated these sequences, together with an $n$ and a prime $p > \max(r, t, n)$ such that $X_n$ is divisible by $p$. 

\begin{center}
  \begin{tabular}{| l | l | l | l | l |}
		\hline
    $t$ & $r_1$ & $r_2$ & $n$ & $p$ \\ \hline \hline
    1 & 1 & - & 2 & 3 \\ \hline
    1 & 2 & - & 2 & 3 \\ \hline
    2 & 1 & -2 & 2 & 3 \\ \hline
		2 & 1 & -1 & 3 & 5 \\ \hline
    2 & 1 & 0 & 7 & 11 \\ \hline
		2 & 1 & 2 & 3 & 7 \\ \hline
    2 & 2 & -2 & 3 & 5 \\ \hline
    2 & 2 & -1 & 2 & 3 \\ \hline
		2 & 2 & 0 & 7 & 11 \\ \hline
    2 & 2 & 1 & 2 & 5 \\ \hline
		2 & 0 & 1 & 6 & 11 \\ \hline
    2 & 0 & 2 & 6 & 11 \\ \hline
  \end{tabular}
\end{center}  

By extending this table with the help of a computer, one can check that for all sequences of $r_i$ with $\max(r, t) \le 8$, there exist $n$ and $p$ with $\max(r, t, n) < p \le 179$ and $p | X_n$. With these $n$ and $p$ we can then apply Theorem \ref{Theorem1}. For example, for all $12$ tabulated sequences we get the upper bound $b(a) \le 77a$, for all $a \ge 3$. In particular, if $r_i = 1$ for all $i$, we obtain the following corollary of Theorem \ref{Theorem1}:

\begin{corollary}\label{classical}
If $r_i = 1$ for all $i$, then $b(a) \le 6(a-1)$, for all $a > 1$.\footnote{See \mbox{\cite{oeis1}} for the actual values of $b(a) - 1$.}
\end{corollary}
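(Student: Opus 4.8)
The plan is to invoke Theorem \ref{Theorem1} with the smallest available witness pair $(n,p)$ from the table, namely $n = 2$ and $p = 3$. In the classical case $r_i = 1$ for all $i$ we have $r = 1$ and $t = 1$, so $\lambda = 1$ and $\max(r,t) = 1 < 3 = p$; moreover $X_2 = L_2 \cdot (1 + \tfrac12) = 2 \cdot \tfrac32 = 3$, which is indeed divisible by $p = 3$, and $n = 2$ is clearly the least such index. Write $n = lp^k = 2 \cdot 3^0$, so $l = 2$ and $k = 0$. Since $t = 1$, the requirement $p^{\lambda k_1 + k} \ge \max(a, 2t) = \max(a,2)$ becomes $3^{k_1} \ge a$, and we take $k_1$ to be the least integer with $3^{k_1} \ge a$.

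The key point to check is the hypothesis $\gcd(l, X_{a,b-1}) < p$, i.e. $\gcd(2, X_{a,b-1}) < 3$. But $\gcd(2, X_{a,b-1})$ is either $1$ or $2$, hence always strictly less than $3$; so the condition of Theorem \ref{Theorem1} is automatically satisfied, for every $a$ and in particular for the minimal choice of $k_1$. Theorem \ref{Theorem1} then yields $v_{a,b} < v_{a,b-1}$ and the bound $b(a) \le \max(a-1, 2t-1)\, l\, p^{\lambda} = \max(a-1, 1) \cdot 2 \cdot 3 = 6(a-1)$ for all $a > 1$, which is exactly the claimed inequality.

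There is essentially no obstacle here: the corollary is a direct specialization of Theorem \ref{Theorem1} once one exhibits the witness $(n,p) = (2,3)$ and observes that the coprimality hypothesis is vacuous because $l = 2 < 3 = p$. The only minor care needed is bookkeeping of the parameters ($l = 2$, $k = 0$, $\lambda = 1$, and the definition of $k_1$ as the least integer with $3^{k_1} \ge a$) so that the second conclusion of Theorem \ref{Theorem1} produces the constant $6$ rather than something larger; and noting that for $a > 1$ we have $\max(a-1,2t-1) = a-1$, so the bound reads $b(a) \le 6(a-1)$ as stated. (The sharper bound $4.374(a-1)$ promised in the overview will instead require a larger prime $p$ with $l < p$ giving a smaller ratio $lp^{\lambda}/(\text{something})$, but that refinement is carried out separately and is not part of this corollary.)
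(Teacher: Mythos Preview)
Your proof is correct and matches the paper's approach exactly: the paper simply states that the corollary follows from Theorem~\ref{Theorem1} applied with the table entry $(n,p)=(2,3)$, and you have filled in precisely those details. (Your closing parenthetical about the $4.374$ bound is slightly off---that refinement still uses $p=3$ but exploits higher powers of $3$ dividing $X_{a,f(a)}$ rather than a larger prime---but this does not affect the proof of the corollary itself.)
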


It is however possible to improve upon this corollary. Recall that, if $k$ is such that $3^k < a \le 3^{k+1}$, then the proof of Theorem \ref{Theorem1} shows that with $f(a) = 2 \cdot 3^{k+1}$ one has $v_{a,f(a)} < v_{a,f(a)-1}$. So for all $a \in (3^k, 3^{k+1}]$ the same value of $f(a)$ is chosen. To improve upon Corollary \ref{classical}, for $k \ge 10$ we are going to split up the interval $(3^k, 3^{k+1}]$ into six sub-intervals and let the value of $f(a)$ depend on the sub-interval that contains $a$. First, let us state our improvement.

\begin{ogbetter} \label{ogplus}
If $r_i = 1$ for all $i$, then $b(a) \le 4.374(a-1)$, for all $a \ge 6$.
\end{ogbetter}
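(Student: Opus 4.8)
The plan is to get more out of Theorem~\ref{Theorem1} by using several primes at once and by letting the auxiliary value $f(a)$ vary within each block $(3^k,3^{k+1}]$. The primes that help are exactly those appearing in the table of Section~\ref{return} with $p$ larger than its first witness: $p=3$ with witness $n=2$, $p=5$ with witness $n=4$, and $p=11$ with witness $n=3$. In each of these cases $n(p)<p$, so $\gcd(l,X_{a,b-1})\le l=n(p)<p$ automatically and Theorem~\ref{Theorem1} applies for every admissible $k_1$. Hence every $a$ satisfies all three of $b(a)\le 2\cdot 3^{\lceil\log_3 a\rceil}$, $b(a)\le 4\cdot 5^{\lceil\log_5 a\rceil}$ and $b(a)\le 3\cdot 11^{\lceil\log_{11} a\rceil}$, and on each piece of the block I would invoke whichever of these is $\le 4.374(a-1)$.

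For small $a$ I would simply compute: the true values of $b(a)$ lie far below the bounds coming from Theorem~\ref{Theorem1} (for instance $b(6)=b(10)=15$), so a finite check disposes of all $a\le 3^{10}$. For $a>3^{10}$ fix $k\ge 10$ with $3^k<a\le 3^{k+1}$. Since $5,11>3$, the block $(3^k,3^{k+1}]$ contains at most one power of $5$ and at most one power of $11$; those (at most two) points, the endpoint $3^{k+1}$, and the three thresholds $a=\tfrac{2}{4.374}\,3^{k+1}$, $a=\tfrac{4}{4.374}\,5^{m}$, $a=\tfrac{3}{4.374}\,11^{m}$ — where the respective certificate stops beating $4.374(a-1)$ — cut $(3^k,3^{k+1}]$ into six sub-intervals. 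On the top one $f(a)=2\cdot 3^{k+1}$ already works because there $a-1\ge\tfrac{2}{4.374}\,3^{k+1}$; going down, I would switch $f(a)$ to $4\cdot 5^{m}$, respectively $3\cdot 11^{m}$, exactly on the stretch where that value lies in $[a,\,4.374(a-1)]$. Checking that the six pieces tile the block is then bookkeeping with the fractional parts of $k\log_3 5$ and $k\log_3 11$, for which a handful of decimal digits of $\log_3 5$ and $\log_3 11$ are enough.

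The main obstacle is the \emph{bottom} of the block. For those $k$ for which neither the least power of $5$ above $3^k$ lies within a factor $1.0935$ of $3^k$, nor the least power of $11$ above $3^k$ within a factor $1.458$ of $3^k$, the three certificates alone do \emph{not} give $b(a)\le 4.374(a-1)$ for $a$ just above $3^k$, so the lowest one or two sub-intervals need a different construction. To supply one I would re-enter the proof of Theorem~\ref{Theorem1}: for such $a$ pick a $b$ of size about $4\cdot 3^k$ that is not a prime power, so $L_{a,b}=L_{a,b-1}$ by the argument of Lemma~\ref{lisl}, and then, as in Lemma~\ref{p-ness}, choose a prime $q\le 11$ and run the congruence computation so that $q\mid X_{a,b}$ while $q\nmid X_{a,b-1}$, forcing $g_{a,b}>g_{a,b-1}$ once the other prime contributions are estimated via Lemma~\ref{Lemma1}. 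Carrying this through uniformly over the bad configurations of $k$, and checking it meshes cleanly with the five sub-intervals already handled, is where essentially all the difficulty sits; the rest is routine manipulation of $\log 3$, $\log 5$ and $\log 11$.
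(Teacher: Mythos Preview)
Your plan has a genuine gap, and it sits exactly where you say ``essentially all the difficulty'' is. The three certificates $2\cdot 3^{\lceil\log_3 a\rceil}$, $4\cdot 5^{\lceil\log_5 a\rceil}$, $3\cdot 11^{\lceil\log_{11} a\rceil}$ do \emph{not} cover the bottom of the block $(3^k,3^{k+1}]$ for infinitely many $k$: since $\log_3 5$ and $\log_3 11$ are irrational, the fractional parts of $k\log_3 5$ and $k\log_3 11$ are equidistributed, and for a positive proportion of $k$ neither power falls close enough to $3^k$. Your fallback (``pick $b\approx 4\cdot 3^k$, choose $q\le 11$, force $q\mid X_{a,b}$'') is not a construction; in fact if $b=l\cdot 3^j$ with $l>2$ and you only arrange $3\mid X_{a,b}$, the condition $\gcd(l,X_{a,b-1})<3$ can fail (e.g.\ $5\mid X_{a,b-1}$ is not excluded), so the last step of Theorem~\ref{Theorem1} breaks down.

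The paper's route is quite different and avoids primes other than $3$ entirely. The two ideas you are missing are: (i) a strengthened version of Lemma~\ref{p-ness} showing that for carefully chosen $f(a)=l\cdot 3^j$ one has $9\mid X_{a,f(a)}$ (or even $27\mid X_{a,f(a)}$), so that the $\gcd$ condition relaxes to $\gcd(l,X_{a,f(a)-1})<9$ (resp.\ $<3$); and (ii) the observation that $X_{a,b}$ is always odd when $r_i\equiv 1$, so one may take $l$ to be a large power of $2$ (up to $l=1024$) without any cost in the $\gcd$. With these two facts the paper covers the stubborn strip $(3^k,\,1000\cdot 3^{k-6}]$ by five explicit choices of $f(a)$ with $l\in\{5,16,5,14,1024\}$, each verified by a short congruence computation modulo $9$ or $27$; the top strip $(1000\cdot 3^{k-6},3^{k+1}]$ is exactly where $2\cdot 3^{k+1}\le 4.374(a-1)$, which explains the constant. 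Your multi-prime tiling could perhaps be salvaged, but not without importing at least idea~(ii) and doing substantially more case analysis than the paper needs.
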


\begin{proof}
To prove this, we will define a function $f(a)$ for all $a \ge 6$ such that $f(a) \le 4.374(a-1)$ and $v_{a,f(a)} < v_{a,f(a)-1}$. To start off, for $6 \le a \le 59049 = 3^{10}$, we define $f(a)$ as in the following four tables, where elements in the top rows specify intervals of $a$. \\

\begin{tabular}{| l || c | c | c | c | c | c | c | c | c |}
  \hline			
  $a$ & $[6, 10]$ & $[11, 14]$ & $[15, 27]$ & $[28, 50]$ & $[51, 81]$ & $[82, 108]$ & $[109, 117]$ \\
	\hline
  $f(a)$ & $15$ & $35$ & $54$ & $75$ & $162$ & $135$ & $126$ \\
  \hline  
\end{tabular}

\vspace{5pt}

\begin{tabular}{| l || c | c | c | c | c |}
  \hline			
  $a$ & $[118, 243]$ & $[244, 363]$ & $[364, 729]$ & $[730, 1000]$ & $[1001, 2187]$ \\
	\hline
  $f(a)$ & $486$ & $968$ & $1458$ & $2166$ & $4374$  \\
  \hline  
\end{tabular}

\vspace{5pt}

\begin{tabular}{| l || c | c | c | c | c |}
  \hline			
  $a$ & $[2188, 2916]$ & $[2917, 3000]$ & $[3001, 6561]$ & $[6562, 8748]$ & $[8749, 9000]$ \\
	\hline
  $f(a)$ & $3645$ & $3402$ & $13122$ & $10935$ & $10206$\\
  \hline  
\end{tabular}

\vspace{5pt} 
\begin{tabular}{| l || c | c | c | c |}
  \hline			
  $a$  & $[9001, 19683]$ & $[19684, 26244]$ & $[26245, 27000]$ & $[27001, 59049]$\\
	\hline
  $f(a)$ & $39366$ & $32805$ & $30618$ & $118098$\\
  \hline  
\end{tabular}

\vspace{10pt}

With these values of $f(a)$, one can check that $f(a) \le 4.374(a-1)$ holds for all $a \le 3^{10}$ and with the help of a computer, one can also check $v_{a,f(a)} < v_{a,f(a) - 1}$ in each case, proving Theorem \ref{ogplus} for all $a \le 3^{10}$. \\

We may therefore assume $a > 3^{10}$, in which case there exists an integer $k \ge 10$ such that $3^k < a \le 3^{k+1}$. We will now partition the interval $I = (3^k, 3^{k+1}]$ into the following six subintervals: 
\begin{align*}
I_1 &= (3^k, 10\cdot 3^{k-2}] \\
I_2 &= (10\cdot 3^{k-2}, 11\cdot 3^{k-2}] \\
I_3 &= (11\cdot 3^{k-2}, 4\cdot 3^{k-1}] \\
I_4 &= (4\cdot 3^{k-1}, 37\cdot 3^{k-3}] \\
I_5 &= (37\cdot 3^{k-3}, 1000\cdot 3^{k-6}] \\
I_6 &= (1000\cdot 3^{k-6}, 3^{k+1}]
\end{align*}

We then define $f(a)$ as follows:
$$
f(a) = \begin{cases} 

5\cdot3^{k-1} &\mbox{if } a \in I_1 \\ 
16\cdot3^{k-2} &\mbox{if } a \in I_2 \\ 
5\cdot3^{k-1} &\mbox{if } a \in I_3 \\
14\cdot3^{k-2} &\mbox{if } a \in I_4 \\
1024\cdot3^{k-6} &\mbox{if } a \in I_5 \\
2\cdot3^{k+1} &\mbox{if } a \in I_6 \\

\end{cases} 
$$

The inequality $f(a) \le 4.374(a-1)$ is again straight-forward to check for all $a \in I$. It therefore suffices to prove $v_{a,f(a)} < v_{a,f(a) - 1}$. For all $a \in I_6$, the proof of Theorem \ref{Theorem1} tells us $v_{a,f(a)} < v_{a,f(a) - 1}$. For $a$ in the other five intervals, Theorem \ref{Theorem1} does not directly help, but we will follow its proof quite closely with $p = 3$. 

First, analogously to Lemma \ref{lisl}, we remark that in all cases $L_{a,f(a)} = L_{a,f(a)-1}$. To see this, write $f(a) = l \cdot 3^{k_1}$ with $\gcd(l, 3) = 1$, and recall that $L_{a,f(a)}$ equals $\text{lcm}(l \cdot 3^{k_1}, L_{a,f(a)-1})$. Since $l$ divides $l(3^{k_1} - 1)$, $3^{k_1}$ divides $(l-1)3^{k_1}$ and, in all cases, $a \le \min(l(3^{k_1} - 1), (l-1)3^{k_1})$, we get $L_{a,f(a)} = \text{lcm}(l \cdot 3^{k_1}, L_{a, f(a)-1}) = L_{a,f(a)-1}$. It therefore suffices to show $g_{a,f(a)} < g_{a,f(a)-1}$. \\

The main difference with the proof of Theorem \ref{Theorem1} is that here, $X_{a,f(a)}$ is not just divisible by $3$; we actually claim that $9$ divides $X_{a,f(a)}$ for all $a$ in the first four intervals, while $27 | X_{a,f(a)}$ for $a \in I_5$. We will then make use of the following result, which can be obtained by going through the computation of $g_{a,b}$ again, at the end of the proof of Theorem \ref{Theorem1}.

\begin{gabhigherpower} \label{higherpower}
If $\gcd(l, X_{a,f(a)-1}) < p^{e_p(g_{a,f(a)}) - e_p(g_{a,f(a)-1})}$, then $g_{a,f(a)} < g_{a,f(a)-1}$.
\end{gabhigherpower}

To show that $9$ (or $27$) does indeed divide $X_{a,f(a)}$ for $a \in \displaystyle \bigcup_{1 \le i \le 5} I_i$, we use the fact that if $e_3(L_{a,f(a)}) = k_1$, then $\frac{L_{a,f(a)}}{i} \equiv 0 \pmod{3^m}$, unless $e_3(i) > k_1 - m$. So to calculate $X_{a,f(a)} \pmod{3^m}$ the only terms $\frac{L_{a,f(a)}}{i}$ that we have to add are the ones where $3^{k_1 - m + 1}$ divides $i$. Note that in all the five intervals we consider, we have $3^k < a < f(a) < 2 \cdot 3^k$, so that $k_1$ is at most $k-1$.

\begin{enumerate}
	\item For $a \in I_1$ we chose $f(a) = 5 \cdot 3^{k-1}$, so that $e_3(L_{a,f(a)})$ equals $k-1$. This means that, modulo $9$, the only terms $\frac{L_{a,f(a)}}{i}$ that are non-zero, are the ones where $i$ is divisible by $3^{k-2}$. We will now calculate $X_{a,f(a)} \pmod{9}$ by rearranging those terms and then taking certain pairs of terms together.
\begin{align*}	
X_{a,f(a)} &= L_{a,f(a)} \sum_{i=a}^b \frac{1}{i} \\
&\equiv \sum_{i=10}^{15} \frac{L_{a,f(a)}}{i \cdot 3^{k-2}} \pmod{9} \\
&= \frac{L_{a,f(a)}}{3^{k-1}} \Biggr[ 3\left(\frac{1}{10} + \frac{1}{11} \right) + \left(\frac{1}{4} + \frac{1}{5}\right) + 3\left(\frac{1}{13} + \frac{1}{14}\right) \Biggr] \\
&= \frac{L_{a,f(a)}}{3^{k-1}} \Biggr[ 9\left(\frac{7}{10 \cdot 11} \right) + 9\left(\frac{1}{4 \cdot 5}\right) + 9\left(\frac{9}{13 \cdot 14}\right) \Biggr] \\
&\equiv 0 \pmod{9}
\end{align*}

\item For $a \in I_2$, we also have $e_3(L_{a,f(a)}) = k-1$, and we obtain the following sum:
\begin{align*}	
X_{a,f(a)} &\equiv \sum_{i=11}^{16} \frac{L_{a,f(a)}}{i \cdot 3^{k-2}} \pmod{9} \\
&= \frac{L_{a,f(a)}}{3^{k-1}} \Biggr[ 3\left(\frac{1}{11} + \frac{1}{16} \right) + \left(\frac{1}{4} + \frac{1}{5}\right) + 3\left(\frac{1}{13} + \frac{1}{14}\right) \Biggr] \\
&= \frac{L_{a,f(a)}}{3^{k-1}} \Biggr[ 9\left(\frac{9}{11 \cdot 16} \right) + 9\left(\frac{1}{4 \cdot 5}\right) + 9\left(\frac{9}{13 \cdot 14}\right) \Biggr] \\
&\equiv 0 \pmod{9}
\end{align*}	

\item The calculation for $a \in I_3$ is very similar to the one for the first interval, except that it does not contain the two terms corresponding to $10 \cdot 3^{k-2}$ and $11\cdot3^{k-2}$.
\begin{align*}	
X_{a,f(a)} &\equiv \sum_{i=12}^{15} \frac{L_{a,f(a)}}{i \cdot 3^{k-2}} \pmod{9} \\
&= \frac{L_{a,f(a)}}{3^{k-1}} \Biggr[ \left(\frac{1}{4} + \frac{1}{5}\right) + 3\left(\frac{1}{13} + \frac{1}{14}\right) \Biggr] \\
&= \frac{L_{a,f(a)}}{3^{k-1}} \Biggr[ 9\left(\frac{1}{4 \cdot 5}\right) + 9\left(\frac{9}{13 \cdot 14}\right) \Biggr]  \\
&\equiv 0 \pmod{9}
\end{align*}

\item For $a \in I_4$ we have that $e_3(L_{a,f(a)})$ is equal to $k-2$ and $\frac{L_{a,f(a)}}{i} \equiv 0 \pmod{9}$, unless $e_3(i) \ge k-3$.	
\begin{align*}
X_{a,f(a)} &\equiv \sum_{i=37}^{42} \frac{L_{a,f(a)}}{i \cdot 3^{k-3}} \pmod{9}\\
&= \frac{L_{a,f(a)}}{3^{k-2}} \Biggr[ 3\left(\frac{1}{37} + \frac{1}{38} \right) + \left(\frac{1}{13} + \frac{1}{14}\right) + 3\left(\frac{1}{40} + \frac{1}{41}\right) \Biggr] \\
&= \frac{L_{a,f(a)}}{3^{k-2}} \Biggr[ 9\left(\frac{25}{37 \cdot 38} \right) + 9\left(\frac{3}{13 \cdot 14}\right) + 9\left(\frac{27}{40 \cdot 41}\right) \Biggr] \\
&\equiv 0 \pmod{9}
\end{align*}	

\item Finally, for $a \in I_5$, $e_3(L_{a,f(a)}) = k-4$ and $\frac{L_{a,f(a)}}{i} \equiv 0 \pmod{27}$, unless $e_3(i) \ge k-6$.	Since $999 \cdot 3^{k-6} < a \le 1000 \cdot 3^{k-6} < 1024 \cdot 3^{k-6} = f(a)$, this means that there are in total $25$ terms which do not vanish modulo $27$. We partition those $25$ terms into eight pairs of the form $\frac{L_{a,f(a)}}{i \cdot 3^{k-6}} + \frac{L_{a,f(a)}}{(i+1) \cdot 3^{k-6}}$ where $i \equiv 1 \pmod{3}$, then three more pairs of the form $\frac{L_{a,f(a)}}{(999 + i) \cdot 3^{k-6}} + \frac{L_{a,f(a)}}{(1026 - i) \cdot 3^{k-6}}$ where $i$ is divisible by $3$ but not by $9$, and then the three remaining terms. We claim that the sum of every pair is divisible by $27$, and so is the sum of the three remaining terms.
\begin{align*}	X_{a,f(a)} &\equiv \sum_{i=1000}^{1024} \frac{L_{a,f(a)}}{i \cdot 3^{k-6}} \pmod{27} \\[10pt]
&= \frac{L_{a,f(a)}}{3^{k-6}} \Biggr[ \sum_{j = 0}^{7} \left(\frac{1}{1000 + 3j} + \frac{1}{1001 + 3j} \right)  \\
&\hspace{7pt}+ \left(\frac{1}{1002} + \frac{1}{1023}\right) + \left(\frac{1}{1005} + \frac{1}{1020}\right) + \left(\frac{1}{1011} + \frac{1}{1014}\right)   \\
&\hspace{7pt}+ \left(\frac{1}{1008} + \frac{1}{1017} + \frac{1}{1024}\right) \Biggr]  \\[10pt]
%&= \frac{L_{a,f(a)}}{3^{k-4}} \Biggr[ 9\sum_{j = 0}^{7} \left(\frac{1}{1000 + 3j} + \frac{1}{1001 + 3j} \right)  \\
%&\hspace{7pt}+ 3\left(\frac{1}{334} + \frac{1}{341}\right) + 3\left(\frac{1}{335} + \frac{1}{340}\right) + 3\left(\frac{1}{337} + \frac{1}{338}\right)   \\
%&\hspace{7pt}+ \left(\frac{1}{112} + \frac{1}{113} + \frac{9}{1024}\right) \Biggr] \\[10pt]
&= \frac{L_{a,f(a)}}{3^{k-4}} \Biggr[27 \sum_{j = 0}^{7} \left(\frac{667 + 2j}{(1000 + 3j)(1001 + 3j)} \right)  \\
&\hspace{7pt}+ 27\left(\frac{75}{334 \cdot 341}\right) + 27\left(\frac{75}{335 \cdot 340}\right) + 27\left(\frac{75}{337 \cdot 338}\right)   \\
&\hspace{7pt}+ 27\left(\frac{797}{7 \cdot 113 \cdot 1024}\right) \Biggr] \\[10pt]
&\equiv 0 \pmod{27} 
\end{align*}
\end{enumerate}

For $a \in I_1 \cup I_3 \cup I_4$, we see $e_3\left(\frac{L_{a,f(a)}}{f(a)}\right) = 0$. For $a \in I_2$, we have $e_3\left(\frac{L_{a,f(a)}}{f(a)}\right) = 1$. And for $a \in I_5$, $e_3\left(\frac{L_{a,f(a)}}{f(a)}\right) = 2$. Since $X_{a,f(a)-1} = X_{a,f(a)} - \frac{L_{a,f(a)}}{f(a)}$, this implies (compare with Lemma \ref{p-ness}) the following (in)equalities:
\begin{align*}
e_3(X_{a,f(a)-1}) &= 0 \le e_3(X_{a,f(a)}) - 2 \text{ for } a \in I_1 \cup I_3 \cup I_4 \\
e_3(X_{a,f(a)-1}) &= 1 \le e_3(X_{a,f(a)}) - 1 \text{ for } a \in I_2 \\
e_3(X_{a,f(a)-1}) &= 2 \le e_3(X_{a,f(a)}) - 1 \text{ for } a \in I_5
\end{align*}

Since $L_{a,f(a)}$ is always at least $k-4 \ge 3$, it suffices by Lemma \ref{higherpower} to show $\gcd(l, X_{a,f(a)-1}) < 9$ for $a \in I_1 \cup I_3 \cup I_4$ and $\gcd(l, X_{a,f(a)-1}) < 3$ for $a \in I_2 \cup I_5$. Since $l = 5, 16, 5, 14, 1024$ for $I_1, I_2, I_3, I_4, I_5$ respectively, this at once follows from the following well-known proposition. \phantom\qedhere

\begin{neverodd} \label{neverodd}
If $r_i = 1$ for all $i$, then $X_{a,b}$ is odd for all $a$ and $b \ge a$.
\end{neverodd}

\begin{proof}
Let $m$ be such that $L_{a,b}$ is exactly divisible by $2^m$, and let $i \in [a,b]$ be an integer divisible by $2^m$. Then we claim that this $i$ is unique; if $i' \neq i$ is also divisible by $2^m$, then $i' \notin [a,b]$. To see this, first note that if $i'$ is divisible by $2^m$, then either $i' \le i - 2^m$ or $i' \ge i + 2^m$. Secondly note that, since $i$ is exactly divisible by $2^m$, it must be an odd multiple of $2^m$. This implies that $i - 2^m$ and $i + 2^m$ are both even multiples of $2^m$, which means they are divisible by $2^{m+1}$. Since $L_{a,b}$ is not divisible by $2^{m+1}$, this then shows that both $i - 2^m$ and $i + 2^m$ have to be outside of the interval $[a,b]$, so $i'$ cannot be contained in $[a,b]$ either. Since we have shown that this $i$ is unique, we conclude $X_{a,b} \equiv \frac{L_{a,b}}{i} \equiv 1 \pmod{2}$.
\end{proof}
\end{proof}

\subsection{Exponential growth} \label{expgrowth}
In Section \ref{iflarge} we used a prime $p > \max(r, t)$ that divides $X_n$, for some $n \in \mathbb{N}$. We will now start to concern ourselves with proving the existence of such a prime. In order to do this, the first thing we need to find are lower bounds on the growth of $X_n$ itself. For whomever just wants an exponential lower bound that works for all large enough $n$, we will prove that first. However, in this paper we aim for explicit bounds, and for that we need to work a bit harder, which we shall do right after. 

\begin{lcmbound} \label{lcmbound}
For all $n \ge t(t+2)$ we have $L_n > 2^{\frac{n}{t} -2}$.
\end{lcmbound}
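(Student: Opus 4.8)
The plan is to deduce the bound from a lower estimate for the least common multiple of a single arithmetic progression, which I would in turn obtain from an integral identity generalizing Nair's elementary argument for $\text{lcm}(1,2,\dots,N)\ge 2^{N-1}$. For the reduction, let $i_0$ be the smallest positive integer with $r_{i_0}\neq 0$; since $\{r_i\}$ has period $t$ and is not identically zero, $i_0\le t$, and by periodicity $r_j\neq 0$ whenever $j\equiv i_0\pmod{t}$. Hence, writing $M:=\lfloor (n-i_0)/t\rfloor$, all of $i_0,\,i_0+t,\,\dots,\,i_0+Mt$ lie in $\{1,\dots,n\}$ and have non-zero $r$, so $L_n\ge\nu:=\text{lcm}(i_0,i_0+t,\dots,i_0+Mt)$. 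From $i_0\le t$ one gets $M>n/t-2$, and then $n\ge t(t+2)$ forces $M>t$.

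The key point is the estimate $\nu>t\,s\binom{M}{s}$, valid for every integer $s$ with $1\le s\le M$. To prove it, I would evaluate $\int_0^1 x^{\,i_0+ts-1}(1-x^{t})^{M-s}\,dx$ in two ways: expanding $(1-x^{t})^{M-s}$ by the binomial theorem gives
\[
\int_0^1 x^{\,i_0+ts-1}(1-x^{t})^{M-s}\,dx=\sum_{i=0}^{M-s}\binom{M-s}{i}\frac{(-1)^{i}}{\,i_0+t(s+i)\,},
\]
whereas the substitution $u=x^{t}$ turns the integral into a Beta integral equal to $\dfrac{(M-s)!\,t^{\,M-s}}{\prod_{j=s}^{M}(i_0+jt)}$. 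Multiplying both sides by $\nu$ makes the sum an integer, since each denominator $i_0+jt$ with $s\le j\le M$ is at most $i_0+Mt$ and therefore divides $\nu$; hence $\prod_{j=s}^{M}(i_0+jt)$ divides $\nu\,(M-s)!\,t^{\,M-s}$, and bounding $i_0+jt>jt$ yields
\[
\nu\;\ge\;\frac{\prod_{j=s}^{M}(i_0+jt)}{(M-s)!\,t^{\,M-s}}\;>\;\frac{\prod_{j=s}^{M}(jt)}{(M-s)!\,t^{\,M-s}}\;=\;t\,s\binom{M}{s}.
\]

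To finish, take $s=\lfloor M/2\rfloor$, so that $s\ge (M-1)/2$ and $\binom{M}{s}\ge 2^{M}/(M+1)$; then $\nu>t\cdot\frac{M-1}{2(M+1)}\cdot 2^{M}$. Since $M>n/t-2$ gives $2^{M}>2^{\,n/t-2}$, this is $>2^{\,n/t-2}$ as soon as $t\cdot\frac{M-1}{2(M+1)}\ge 1$, which (as $M>t$) holds for every $t\ge 4$. For $t\le 3$ there remain only finitely many pairs $(t,n)$ with $n$ near $t(t+2)$, and these are handled by a direct computation of $\nu$; the case $i_0=t$ (equivalently $r_j=0$ for $t\nmid j$) is especially clean, since then $L_n=t\cdot\text{lcm}(1,\dots,\lfloor n/t\rfloor)$ and one simply invokes $\text{lcm}(1,\dots,N)\ge 2^{N-1}$.

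The step I expect to be the main obstacle is controlling the constant: the identity above loses roughly a factor of $2$, and the floor estimate $M>n/t-2$ leaves essentially no room, so to land on the exponent $n/t-2$ rather than $n/t-3$ one has to use the strict inequalities and the extra factor $t$ with some care, and — for the smallest values of $t$, where the progression is sparsest and the inequality tightest — either verify a short finite list by hand or replace $\binom{M}{\lfloor M/2\rfloor}\ge 2^{M}/(M+1)$ by a sharper estimate such as $\binom{2w}{w}\ge 4^{w}/(2\sqrt w)$.
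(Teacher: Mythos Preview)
Your argument is essentially correct and takes a genuinely different route from the paper. The paper does exactly your first reduction step --- pick the residue class $i_1\equiv i_0$ with $r_{i_1}\neq 0$, set $A=\lfloor (n-i_1)/t\rfloor>n/t-2$, and bound $L_n$ below by the lcm of $i_1,i_1+t,\dots,i_1+At$ --- but then it simply \emph{cites} a ready-made lower bound (Theorem~1.1 of \cite{lcm}) for lcm's of arithmetic progressions, which after dividing by $\gcd(i_1,t)$ yields
\[
L_n\;\ge\;\frac{i_1}{\gcd(i_1,t)}\cdot\frac{t}{\gcd(i_1,t)}\cdot\Bigl(\frac{t}{\gcd(i_1,t)}+1\Bigr)^{A}\;\ge\;2^{A}\;>\;2^{n/t-2},
\]
valid once $A>t/\gcd(i_1,t)$, which $n\ge t(t+2)$ guarantees.

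What you do instead is re-derive a bound of this type from scratch, via the Beta-integral identity, obtaining $\nu>t\,s\binom{M}{s}$ and then specialising to $s=\lfloor M/2\rfloor$. That is a clean and self-contained substitute for the black-box citation; indeed the techniques in \cite{lcm} are of the same Nair/Farhi flavour, so you are effectively reproving a weak form of the cited theorem. The trade-off is that the cited result gives the base $\bigl(t/\gcd(i_1,t)+1\bigr)\ge 2$ directly and uniformly, whereas your central-binomial route loses a factor $\sim M$ and forces the case split at small $t$ that you correctly flag. One small inaccuracy in your write-up: for $t=1$ the inequality $t\cdot\frac{M-1}{2(M+1)}\ge 1$ fails for \emph{every} $M$, not just for finitely many $n$; but since $t=1$ forces $i_0=1=t$, your ``$i_0=t$'' clause (invoke $\mathrm{lcm}(1,\dots,N)\ge 2^{N-1}$) already covers it, and for $t=2,3$ the residual checks really are finite. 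With that clarification, your proof goes through.
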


\begin{proof}
Recall that $i_1$ is the smallest positive integer such that $r_{i_1} \neq 0$, and define $A = \left\lfloor\frac{n - i_1}{t}\right\rfloor > \frac{n}{t} - 2$. We then have the following:
\begin{align*}
L_n &\ge \text{lcm}(i_1, i_1 + t, i_1 + 2t, \ldots, i_1 + At) \\
&\ge \text{lcm}\left(\frac{i_1}{\gcd(i_1, t)}, \frac{i_1 + t}{\gcd(i_1,t)}, \frac{i_1 + 2t}{\gcd(i_1,t)}, \ldots, \frac{i_1 + At}{\gcd(i_1, t)}\right)
\end{align*}

We can then apply Theorem $1.1$ from \mbox{\cite[p. 2]{lcm}}\footnote{With $\alpha = 1$, their $n$ is our $A$, their $r$ is our $\frac{t}{\gcd(i_1,t)}$ and their $u_0$ is our $\frac{i_1}{\gcd(i_1,t)}$.} to obtain a lower bound on $L_n$.
\begin{align*}
L_n &\ge \left(\frac{i_1}{\gcd(i_1,t)}\right)\left(\frac{t}{\gcd(i_1,t)}\right)\left(\frac{t}{\gcd(i_1,t)} + 1\right)^{A} \\
&> 2^{\frac{n}{t} -2}
\end{align*}

This lower bound holds when $A > \frac{t}{\gcd(i_1,t)}$. And if $n \ge t(t+2)$, then $A > \frac{n}{t} - 2 \ge t \ge \frac{t}{\gcd(i_1,t)}$.
\end{proof}

We will now use Lemma \ref{lcmbound} to prove a lower bound on $|X_n|$.

\begin{expo}
There exists a positive constant $c_0$ such that $|X_n| > c_0n^{-t}2^{\frac{n}{t}}$, for all large enough integers $n$.
\end{expo}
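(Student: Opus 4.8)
The plan is to combine the lower bound $L_n > 2^{n/t-2}$ from Lemma~\ref{lcmbound} with a lower bound on $\left|\frac{X_n}{L_n}\right| = \left|\sum_{i=1}^n \frac{r_i}{i}\right|$ that decays at most polynomially in $n$. Since $X_n = L_n \sum_{i=1}^n \frac{r_i}{i}$, it suffices to show there is a constant $c_1 > 0$ such that $\left|\sum_{i=1}^n \frac{r_i}{i}\right| > c_1 n^{-t}$ for all large enough $n$; multiplying by $L_n > 2^{n/t-2} = \tfrac14 \cdot 2^{n/t}$ then gives $|X_n| > \tfrac{c_1}{4} n^{-t} 2^{n/t}$, which is the claim with $c_0 = c_1/4$.

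For the partial-sum estimate I would split into two cases according to the behaviour of $S := \sum_{i=1}^{t} \frac{r_i}{i^{(t)}}$ where I group the sum over full periods. The cleanest route: write the sum of $r_i/i$ over a full period starting at $i = mt+1$. The key point is that $\sum_{i=a}^{b} \frac{r_i}{i}$ cannot be too small in absolute value because the sequence $r_i$ is periodic and not identically zero. Concretely, let $P(s) = \sum_{j=1}^{t} \frac{r_j}{s+j}$ be the contribution of the block of indices $\{s+1,\dots,s+t\}$; as a rational function of $s$ it is not identically zero (its numerator is a fixed nonzero polynomial, since clearing denominators and evaluating the residue at $s = -j_0$ for an index $j_0$ with $r_{j_0}\neq 0$ gives $\pm r_{j_0}\prod_{j\neq j_0}(j-j_0)\neq 0$), so $P(s) = c/s^{t} + O(s^{-t-1})$ for some fixed rational $c\neq 0$ as $s\to\infty$ — wait, more carefully, either $\sum_j r_j = 0$, in which case $P(s)\sim c' s^{-2}$ or a higher negative power but still a \emph{fixed} power $s^{-d}$ with $1 \le d \le t$ and $c'\neq 0$, or $\sum_j r_j\neq 0$ and $P(s)\sim (\sum_j r_j)/s$. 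In every case $P(s)$ is eventually of one sign and $\gg s^{-t}$ in absolute value. Summing $P(s)$ over $s = 0, t, 2t, \dots$ up to roughly $n$, the tail $\sum_{s \ge n,\ t\mid s} P(s) = O(n^{-d+1}) = O(1)$ converges, and the partial sum $\sum_{i=1}^{n} \frac{r_i}{i}$ either converges to a nonzero limit (when the series converges, i.e. $\sum_j r_j = 0$) or grows like $\tfrac{\sum_j r_j}{t}\log n$; either way it is bounded below by a positive constant, hence certainly $> c_1 n^{-t}$, for large $n$. The leftover incomplete block at the end contributes $O(1/n)$ and does not affect this.

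The one subtlety is the degenerate possibility that the partial sums, while not tending to $0$, oscillate — but with $r_i$ periodic this cannot happen: the sequence of block-sums $\sum_{i=1}^{mt}\frac{r_i}{i}$ is eventually monotone (since $P(s)$ is eventually one-signed), so it has a limit in $\mathbb{R}\cup\{\pm\infty\}$, and that limit is nonzero unless $\sum_j r_j = 0$ \emph{and} the limit of the convergent series happens to vanish. This last coincidence is the real content: I would rule it out by noting $\sum_{i=1}^{\infty} \frac{r_i}{i} = 0$ would force, after multiplying by the (nonzero) value of a suitable Dirichlet-type combination, a nontrivial $\mathbb{Q}$-linear relation among values like $\log$'s or $\psi$-function values at rationals, which is false; alternatively, and more elementarily, if the full series were $0$ then the partial sums $\sum_{i=1}^{n}\frac{r_i}{i}$ are $\Theta(n^{-d+1})$ for the fixed exponent $d\ge 2$ above, so $\left|\sum_{i=1}^{n}\frac{r_i}{i}\right| \gg n^{-(d-1)} \ge n^{-t+1} > n^{-t}$ still holds. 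So in \emph{all} cases the polynomial lower bound survives, and I expect this case-splitting on the asymptotics of the block-sum $P(s)$ to be the main thing to get right; the rest is routine.

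\medskip
\noindent In outline, the steps are:
\begin{enumerate}
\item Reduce to showing $\left|\sum_{i=1}^n \frac{r_i}{i}\right| \gg n^{-t}$ eventually, via $X_n = L_n\sum r_i/i$ and Lemma~\ref{lcmbound}.
\item Define the block-sum $P(s) = \sum_{j=1}^t \frac{r_j}{s+j}$, show its numerator is a fixed nonzero polynomial, deduce $P(s) = c s^{-d}(1+o(1))$ for a fixed $c\neq 0$ and fixed $1\le d\le t$ as $s\to\infty$.
\item Conclude the partial sums are eventually monotone, hence tend to a limit $\ell\in\mathbb{R}\cup\{\pm\infty\}$; handle $\ell\neq 0$ trivially and $\ell = 0$ via the $\Theta(n^{-(d-1)})$ rate, which still beats $n^{-t}$.
\item Absorb the incomplete final block (error $O(1/n)$) and combine with $L_n > \tfrac14 2^{n/t}$ to get $|X_n| > c_0 n^{-t} 2^{n/t}$.
\end{enumerate}
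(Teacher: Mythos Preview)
Your approach matches the paper's: reduce via $X_n = L_n \sum_{i\le n} r_i/i$ and Lemma~\ref{lcmbound} to a polynomial lower bound on $\bigl|\sum_{i\le n} r_i/i\bigr|$, observe that the block increment $\sum_{i=n+1}^{n+t} r_i/i$ is a fixed nonzero rational function of $n$ (hence eventually one-signed and $\gg n^{-t}$), conclude the partial sums are eventually monotone, and split on whether the limit is zero. The paper's one organisational simplification over your write-up is to fix a residue class $n \equiv j \pmod t$ from the outset; in the limit-zero case monotonicity then gives $\bigl|\sum_{i\le n} r_i/i\bigr| > \bigl|\text{next increment}\bigr| \gg n^{-t}$ directly, with no incomplete block to handle.

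Your step~4 in the $\ell = 0$ case does not go through as written. You claim the full-block partial sum is $\Theta(n^{-(d-1)})$ and that the incomplete block, being $O(1/n)$, can be absorbed --- but $d \ge 2$ forces $n^{-(d-1)} \le n^{-1}$, so the incomplete block is \emph{at least} the order of your main term and can cancel against it. The effective exponent genuinely depends on the residue class of $n$: for instance with $t=3$ and $(r_1,r_2,r_3)=(1,1,-2)$ the increment is $(3n+5)/\bigl((n+1)(n+2)(n+3)\bigr) = \Theta(n^{-2})$ along $n\equiv 0 \pmod 3$ but $2/\bigl((n+1)(n+2)(n+3)\bigr) = \Theta(n^{-3})$ along $n\equiv 1 \pmod 3$, so no single $\Theta(n^{-(d-1)})$ works uniformly. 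The fix is exactly the paper's: run the monotonicity-plus-increment argument separately in each residue class modulo $t$; the increment there is always a nonzero rational function and hence $\gg n^{-t}$, so the target bound survives. With that change your Baker/digamma aside becomes unnecessary.
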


\begin{proof}
Fix a residue class $n \pmod{t}$ and note that the difference $\frac{X_{n+t}}{L_{n+t}} - \frac{X_n}{L_n}$ is equal to the sum $\displaystyle \sum_{i=n+1}^{n+t} \frac{r_i}{i}$ and can therefore be written as $\frac{f(n)}{g(n)}$, where $f(n)$ and $g(n)$ are non-zero polynomials with integer coefficients and degree at most $t$. If the leading coefficients of $f(n)$ and $g(n)$ have the same sign, then $\frac{f(n)}{g(n)}$ is positive for all large $n$, and if the leading coefficients of $f(n)$ and $g(n)$ differ in sign, then $\frac{f(n)}{g(n)}$ is negative for all large $n$. Either way, this implies that the sequence $\frac{X_n}{L_n}, \frac{X_{n+t}}{L_{n+t}}, \frac{X_{n+2t}}{L_{n+2t}}, ..$ is monotonic, for large enough $n$. If this sequence does not converge to zero, we are done by Lemma \ref{lcmbound}. If it does converge to zero, we have (for some positive constant $c$ and large enough $n$):
\begin{align*}
\left|\frac{X_n}{L_n}\right| & = \left|\frac{X_n}{L_n} - 0\right| \\
&> \left|\frac{X_n}{L_n} - \frac{X_{n+t}}{L_{n+t}}\right| \\
&= \left|\frac{f(n)}{g(n)}\right| \\
&> cn^{-t}
\end{align*}

We can now take $c_0$ to be the minimum value of $\frac{c}{4}$ over all residue classes modulo $t$, and we are once again done by Lemma \ref{lcmbound}.
\end{proof}

Like we mentioned before however, we would like to find explicit bounds. And to that end, we introduce some notation. Define $m = \max(r+1 , t+1)$ and note that by the table in Section \ref{return}, we may assume $m \ge 4$. Let $z$ be the number of primes strictly below $m$ and define $\tilde{m}$ to be the smallest integer larger than $42m^{3z+7}$ with $\tilde{m} \equiv i_1 \pmod{t}$ and such that $\tilde{m}$ has a prime divisor $q_0$ larger than $m^{3z+5}$. Finally, we define the half-open interval $I = [\tilde{m} - m^{3z+5}, \tilde{m} + m^{3z+5})$ and divide it into the sub-intervals $J_1 = [\tilde{m} - m^{3z+5}, \tilde{m})$ and $J_2 = [\tilde{m}, \tilde{m} + m^{3z+5})$. We can then show a lower bound on $|X_n|$ for all $n \in J_1$, or for all $n \in J_2$.

%These definitions and everything that follows in this Section and the next one works equally well with any M \ge m. And we will use this in Section 2.8. Although it is unfortunately not going to improve the final bounds, locally in Section 2.5 and 2.6 I think (although haven't really checked) that we can improve 3Z (which for example occurs in the definitions of the intervals I, J_1, J_2) to 2Z + z with Z = \pi(M-1) and M any integer larger than or equal to m

\begin{expolicit} \label{xpo}
Either $|X_n| > m^2n^{z}$ for all $n \in J_1$, or $|X_n| > m^2n^{z}$ for all $n \in J_2$.
\end{expolicit}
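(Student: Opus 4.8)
The plan is to pass, as in the preceding lemmas, to the rational number $P_n := \tfrac{X_n}{L_n} = \sum_{i=1}^{n}\tfrac{r_i}{i}$ and to exploit that $L_n$ is astronomically large throughout $I$. For $n\in I$ one has $n\ge\tilde m-m^{3z+5}>41\,m^{3z+7}$, so $n\ge t(t+2)$ and Lemma~\ref{lcmbound} gives $L_n>2^{n/t-2}>2^{41\,m^{3z+6}-2}$; since (by an effective PNT in arithmetic progressions) $\tilde m=O\!\left(m^{3z+7}\right)$, the quantity $m^2n^z$ is only a bounded power of $m$ for $n\in I$, so $L_n$ dwarfs it. Hence it suffices to prove that the bad set $B:=\{\,n\in I:|X_n|\le m^2n^z\,\}$ cannot meet both $J_1$ and $J_2$, for then $|X_n|>m^2n^z$ holds throughout whichever of $J_1,J_2$ is disjoint from $B$. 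Note that, unlike in the earlier non-explicit growth lemma, no monotonicity analysis is needed for this.

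Suppose for contradiction that $n_1\in B\cap J_1$ and $n_2\in B\cap J_2$; then $n_1<\tilde m\le n_2$, $[n_1+1,n_2]\subseteq I$, and
\[
\Bigl|\,\sum_{i=n_1+1}^{n_2}\tfrac{r_i}{i}\,\Bigr|=|P_{n_2}-P_{n_1}|\le|P_{n_1}|+|P_{n_2}|\le\frac{m^2n_1^z}{L_{n_1}}+\frac{m^2n_2^z}{L_{n_2}}\le\frac{2\,m^2n_2^z}{L_{n_1}}.
\]
On the other hand, because $q_0>m^{3z+5}$ exceeds the half-length of $I$, the integer $\tilde m$ is the \emph{unique} multiple of $q_0$ in $[n_1+1,n_2]$, and $q_0$ exactly divides $\tilde m$ (as $q_0>m^{3z+5}$ while $\tilde m=O(m^{3z+7})$, so $\tilde m<q_0^2$). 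Writing the sum over the common denominator $L':=\operatorname{lcm}(n_1+1,\dots,n_2)$, one gets $q_0\,\|\,L'$ while, modulo $q_0$, only the term $i=\tilde m$ survives in the numerator; since $r_{\tilde m}=r_{i_1}$ satisfies $0<|r_{\tilde m}|<q_0$, that numerator is coprime to $q_0$. Consequently $\sum_{i=n_1+1}^{n_2}\tfrac{r_i}{i}$ is nonzero and, in lowest terms, equals $\pm A/(q_0C)$ with $A\ge1$ and $C\le L'$, whence
\[
\Bigl|\,\sum_{i=n_1+1}^{n_2}\tfrac{r_i}{i}\,\Bigr|\ge\frac{1}{q_0C}\ge\frac{1}{q_0L'}.
\]

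To finish, one shows $\frac{1}{q_0L'}>\frac{2\,m^2n_2^z}{L_{n_1}}$, i.e.\ $L_{n_1}>2\,m^2n_2^z\,q_0L'$, which contradicts the two displays. Here $L'$ is the lcm of an interval of length $n_2-n_1<2m^{3z+5}$ whose largest element is $n_2=O\!\left(m^{3z+7}\right)$; combining the elementary bound $\operatorname{lcm}(k+1,\dots,k+h)\le\binom{k+h}{h}\operatorname{lcm}(1,\dots,h)$ (the kind of estimate already used for Lemma~\ref{lcmbound}) with Chebyshev's bound $\operatorname{lcm}(1,\dots,h)<2^{1.04h}$ yields $\log_2L'=O\!\left(m^{3z+5}\log m\right)$, whereas $\log_2L_{n_1}>41\,m^{3z+6}-2$; the remaining factors $q_0<\tilde m$ and $n_2^z$ contribute only $O(m\log m)$ and $O\!\left(m^2\log m\right)$ to the relevant logarithm. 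Since $41m$ dwarfs $\log m$, the inequality holds. The main work — and the only place I expect real care to be needed — is in these explicit magnitude comparisons: pinning down $\tilde m=O\!\left(m^{3z+7}\right)$ through an effective form of PNT in arithmetic progressions, and using the sharp interval-lcm bound rather than the crude $\operatorname{lcm}(k+1,\dots,k+h)\le(k+h)^h$, which becomes too lossy once $m$ is large.
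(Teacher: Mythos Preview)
Your argument is correct and follows essentially the same route as the paper: assume a bad point $w\in J_1$, use the unique multiple $\tilde m$ of $q_0$ in the gap to force $\sum_{w+1}^{w+k}r_i/i\neq 0$, and then pit the exponential size of $L_n$ against a crude denominator bound. Two remarks, however. First, once you know the numerator $N=L'\sum r_i/i$ is a nonzero integer, you immediately get $|\sum|\ge 1/L'$; your detour through ``in lowest terms $\pm A/(q_0C)$'' only produces the weaker $1/(q_0L')$. Second, your claim that the crude estimate $L'\le n_2^{\,n_2-n_1}$ ``becomes too lossy once $m$ is large'' is mistaken: this is exactly what the paper uses (Lemma~\ref{bla} bounds the denominator by $(w+k)^k$), and the arithmetic in Lemma~\ref{boring} goes through because the constant $42$ in the definition of $\tilde m$ was calibrated for it. Concretely, $\log L_{n_1}>28\,m^{3z+6}$ while $(n_2-n_1)\log n_2<2m^{3z+5}\cdot 7.14m<14.3\,m^{3z+6}$, with the remaining factors $2m^2n_2^zq_0$ contributing only $O(m^2)$; so the crude bound already wins by a factor of roughly $2$. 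The binomial--Chebyshev refinement is not needed.
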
 

\begin{proof}
Without loss of generality we may assume that there exists an integer $w \in J_1$ with $|X_w| \le m^2w^{z} < w^{z+1}$. Let $w+k$ be an integer in $J_2$ and note that $k$ is smaller than $(\tilde{m} + m^{3z+5}) - (\tilde{m} - m^{3z+5}) = 2m^{3z+5}$. We will then prove $|X_{w+k}| > (w+k)^{z+1} > m^2(w+k)^{z}$, but we first need a few technical lemmas. 

\begin{priemtellen} \label{priemteller}
For all $m \ge 2$ we have $z \le \pi(m) < \left(\frac{m}{\log(m)}\right)\min\left(1.25506, 1 + \frac{3}{2 \log(m)} \right)$. In particular, $m^{3z} < e^{3.77m}$ and $m^{3z} < e^{m \left(3 + \frac{9}{2 \log(m)} \right)}$. 
\end{priemtellen}

\begin{bore}\label{bla}
For all $k \in \mathbb{N}$ with $w+k \in J_2$ we have the following lower bound:
\begin{equation*}
\left|\sum_{i = w + 1}^{w + k} \frac{r_i}{i}\right| \ge \frac{1}{(w+k)^k} 
\end{equation*}
\end{bore}

\begin{bored}\label{boring}
For all $k \in \mathbb{N}$ with $w+k \in J_2$ we have the following inequality: 
\begin{equation*} 
\frac{2^{\frac{w+k}{t}-2}}{(w+k)^k} - (w+k)^k  w^{z+1} > (w+k)^{z+1}
\end{equation*}
\end{bored}

\begin{proof}[Proof of Lemma \ref{priemteller}]
These are the statements of Theorem $1$ and Corollary $1$ of \mbox{\cite[p. 69]{pnt2}}.
\end{proof}

\begin{proof}[Proof of Lemma \ref{bla}]
The sum $\displaystyle \sum_{i = w + 1}^{w + k} \frac{r_i}{i}$ can be written as a fraction with denominator equal to $L_{w+1,w+k}$, which is trivially upper bounded by $(w+k)^k$. So to prove that the estimate we want to show holds, it suffices to show that the left-hand side is non-zero. Note that $\tilde{m} \le w + k < \tilde{m} + m^{3z+5} < \tilde{m} + q_0$. So in the sum $\displaystyle \sum_{i = w + 1}^{w + k} \frac{L_{w+1,w+k}r_i}{i}$, every term is divisible by $q_0$, except for the term corresponding to $i = \tilde{m}$. The term corresponding to $i = \tilde{m}$ is not divisible by $q_0$ as $0 < |r_{\tilde{m}}| < q_0$. Since the sum is then not divisible by $q_0$, it is certainly non-zero, which means $\displaystyle \sum_{i = w + 1}^{w + k} \frac{r_i}{i}$ is non-zero as well.
\end{proof}

\begin{proof}[Proof of Lemma \ref{boring}]
We calculate, using the fact that $\frac{x}{\log(x)}$ is an increasing function of $x$ for $x \ge 3$, applying the inequalities $w+k \ge \tilde{m} > 42m^{3z+7}$ and $m^{3z} < e^{3.77m}$, and making use of the bounds $m \ge \max(4,z+1)$ and $4m^{3z+5} > 2k$.
\begin{align*}
\frac{w+k}{\log(w+k)} &> \frac{42m^{3z+7}}{\log(42m^{3z+7})} \\
&> \frac{42m^{3z+7}}{\log(42m^{7}e^{3.77m})} \\
&= \frac{42m^{3z+7}}{\log(42) + 7\log(m) + 3.77m} \\
&> \frac{42m^{3z+7}}{7.14m} \\
&> 5.88m^{3z+6} \\
&> 5m^2 + 5.8m^{3z+6} \\
&> 3t + 2(z+1)t + \frac{4tm^{3z+5}}{\log(2)} \\
&> \frac{3t}{\log(w+k)} + \frac{(z+1)t}{\log(2)} + \frac{2kt}{\log(2)}
\end{align*}

When we multiply by $\log(w+k)$, subtract $2t$ from both sides, then divide by $t$ and take $2$ to the power of both sides, we obtain:
\begin{align*}
2^{\frac{w+k}{t} - 2} &> 2(w+k)^{2k + z + 1} \\
&> (w+k)^{2k + z + 1} + (w+k)^{k + z + 1} \\
&> (w+k)^{2k} w^{z + 1} + (w+k)^{k + z + 1} 
\end{align*}

Dividing by $(w+k)^k$ and rearranging gives the desired inequality. 
\end{proof}

Combining all these lemmas lets us finish the proof of Lemma \ref{xpo}.
\begin{align*}
|X_{w+k}| &= \left|L_{w + k} \sum_{i=1}^{w + k} \frac{r_i}{i}\right| \\
&= \left|\frac{L_{w + k}}{L_{w}} X_{w} + L_{w + k} \sum_{i = w + 1}^{w + k} \frac{r_i}{i}\right| \\
&\ge L_{w + k} \left|\sum_{i = w + 1}^{w + k} \frac{r_i}{i}\right| - \frac{L_{w + k}}{L_{w}} \left|X_{w}\right| \\
&> \frac{2^{\frac{w+k}{t}-2}}{(w+k)^k} - (w+k)^k w^{z+1} \\ 
&> (w+k)^{z+1} \qedhere
\end{align*}
\end{proof}

\subsection{Large prime divisors exist}\label{largeprimedivisors}
With the notation of Lemma \ref{xpo}, set $I_0 = J_1$ if $|X_n| > m^2n^{z}$ holds true for all $n \in J_1$, or else set $I_0 = J_2$. This section will then be devoted to proving the following theorem.

\begin{ohyeah1}\label{ohyah} 
There exists an integer $n \in I_0$ for which $X_n$ is divisible by a prime larger than or equal to $m$. 
\end{ohyeah1}

Let $\Sigma_1, \Sigma_2, \Sigma_3$ be three sets of primes, defined as follows: 
\begin{enumerate}
	\item $\Sigma_1 = \{p: p \ge m\}$
	\item $\Sigma_2 = \{p: p < m, \text{ and } r_{ip^{e(t)}} \neq 0 \text{ for some } i\}$
	\item $\Sigma_3 = \{p: p < m, \text{ and } r_{ip^{e(t)}} = 0 \text{ for all } i\}$
\end{enumerate}

We will prove Theorem \ref{ohyah} by finding an $n \in I_0$ for which the largest divisor of $X_n$ that is composed solely of primes from $\Sigma_2 \cup \Sigma_3$ is strictly smaller than $|X_n|$. Let us start by focusing our attention on the primes from $\Sigma_3$ and note that, since $r_{ip^{e(t)}} = 0$ for all $i$, $p$ must divide $t$. Because otherwise, $e(t)$ would by assumption equal $0$, which would imply $r_i = 0$ for all $i$. To state and prove the following two lemmas, let us define $f_p = e(t) + e(r_{i_1})$.

\begin{diszero}\label{diszero}
If $p \in \Sigma_3$, then for all $n \in \mathbb{N}$ and all $i \in \mathbb{N}$ with $i + tp^{f_p} \le n$ we have $\dfrac{L_nr_i}{i} \equiv \dfrac{L_nr_{i + tp^{f_p}} }{i +tp^{f_p}} \pmod{p^{f_p}}$.
\end{diszero}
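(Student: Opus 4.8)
The plan is to reduce the asserted congruence to a comparison of $p$-adic valuations. First I would observe that $tp^{f_p}$ is a multiple of the period $t$ (since $f_p \ge 0$), so periodicity of $\{r_i\}$ gives $r_{i + tp^{f_p}} = r_i$; the claim is thus equivalent to $\frac{L_n r_i}{i} \equiv \frac{L_n r_i}{i + tp^{f_p}} \pmod{p^{f_p}}$. If $r_i = 0$ both sides are $0$ and there is nothing to prove, so I would assume $r_i \ne 0$. Then $i \le n$ gives $i \mid L_n$, while $i + tp^{f_p} \le n$ together with $r_{i + tp^{f_p}} = r_i \ne 0$ gives $(i + tp^{f_p}) \mid L_n$; in particular both quantities are integers, and it suffices to show that $e_p$ of their difference is at least $f_p$.

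The crucial input is the hypothesis $p \in \Sigma_3$: by definition $r_{jp^{e_p(t)}} = 0$ for every $j$, i.e.\ $r$ vanishes at every multiple of $p^{e_p(t)}$, so the assumption $r_i \ne 0$ forces $i$ not to be such a multiple, whence $e_p(i) \le e_p(t) - 1$. Therefore $e_p(tp^{f_p}) = e_p(t) + f_p > e_p(i)$, and by the non-archimedean (ultrametric) property of $e_p$ we get $e_p(i + tp^{f_p}) = e_p(i)$.

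It then remains to compute
\[
\frac{L_n r_i}{i} - \frac{L_n r_i}{i + tp^{f_p}} = L_n r_i \cdot \frac{tp^{f_p}}{i\,(i + tp^{f_p})},
\]
and read off
\[
e_p\!\left(\frac{L_n r_i}{i} - \frac{L_n r_i}{i + tp^{f_p}}\right) = e_p(L_n) + e_p(r_i) + e_p(t) + f_p - 2e_p(i).
\]
Using $e_p(L_n) \ge e_p(i)$ (from $i \mid L_n$), $e_p(r_i) \ge 0$ (as $r_i \ne 0$), and $e_p(t) - e_p(i) \ge 1$ from the previous step, this valuation is at least $f_p + \big(e_p(t) - e_p(i)\big) \ge f_p$, so $p^{f_p}$ divides the difference, which is exactly the desired congruence.

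I do not expect any genuine obstacle here: once one spots that $p \in \Sigma_3$ forces $e_p(i) < e_p(t)$ whenever $r_i \ne 0$, everything else is routine valuation bookkeeping. (In fact the argument never uses the summand $e_p(r_{i_1})$ in the definition of $f_p$, so the lemma holds verbatim with $f_p$ replaced by $e_p(t)$; the larger value of $f_p$ is presumably chosen for the sake of the following lemma.)
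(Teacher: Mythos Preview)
Your proof is correct and follows essentially the same approach as the paper: both hinge on the observation that $p\in\Sigma_3$ together with $r_i\neq 0$ forces $e_p(i)<e_p(t)$, after which the congruence is immediate. The only cosmetic difference is that the paper factors out $p^{e_p(i)}$ and invokes modular inverses, whereas you compute the $p$-adic valuation of the difference directly; your closing remark that the summand $e_p(r_{i_1})$ in $f_p$ plays no role in this lemma is also correct.
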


\begin{proof}
When $r_i = r_{i + tp^{f_p}} = 0$, Lemma \ref{diszero} follows immediately. We may therefore assume $r_i \neq 0$, which by definition of $\Sigma_3$ implies $e(i) < e(t)$. We can therefore define $i'$, $t'$ and $L'_n$ as $\frac{i}{p^{e(i)}}$, $\frac{t}{p^{e(i)}}$, and $\frac{L_n}{p^{e(i)}}$ respectively. Now the residue class $i' \equiv i' + t'p^{f_p} \pmod{p^{f_p}}$ is invertible, since $p$ does not divide $i'$. Let $i^*$ be its inverse. We then get the following:
\begin{align*}
\frac{L_nr_i}{i} - \frac{L_nr_{i + tp^{f_p}}}{i + tp^{f_p}} &= \frac{L'_nr_i}{i'} - \frac{L'_nr_{i + tp^{f_p}}}{i' + t'p^{f_p}} & \\
&\equiv L'_nr_ii^* - L'_nr_ii^* &\pmod{p^{f_p}} \hspace{30pt} \\
&\equiv 0 &\pmod{p^{f_p}} \hspace{30pt} \qedhere
\end{align*}
\end{proof}

For $p \in \Sigma_3$ we can use Lemma \ref{diszero} to bound $e_p(X_n)$, if $n$ lies in a certain residue class.

\begin{pdoesntdivide3}\label{Lemma3c}
If $p \in \Sigma_3$ and $n \equiv i_1 \pmod{t^3r_{i_1}^2}$, then $e_p(X_n) < f_p$.
\end{pdoesntdivide3}

\begin{proof}
If $n$ is congruent to $i_1 \pmod{t^3r_{i_1}^2}$, then a non-negative integer $c$ exists such that $n = i_1 + ctp^{2f_p}$. Now, by Lemma \ref{diszero} we know that $\frac{L_nr_i}{i}$ and $\frac{L_nr_{i + tp^{f_p}}}{i + tp^{f_p}}$ differ by a multiple of $p^{f_p}$. We can use this to split up the sum $X_n$ into parts that are all congruent modulo $p^{f_p}$. Writing $x_j = i_1 + jtp^{f_p}$, this yields:
\begin{align*}
X_n &= \sum_{i=1}^n \frac{L_n r_i}{i} &\\
&= \frac{L_n r_{i_1}}{i_1} + \sum_{j = 0}^{cp^{f_p} - 1} \sum_{i=x_j + 1}^{x_{j+1}} \frac{L_n r_i}{i} & \\
&\equiv \frac{L_n r_{i_1}}{i_1} + cp^{f_p} \sum_{i=x_0 + 1}^{x_{1}} \frac{L_n r_i}{i} &\pmod{p^{f_p}} \\
&\equiv \frac{L_n r_{i_1}}{i_1	}  &\pmod{p^{f_p}} \\
&\not\equiv 0  &\pmod{p^{f_p}}
\end{align*}

Here, the final inequality holds because $L_n$ is (regardless of the value of $n$) not divisible by $p^{e(t)}$, by the definition of $L_n$ and the assumption $p \in \Sigma_3$. 
\end{proof}

Note that Lemma \ref{Lemma3c} implies that for $n \equiv i_1 \pmod{t^3r_{i_1}^2}$, the largest divisor of $X_n$ composed solely of primes from $\Sigma_3$ is smaller than ${\displaystyle \prod_{p \in \Sigma_3}} p^{f_p} \le tr_{i_1} < m^2$. \\

Assume for the moment $n \in I_0$ and $n \equiv i_1 \pmod{t^3r_{i_1}^2}$. Since $|X_n| > m^2n^{z}$ by Lemma \ref{xpo} and since the largest divisor of $X_n$ composed solely of primes from $\Sigma_3$ is smaller than $m^2$, it follows that if the largest divisor of $X_n$ composed solely of primes from $\Sigma_2$ is smaller than $n^z$, then $X_n$ must have a prime divisor from $\Sigma_1$, which is exactly what we want. \\

So let $p_1 < p_2 < \ldots < p_y < m$ be the sequence of primes in $\Sigma_2$ with $y \le z$ and let (by a slight change in notation) $e_i(x)$ for the rest of this section denote the largest power of $p_{i}$ that divides $x$, where $e_i(0) = \infty$. With this notation, $p_1^{e_1(X_n)} \cdots p_{y}^{e_y(X_n)}$ is the prime decomposition of the largest divisor $d(n)$ of $X_n$ which consists only of primes contained in $\Sigma_2$. The goal is to find an $n \in I_0$ with $d(n) < n^z$. We define $m_0$ to be the smallest integer in $I_0$ that is congruent to $i_1 \pmod{t^3r_{i_1}^2}$ and note that such an integer $m_0 \in I_0$ exists, since $|I_0| = m^{3z + 5} >  m^5 > t^3r_{i_1}^2$. \\

We shall then construct a sequence $m_0 = n_1 < n_2 < \ldots < n_{y+1}$ of integers contained in $I_0$, such that $n_j \equiv i_1 \pmod{t^3r_{i_1}^2}$ for all $j$, and such that either $d(n_j) < n_j^y \le n_j^z$ for some $j \le y$, or $p_i^{e_i(X_{n_{y+1}})} < n_{y+1}$ for all $i$ with $1 \le i \le y$, implying $d(n_{y+1}) < n_{y+1}^y \le n_{y+1}^z$.

\begin{proof}[Proof of Theorem \ref{ohyah}]
To start off, choose $n_1 = m_0$. Now, once we have defined $n_j$ for some $j$ with $1 \le j \le y$, if $d(n_j) < n_j^y$, we are done, Theorem \ref{ohyah} is proved and we can stop. So for the rest of this proof we are free to assume that, after we have defined $n_j$, the inequality $d(n_j) \ge n_j^y$ holds. This implies in particular that there exists a $\sigma(j) \in \{1, 2, \ldots, y\}$ with $p_{\sigma(j)}^{e_{\sigma(j)}(X_{n_j})} \ge n_j$. Of course, there can be more than one such prime. Just pick, say, the smallest. \\

Then let $p_{\sigma(j)}^{k_j}$ be the largest power of $p_{\sigma(j)}$ smaller than $m^{3y + 6 - 3j}$, set $\widetilde{n}_{j+1}$ equal to the smallest integer larger than $n_j$ such that $e_{\sigma(j)}(\widetilde{n}_{j+1}) - e_{\sigma(j)}(r_{\widetilde{n}_{j+1}}) \ge k_j$, set $n_{j+1}$ equal to the smallest integer larger than or equal to $\widetilde{n}_{j+1}$ congruent to $i_1 \pmod{t^3r_{i_1}^2}$, and define the half-open interval $I_j = [n_{j+1}, n_{j+1} + p_{\sigma(j)}^{k_j} - m^{5})$. Then we claim that the intervals $I_j$ form a decreasing sequence. 

\begin{decreasingintervals} \label{decint}
We have $I_0 \supset I_1 \supset I_2 \supset \ldots \supset I_y$.
\end{decreasingintervals}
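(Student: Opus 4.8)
The plan is to establish the chain one link at a time, proving $I_1 \subseteq I_0$ and then $I_j \subseteq I_{j-1}$ for each $j$ with $2 \le j \le y$ (when $y = 0$ there is nothing to prove). For $j \ge 1$ the interval $I_j$ is half-open with left endpoint $n_{j+1}$, while $I_0$ has left endpoint $L_0 \in \{\tilde{m} - m^{3z+5}, \tilde{m}\}$ and contains $n_1 = m_0$. Since $n_{j+1} > \widetilde{n}_{j+1} > n_j$ and $n_2 > n_1 = m_0 \ge L_0$, the left endpoints are strictly increasing along the chain, so the left-endpoint containment is automatic (and every containment is proper). Hence the only thing to check is the right endpoint: for $j \ge 2$ the required inequality is
\[ n_{j+1} - n_j + p_{\sigma(j)}^{k_j} \;\le\; p_{\sigma(j-1)}^{k_{j-1}}, \]
and for $j = 1$ it is $(n_2 - m_0) + (m_0 - L_0) + p_{\sigma(1)}^{k_1} - m^5 \le m^{3z+5}$.

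Everything now hinges on bounding the jump $n_{j+1} - n_j = (\widetilde{n}_{j+1} - n_j) + (n_{j+1} - \widetilde{n}_{j+1})$. The second summand is less than $t^3 r_{i_1}^2$, since $n_{j+1}$ is the first integer $\ge \widetilde{n}_{j+1}$ in a fixed residue class modulo $t^3 r_{i_1}^2$; the same reasoning gives $m_0 - L_0 < t^3 r_{i_1}^2$. For the first summand I would proceed as follows. As $p_{\sigma(j)} \in \Sigma_2$, there is a residue class $\rho \pmod{t}$ with $r_\rho \neq 0$ and $e_{\sigma(j)}(\rho) \ge e_{\sigma(j)}(t)$; on this class $e_{\sigma(j)}(r_x)$ equals a constant $s$ with $p_{\sigma(j)}^{s} \le r$. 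Every $x$ with $x \equiv \rho \pmod{t}$ and $p_{\sigma(j)}^{k_j + s} \mid x$ then satisfies $e_{\sigma(j)}(x) - e_{\sigma(j)}(r_x) \ge k_j$, and these $x$ form a non-empty arithmetic progression whose common difference $\text{lcm}(t, p_{\sigma(j)}^{k_j+s})$ is at most $(t\, p_{\sigma(j)}^{s}) \cdot p_{\sigma(j)}^{k_j} \le (m-1)^2 p_{\sigma(j)}^{k_j}$. Consequently $\widetilde{n}_{j+1} - n_j \le (m-1)^2 p_{\sigma(j)}^{k_j}$.

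Feeding these bounds in, for $j \ge 2$ the left side of the target inequality is at most $\big[(m-1)^2 + 1\big] p_{\sigma(j)}^{k_j} + t^3 r_{i_1}^2 < m^2 \cdot m^{3y+6-3j} + m^5$, using $p_{\sigma(j)}^{k_j} < m^{3y+6-3j}$ and $t, r_{i_1} < m$. On the other side $p_{\sigma(j-1)}^{k_{j-1}}$ is the largest power of the prime $p_{\sigma(j-1)} < m$ lying below $m^{3y+9-3j}$, so it exceeds $m^{3y+9-3j}/(m-1) > m^{3y+8-3j}$; since $j \le y$ forces $3y+6-3j \ge 6$, the difference $p_{\sigma(j-1)}^{k_{j-1}} - m^{3y+8-3j}$ is a positive multiple of $m^{3y+6-3j} \ge m^6$ and therefore exceeds $m^5 > t^3 r_{i_1}^2$, which closes the estimate. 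The case $j = 1$ is handled the same way: one compares against $|I_0| = m^{3z+5}$, uses $y \le z$ (so $p_{\sigma(1)}^{k_1} < m^{3y+3} \le m^{3z+3}$) together with $z \ge 2$ (which holds because $m \ge 4$), and the resulting slack of order $m^{3z+3}$ absorbs the two $t^3 r_{i_1}^2$ error terms.

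The genuinely delicate step is the bound on $\widetilde{n}_{j+1} - n_j$, and in particular the choice of the residue class $\rho$: it must be taken from the definition of $\Sigma_2$, not as the class of $i_1$, for this is what guarantees that the progression of admissible $x$ is non-empty even when $p_{\sigma(j)}$ divides $t$. Once that is in hand, the rest is careful bookkeeping with the exponents $m^{3y+6-3j}$, whose consecutive gaps of $3$ were evidently designed precisely to swallow the factors of $t$ and $r$ and the $O(m^5)$ rounding terms.
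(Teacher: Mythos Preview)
Your strategy matches the paper's, and your arithmetic-progression argument for $\widetilde{n}_{j+1}-n_j \le (m-1)^2\,p_{\sigma(j)}^{k_j}$ is correct (the paper obtains the marginally weaker $(m-1)m\,p_{\sigma(j)}^{k_j}$ via Lemma~\ref{notin3}, but the idea is the same). The one real slip is the assertion that $p_{\sigma(j-1)}^{k_{j-1}} - m^{3y+8-3j}$ is ``a positive multiple of $m^{3y+6-3j}$'': a prime power has no reason to differ from a power of $m$ by a multiple of another power of $m$, and in general it does not.

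The estimate nonetheless closes from bounds you already have. Either use your own sharper lower bound $p_{\sigma(j-1)}^{k_{j-1}} > m^{3y+9-3j}/(m-1)$, which gives
\[
p_{\sigma(j-1)}^{k_{j-1}} - m^{\,3y+8-3j} \;>\; \frac{m^{\,3y+8-3j}}{m-1} \;>\; m^{\,3y+7-3j} \;\ge\; m^{7} \;>\; m^{5},
\]
or---and this is what the paper does---absorb the $m^5$ term \emph{before} bounding $p_{\sigma(j)}^{k_j}$ above, using $p_{\sigma(j)}^{k_j} > m^{3y+5-3j} \ge m^5$, so that
\[
[(m-1)^2+1]\,p_{\sigma(j)}^{k_j} + m^5 \;<\; [(m-1)^2+2]\,p_{\sigma(j)}^{k_j} \;\le\; m^2\,p_{\sigma(j)}^{k_j} \;<\; m^{3y+8-3j} \;<\; p_{\sigma(j-1)}^{k_{j-1}}.
\]
The same repair is needed at $j=1$: when $y=z$, the bound $p_{\sigma(1)}^{k_1} < m^{3z+3}$ leaves no slack against $|I_0|=m^{3z+5}$ after multiplying by $m^2$, so your ``slack of order $m^{3z+3}$'' cannot come from $y\le z$; it must instead come from $(m-1)^2+1<m^2$ (or the explicit $-m^5$ in the definition of $I_1$), exactly as above.
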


\begin{proof}
Since $I_j = [n_{j+1}, n_{j+1} + p_{\sigma(j)}^{k_j} - m^5)$ for $j \ge 1$ and $m^5$ is just a constant independent of $j$, we note that the statement $I_{j-1} \supset I_{j}$ for $j \ge 2$ is equivalent to the following two inequalities:
\begin{align*}
n_j &\le n_{j+1} \\
n_{j+1} + p_{\sigma(j)}^{k_j} &\le n_j + p_{\sigma(j-1)}^{k_{j-1}} 
\end{align*} 

While for $I_0 \supset I_1$ the second inequality gets replaced by $n_{2} + p_{\sigma(1)}^{k_1} - m^5 < \min(I_0) + m^{3z + 5}$, where $\min(I_0)$ is the smallest integer in $I_0$. And since $n_1 = m_0 < \min(I_0) +  m^5$, for $I_0 \supset I_1$ it suffices to prove $n_{2} + p_{\sigma(1)}^{k_1} \le n_1 + m^{3z + 5}$. \\

So we would like to get some upper and lower bounds on $n_{j+1}$ and $p_{\sigma(j)}^{k_j}$, and all we need to use are their definitions. First of all, as $n_{j+1}$ is defined as the smallest integer \textit{larger} than or equal to $\widetilde{n}_{j+1}$ for which something holds, while $\widetilde{n}_{j+1}$ is defined as the smallest integer \textit{larger} than $n_j$ with some property, the inequality $n_j \le n_{j+1}$ is trivial. \\

Secondly, for an upper bound on $n_{j+1}$, we need a small lemma.

\begin{pnotinsigma3} \label{notin3}
If $p \notin \Sigma_3$ and $A \in \mathbb{N}$ is such that $\gcd(A, t) = p^{e(t)}$, then for every $i \in \mathbb{N}$, there is an $i' \in \{iA, (i+1)A, \ldots, (i + \frac{t}{p^{e(t)}} - 1)A\}$ for which $r_{i'} \neq 0$.
\end{pnotinsigma3}

\begin{proof}
There are exactly $\frac{t}{p^{e(t)}}$ distinct residue classes $i' \pmod{t}$ that are divisible by $p^{e(t)}$, and all of them are represented in $\{iA, (i+1)A, \ldots, (i + \frac{t}{p^{e(t)}} - 1)A\}$, since $j_1A \equiv j_2A \pmod{t}$ implies $j_1 \equiv j_2 \pmod{\frac{t}{p^{e(t)}}}$. For at least one of them we must have $r_{i'} \neq 0$, by the assumption $p \notin \Sigma_3$.
\end{proof}

With $\mu_p$ defined as $\left \lfloor \frac{\log(m-1)}{\log(p)} \right \rfloor$, we apply Lemma \ref{notin3} with $A = p_{\sigma(j)}^{\mu_{p_{\sigma(j)}} + k_j}$. We then conclude that there exists an $i' \in (n_j, n_j + mA]$ with $r_{i'} \neq 0$ and $e_{\sigma(j)}(i') \ge \mu_{p_{\sigma(j)}} + k_j$. Now we recall that $\widetilde{n}_{j+1}$ is defined as the smallest integer larger than $n_j$ with $e_{\sigma(j)}(\widetilde{n}_{j+1}) - e_{\sigma(j)}(r_{\widetilde{n}_{j+1}}) \ge k_j$. And because $e_{\sigma(j)}(r_{\widetilde{n}_{j+1}}) \le \mu_{p_{\sigma(j)}}$ if $r_{\widetilde{n}_{j+1}} \neq 0$, we deduce $\widetilde{n}_{j+1} \le i' \le n_j + mA \le n_j +(m-1)mp_{\sigma(j)}^{k_j}$. And since, by definition of $n_{j+1}$, $n_{j+1} < \widetilde{n}_{j+1} + m^5$, we get $n_{j+1} < n_j + (m-1)mp_{\sigma(j)}^{k_j} + m^5$. \\

Lastly, we look for bounds on $p_{\sigma(j)}^{k_j}$. Again we have a trivial bound $p_{\sigma(j)}^{k_j} < m^{3y + 6 - 3j}$ because $p_{\sigma(j)}^{k_j}$ is defined as the largest power of $p_{\sigma(j)}$ \textit{smaller} than $m^{3y + 6 - 3j}$. On the other hand, there is always a power of $p_{\sigma(j)}$ between two consecutive powers of $m$ since $p_{\sigma(j)} < m$. So $p_{\sigma(j)}^{k_j}$ must be larger than $m^{3y + 5 - 3j}$. By putting all these inequalities together, we can prove $I_{j-1} \supset I_j$, for all $j \in \{2, \ldots, y\}$: 
\begin{align}
n_{j+1} + p_{\sigma(j)}^{k_j} &< n_j + (m-1)mp_{\sigma(j)}^{k_j} + m^5 + p_{\sigma(j)}^{k_j} \nonumber \\
&< n_j + (m-1)mp_{\sigma(j)}^{k_j} + 2p_{\sigma(j)}^{k_j} \nonumber \\
&\le n_j + m^2p_{\sigma(j)}^{k_j} \nonumber \\
&< n_j + m^{3y + 8 - 3j} \label{eqn8} \\
&= n_j + m^{3y + 5 - 3(j-1)} \nonumber \\
&< n_j + p_{\sigma(j-1)}^{k_{j-1}} \nonumber
\end{align}

To prove $I_0 \supset I_1$, use the above reasoning up to and including equation (\ref{eqn8}) with $j = 1$, and apply $y \le z$. 
\end{proof}

\begin{pfreeinterval} \label{pfree}
For all $n \in I_j$ we have $p_{\sigma(j)}^{e_{\sigma(j)}(X_{n})} < n$.
\end{pfreeinterval}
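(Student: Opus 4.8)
The plan is to show that for every $n$ in the interval $I_j = [n_{j+1}, n_{j+1} + p_{\sigma(j)}^{k_j} - m^5)$, the term of $X_n$ that is potentially "most divisible" by $p = p_{\sigma(j)}$ is essentially isolated, so that $e_p(X_n)$ is forced down to roughly $k_j$. Recall that $p^{k_j}$ was chosen as the largest power of $p$ below $m^{3y+6-3j}$, and that $n_{j+1}$ was constructed (via $\widetilde{n}_{j+1}$) so that it sits just above an integer $i^*$ with $e_p(i^*) - e_p(r_{i^*}) \ge k_j$ but also $i^* \le n_j + (m-1)m\,p^{k_j}$. Since the length of $I_j$ is less than $p^{k_j}$, no two distinct multiples of $p^{K}$ with $K$ sufficiently large can both lie in $[\,a,\min(I_j)\,] \cup I_j$ simultaneously — this is the same isolation phenomenon used in Lemma \ref{neverodd}, now with $p$ in place of $2$.

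Concretely, I would let $K = e_p(L_n)$ and observe that $\frac{L_n r_i}{i} \equiv 0 \pmod{p^{K - e_p(i) + e_p(r_i)}}$, so that only indices $i \le n$ with $e_p(i) - e_p(r_i)$ large contribute to a high power of $p$ dividing $X_n$. I would then argue that there is essentially a single index $i_0 \le n$ maximizing $e_p(i) - e_p(r_i)$ in the relevant range, namely the multiple of $p^{k_j + \mu_p}$-type index built into the definition of $\widetilde{n}_{j+1}$, and that every other index contributes a term divisible by strictly fewer factors of $p$. Here $\mu_p = \lfloor \log(m-1)/\log(p)\rfloor$ bounds $e_p(r_i)$ from above whenever $r_i \ne 0$ (since $|r_i| \le r < m$), which is exactly why the definition of $\widetilde{n}_{j+1}$ subtracts $e_p(r_{\widetilde{n}_{j+1}})$. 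Summing the surviving terms, $X_n$ reduces modulo a suitable power of $p$ to a single nonzero term $\frac{L_n r_{i_0}}{i_0}$ (nonzero because $e_p(L_n) = e_p(i_0) - e_p(r_{i_0}) < $ the next power, by construction), which pins down $e_p(X_n)$ precisely and yields the bound $e_p(X_n) < n$; in fact one gets $p^{e_p(X_n)} \le p^{e_p(i_0)} < p^{k_j}\cdot p^{\mu_p} \cdot(\text{something}) $, and the interval bounds force $p^{e_p(i_0)}$ to be comfortably smaller than $n \ge n_{j+1} > m^{3z+5}$-scale quantities.

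The main obstacle I anticipate is the bookkeeping showing there is genuinely only \emph{one} dominant index $i_0$, i.e. that no second index in $[a, n]$ achieves an equally large value of $e_p(i) - e_p(r_i)$. Unlike the $p = 2$, $r_i = 1$ case of Lemma \ref{neverodd} where this is clean, here $r_i$ can absorb some factors of $p$, so "dominant" must be measured by $e_p(i) - e_p(r_i)$ rather than $e_p(i)$, and one must rule out pathological coincidences using the length bound $|I_j| < p^{k_j}$ together with the fact that $n < n_{j+1} + p^{k_j}$ and $n_{j+1}$ was chosen minimally above $\widetilde{n}_{j+1}$. I would handle this by the same parity-of-the-multiple argument as in Lemma \ref{neverodd}: a second index would differ from $i_0$ by a multiple of $p^{k_j}$, hence (being within an interval of length $< p^{k_j}$ starting at $n_{j+1}$) cannot occur, while indices below $a$ or with larger $e_p$ would contradict minimality of $n_{j+1}$ or of $n$ itself. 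Once that isolation is established, the rest is the routine congruence computation for $X_n \pmod{p^{e_p(i_0)+1}}$ already rehearsed several times in the preceding lemmas.
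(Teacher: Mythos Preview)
Your isolation idea correctly handles the indices in $(\widetilde{n}_{j+1}, n]$ and in $(n_j, \widetilde{n}_{j+1})$, but it breaks down on the block $[1, n_j]$. The sum defining $X_n$ runs from $i=1$, and there will in general be \emph{many} indices $i \le n_j$ with $e_p(i) - e_p(r_i) \ge k_j$ (indeed, every multiple of $p^{k_j + \mu_p}$ below $n_j$ with nonzero $r_i$ is such an index). Your ``parity-of-the-multiple'' argument only isolates within a window of length $< p^{k_j}$; it says nothing about indices far below $n_{j+1}$. And the minimality you invoke cannot help here: $\widetilde{n}_{j+1}$ is defined as the smallest integer \emph{larger than $n_j$} with $e_p(i) - e_p(r_i) \ge k_j$, so its minimality constrains only $(n_j, \widetilde{n}_{j+1})$; there is no ``$a$'' in this section (we are computing $X_n = X_{1,n}$), and $n$ is not minimal for anything.

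What you are missing is precisely the hypothesis that defines $\sigma(j)$: by construction $p_{\sigma(j)}^{e_{\sigma(j)}(X_{n_j})} \ge n_j$. The paper's proof uses this to collapse the entire block $[1,n_j]$ at once: writing
\[
\sum_{i=1}^{n_j} \frac{L_n r_i}{i} \;=\; \frac{L_n}{L_{n_j}}\, X_{n_j},
\]
one sees this is divisible by $p^{e_p(L_n)}$ (since $e_p(X_{n_j}) \ge e_p(L_{n_j})$ from $p^{e_p(X_{n_j})} \ge n_j \ge p^{e_p(L_{n_j})}$), hence certainly by $p^{e_p(L_n) - k_j + 1}$. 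The remaining three pieces are then handled essentially as you describe: terms in $(n_j, \widetilde{n}_{j+1})$ by minimality of $\widetilde{n}_{j+1}$, terms in $(\widetilde{n}_{j+1}, n]$ by the length bound $n < \widetilde{n}_{j+1} + p^{k_j}$, leaving $\frac{L_n r_{\widetilde{n}_{j+1}}}{\widetilde{n}_{j+1}}$ as the unique term not divisible by $p^{e_p(L_n) - k_j + 1}$. This gives $e_p(X_n) \le e_p(L_n) - k_j$, hence $p^{e_p(X_n)} \le p^{e_p(L_n)}/p^{k_j} \le n/p < n$.
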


\begin{proof}
For an integer $n \in I_j$, let us write $X_n$ as a sum of four distinct terms.
\begin{align*}
X_n &= L_n \sum_{i=1}^n \frac{r_i}{i} \\
&= \sum_{i=1}^{n_j} \frac{L_nr_i}{i} + \sum_{i=n_j+1}^{\widetilde{n}_{j+1}-1} \frac{L_n	r_i}{i} + \frac{L_nr_{\widetilde{n}_{j+1}}}{\widetilde{n}_{j+1}} + \sum_{i=\widetilde{n}_{j+1}+1}^n \frac{L_nr_i}{i}
%\underbrace{\sum_{i=1}^{n_j} \frac{L_nr_i}{i}}_{S_1} + \underbrace{\sum_{i=n_j+1}^{\widetilde{n}_{j+1}-1} \frac{L_n	r_i}{i}}_{S_2} + \frac{L_nr_{\widetilde{n}_{j+1}}}{\widetilde{n}_{j+1}} + \underbrace{\sum_{i=\widetilde{n}_{j+1}+1}^n \frac{L_nr_i}{i}}_{S_3}
\end{align*}

By assumption, $X_{n_j}$ is divisible by a power of $p_{\sigma(j)}$ that is at least as large as $n_j$, hence we obtain $e_{\sigma(j)}\left(\frac{L_nX_{n_j}}{L_{n_j}}\right) \ge e_{\sigma(j)}(L_n) \ge e_{\sigma(j)}(L_n) -  k_j + 1$ for the first term. \\

As for the second and third terms, by the definition of $\widetilde{n}_{j+1}$ we know that for every $i \in [n_j+1, \widetilde{n}_{j+1}-1]$ we have $e_{\sigma(j)}\left(\frac{L_n	r_i}{i}\right) \ge e_{\sigma(j)}(L_n) - k_j + 1$, while $e_{\sigma(j)}\left(\frac{L_n	r_{\widetilde{n}_{j+1}}}{\widetilde{n}_{j+1}}\right) \le e_{\sigma(j)}(L_n) - k_j$. \\

Finally, since $e_{\sigma(j)}(\widetilde{n}_{j+1}) \ge k_j$ and $n < \widetilde{n}_{j+1} + p_{\sigma(j)}^{k_j}$, we have $e_{\sigma(j)}(i) < k_j$ for all $i \in [\widetilde{n}_{j+1}+1, n]$, hence $e_{\sigma(j)}\left(\frac{L_n	r_i}{i}\right) \ge e_{\sigma(j)}(L_n) - k_j + 1$. \\

Combining the above estimates we see that there is exactly one term in the sum for $X_n$ that is not divisible by $p_{\sigma(j)}^{e_{\sigma(j)}(L_n) - k_j + 1}$, and we conclude $p_{\sigma(j)}^{e_{\sigma(j)}(X_{n})} \le p_{\sigma(j)}^{e_{\sigma(j)}(L_n) - k_j} < n$.
\end{proof}

Now we may finish the proof of Theorem \ref{ohyah}. First off, all the $p_{\sigma(j)}$ have to be distinct, since $p_{\sigma(i)}^{e_{\sigma(i)}(X_{n_i})} \ge n_i$, while Lemma \ref{pfree} shows that if $i > j$, then for all $n \in I_{i-1} \subset I_j$ it holds true that $p_{\sigma(j)}^{e_{\sigma(j)}(X_{n})} < n$. In other words, $\big(\sigma(1), \sigma(2), \ldots, \sigma(y)\big)$ is a permutation of $(1,2,\ldots,y)$. Secondly, since our intervals form a nesting sequence, for $n_{y+1} \in I_y \subset I_j$ we have $p_{\sigma(j)}^{e_{\sigma(j)}(X_{n_{y+1}})} < n_{y+1}$ for all $j$ with $1 \le j \le y$. We conclude $d(n_{y+1}) = \displaystyle \prod_{j=1}^y p_j^{e_j(X_{n_{y+1}})} = \displaystyle \prod_{j=1}^y p_{\sigma(j)}^{e_{\sigma(j)}(X_{n_{y+1}})} < \displaystyle \prod_{j=1}^y n_{y+1} = n_{y+1}^y$, and the theorem is proved.
\end{proof}

\subsection{Explicit bounds for non-zero sequences and Dirichlet characters}\label{dirichlet}
Let $n \ge i_1$ be the smallest positive integer for which $X_n$ is divisible by a prime $p \ge m$. Write $n = lp^k$ and let $b$ be defined as in Section \ref{iflarge}. If we could force $\gcd(l, X_{a,b-1})$ to be smaller than $p$ (as is the condition in Theorem \ref{Theorem1}), then we can straightaway combine Theorem \ref{Theorem1} and Theorem \ref{ohyah}. We claim that this can be done when $r_i \neq 0$ for all $i$ with $\gcd(i, t) = 1$. Because in that case, it is not hard to see that $l$ will always be smaller than $p$, so the condition $\gcd(l, X_{a,b-1}) < p$ is fulfilled automatically. Indeed, by Lemma \ref{prelimprop} $p^k$ exactly divides $L_n$. But if $l > p$, then $n = lp^k > p^{k+1}$, while $r_{p^{k+1}} \neq 0$, so $p^{k+1}$ should divide $L_n$ as well; contradiction. \\

Recall $I_0 \subset I$ in the proofs of Lemma \ref{xpo} and Theorem \ref{ohyah}, where $I$ was defined as $[\tilde{m} - m^{3z+5} , \tilde{m} + m^{3z+5})$ and $\tilde{m}$ is the smallest integer larger than $42m^{3z+7}$ with $\tilde{m} \equiv i_1 \pmod{t}$ and such that $\tilde{m}$ has a prime divisor larger than $m^{3z+5}$. To find an upper bound on $\tilde{m}$ we use the results mentioned in the introduction of \mbox{\cite{lpfap}}, which imply $\tilde{m} < 43m^{3z+7}$. Now Theorem \ref{ohyah} implies $n < 43m^{3z+7} + m^{3z+5} < 44m^{3z+7} < e^{6m}$. Since $p$ divides $X_n$, $L_n < e^{1.04n}$ by Theorem $12$ in \mbox{\cite[p. 71]{pnt2}}, and $1.04 \cdot 44 < 46$, we can find an upper bound on $p$.
\begin{align*}
p &\le |X_n| \\
&\le L_n \sum_{i=1}^n \frac{|r_i|}{i} \\
&< 3 m \log(n) L_n \\
&< 18m^2 e^{46m^{3z+7}} \\
&< e^{47m^{3z+7}} 
\end{align*}

Now we can bound the quantity $\max(a-1,2t-1)lp^{\lambda}$ that appears in Theorem~\ref{Theorem1}.
\begin{align*}
\max(a-1,2t-1)lp^{\lambda} &< 2amp^m\\
&< 2ame^{47m^{3z+8}} \\
&< ae^{48m^{3z+8}} \\
&< ae^{e^{m\left(3 + \frac{6}{\log(m)}\right)}} 
\end{align*}

The last inequality can be checked with a computer for $m < 145$. For $m \ge 145$ we have $48m^8 < e^{\frac{3m}{2\log(m)}}$, which, combined with $m^{3z} < e^{3m + \frac{9m}{2\log(m)}}$ from Lemma~\ref{priemteller}, is sufficient. In conclusion we may say the following:

\begin{dirichlet}\label{diri}
If $r_i \neq 0$ for all $i$ with $\gcd(i, t) = 1$, then for all $a$ there exists a $b < ca$ for which $v_{a,b} < v_{a,b-1}$, where $c = e^{e^{m\left(3 + \frac{6}{\log(m)}\right)}}$.
\end{dirichlet}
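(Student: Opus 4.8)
The plan is to assemble Theorem~\ref{diri} by combining the two main ingredients already in hand: Theorem~\ref{Theorem1}, which converts the divisibility fact ``$p \mid X_n$ with $p > \max(r,t)$'' into a concrete linear-in-$a$ bound $b(a) \le \max(a-1,2t-1)lp^\lambda$ \emph{provided} $\gcd(l, X_{a,b-1}) < p$; and Theorem~\ref{ohyah}, which guarantees that such a prime $p \ge m$ exists and in fact divides $X_n$ for some $n \in I_0 \subseteq I$. So the first step is to observe that under the extra hypothesis $r_i \neq 0$ whenever $\gcd(i,t)=1$, the side condition $\gcd(l,X_{a,b-1}) < p$ is \emph{automatic}: writing $n = lp^k$ with $\gcd(l,p)=1$, Lemma~\ref{prelimprop} tells us $p^k$ exactly divides $L_n$, so if $l > p$ then $p^{k+1} \mid n$, and since $\gcd(p^{k+1},t)=1$ we would have $r_{p^{k+1}} \neq 0$, forcing $p^{k+1} \mid L_n$ — a contradiction. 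Hence $l < p$, so $\gcd(l,X_{a,b-1}) \le l < p$ and Theorem~\ref{Theorem1} applies unconditionally, giving $b(a) \le \max(a-1,2t-1)\,l\,p^\lambda < 2a \, m \, p^m$ (using $l \le n$ will turn out to be too crude; instead use $l < p$ and $\lambda \le \varphi(t) < t \le m$).

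Wait — more carefully, the bound from Theorem~\ref{Theorem1} is $\max(a-1,2t-1) l p^\lambda$; bounding $l < p$, $\lambda < m$, and $\max(a-1,2t-1) < 2am$ (since $t < m$) gives $b(a) < 2am p^{m+1}$, which up to constants is what the displayed computation in the excerpt does. The next step is therefore purely a matter of making $n$, then $L_n$, then $p = |X_n|$, and finally $b(a)$ explicit in terms of $m$. For $n$: Theorem~\ref{ohyah} places the relevant $n$ inside $I = [\tilde m - m^{3z+5}, \tilde m + m^{3z+5})$, and a known result on least prime factors in arithmetic progressions (cited as \cite{lpfap}) gives $\tilde m < 43 m^{3z+7}$, whence $n < 44 m^{3z+7} < e^{6m}$ after invoking Lemma~\ref{priemteller} to bound $m^{3z}$. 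For $L_n$: the cited bound $L_n < e^{1.04 n}$ (from \cite{pnt2}) combined with $n < 44 m^{3z+7}$ yields $L_n < e^{46 m^{3z+7}}$. For $p$: since $p \le |X_n| \le L_n \sum_{i=1}^n |r_i|/i < 3m\log(n) L_n < e^{47 m^{3z+7}}$. Substituting into $b(a) < 2am p^{m+1}$ and absorbing the polynomial factors gives $b(a) < a\, e^{48 m^{3z+8}}$, and then one more application of Lemma~\ref{priemteller} (in the form $m^{3z} < e^{3m + 9m/(2\log m)}$, plus a finite computer check for small $m$) upgrades $48 m^{3z+8}$ to $e^{m(3 + 6/\log m)}$, delivering $c = e^{e^{m(3+6/\log m)}}$.

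The main obstacle is not any single deep step — all the hard analytic work (exponential growth of $|X_n|$, existence of the large prime divisor, the monotonicity criterion) has been done in the preceding sections — but rather the bookkeeping needed to keep every estimate \emph{explicit} and to ensure the chain of inequalities $n < e^{6m}$, $L_n < e^{46 m^{3z+7}}$, $p < e^{47 m^{3z+7}}$, $b(a) < a e^{48 m^{3z+8}} < a e^{e^{m(3+6/\log m)}}$ actually closes with the stated numerical constants. In particular, one must be careful that the reduction from a ``$3z$'' exponent to the clean ``$3 + 6/\log m$'' in the outer exponential genuinely holds for all $m \ge 4$, which is why the finite verification for $m < 145$ together with the asymptotic estimate $48 m^8 < e^{3m/(2\log m)}$ for $m \ge 145$ is needed. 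A secondary point of care is that Theorem~\ref{ohyah} only guarantees a prime $p \ge m$, not $p > \max(r,t)$ — but since $m = \max(r+1, t+1) > \max(r,t)$, any such $p$ qualifies for Theorem~\ref{Theorem1}, and the hypothesis ``$r_i \neq 0$ for $\gcd(i,t)=1$'' is exactly what makes $\Sigma_3$-type obstructions (and the $l > p$ scenario) disappear, so the two theorems dovetail cleanly.
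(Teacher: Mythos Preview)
Your proposal is correct and follows essentially the same route as the paper: verify $l < p$ via Lemma~\ref{prelimprop} under the hypothesis, then chain the explicit bounds $n < 44m^{3z+7}$, $p \le |X_n| < e^{47m^{3z+7}}$, and $b(a) < 2am\,p^m < a e^{48m^{3z+8}} < a e^{e^{m(3+6/\log m)}}$ using Lemma~\ref{priemteller} plus a finite check. One slip to fix: in your $l < p$ argument you wrote ``if $l > p$ then $p^{k+1} \mid n$'', but since $\gcd(l,p)=1$ this is false --- you mean $p^{k+1} \le n$ (so that $p^{k+1}$ lies in $[1,n]$ and, having $r_{p^{k+1}} \neq 0$, forces $p^{k+1} \mid L_n$), exactly as the paper argues.
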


\subsection{Bounding prime divisors} \label{primebound}
We could combine Theorem \ref{Theorem1} and Theorem \ref{ohyah} in Section \ref{dirichlet} when $\gcd(i,t) = 1$ implies $r_i \neq 0$, because in that case we always have $l < p$. However, in general this is not true. Consider for example $t = 2$, $r_1 = 0$, $r_2 = 1$. Then $p = 3$ divides $X_4 = 4(\frac{1}{2} + \frac{1}{4}) = 3$ and $n = l = 4 > 3 = p$. Luckily, we do not need $l < p$ to invoke Theorem \ref{Theorem1}; all we need is $\gcd(l, X_{a,b-1}) < p$. So we need to be able to bound prime divisors of either $l$ or $X_{a,b-1}$. In order to do this, recall $\mu_p = \left \lfloor \frac{\log(m-1)}{\log(p)} \right \rfloor$ and $e_p(r_i) \le \mu_p$ if $r_i \neq 0$.

\begin{pdoesntdivide}\label{Lemma3}
If $p \notin \Sigma_3$, then there exists a positive integer $c_p \le tp^{\mu_p}$ with $r_{c_p} \neq 0$, such that $e_p(X_n) \le \mu_p$ for all $k$ and $n$ with $c_pp^{\lambda k} \le n < (c_p + 1)p^{\lambda k}$.
\end{pdoesntdivide}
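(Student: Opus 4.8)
The plan is to follow the same strategy that worked for primes in $\Sigma_3$ (Lemmas \ref{diszero} and \ref{Lemma3c}), but adapted to a prime $p \notin \Sigma_3$. For such a prime, $e(t)$ measures how much of $p$ already lives inside $t$; the key difference from the $\Sigma_3$ case is that here there genuinely exist multiples $ip^{e(t)}$ with $r_{ip^{e(t)}} \ne 0$, and we must single one out. First I would fix $p \notin \Sigma_3$ and use Lemma \ref{notin3} (or rather its underlying counting of residue classes) to locate a small positive integer $c_p$, divisible by the appropriate power of $p$, with $r_{c_p} \ne 0$; the bound $c_p \le tp^{\mu_p}$ should come out because within any block of $\frac{t}{p^{e(t)}}$ consecutive multiples of $p^{e(t)}$ there is a nonzero term, and $p^{\mu_p}$ is the largest relevant power of $p$ not exceeding $m-1$. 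This $c_p$ plays the role that $i_1$ played for $\Sigma_3$.

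Next, for $n$ in the window $c_p p^{\lambda k} \le n < (c_p+1)p^{\lambda k}$, I would expand $X_n = L_n \sum_{i=1}^n \frac{r_i}{i}$ modulo $p^{\mu_p + 1}$ and discard every term whose numerator $\frac{L_n r_i}{i}$ is divisible by $p^{\mu_p+1}$. The point is that $e_p(L_n)$ is controlled in this window: because $n < (c_p+1)p^{\lambda k}$ and $c_p p^{\lambda k}$ is the unique multiple of that size divisible by a high power of $p$ with $r \ne 0$, the only $i \le n$ for which $\frac{L_n r_i}{i}$ survives modulo $p^{\mu_p+1}$ are those whose $p$-adic valuation is essentially maximal. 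One then pairs or groups the surviving terms using the periodicity of $\{r_i\}$ together with $\lambda$ (so that $r_{i} = r_{i + t p^{\lambda j}}$-type identities hold), mimicking the telescoping in Lemma \ref{Lemma3c}, and concludes that $X_n \equiv (\text{nonzero multiple of } \frac{L_n r_{c_p}}{c_p}) \pmod{p^{\mu_p+1}}$, which forces $e_p(X_n) \le \mu_p$. The inequality $e_p(r_i) \le \mu_p$ whenever $r_i \ne 0$, recalled just before the statement, is exactly what guarantees the leftover term is not divisible by $p^{\mu_p+1}$.

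The main obstacle I anticipate is the bookkeeping of $p$-adic valuations of $L_n$ across the entire window $[c_p p^{\lambda k}, (c_p+1)p^{\lambda k})$ simultaneously — unlike the $\Sigma_3$ situation, $p$ does divide things here, so $e_p(L_n)$ varies with $n$, and one needs that for \emph{every} $n$ in the window the set of "surviving" indices $i$ behaves uniformly enough for the pairing argument to go through. I would handle this by showing that if $p^{\lambda k + e(t)}$ (say) divides some $i \in [c_p p^{\lambda k}, n]$ with $r_i \ne 0$, then dividing by $p^{\lambda k}$ produces an index $\le c_p + 1$ with the same residue mod $t$, hence $\le c_p$, pinning down $c_p p^{\lambda k}$ as essentially the unique high-valuation contributor; this is the analogue of the contradiction argument in Lemma \ref{prelimprop}. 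The rest is a routine modular computation of the same flavour as the displayed congruences in Lemmas \ref{Lemma3c} and \ref{pfree}, and the final bound $e_p(X_n) \le \mu_p$ drops out.
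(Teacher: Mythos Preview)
Your high-level shape is right --- single out a special index $c_p$ and show that in the window $[c_p p^{\lambda k},(c_p+1)p^{\lambda k})$ the term at $i=c_p p^{\lambda k}$ is the unique one with minimal $p$-valuation --- but the mechanism you propose for both steps is off. The paper does \emph{not} choose $c_p$ as ``any small index divisible by the right power of $p$ with $r_{c_p}\ne 0$''; it defines $c_p$ as the \emph{smallest} $i\le tp^{\mu_p}$ that \emph{maximises} $e_p(i)-e_p(r_i)$. This extremal choice is the whole point: it guarantees that for every $i<c_p$ one has $e_p(i)-e_p(r_i)<e_p(c_p)-e_p(r_{c_p})$ strictly, so $\frac{L_n r_i}{i}$ has strictly higher $p$-valuation than $\frac{L_n r_{c_p}}{c_p}$. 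With your looser choice of $c_p$ there could easily be a second index $j<c_p$ with $e_p(j)-e_p(r_j)=e_p(c_p)-e_p(r_{c_p})$, giving two surviving terms that might cancel modulo $p^{\mu_p+1}$.

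Correspondingly, the ``pairing/telescoping'' you import from Lemma~\ref{Lemma3c} is a red herring here and would not salvage the argument. That lemma worked only because $p\in\Sigma_3$ forces $e_p(L_n)<e_p(t)$, so every term had bounded $p$-valuation and blocks of length $tp^{f_p}$ were genuinely congruent; for $p\notin\Sigma_3$ the valuation $e_p(L_n)$ grows with $n$ and no such block structure survives. The paper's proof needs no cancellation at all: once $c_p$ is defined extremally, one shows directly (via Lemma~\ref{car} to transport the extremality from $[1,c_p]$ to $[1,n]$) that $i=c_p p^{\lambda k}$ is the \emph{only} index with $e_p\bigl(\tfrac{L_n r_i}{i}\bigr)\le e_p(L_{c_p})+e_p(r_{c_p})-e_p(c_p)$, whence $e_p(X_n)$ equals that quantity, which is at most $\mu_p$. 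So the missing idea is the extremal definition of $c_p$; replace the telescoping plan with a single-survivor valuation comparison and the proof goes through.
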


\begin{proof}
Fix $p \notin \Sigma_3$ for this proof and define $c_p$ to be the smallest integer $i$ for which the maximum of $e(i) - e(r_i)$ is attained, where $i$ runs from $1$ to $tp^{\mu_p}$. That is, $e(c_p) - e(r_{c_p}) \ge \displaystyle \max_{1 \le i \le tp^{\mu_p}}\big(e(i) - e(r_i)\big)$, with strict inequality for all $i < c_p$. \\

By Lemma \ref{notin3} an $i' \in \{p^{e(t) + \mu_p}, 2p^{e(t) + \mu_p}, \ldots, tp^{\mu_p}\}$ exists with $r_{i'} \neq 0$. We then get the lower bound $e(c_p) - e(r_{c_p}) \ge e(i') - e(r_{i'}) \ge \mu_p + e(t) - \mu_p = e(t)$. Moreover, this implies that $e(c_p) - e(r_{c_p})$ is non-negative, so $e(r_{c_p}) \neq \infty$ and $r_{c_p} \neq 0$. \\

Now we claim $e(X_{c_p}) \le e(L_{c_p}) + e(r_{c_p}) - e(c_p)$, because the only term $\frac{L_{c_p} r_i}{i}$ in the sum for $X_{c_p}$ which is not divisible by $p^{e(L_{c_p}) + 1 + e(r_{c_p}) - e(c_p)}$, is the term corresponding to $i = c_p$. Indeed, by the definition of $c_p$, for all $i < c_p$ we have $e(r_i) - e(i) \ge 1 + e(r_{c_p}) - e(c_p)$, implying $e\left(\frac{L_{c_p} r_i}{i}\right) > e\left(\frac{L_{c_p} r_{c_p}}{c_p}\right)$. \\

Let $k$ now be given and let $n$ be such that $c_pp^{\lambda k} \le n < (c_p + 1)p^{\lambda k}$. Analogously, we claim that only one term $\frac{L_{n} r_i}{i}$ does not vanish modulo $p^{e(L_{c_p}) + 1 + e(r_{c_p}) - e(c_p)}$, namely the term corresponding to $i = c_pp^{\lambda k}$. This would give us $e(X_n) \le e(L_{c_p}) + e(r_{c_p}) - e(c_p)$ as well. To prove this, we need a small lemma.

\begin{carmichael} \label{car}
If $p^{e(t)}$ divides $i$, then $r_i = r_{ip^{\lambda k}}$ for all $k \in \mathbb{N}$ .
\end{carmichael}

\begin{proof}
It is sufficient to prove $i \equiv ip^{\lambda k} \pmod{t}$. But this is equivalent to $(ip^{-e(t)}) \equiv (ip^{-e(t)})p^{\lambda k} \pmod{tp^{-e(t)}}$, which is true as $p^{\lambda k} \equiv 1 \pmod{tp^{-e(t)}}$ by the property of the Carmichael function that $d | t$ implies $\lambda(d) | \lambda(t)$. 
\end{proof}

When $i \le n$ is different from $c_pp^{\lambda k}$, we have $e\left(\frac{L_{n} r_i}{i}\right) = e(L_{c_p}) + \lambda k + e(r_i) - e(i)$. Now, by contradiction, if this is to be at most $e(L_{c_p}) + e(r_{c_p}) - e(c_p)$, then $e(i) - e(r_i) - \lambda k \ge e(c_p) - e(r_{c_p})$. The right-hand side of this inequality is at least $e(t)$, so if we define $j = ip^{-\lambda k} < c_p$, then $p^{e(t)} | j$. Since $e(j) = e(i) - \lambda k$ and $e(r_i) = e(r_j)$ by Lemma \ref{car}, we would have $e(j) - e(r_j) = e(i) - \lambda k - e(r_i) \ge e(c_p) - e(r_{c_p})$, which contradicts the definition of $c_p$. \\

So with $c_pp^{\lambda k} \le n < (c_p + 1)p^{\lambda k}$ we know $e(X_n) \le e(L_{c_p}) + e(r_{c_p}) - e(c_p)$. Now let $i \le c_p$ be such that $r_i \neq 0$ and $e(i) = e(L_{c_p})$. Then $e(X_n) \le e(L_{c_p}) + e(r_{c_p}) - e(c_p) \le e(i) + e(r_i) - e(i) = e(r_i) \le \mu_p$.
\end{proof}

Lemma \ref{Lemma3} should help us satisfy the condition $\gcd(l, X_{a,b-1}) < p$ from Theorem \ref{Theorem1}. However, for a prime divisor $q$ of $l$, even if $c_qq^{\lambda k} \le b-1 < (c_q + 1)q^{\lambda k}$ for some $k$, the astute reader might point out that we can only say something about $\gcd(l, X_{b-1})$ as opposed to $\gcd(l, X_{a,b-1})$. Fortunately, we have the following lemma.

\begin{xbisxab} \label{xbisxab}
If $c_qam \le c_qq^{\lambda k} \le b-1 < (c_q + 1)q^{\lambda k}$, then $e_q(X_{a,b-1}) = e_q(X_{b-1})$. 
\end{xbisxab}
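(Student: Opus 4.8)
The plan is to split the sum $X_{a,b-1} = L_{a,b-1}\sum_{i=a}^{b-1} r_i/i$ into the part with $i < a$ (which gives $X_{b-1}$, up to the ratio of $\text{lcm}$'s) and the part with $1 \le i < a$, and to show that every term in the latter part is divisible by a high enough power of $q$ that it cannot affect $e_q$. Concretely, write
\begin{align*}
X_{b-1} &= \frac{L_{b-1}}{L_{a,b-1}} X_{a,b-1} + L_{b-1}\sum_{i=1}^{a-1}\frac{r_i}{i}.
\end{align*}
Since $b-1 < (c_q+1)q^{\lambda k}$ and $b-1 \ge c_q q^{\lambda k} \ge c_q a m$, the only index in $[1,b-1]$ divisible by $q^{e_q(L_{c_q})+\lambda k}$ is $c_q q^{\lambda k}$ itself — this is exactly the mechanism established in the proof of Lemma~\ref{Lemma3}, where it was shown that $e_q(X_n)\le e_q(L_{c_q})+e_q(r_{c_q})-e_q(c_q)$ for $c_q q^{\lambda k}\le n<(c_q+1)q^{\lambda k}$, with a \emph{single} non-vanishing term modulo the relevant power of $q$.

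First I would fix the prime $q$ and abbreviate $e = e_q$. The key observation is that $e(L_{b-1}) = e(L_{c_q}) + \lambda k$: indeed $c_q q^{\lambda k} \le b-1$ contributes this much, and nothing in $[1,b-1]$ contributes more, by the argument in Lemma~\ref{Lemma3} (using Lemma~\ref{car} to descend any hypothetical larger-power index $i$ to some $j = i q^{-\lambda k} < c_q$ with $p^{e(t)}\mid j$, contradicting the definition of $c_q$). Next, for $1 \le i \le a-1$ I would bound $e\!\left(\frac{L_{b-1} r_i}{i}\right) = e(L_{b-1}) + e(r_i) - e(i)$. Here $e(i) \le e(a-1)!$-type bounds are crude; instead note $i \le a-1 < a$, and since $c_q q^{\lambda k} \ge c_q a m$ we get $q^{\lambda k} \ge a m > a > i$, so $e(i) < \lambda k$ and hence $e(i) \le \lambda k - 1 < e(L_{b-1}) - e(L_{c_q})$. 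Combined with the fact that, by the proof of Lemma~\ref{Lemma3}, $e(X_{b-1}) \le e(L_{c_q}) + e(r_{c_q}) - e(c_q) =: M$ with the term at $c_q q^{\lambda k}$ being the unique one of $q$-valuation exactly $M$ in $X_{b-1}$, I would show each term $\frac{L_{b-1}r_i}{i}$ with $i < a$ has $q$-valuation strictly greater than $M$. Therefore $L_{b-1}\sum_{i=1}^{a-1} r_i/i$ has $q$-valuation $> M$, while $\frac{L_{b-1}}{L_{a,b-1}}X_{a,b-1}$ must then carry the valuation $M$; since $q \nmid \frac{L_{b-1}}{L_{a,b-1}}$ is not automatic, I would instead argue $e(X_{a,b-1}) = e(X_{b-1})$ directly by noting that $\frac{L_{b-1}}{L_{a,b-1}}$ and $\frac{L_{a,b}}{L_{b-1}}$ absorb only powers of $q$ coming from indices $\le a-1$ or so, which by the length estimate are $< \lambda k$ in $q$-valuation — in fact the cleanest route is to re-run the single-non-vanishing-term computation of Lemma~\ref{Lemma3} with the sum starting at $i=a$ instead of $i=1$, observing that the unique surviving term $i = c_q q^{\lambda k}$ still lies in $[a, b-1]$ because $c_q q^{\lambda k} \ge c_q a m \ge a$.

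The main obstacle I anticipate is bookkeeping the $\text{lcm}$ ratios: one must be careful that $L_{a,b-1}$ and $L_{b-1}$ (equivalently, the indices in $[1,a-1]$) do not change $e_q$, and this is exactly where the hypothesis $c_q a m \le c_q q^{\lambda k}$ is used — it forces every index below $a$ to have $q$-valuation strictly smaller than $\lambda k$, hence strictly smaller than $e_q(L_{c_q}) + \lambda k = e_q(L_{b-1})$, so those indices are irrelevant both to the lcm's $q$-part and to which term of the sum dominates. Once that is pinned down, the statement follows by the same "unique non-vanishing term" mechanism already proven, applied to the truncated sum $\sum_{i=a}^{b-1}$.
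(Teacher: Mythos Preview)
Your final ``cleanest route'' is correct, and the argument in your closing paragraph (indices $i<a$ satisfy $e_q(i)<\lambda k\le e_q(L_{b-1})$, so they affect neither the $q$-part of the lcm nor which term dominates) is exactly what makes it go through. The initial decomposition via $X_{b-1}=\frac{L_{b-1}}{L_{a,b-1}}X_{a,b-1}+\cdots$ is a detour you yourself abandon, so the write-up meanders, but the endpoint is sound.

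The paper takes a genuinely different and somewhat cleaner path: rather than only matching the $q$-parts of the two lcm's, it proves the full equality $L_{a,b-1}=L_{b-1}$. The argument uses periodicity directly: since $b-1\ge am$, every $i<a$ with $r_i\ne 0$ has a multiple of the form $(jt+1)i$ lying in $[a,b-1]$, and $r_{(jt+1)i}=r_i\ne 0$, so $i\mid L_{a,b-1}$. Once $L_{a,b-1}=L_{b-1}$, the sums for $X_{b-1}$ and $X_{a,b-1}$ differ only by the terms with $i<a$; the paper then observes that any term nonzero modulo $q^{e_q(X_{b-1})+1}$ must have $i$ divisible by $q^{\lambda k-e_q(X_{b-1})}\ge q^{\lambda k-\mu_q}>a$, so those small-index terms are irrelevant. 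Your route stays closer to the $q$-adic mechanics of Lemma~\ref{Lemma3} and never invokes the periodicity-and-multiples trick; the paper's route buys the stronger intermediate fact $L_{a,b-1}=L_{b-1}$ and makes the final step nearly immediate.
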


\begin{proof}
First we note that $L_{a,b-1}$ is equal to $L_{b-1}$. Indeed, on the one hand we trivially have $L_{a,b-1} | L_{b-1}$. And as for the other direction, since $b-1 \ge am$, every integer $i$ smaller than $a$ with $r_{i} \neq 0$ has a multiple of the form $(jt+1)i$ between $a$ and $b-1$, with $r_{(jt+1)i} = r_{i} \neq 0$. So if $i$ divides $L_{b-1}$, it will also divide $L_{a,b-1}$, proving $L_{b-1} | L_{a,b-1}$ and therefore $L_{a,b-1} = L_{b-1}$. \\

Secondly, $L_{b-1}$ is divisible by $q^{\lambda k}$ since $r_{c_qq^{\lambda k}} = r_{c_q} \neq 0$, by Lemma \ref{Lemma3}. Therefore the only terms $\frac{L_{b-1} r_i}{i}$ in the sum for $X_{b-1}$ that are non-zero modulo $q^{e_q(X_{b-1}) + 1}$ are the ones where $i$ is divisible by $q^{\lambda k - e_q(X_{b-1})}$. The latter quantity is larger than $a$ as we assumed $q^{\lambda k} \ge am$, while $q^{e_q(X_{b-1})} \le q^{\mu_q} < m$, by Lemma \ref{Lemma3}. Since all terms that are non-zero modulo $q^{e_q(X_{b-1}) + 1}$ are larger than $a$, we indeed have $e_q(X_{a,b-1}) = e_q(X_{b-1})$. 
\end{proof}

\subsection{Diophantine approximation to the rescue}\label{dioph}
The proof of Theorem \ref{ohyah} is still valid for any integer $M \ge m$ instead of $m$, because the only property of $m$ that we used is that it is larger than $\max(r, t)$. In particular, with $M = \left\lfloor e^{2m + \frac{4m}{3\log(m)}} \right\rfloor$, let $n$ be the smallest integer $i \ge i_1$ for which a prime $p \ge M$ exists with $p | X_i$, and write $n = lp^k$ with $\gcd(l, p) = 1$. These integers $M$, $n$, $l$, $p$ and $k$ will now all be fixed for the rest of Section \ref{upper}. \\

If $l < p$, then the arguments from Section \ref{dirichlet} can be used again, and one can check $b(a) < a e^{e^{e^{4m}}}$. If the inequality $l < p$ does not hold however, we claim that we still have the weaker estimate $l < pt$. Indeed, by Lemma \ref{notin3} an $i \in \{1,2,\ldots,t\}$ exists with $r_{ip^{k+1}} \neq 0$. So if $l > pt \ge pi$, then $\frac{L_nr_n}{lp^k} \equiv 0 \pmod{p}$, contradicting the definition of $n$. We will therefore assume $p < l < pt$ from now on. It then turns out that $l$ must have a prime divisor $q$ for which $q^{e_q(l)}$ is large.

% if l < p this time around, we can use Section 2.6. More precisely, $p < e^{47M^{3Z+7}}$ by the same arguments, so that $b(a) < a e^{M^{3Z+8}} < a e^{e^{4M}} < a e^{e^{e^{4m}}}$.

\begin{primeorpower} \label{primeorpower}
If for every prime divisor $q$ of $l$ we have $q < m$ and $q^{e_q(l)} < m^2$, then $l \le M$.
\end{primeorpower}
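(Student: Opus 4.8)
The statement is best read as a purely multiplicative fact about $M$, with the arithmetic of $l$ playing no role beyond its prime factorisation (its purpose being to force, together with the bound $l > p \ge M$ established just above, a prime power $q^{e_q(l)} \ge m^2$ or a prime $q \ge m$ dividing $l$). For a prime $q < m$ let $\alpha_q$ be the largest integer with $q^{\alpha_q} < m^2$. The hypothesis gives $e_q(l) \le \alpha_q$ for every $q \mid l$, so, padding with the (trivial) factors $q^{\alpha_q} \ge 1$ for primes $q < m$ not dividing $l$,
\begin{equation*}
l = \prod_{q \mid l} q^{e_q(l)} \le \prod_{q < m} q^{\alpha_q}.
\end{equation*}
The crude bound $q^{\alpha_q} < m^2$ for each of the $z$ primes only gives $l < m^{2z}$, which by Lemma~\ref{priemteller} is roughly $e^{2m + 3m/\log m}$ and thus is \emph{not} below $M = \lfloor e^{2m + 4m/(3\log m)} \rfloor$; so the layering of the product must be exploited.

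Writing $q^{\alpha_q} = \prod_{i=1}^{\alpha_q} q$ and swapping the order of multiplication, with the condition $q^i < m^2$ rewritten as $q < m^{2/i}$,
\begin{equation*}
\prod_{q < m} q^{\alpha_q} = \prod_{i \ge 1} \ \prod_{q < \min(m,\, m^{2/i})} q.
\end{equation*}
For $i = 1$ and $i = 2$ the inner product is the primorial $\prod_{q < m} q = e^{\theta(m-1)}$, where $\theta(x) = \sum_{p \le x}\log p$; for $i \ge 3$ it is $\prod_{q < m^{2/i}} q \le e^{\theta(m^{2/3})}$, and only the fewer than $2\log_2 m$ values of $i$ with $m^{2/i} > 2$ contribute more than $1$. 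Hence
\begin{equation*}
l \le \exp\!\big( 2\theta(m-1) + 2\log_2(m)\, \theta(m^{2/3}) \big).
\end{equation*}
Now I would use explicit estimates of Chebyshev's function from \cite{pnt2}: the sharp bound $\theta(m) < m + \tfrac{m}{2\log m}$ (for $m$ beyond an explicit $m_0$) controls the main term, $2\theta(m-1) \le 2\theta(m) < 2m + \tfrac{m}{\log m}$, while the weak bound $\theta(m^{2/3}) < 1.02\, m^{2/3}$ suffices for the error term, which is $O(m^{2/3}\log m) = o(m/\log m)$ and hence below $\tfrac{m}{3\log m}$ for large $m$. Since $\tfrac{m}{\log m} + \tfrac{m}{3\log m} = \tfrac{4m}{3\log m}$, this yields $l < e^{2m + 4m/(3\log m)}$, and as $l$ is a positive integer, $l \le M$. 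The finitely many $m < m_0$ (recall $m \ge 4$) are dispatched by directly computing the integer $\prod_{q < m} q^{\alpha_q}$ and checking it is at most $M$.

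The one genuinely delicate point I anticipate is the calibration of constants: the elementary bound $\theta(x) < 1.02x$ is not good enough, because doubling it would leave $2.04m$ in the exponent, which exceeds $2m + \tfrac{4m}{3\log m}$ for all large $m$; one really needs the prime number theorem in the explicit form $\theta(m) = m + o(m/\log m)$, together with an explicit bound on $\sum_{i \ge 3}\theta(m^{2/i})$ and a clean threshold $m_0$ separating the asymptotic regime from the computer-checked range.
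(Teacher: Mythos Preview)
Your approach is correct and essentially parallel to the paper's: both start from $l \le \prod_{q<m} q^{\alpha_q}$, isolate the squared primorial $e^{2\theta(m)}$ as the main term, control the remainder via primes up to $m^{2/3}$, invoke the explicit Rosser--Schoenfeld bound $\theta(m) < m\bigl(1+\tfrac{1}{2\log m}\bigr)$ for the main term, and finish with a computer check below an explicit threshold. The one organisational difference is that the paper splits by the \emph{size} of the prime rather than by exponent layer: it writes
\[
\prod_{q<m} q^{\alpha_q} \;=\; \prod_{q\le m^{2/3}} q^{\alpha_q}\cdot \prod_{m^{2/3}<q<m} q^{2}
\]
(since $\alpha_q\le 2$ once $q>m^{2/3}$), bounds the small-prime factor crudely by $m^{2\pi(m^{2/3})}$, and then uses the \emph{lower} bound $\prod_{q\le m^{2/3}} q > e^{0.98\,m^{2/3}}$ to partially cancel it against the full primorial squared. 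This yields an error of order $e^{1.594\,m^{2/3}}$ rather than your $e^{O(m^{2/3}\log m)}$, so the paper's computer check stops at $m=2\cdot 10^{5}$, whereas your crude bound $2\log_2(m)\,\theta(m^{2/3})$ pushes the threshold to roughly $10^{12}$; summing $\sum_{i\ge 3}\theta(m^{2/i})$ term by term, as you already anticipate in your final paragraph, would bring your threshold down to essentially the same range.
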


\begin{proof}
Let $l \in \mathbb{N}$ be such that for all prime divisors $q$ of $l$ we have $q < m$ and $q^{e_q(l)} < m^2$. Then $l \le \displaystyle \prod_{q < m} q^{\left\lfloor\frac{\log(m^2-1)}{\log(q)}\right\rfloor}$ and with a computer one can check that for $m < 2\cdot10^5$, this product is smaller than $e^{2m + \frac{4m}{3\log(m)}}$. For $m \ge 2\cdot10^5$, we will bound $l$ by using the inequalities $e^{0.98m} < \displaystyle \prod_{p \le m} p < e^{m(1 + \frac{1}{2\log(m)})}$ which can be found in \mbox{\cite{pnt2}} as Theorem $4$ and Theorem $10$, and the inequality $2\pi(m^{2/3}) < \frac{3.554m^{2/3}}{\log(m)}$ that follows from Lemma \ref{priemteller}.
\begin{align*}
l &\le \prod_{q < m} q^{\left\lfloor\frac{\log(m^2-1)}{\log(q)}\right\rfloor} \\
&= \prod_{q \le m^{2/3}} q^{\left\lfloor\frac{\log(m^2-1)}{\log(q)}\right\rfloor} \prod_{m^{2/3} < q < m} q^2 \\
&< m^{2\pi(m^{2/3})} \prod_{m^{2/3} < q < m} q^2 \\
&< e^{3.554m^{2/3}}  e^{-1.96m^{2/3}} e^{2m + \frac{m}{\log(m)}} \\
&= e^{1.594m^{2/3}} e^{2m + \frac{m}{\log(m)}} \\
&< e^{2m + \frac{4m}{3\log(m)}}
\end{align*}

Here the final inequality used $m \ge 2 \cdot 10^5$.
\end{proof}

So if $l > p \ge M$, then either $l$ is divisible by a prime $q \ge m$ or $l$ is divisible by a prime $q < m$ with $q^{e_q(l)} \ge m^2$. Let us fix this prime $q$ for the rest of Section \ref{upper}, and observe $q \notin \Sigma_3$ in either case. We therefore get by Lemma \ref{Lemma3} and Lemma \ref{xbisxab} that if $c_qam \le c_qq^{\lambda k_2} < b < (c_q + 1)q^{\lambda k_2}$, then $\gcd(l, X_{a,b-1}) \le lq^{\mu_q - e_q(l)} \le \frac{l}{m} < p$, which is the condition of Theorem \ref{Theorem1}. So we conclude the following:

\begin{notmuchleft}\label{close}
If $k_1$ and $k_2$ are positive integers such that with $b = np^{\lambda k_1}$ the string of inequalities $c_qam \le c_qq^{\lambda k_2} < b < (c_q + 1)q^{\lambda k_2}$ holds, then $v_{a,b} < v_{a,b-1}$.
\end{notmuchleft}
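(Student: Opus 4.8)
The plan is to assemble Lemma~\ref{close} essentially by bookkeeping, since all the substantive work has already been done. We are handed integers $k_1, k_2 > 0$ with $b = np^{\lambda k_1}$ satisfying $c_qam \le c_qq^{\lambda k_2} < b < (c_q+1)q^{\lambda k_2}$, and we must conclude $v_{a,b} < v_{a,b-1}$. The obvious route is to verify the hypothesis of Theorem~\ref{Theorem1}, namely $\gcd(l, X_{a,b-1}) < p$, and then invoke that theorem directly. Note first that $b = np^{\lambda k_1} = lp^{\lambda k_1 + k}$ has exactly the shape required in Section~\ref{iflarge}, so Theorem~\ref{Theorem1} is applicable as soon as we also check the side condition $p^{\lambda k_1 + k} \ge \max(a, 2t)$; but this follows because $b = lp^{\lambda k_1 + k} < (c_q+1)q^{\lambda k_2}$ forces $q^{\lambda k_2}$ to be comparable to $b$, and in any case we may absorb this into the standing assumption $p \ge M \ge m > \max(r,t)$ together with the lower bound $b > c_qam \ge am$, from which $p^{\lambda k_1+k} = b/l > am/l \ge m \ge \max(a,2t)/\text{(something)}$ — actually cleaner is to observe $b-1 \ge c_qq^{\lambda k_2} \ge c_qam \ge am$, which is what the $\gcd$ estimate will need, and separately note $p^{\lambda k_1 + k} \ge p \ge M$, which dominates $\max(a,2t)$ only after we recall $n$ was chosen with $n < e^{6M}$-type bounds — so in practice one takes $k_1$ large enough that $b \ge \max(a,2t) l$, consistent with the inequalities on $q^{\lambda k_2}$.

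The heart of the argument is the chain already spelled out in the paragraph preceding the statement: from $p < l < pt$ and Lemma~\ref{primeorpower}, the fixed prime $q$ satisfies $q \notin \Sigma_3$ and $q^{e_q(l)} \ge m^2 / q^{\mu_q} \cdot q^{\mu_q}$, more precisely either $q \ge m$ (so $e_q(l) \ge 1$ and $q^{e_q(l)} \ge m$) or $q < m$ with $q^{e_q(l)} \ge m^2$. Since $q \notin \Sigma_3$, Lemma~\ref{Lemma3} gives a constant $c_q \le tq^{\mu_q}$ with $e_q(X_n) \le \mu_q$ whenever $c_q q^{\lambda k} \le n < (c_q+1)q^{\lambda k}$. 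Our hypothesis places $b-1$ (hence also the slightly smaller $b-1$, since $b-1 < b < (c_q+1)q^{\lambda k_2}$ and $b - 1 \ge c_q q^{\lambda k_2}$, using $b > c_q q^{\lambda k_2}$ so $b - 1 \ge c_q q^{\lambda k_2}$) in the window $[c_q q^{\lambda k_2}, (c_q+1)q^{\lambda k_2})$, so $e_q(X_{b-1}) \le \mu_q$. Because also $b - 1 \ge c_q am \ge am$ and $q^{\lambda k_2} \ge am$ (from $c_q q^{\lambda k_2} \ge c_q am$ and $c_q \ge 1$), Lemma~\ref{xbisxab} upgrades this to $e_q(X_{a,b-1}) = e_q(X_{b-1}) \le \mu_q$.

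Now the $\gcd$ computation: every prime dividing $\gcd(l, X_{a,b-1})$ other than $q$ contributes at most its full power in $l$, while the contribution of $q$ is $q^{\min(e_q(l), e_q(X_{a,b-1}))} \le q^{\mu_q}$. Hence $\gcd(l, X_{a,b-1}) \le \dfrac{l}{q^{e_q(l)}} \cdot q^{\mu_q} = l \, q^{\mu_q - e_q(l)} \le \dfrac{l}{m}$, where the last step uses $q^{e_q(l) - \mu_q} \ge m$ in both cases ($q \ge m$ with $e_q(l) \ge 1 > \mu_q$ giving $q^{e_q(l)-\mu_q} \ge q \ge m$; or $q < m$ with $q^{e_q(l)} \ge m^2$ and $q^{\mu_q} < m$ giving $q^{e_q(l)-\mu_q} > m$). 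Finally $l < pt < p \cdot p = p^2$ would be too weak; instead use $l < pt \le p \cdot m \le p \cdot p = $ — the clean bound is $l < pt$ and $t < m \le p$, so $l < pm$, whence $\gcd(l, X_{a,b-1}) \le l/m < p$. This is exactly the hypothesis of Theorem~\ref{Theorem1}, which therefore yields $v_{a,b} < v_{a,b-1}$.

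I expect the only real friction to be pedantic interval arithmetic: making sure the strict/non-strict inequalities $c_q q^{\lambda k_2} < b$ versus the half-open window $[c_q q^{\lambda k_2}, (c_q+1)q^{\lambda k_2})$ line up so that $b-1$ genuinely lands in the window (one needs $b - 1 \ge c_q q^{\lambda k_2}$, which follows from $b > c_q q^{\lambda k_2}$ since these are integers, and $b - 1 < b < (c_q+1)q^{\lambda k_2}$), and similarly that the side condition of Theorem~\ref{Theorem1} on $p^{\lambda k_1+k}$ versus $\max(a,2t)$ is implied — here one uses $b \ge c_q a m \ge 2am$ and $b = lp^{\lambda k_1+k}$ with $l < pm$ to get $p^{\lambda k_1 + k} = b/l > 2am/(pm) = 2a/p$; this is not obviously $\ge \max(a,2t)$, so the honest fix is to note that the hypotheses of the lemma already force $k_1$ large, or simply to state the lemma as applying Theorem~\ref{Theorem1} whose own conclusion $b(a) \le \max(a-1,2t-1)lp^\lambda$ we do not need here — we only need the first sentence of Theorem~\ref{Theorem1}, which requires $p^{\lambda k_1+k} \ge \max(a,2t)$; since $b = np^{\lambda k_1} \ge n \ge i_1$ and $b > c_q a m \ge am$, and $b/p^{\lambda k_1 + k} = l < pm \le p^2$, we get $p^{\lambda k_1+k} > am/p^2$, and combined with $p \ge M \ge m$ one pushes through. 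This is routine and I would not belabor it in the writeup beyond a sentence.
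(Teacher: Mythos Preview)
Your proposal is correct and follows essentially the same route as the paper: the paragraph immediately preceding Lemma~\ref{close} is its entire proof, and it reads ``by Lemma~\ref{Lemma3} and Lemma~\ref{xbisxab} \ldots\ $\gcd(l, X_{a,b-1}) \le lq^{\mu_q - e_q(l)} \le \frac{l}{m} < p$, which is the condition of Theorem~\ref{Theorem1}'' --- exactly your chain. Your extended worry about the side condition $p^{\lambda k_1 + k} \ge \max(a,2t)$ is something the paper simply does not address at this point; it is left to the later construction of $k_1,k_2$ (via Lemma~\ref{gamm} and the choice of $D$ in Corollary~\ref{inf}) to ensure it, so you are being more scrupulous than the source rather than missing anything.
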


To find $k_1$ and $k_2$ for which these inequalities are satisfied, we have to do some Diophantine approximation.

\begin{dat}\label{dat}
There exist positive integers $b_1$ and $b_2$ with $b_2 < 2 \log(q) m^3$ such that the following inequality holds:

\begin{equation*}
\epsilon := |b_2 \log(p) - b_1 \log(q) | < \frac{1}{2m^3}
\end{equation*}
\end{dat}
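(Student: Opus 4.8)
The plan is to recognize Lemma \ref{dat} as a direct application of the pigeonhole (Dirichlet) principle of Diophantine approximation to the number $\alpha = \frac{\log p}{\log q}$. First I would record that $\alpha$ is irrational: if $\frac{\log p}{\log q} = \frac{b_1}{b_2}$ with $b_1, b_2 \in \mathbb{N}$, then $p^{b_2} = q^{b_1}$, which is impossible since $p$ and $q$ are distinct primes — they are distinct because $q \mid l$ while $\gcd(l,p) = 1$. I would also note the crude lower bound $\alpha > \tfrac12$: since $q \le l < pt$ and $t \le m-1 < p$, we have $\log q < \log p + \log t < 2\log p$.

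Next I would invoke the sharp form of Dirichlet's approximation theorem: for every real $\alpha$ and every positive integer $Q$ there exist integers $b_1, b_2$ with $1 \le b_2 \le Q$ and $|b_2\alpha - b_1| \le \tfrac{1}{Q+1}$. This is the usual argument applied to the $Q+1$ points $\{0\}, \{\alpha\}, \ldots, \{Q\alpha\} \in [0,1)$ against the $Q+1$ half-open intervals $\big[\tfrac{k}{Q+1}, \tfrac{k+1}{Q+1}\big)$, with the standard endpoint refinement handling the case where no two of the points share an interval. I would apply this with $Q = \big\lfloor 2\log(q)m^3 \big\rfloor$, which is a positive integer since $m \ge 4$ and $q \ge 2$ force $2\log(q)m^3 > 1$.

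With these choices, $b_2 \le Q = \lfloor 2\log(q)m^3\rfloor < 2\log(q)m^3$, the strictness coming from $\log q \notin \mathbb{Q}$. Multiplying the approximation inequality through by $\log q$ gives
\[
\epsilon = |b_2\log p - b_1\log q| = \log q \cdot |b_2\alpha - b_1| \le \frac{\log q}{Q+1} = \frac{\log q}{\lfloor 2\log(q)m^3\rfloor + 1} < \frac{\log q}{2\log(q)m^3} = \frac{1}{2m^3},
\]
where the last step uses $\lfloor x\rfloor + 1 > x$. It remains only to check $b_1 \in \mathbb{N}$: since $|b_1 - b_2\alpha| \le \tfrac{1}{Q+1}$ and $Q$ is large, $b_1 \ge b_2\alpha - \tfrac{1}{Q+1} \ge \alpha - \tfrac{1}{Q+1} > \tfrac12 - \tfrac{1}{Q+1} > 0$, so $b_1 \ge 1$.

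The argument is essentially routine; the one point that genuinely needs care is using the sharp ``$\tfrac{1}{Q+1}$'' form of Dirichlet rather than the cruder ``$\tfrac{1}{Q}$'' form. With the crude form one is forced to take $Q = \lceil 2\log(q)m^3\rceil$ and then loses the strict bound $b_2 < 2\log(q)m^3$ by a hair, so the extra sharpness is exactly what makes the two stated inequalities compatible. The only other (very minor) nuisance is the positivity of $b_1$, which follows from the bound $\alpha > \tfrac12$ established at the outset.
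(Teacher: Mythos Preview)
Your proof is correct and takes essentially the same route as the paper: apply Dirichlet's approximation theorem to $\alpha=\frac{\log p}{\log q}$ with $N=\lfloor 2\log(q)m^3\rfloor$ and multiply through by $\log q$. You additionally spell out why the sharp $\tfrac{1}{N+1}$ form is needed, why $b_2<2\log(q)m^3$ is strict (irrationality of $\log q$), and why $b_1\ge 1$ (via $\alpha>\tfrac12$), all of which the paper leaves implicit.
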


\begin{proof}
Dirichlet's Approximation Theorem states that for any real number $\zeta > 0$ and any $N \in \mathbb{N}$, there exist positive integers $b_1$ and $b_2$ with $b_2 \le N$ such that $|b_2 \zeta - b_1| \le \frac{1}{N+1}$. Now we apply this with $\zeta = \frac{\log(p)}{\log(q)}$ and $N = \lfloor 2 \log(q) m^3 \rfloor$ to obtain $|b_2 \frac{\log(p)}{\log(q)} - b_1| < \frac{1}{2 \log(q) m^3}$. Multiplying both sides of the inequality by $\log(q)$ finishes the proof.
\end{proof}

\begin{kron} \label{kron}
Let $b_1, b_2$ and $\epsilon$ be as in Lemma \ref{dat}. Let $\gamma > 0$ be any positive real number and set $C = \left \lceil \frac{\gamma}{\epsilon} \right \rceil$. Then, if $b_2 \log(p) - b_1 \log(q) > 0$, we have

\begin{equation*}
0 \le Cb_2 \log(p) - Cb_1 \log(q) - \gamma < \frac{1}{2m^3}
\end{equation*}

while if $b_2 \log(p) - b_1 \log(q) < 0$, we have

\begin{equation*}
\frac{-1}{2m^3} < Cb_2 \log(p) - Cb_1 \log(q) + \gamma \le 0
\end{equation*}
\end{kron}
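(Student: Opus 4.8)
The plan is to do essentially nothing beyond unwinding the definition of the ceiling function. By definition of $C = \left\lceil \frac{\gamma}{\epsilon}\right\rceil$ one has the elementary two-sided bound $\frac{\gamma}{\epsilon} \le C < \frac{\gamma}{\epsilon} + 1$. First I would record that $\epsilon > 0$: the integers $b_1, b_2$ furnished by Lemma \ref{dat} are positive, and since $p \neq q$ are distinct primes, $\log(p)$ and $\log(q)$ are linearly independent over $\mathbb{Q}$, so $b_2\log(p) - b_1\log(q) \neq 0$, whence $\epsilon > 0$ and $C$ is a well-defined positive integer. Multiplying the bound on $C$ through by $\epsilon$ gives $\gamma \le C\epsilon < \gamma + \epsilon$, i.e.\ $0 \le C\epsilon - \gamma < \epsilon$, and invoking $\epsilon < \frac{1}{2m^3}$ from Lemma \ref{dat} upgrades this to
\[ 0 \le C\epsilon - \gamma < \frac{1}{2m^3}. \]

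It then remains only to rewrite $C\epsilon$ in terms of the linear form appearing in the statement, and this is where the sign of $b_2\log(p) - b_1\log(q)$ comes in. In the case $b_2\log(p) - b_1\log(q) > 0$ we have $\epsilon = b_2\log(p) - b_1\log(q)$ by definition of $\epsilon$, so $C\epsilon - \gamma = Cb_2\log(p) - Cb_1\log(q) - \gamma$, and the displayed inequality is exactly the first assertion. In the case $b_2\log(p) - b_1\log(q) < 0$ we instead have $\epsilon = -\bigl(b_2\log(p) - b_1\log(q)\bigr)$, so that $Cb_2\log(p) - Cb_1\log(q) + \gamma = \gamma - C\epsilon$; negating the chain $0 \le C\epsilon - \gamma < \frac{1}{2m^3}$ turns it into $-\frac{1}{2m^3} < \gamma - C\epsilon \le 0$, which is the second assertion.

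I do not expect any genuine obstacle here; the lemma is a one-line consequence of the definition of $\lceil\,\cdot\,\rceil$ once the sign bookkeeping of the two cases is carried out carefully. The only point deserving a word of justification is that $\epsilon > 0$, so that $C$ is meaningful and multiplication by $\epsilon$ preserves the direction of the inequalities — and this is immediate from the distinctness of the primes $p$ and $q$ together with the positivity of $b_1,b_2$ in Lemma \ref{dat}.
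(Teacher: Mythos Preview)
Your proof is correct and follows essentially the same approach as the paper: both arguments simply use the defining inequalities $\frac{\gamma}{\epsilon} \le C < \frac{\gamma}{\epsilon}+1$ of the ceiling, multiply through by $\epsilon$, and invoke $\epsilon < \frac{1}{2m^3}$ from Lemma~\ref{dat}. The only minor difference is that you handle both sign cases explicitly and add a word on why $\epsilon>0$, whereas the paper treats the positive case and declares the negative case analogous.
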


\begin{proof}
Assume $b_2 \log(p) - b_1 \log(q) > 0$. The other case can be proven in an analogous manner. Then, on the one hand:
\begin{align*}
Cb_2 \log(p) - Cb_1 \log(q) - \gamma &= C\big(b_2 \log(p) - b_1 \log(q)\big) - \gamma\\
&=  \left \lceil \frac{\gamma}{\epsilon} \right\rceil\epsilon - \gamma \\
&\ge \left(\frac{\gamma}{\epsilon}\right)\epsilon - \gamma \\
&= 0
\end{align*}

while on the other hand:
\begin{align*}
Cb_2 \log(p) - Cb_1 \log(q) - \gamma&= \left\lceil \frac{\gamma}{\epsilon} \right\rceil\epsilon - \gamma \\
&< \left(\frac{\gamma}{\epsilon} + 1 \right)\epsilon - \gamma\\
&= \epsilon \\
&< \frac{1}{2m^3} \qedhere
\end{align*}
\end{proof}

\begin{gammaslim}\label{gamm}
Let $D \in \mathbb{N}$ be any integer larger than or equal to $k+2$ and assume that we choose $\gamma$ in Lemma \ref{kron}, equal to

\begin{equation*}
\gamma  = \pm\left(\frac{\log(c_q) + \log(c_q + 1) - 2\log(n)}{2\lambda}\right) + D\log(p)
\end{equation*}

where plus or minus depends on whether $b_2 \log(p) - b_1 \log(q)$ is positive or negative, respectively. Then $\gamma > (D - k - 2)\log(p) \ge 0$ and the inequalities $c_qq^{\lambda k_2} < np^{\lambda k_1} < (c_q + 1)q^{\lambda k_2}$ hold, with $k_2 = Cb_1$ and $k_1 = Cb_2 \mp D$.
\end{gammaslim}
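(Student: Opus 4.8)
The plan is to unwind the definition of $\gamma$ and feed it into Lemma \ref{kron}. Write $\Delta = \tfrac{2\log(n) - \log(c_q) - \log(c_q+1)}{2\lambda}$, so that the hypothesis reads $\gamma = D\log(p) - \Delta$ when $b_2\log(p) - b_1\log(q) > 0$ and $\gamma = D\log(p) + \Delta$ when $b_2\log(p) - b_1\log(q) < 0$. Since $c_q$ is a positive integer with $c_q \le t q^{\mu_q} \le (m-1)^2$ by Lemma \ref{Lemma3}, we have $c_q + 1 \le (m-1)^2 + 1 < m^2 \le M \le p < n$, whence $\log(c_q) \le \log(c_q+1) < \log(n)$ and so $\Delta > 0$. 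First I would record the crude bound $\Delta < (k+2)\log(p)$: from the standing assumption $p < l < pt$ we get $n = lp^k < tp^{k+1}$, hence $2\log(n) < 2\log(t) + 2(k+1)\log(p)$, and then dropping the nonnegative quantity $\log(c_q)+\log(c_q+1)$, using $\lambda \ge 1$, and using $t \le m-1 < p$, yields $\Delta < \log(t) + (k+1)\log(p) < (k+2)\log(p)$. In either sign case this gives $\gamma > (D-k-2)\log(p) \ge 0$ — in the $+$ case because $\Delta < (k+2)\log(p) \le D\log(p)$, in the $-$ case because $\Delta > 0$ — so in particular $\gamma > 0$ and Lemma \ref{kron} applies with $C = \lceil \gamma/\epsilon \rceil$.

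Now take the case $b_2\log(p) - b_1\log(q) > 0$; the other is symmetric. Lemma \ref{kron} provides $\delta := Cb_2\log(p) - Cb_1\log(q) - \gamma \in \big[0, \tfrac{1}{2m^3}\big)$. With $k_2 = Cb_1$ and $k_1 = Cb_2 - D$, and using $\lambda\Delta = \log(n) - \tfrac12\big(\log(c_q)+\log(c_q+1)\big)$, the linear form collapses:
\begin{align*}
\log\!\big(np^{\lambda k_1}\big) - \log\!\big(c_q q^{\lambda k_2}\big) &= \log(n) - \log(c_q) + \lambda\big(Cb_2\log(p) - Cb_1\log(q)\big) - \lambda D\log(p) \\
&= \log(n) - \log(c_q) + \lambda\delta - \lambda\Delta \;=\; \lambda\delta + \tfrac12\log\tfrac{c_q+1}{c_q},
\end{align*}
and likewise $\log(np^{\lambda k_1}) - \log\big((c_q+1)q^{\lambda k_2}\big) = \lambda\delta - \tfrac12\log\tfrac{c_q+1}{c_q}$. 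The first quantity is positive since $\lambda\delta \ge 0$. For the second, I would use $\lambda \le \varphi(t) \le t < m$, the bound $c_q+1 < m^2$, and $\log(1+x) \ge x/(1+x)$ to estimate $\lambda\delta < \tfrac{\lambda}{2m^3} < \tfrac{1}{2m^2} < \tfrac{1}{2(c_q+1)} \le \tfrac12\log\!\big(1+\tfrac1{c_q}\big)$. Exponentiating the two displays gives exactly $c_q q^{\lambda k_2} < np^{\lambda k_1} < (c_q+1)q^{\lambda k_2}$. In the case $b_2\log(p) - b_1\log(q) < 0$ one instead sets $\delta := -\big(Cb_2\log(p) - Cb_1\log(q) + \gamma\big) \in \big[0,\tfrac{1}{2m^3}\big)$ and takes $k_1 = Cb_2 + D$; the identical manipulation yields $\log(np^{\lambda k_1}) - \log(c_q q^{\lambda k_2}) = \tfrac12\log\tfrac{c_q+1}{c_q} - \lambda\delta > 0$ and $\log(np^{\lambda k_1}) - \log\big((c_q+1)q^{\lambda k_2}\big) = -\tfrac12\log\tfrac{c_q+1}{c_q} - \lambda\delta < 0$.

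The argument is essentially forced once one sees that $\gamma$ was reverse-engineered precisely so that the linear form $\lambda k_1\log(p) - \lambda k_2\log(q)$ lands strictly between $\log(c_q) - \log(n)$ and $\log(c_q+1) - \log(n)$; the telescoping in the display above and the verification $\gamma > 0$ (needed before invoking Lemma \ref{kron}) are the only substantive steps. The one genuinely delicate point is the last error estimate: one must know simultaneously that $\lambda$ is small compared with $m$ and that $c_q$ is small compared with $m^2$, so that the Dirichlet approximation error $\lambda\delta < \lambda/(2m^3)$ is strictly dominated by the gap $\tfrac12\log(1+1/c_q)$ between $\log(c_q q^{\lambda k_2})$ and $\log((c_q+1)q^{\lambda k_2})$. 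This is exactly what the factor $m^3$ in Lemmas \ref{dat} and \ref{kron} was arranged to deliver; everything else is bookkeeping of the two sign cases.
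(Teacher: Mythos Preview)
Your proof is correct and follows essentially the same approach as the paper's: both verify $\gamma>0$ by bounding the fractional term against $(k+2)\log(p)$, then feed Lemma \ref{kron} into the two target inequalities, with the decisive estimate being $\lambda/(2m^3) < \tfrac12\log\!\big(1+1/c_q\big)$ via $\lambda<m$ and $c_q+1<m^2$. Your organization with the auxiliary quantities $\Delta$ and $\delta$ is slightly cleaner than the paper's direct rearrangement, but the substance is identical.
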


\begin{proof}
Let us first prove the lower bound on $\gamma$.
\begin{align*}
y &\ge D \log(p) - \left|\frac{\log(c_q) + \log(c_q + 1) - 2\log(n)}{2\lambda}\right| \\
&> D \log p - \max\big(\log(c_q + 1), \log(n)\big) \\
&\ge \min \big(D \log p - \log(m^2), D \log(p) - \log(lp^k)\big) \\
&> \min \big(D \log p - \log(p), D \log(p) - (k+2)\log(p)\big) \\
&= (D - k - 2)\log(p)
\end{align*}

To prove the inequalities $c_qq^{\lambda Cb_1} < np^{\lambda (Cb_2 \mp D)} < (c_q + 1)q^{\lambda Cb_1}$, we should consider two distinct cases, depending on whether $b_2 \log(p) - b_1 \log(q)$ is positive or negative. These proofs are however completely analogous to each other. So let us only do the first one and leave the second one as exercise for the reader. Assume $b_2 \log(p) - b_1 \log(q) > 0$ and let us first try to find an upper bound for $np^{\lambda k_1}$, taking Lemma \ref{kron} as a starting point.
\begin{align*}
Cb_2 \log(p) - Cb_1 \log(q) - \gamma &< \frac{1}{2m^3} \\
&< \frac{\log(c_q + 1) - \log(c_q)}{2\lambda} \\
\end{align*}

Here we used $\lambda < m$ and the fact $\log(x) - \log(x-1) > \frac{1}{x}$ with $x = c_q + 1 \le m^2$. Now we multiply by $\lambda$, apply the definition of $\gamma$, rearrange and exponentiate.
\begin{align*}
\lambda (Cb_2 - D) \log(p)  + \log(n) &< \lambda Cb_1 \log(q) + \log(c_q + 1) \\
np^{\lambda (Cb_2 - D)} &< (c_q + 1)q^{\lambda Cb_1}
\end{align*}

For a lower bound on $np^{\lambda k_1}$, we use similar ideas.
\begin{align*}
Cb_2 \log(p) - Cb_1 \log(q) - \gamma &\ge 0 \\
&> \frac{\log(c_q) - \log(c_q + 1)}{2\lambda}
\end{align*}

And once more we multiply by $\lambda$, use the definition of $\gamma$, rearrange and exponentiate.
\begin{align*}
\lambda (Cb_2 - D) \log(p)  + \log(n) &> \lambda Cb_1 \log(q) + \log(c_q) \\
np^{\lambda (Cb_2 - D )} &> c_qq^{\lambda Cb_1} \qedhere
\end{align*} 
\end{proof}

\begin{infinite}\label{inf}
For every $a$ there are infinitely many $b$ for which $v_{a,b} < v_{a,b-1}$.
\end{infinite}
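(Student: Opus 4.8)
The plan is to reduce this to the machinery just assembled in Lemmas~\ref{close}, \ref{dat}, \ref{kron} and \ref{gamm}. Fix $a$. Recall that Lemma~\ref{close} says: whenever positive integers $k_1, k_2$ satisfy the string $c_qam \le c_qq^{\lambda k_2} < np^{\lambda k_1} < (c_q+1)q^{\lambda k_2}$ (with $b = np^{\lambda k_1}$), we get $v_{a,b} < v_{a,b-1}$. So it suffices to exhibit \emph{infinitely many} pairs $(k_1,k_2)$ satisfying this chain, since distinct $k_1$ give distinct $b = np^{\lambda k_1}$.

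First I would record that Lemma~\ref{gamm} already produces, for each integer $D \ge k+2$, a valid pair: it chooses $\gamma$ depending on $D$, sets $C = \lceil \gamma/\epsilon \rceil$, and then $k_2 = Cb_1$, $k_1 = Cb_2 \mp D$, with the conclusion that the two-sided inequality $c_qq^{\lambda k_2} < np^{\lambda k_1} < (c_q+1)q^{\lambda k_2}$ holds. So the only thing left to check is the lower cutoff $c_qam \le c_qq^{\lambda k_2}$, i.e. $q^{\lambda k_2} \ge am$, and that different choices of $D$ yield genuinely different (in particular infinitely many) values of $b$. For the size condition: $k_2 = Cb_1$ and $C = \lceil \gamma/\epsilon\rceil$ grows with $\gamma$, which by the displayed formula in Lemma~\ref{gamm} grows linearly in $D$; hence $k_2 \to \infty$ as $D \to \infty$, and in particular $q^{\lambda k_2} \ge am$ for all large enough $D$. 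That settles Lemma~\ref{close}'s hypothesis for all sufficiently large $D$.

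Next I would argue the pairs are distinct. As $D$ ranges over the integers $\ge k+2$, the exponent $k_1 = Cb_2 \mp D$ takes infinitely many values: indeed Lemma~\ref{gamm} guarantees $\gamma > (D-k-2)\log(p) \ge 0$, so $C = \lceil\gamma/\epsilon\rceil$ is non-decreasing in $D$ and tends to infinity, whence $k_1 = Cb_2 \mp D$ is unbounded (the $Cb_2$ term dominates since $C \to \infty$ while $D$ contributes only $\mp D$). An unbounded sequence of positive integers $k_1$ gives an unbounded, hence infinite, set of $b = np^{\lambda k_1}$. Combining: for every sufficiently large $D$ we obtain a $b$ with $v_{a,b} < v_{a,b-1}$, and these $b$ are infinitely many, which is exactly the claim. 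The one case to dispatch separately is $l < p$ (including the subcase $l < p \le M$ flagged at the start of Section~\ref{dioph}): there one does not even need a prime divisor $q$ of $l$, and the construction of Section~\ref{dirichlet} — Theorems~\ref{Theorem1} and \ref{ohyah} applied with $M$ in place of $m$ — already yields a valid $b$ for each admissible $k_1$, of which there are again infinitely many.

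The main obstacle, such as it is, is purely bookkeeping: one must make sure the lower bound $q^{\lambda k_2} \ge am$ eventually holds despite $c_q$, $b_1$, $b_2$ and $\epsilon$ all being fixed constants independent of $a$; this is where letting $D$ (and hence $C$, and hence $k_2$) grow is essential, and it is worth stating explicitly that $a$ is fixed first and $D$ chosen afterward. No new estimate is needed — everything follows by monotonicity of $C$ in $D$ together with the already-proved inequalities of Lemma~\ref{gamm}.
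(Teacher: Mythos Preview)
Your proposal is correct and follows essentially the same route as the paper: invoke Lemma~\ref{gamm} with $D$ large so that the remaining inequality $q^{\lambda k_2}\ge am$ from Lemma~\ref{close} is satisfied, and observe that infinitely many choices of $D$ are available. The paper's proof is terser --- it simply takes any $D>am+k+2$ and verifies $c_qq^{\lambda k_2}>c_qq^{\gamma}>c_qq^{am}>c_qam$ directly --- while you spell out the growth of $k_2$ and the distinctness of the resulting $b$'s more carefully, and you also explicitly dispatch the $l<p$ case that the paper treats as already settled at the start of Section~\ref{dioph}. One minor tightening: your claim that ``$Cb_2$ dominates $\mp D$'' should really be justified by the rate comparison $C\sim D\log(p)/\epsilon$ with $b_2\log(p)/\epsilon>1$, since both $Cb_2$ and $D$ tend to infinity; but this is a cosmetic point, not a gap.
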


\begin{proof}
The only inequality from Lemma \ref{close} that we have not checked yet is the inequality $c_qam \le c_qq^{\lambda k_2}$. Choose $D$ from Lemma \ref{gamm} to be any integer larger than $am + k + 2$. Then $c_qq^{\lambda k_2}$ is indeed larger than $c_qam$;
\begin{align*}
c_qq^{\lambda k_2} &= c_qq^{\lambda Cb_1} \\
%&= c_qq^{\lambda \left\lceil\frac{\gamma}{\epsilon}\right\rceil b_1} \\
&> c_qq^{\gamma} \\
&> c_qq^{am} \\
&> c_qam \qedhere
\end{align*}
\end{proof}

\subsection{Explicit bounds for all sequences} \label{final}
We are now in a position to prove our final theorem on upper bounds.

\begin{godsgift}\label{god}
For all $a$ there exists a $b < ca$ for which $v_{a,b} < v_{a,b-1}$, where $c = e^{e^{e^{m\left(4 + \frac{7}{\log(m)}\right)}}}$.
\end{godsgift}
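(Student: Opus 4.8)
The statement to prove, Theorem~\ref{god}, is the explicit linear upper bound $b(a) < ca$ in the fully general (possibly-zero) case, with $c = e^{e^{e^{m(4 + 7/\log m)}}}$. The idea is to chain together everything assembled in Sections~\ref{dioph} and~\ref{primebound}: Lemma~\ref{close} reduces the problem to exhibiting positive integers $k_1, k_2$ with $b = np^{\lambda k_1}$ satisfying $c_q a m \le c_q q^{\lambda k_2} < b < (c_q+1)q^{\lambda k_2}$, and Lemma~\ref{gamm} (fed through Lemmas~\ref{dat} and~\ref{kron}) produces exactly such $k_1, k_2$ for any integer $D \ge k+2$, provided we also ensure $c_q q^{\lambda k_2} \ge c_q a m$, which as in Corollary~\ref{inf} holds once $D > am + k + 2$. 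So the plan is: take $D = am + k + 3$ (the smallest admissible choice), track the resulting value of $b = n p^{\lambda k_1}$ with $k_1 = Cb_2 \mp D$, and bound it from above in terms of $a$ and $m$ alone.

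\textbf{Key steps.} First I would collect the bounds on the auxiliary quantities. From Section~\ref{dioph}, $n < e^{6M}$ where $M = \lfloor e^{2m + 4m/(3\log m)} \rfloor$ (applying Theorem~\ref{ohyah} with $M$ in place of $m$, and the estimate $\tilde M < 43 M^{3z+7}$), so $\log n$ is roughly $e^{2m + 4m/(3\log m)}$ up to polynomial factors; then $p \le |X_n| < e^{47 M^{3z+7}}$ by the same PNT estimates used in Section~\ref{dirichlet}, giving $\log p$ of the form $e^{e^{m(2 + \cdots)}}$ roughly. Next, $k \le \log n / \log p$ is tiny compared to everything, and $\lambda < m$. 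From Lemma~\ref{dat}, $b_2 < 2\log(q) m^3$; from Lemma~\ref{kron}, $C = \lceil \gamma/\epsilon \rceil \le 2 m^3 \gamma + 1$ where $\gamma$ from Lemma~\ref{gamm} is at most roughly $D \log p + \log n$. Since $D = am + k + 3 = O(am)$ (as $k$ is negligible), $\gamma = O(am \log p)$, hence $C = O(m^3 \cdot am\log p) = O(am^4 \log p)$, and $k_1 = Cb_2 + D = O(am^4 \log p \cdot m^3 \log q) = O(a m^7 \log p \log q)$. Then $b = n p^{\lambda k_1} \le n p^{m k_1}$, so $\log b \le \log n + m k_1 \log p = \log n + O(a m^8 (\log p)^2 \log q)$. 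Since $q < l < pt < p m$, $\log q < \log p + \log m$, so $\log b = O(a m^8 (\log p)^3)$. Plugging $\log p < 47 M^{3z+7}$ and $M^{3z} < e^{3.77 M}$ (Lemma~\ref{priemteller}), $(\log p)^3 < e^{3 \cdot 47 M^{3z+7}}$-ish, so $\log\log b = O(M^{3z+7}) = O(e^{m(2 + \cdots)} \cdot M^{3z})$, and finally $\log\log b < e^{m(2 + 4/(3\log m))} \cdot e^{3.77 e^{2m+\cdots} + \cdots}$, which after the routine but careful collapsing of constants is bounded by $e^{m(4 + 7/\log m)}$; dividing by $\log a$ and absorbing it, $b < e^{e^{e^{m(4+7/\log m)}}} a = ca$. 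The verification that the constants $4$ and $7/\log m$ suffice for all $m \ge 4$ (the relevant range, by the table in Section~\ref{return}) is a finite computer check for small $m$ plus an asymptotic argument for large $m$, exactly parallel to the end of Section~\ref{dirichlet}.

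\textbf{Main obstacle.} The conceptual work is already done — every inequality needed is either Lemma~\ref{close}, Lemma~\ref{gamm}, Corollary~\ref{inf}, or a PNT estimate from~\cite{pnt2} already invoked in Section~\ref{dirichlet}. The real difficulty is purely bookkeeping: one must choreograph the tower $n \le p \le \gamma \le C \le k_1 \le b$ so that each exponentiation is absorbed into the next level of the triple exponential without the constant in the innermost exponent blowing past $4 + 7/\log m$. The tightest squeeze is at the innermost level, where $\log\log\log b$ must come out below $m(4 + 7/\log m)$: here one must be careful that the factor $m^3$ from Dirichlet (Lemma~\ref{dat}), the factor $m$ from $\lambda$, and the $\log p$ factors multiply only a single copy of $M^{3z+7} \approx e^{m(2+\cdots)} e^{3.77 e^{\cdots}}$, and that $2 + 3.77$ together with all the polynomial-in-$m$ slack lands under $4$ once $m$ is large, with the small-$m$ cases dispatched by computer. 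I would organize the final inequality as a single \verb|align*| display of successive crude upper bounds, mirroring the style of Section~\ref{dirichlet}, and state explicitly the cutoff (some $m < m_0$) below which the bound is verified numerically and above which the asymptotic estimates $48 m^8 < e^{3m/(2\log m)}$-type inequalities take over.
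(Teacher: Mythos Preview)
Your proposal contains a genuine gap at the step where you bound $C$. You write ``$C = \lceil \gamma/\epsilon \rceil \le 2 m^3 \gamma + 1$'', but this inequality is backwards: Lemma~\ref{dat} gives $\epsilon < \frac{1}{2m^3}$, an \emph{upper} bound on $\epsilon$, hence $\epsilon^{-1} > 2m^3$, not $\epsilon^{-1} \le 2m^3$. To bound $C = \lceil \gamma/\epsilon \rceil$ from above you need a \emph{lower} bound on $\epsilon = |b_2\log p - b_1\log q|$. Dirichlet's theorem provides no such thing; this is exactly where the paper invokes an effective Baker-type bound on linear forms in logarithms (Lemma~\ref{baker}), obtaining $\log \epsilon > -e^{4m + 6.9m/\log m}$. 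Without this input $\epsilon$ could be arbitrarily small, $C$ arbitrarily large, and your chain of bounds collapses. This is not bookkeeping slack but a missing idea: the triple exponential in the final constant is there precisely because Baker's bound on $\epsilon^{-1}$ is itself exponential in quantities already of size $e^{e^{O(m)}}$.

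There is a second, independent problem with your choice $D = am + k + 3$. With $D$ linear in $a$ you get $\gamma = O(am\log p)$, hence (even granting a correct bound on $\epsilon^{-1}$) $C$, $k_1$, and $\log b$ are all linear in $a$, so $b$ is \emph{exponential} in $a$, not linear --- your final line ``dividing by $\log a$ and absorbing it'' does not recover $b < ca$ from $\log b = O(a\cdot\text{stuff})$. The paper avoids this by not fixing $D$ upfront: it splits into two cases. If $D = k+2$ already forces $q^{\lambda C b_1} \ge am$, then $\gamma+1 < (k+2)e^{5m} < e^{6m}$ is bounded independently of $a$ and one gets $b < c$ outright. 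Otherwise one chooses $D$ minimal so that the condition holds, and then exploits that $q^{\lambda b_1\lceil \gamma_{D-1}\epsilon^{-1}\rceil} < am$ together with $\gamma_D - \gamma_{D-1} = \log p$ to bound the ratio $b/(am)$ by something depending only on $m$. This two-case ``choose $D$ to barely work, then compare consecutive $D$'' argument is what extracts a genuinely linear bound in $a$, and it is absent from your plan.
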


\begin{proof}
Let us recall the chain of dependency. We chose $M = \left\lfloor e^{2m + \frac{4m}{3	\log(m)}} \right\rfloor$ to get $n = lp^k$ with $l > p \ge M$. Then a prime divisor $q$ of $l$ exists such that with $b = np^{\lambda k_1}$, we have $b < (c_q + 1)q^{\lambda Cb_1}$. With $Z = \pi(M-1)$ and $M > 10^5$, we now apply Lemma \ref{priemteller} to upper bound $n$, similar to what we did in Section \ref{dirichlet}. 
\begin{align*}
n &< 44M^{3Z+7} \\
%&< M^{3Z+8} \\
%&< e^{M \left(3 + \frac{9}{2\log(M)} + \frac{8 \log(M)}{M} \right)} \\
&< e^{3.4M} \\
&< e^{e^{2m + \frac{1.8m}{\log(m)}}} \\
&< e^{e^{4m}} 
\end{align*}

Via similar reasoning we also get $pm < nm < e^{e^{2m + \frac{1.8m}{\log(m)}}} < e^{e^{4m}}$. \\

We can then upper bound $b$ by $(c_q + 1)q^{\lambda Cb_1} < m^2q^{m Cb_1} < (pm)^{2m Cb_1} < e^{2m Cb_1e^{4m}}$, where the first inequality follows from Lemma \ref{Lemma3} and the second inequality follows from $q \le l < pt < pm$ as explained at the start of Section \ref{dioph}. So we still need to find upper bounds for $C$ and $b_1$. \\

As for $b_1$, Lemma \ref{dat} gives us that it is smaller than $\frac{b_2 \log(p)}{\log(q)} + 1 < 2m^3\log(p) + 1 < 2m^3\log(pm) < 2m^3e^{4m}$. So $2mb_1 < 4m^4e^{4m} < e^{6m}$. And finally, we would like to find a bound for $C = \left \lceil \frac{\gamma}{\epsilon} \right \rceil < (\gamma + 1)\epsilon^{-1}$. We therefore need to bound both $\gamma$ and $\epsilon^{-1}$ and starting with $\epsilon^{-1}$, we use an effective version of Baker's Theorem on a lower bound on linear forms in logarithms.

\begin{baker} \label{baker}
Let $b_1, b_2$ and $\epsilon$ be as in Lemma \ref{dat}. Then we have the following lower bound:
\begin{equation*}
\log(\epsilon) = \log\left(|b_2 \log(p) - b_1 \log(q) |\right) > -e^{4m + \frac{6.9m}{\log(m)}}
\end{equation*}
\end{baker}

\begin{proof}	
We need to take a look at Corollary $2$ of \mbox{\cite[p. 288]{bak}} and the notation they use. In their notation, $\alpha_2$ equals our $p$, while $\alpha_1$ is our prime $q$. Furthermore, $b_1$ is our $b_1$ and $b_2 = b_2$. So $D$, which is defined in Section $2$ of that paper as $[\mathbb{Q}(\alpha_1, \alpha_2) : \mathbb{Q}]/[\mathbb{R}(\alpha_1, \alpha_2) : \mathbb{R}]$, simply equals $1$. We can let $\log(A_1)$ and $\log(A_2)$ be $\log(q)$ and $\log(p)$ respectively, which makes their $b' = \frac{b_1}{D \log(A_2)} + \frac{b_2}{D \log(A_1)}$ in our case bounded by $2\frac{b_2}{\log q} + 1 < 4m^3 + 1$, so that $\log(b') + 0.14 < \log(4m^3 + 1) + 0.14 < 5\log(m)$. And now we may apply Corollary $2$ of \mbox{\cite{bak}}.
\begin{align*}
\log(|b_2 \log(p) - b_1 \log(q)|) &\ge -24.34\left(\max \left\{5\log(m), 21, \frac{1}{2} \right\} \right)^2 \log(q) \log(p) \\
%&= -608.5 \max\left(\log^2(m), 17.64\right) \log(p) \log(q) \\
%&> -608.5 \max\left(\frac{17.64 \log^2(m)}{\log^2(4)}, 17.64\right) \log(pm) \log(pm) \\
&> -5586 \log^2(m) \log^2(pm) \\
&> -5586 \log^2(m) e^{4m + \frac{3.6m}{\log(m)}} \\
&> -e^{4m + \frac{6.9m}{\log(m)}} \qedhere
\end{align*}
\end{proof}

To upper bound $\gamma = \gamma_D$, we use its definition as it was given in Lemma \ref{gamm}.
\begin{align*}
\gamma + 1 &= 1 \pm\left(\frac{\log(c_q) + \log(c_q + 1) - 2\log(n)}{2\lambda}\right) + D\log(p) \\
&< 1 + \max\bigl(\log(c_q + 1), \log(n)\bigr) + D\log(p) \\
&< 1 + \max\left(\log(m^2), e^{4m} \right) + De^{4m} \\
&= 1 + (D+1)e^{4m} \\
&< De^{5m}
\end{align*}

Here, by Lemma \ref{gamm} and the proof of Corollary \ref{inf}, we have to choose $D$ larger than or equal to $k+2$ and such that $q^{\lambda Cb_1} \ge am$, where $C$ depends on $\gamma$, which in turn depends on $D$. If $D = k+2$ already ensures $q^{\lambda Cb_1} \ge am$, then we choose $D = k+2$ and, by using $k < \lambda \le m-2$ (otherwise $p | X_{n'}$ with $n' = np^{-\lambda}$, contradicting the definition of $n$), the upper bound on $\gamma + 1$ simplifies to $\gamma + 1 < De^{5m} < me^{5m} < e^{6m}$. In this case we have:
\begin{align*}
b(a) &\le b \\
&< e^{2m Cb_1e^{4m}} \\
&< e^{e^{6m} e^{e^{4m + \frac{6.9m}{\log(m)}}} e^{6m} e^{4m}} \\
&< e^{e^{e^{4m + \frac{7m}{\log(m)}}}} \\
&= c \le ca
\end{align*}

In the other case we have to choose $D$ larger than $k+2$ to make sure $q^{\lambda Cb_1} \ge am$. So then we can choose $D$ in such a way that $q^{\lambda Cb_1} = q^{\lambda b_1 \left \lceil \gamma_D\epsilon^{-1} \right \rceil} \ge am > q^{\lambda b_1 \left \lceil \gamma_{D - 1}\epsilon^{-1} \right \rceil}$ holds, and we get: 
\begin{align*}
b(a) &\le b \\
&< (c_{q}+1)q^{\lambda b_1 \left \lceil \gamma_D\epsilon^{-1} \right \rceil} \\
& = (c_{q}+1)q^{\lambda b_1 \left \lceil \gamma_{D - 1}\epsilon^{-1} \right \rceil} q^{\lambda b_1 \left(\left \lceil \gamma_D\epsilon^{-1} \right \rceil - \left \lceil \gamma_{D - 1}\epsilon^{-1} \right \rceil\right)} \\
&< am^3 q^{\lambda b_1 \epsilon^{-1}  ((\gamma_D + 1)  - \gamma_{D - 1})} \\
&< a (pm)^{2\lambda b_1 \epsilon^{-1}  ((\gamma_D + 1)  - \gamma_{D - 1})} \\
&< a e^{2m b_1 \epsilon^{-1}  ((\gamma_D + 1)  - \gamma_{D - 1})e^{4m}} \\
&< a e^{e^{6m} e^{e^{4m + \frac{6.9m}{\log(m)}}}  e^{4m} e^{4m}} \\
&< a e^{e^{e^{4m + \frac{7m}{\log(m)}}}} \\
&= ca \qedhere
\end{align*}
\end{proof}

\newpage

\section{Lower bounds}\label{lower}
\subsection{A logarithmic lower bound} \label{lowerlog}
In the previous section we proved the upper bound $b(a) < ca$, for some constant $c$. Or, in other words, we can upper bound the difference $b(a) - a$ by a linear function. This difference turns out to grow at least logarithmically.

\begin{lima}\label{Theorem10}\label{limba}
We have the uniform lower bound ${\displaystyle \liminf_{a \rightarrow \infty}} \left(\frac{b(a)-a}{\log a}\right) \ge \frac{1}{2}$. % Do I want to make this explicit?
\end{lima}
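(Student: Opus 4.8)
The plan is to show that if $b = b(a)$ is the first index at which the denominator drops, then $b-a$ cannot be too small. The mechanism for a drop is that some prime $p$ must divide both $X_{a,b}$ and $L_{a,b}$ more than it did one step earlier; in particular there must be a prime $p$ dividing $v_{a,b-1}$ but not $v_{a,b}$, which forces $p \mid X_{a,b}$. Since $L_{a,b}$ only changes when $b$ itself contributes a new prime power, and since for a denominator decrease we need (as in the proof of Theorem~\ref{Theorem1}) that $L_{a,b}=L_{a,b-1}$ while $g_{a,b}>g_{a,b-1}$, the key point is that the cancelling prime $p$ must satisfy $p \mid X_{a,b}$, and because $g_{a,b-1} = 1$ is impossible to rule out directly, I would instead argue via $e_p(X_{a,b})$ directly: for $p$ to newly divide the gcd, we need $p \mid X_{a,b}$, and we can analyze $X_{a,b} \bmod p$ by the same term-removal trick used throughout Section~\ref{iflarge}.

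Here is the heart of it. Fix the prime $p$ that produces the cancellation at step $b$. Look at $X_{a,b} = \sum_{i=a}^{b} \frac{L_{a,b} r_i}{i}$ modulo $p$. The only surviving terms are those $i \in [a,b]$ with $e_p(i) = e_p(L_{a,b}) =: K$, i.e. $i$ is exactly divisible by $p^K$. If there were only one such $i$, then $X_{a,b} \equiv \frac{L_{a,b} r_i}{i} \not\equiv 0 \pmod p$ (using $|r_i| < p$, which holds for $p > r$; for small $p$ one needs a small separate argument or the bound is only claimed for large $a$), contradicting $p \mid X_{a,b}$. Hence there are at least two integers in $[a,b]$ exactly divisible by $p^K$. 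Two integers exactly divisible by $p^K$ differ by at least $2p^K$ actually — consecutive odd multiples of $p^K$ differ by $2p^K$ — but more usefully, the two smallest such that are $\le b$ force $b - a \ge p^K$ at minimum, and combined with the fact that $p^{K+1} \nmid L_{a,b}$ we get even more room. Actually the cleaner extraction: among integers in an interval of length $\ell$, the number exactly divisible by $p^K$ is at most $\lceil \ell/p^K \rceil$; requiring this to be $\ge 2$ gives $\ell = b-a \ge p^K$. So $b - a \ge p^K \ge p \ge$ (smallest eligible prime). That alone only gives a constant lower bound, so the logarithmic gain must come from elsewhere: one needs that the relevant prime $p$ dividing $v_{a,b-1}$ is itself forced to be large, of size $\gtrsim \log a$, because $v_{a,b-1}$ is huge (it is essentially $L_{a,b-1}$, which by Lemma~\ref{lcmbound}-type estimates is exponentially large in $b-a$) yet at step $b(a)$ the amount of cancellation is very limited.

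So the refined plan: let $p$ be a prime with $e_p(g_{a,b}) > e_p(g_{a,b-1})$. Then $p \mid X_{a,b}$ and, repeating the modular analysis, the interval $[a,b]$ contains at least two integers exactly divisible by $p^K$ where $K = e_p(L_{a,b})$. Since consecutive integers exactly divisible by $p^K$ differ by exactly $2p^K$ (they're odd multiples of $p^K$) when $p=2$ and otherwise the gap pattern still forces the second-smallest such integer to exceed the smallest by at least $p^K$, we obtain $b - a \ge p^K \ge p$. To promote $p$ to order $\log a$, observe that $g_{a,b-1} \ge p^{e_p(g_{a,b-1})} \ge 1$ is too weak; instead use that $b=b(a)$ is the \emph{first} drop, so for all $b' \in (a, b)$ we have $v_{a,b'} \ge v_{a,b'-1} \ge \cdots \ge v_{a,a} \ge a / r_a \ge a/r$, hence $L_{a,b-1} = v_{a,b-1} g_{a,b-1} \ge v_{a,a} \ge a/r$; but also $L_{a,b-1} \le \prod_{a \le i \le b-1} i$... this isn't tight enough either. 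The actual logarithmic bound should come from: $L_{a,b-1} \mid L_{a,b-1}$ has all its prime powers $q^{e_q} \le b-1$, and the product of primes up to $x$ grows like $e^x$, so if $b - a$ is small then $L_{a,b}/L_{a,a-1}$...

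\textbf{Expected main obstacle.} Pinning down exactly why the cancelling prime $p$ must be of size $\gtrsim \log a$ rather than merely bounded below by a constant. The clean statement is presumably: for the \emph{first} drop, cancellation is so delicate that the prime responsible divides $X_{a,b}$, and $|X_{a,b}|$ being a sum of $b-a+1$ unit-ish fractions over a common denominator $L_{a,b}$ forces, via the two-terms-exactly-divisible-by-$p^K$ argument, that $p^K \le b - a$; meanwhile one argues $a \le p^{K} \cdot (\text{something} \le p)$ or directly that the smallest multiple of $p^K$ in $[a,b]$ is $\ge a$ and the second is $\le b$, giving $b - a \ge p^K \ge a/(\text{first multiple count})$ — no. The genuinely correct route, which I would pursue: the two integers in $[a,b]$ exactly divisible by $p^K$ are some $m p^K$ and $m' p^K$ with $m < m'$ both coprime-ish to $p$ and $m' \ge m+1$, $m p^K \ge a$, $m' p^K \le b$; also there is no multiple of $p^{K+1}$ in $[a,b]$ (else $e_p(L_{a,b}) > K$), which since $m p^K \ge a$ and the next multiple of $p^{K+1}$ after $mp^K$ is within $p^{K+1}$ forces $b - a < p^{K+1}$, combine with $b - a \ge p^K$: consistent but gives $p^K \le b-a < p^{K+1}$, i.e. $K = \lfloor \log_p(b-a)\rfloor$. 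To get $p \gtrsim \log a$ we then need $a \lesssim p^K \le b - a$ is false in general, so instead: since $mp^K \ge a$ and $m < m' \le m+1$ wait $m' $ could be much bigger than $m$. If $m' = m + 1$ then $b \ge (m+1)p^K$ and $a \le mp^K$ so $b - a \ge p^K$; and $a \le m p^K$ with $m$ possibly large gives no lower bound on $p$. I think the logarithmic bound actually uses the \emph{full} set of primes dividing the cancellation together with $L_{a,b} > 2^{(b-a)/t - O(1)}$ and $g_{a,b} \le $ (small), a counting/size argument rather than a single-prime argument. I would therefore structure the proof as: (1) first-drop $\Rightarrow$ $g_{a,b} > g_{a,b-1}$ and $L_{a,b}=L_{a,b-1}$; (2) $p \mid X_{a,b}$ for the responsible $p$, giving via term-removal that $[a,b]$ has $\ge 2$ integers exactly divisible by $p^{e_p(L_{a,b})}$, hence $p^{e_p(L_{a,b})} \le b-a$; (3) but $p^{e_p(L_{a,b})}$ must also be large because $p^{e_p(L_{a,b})} \ge$ (smallest multiple of it that is $\ge a$) $/m \ge \cdots$; and the ``main obstacle'' paragraph is exactly step (3) — making the jump from a constant lower bound to $(\tfrac12 - \epsilon)\log a$, which I anticipate requires showing $p$ itself (not just $p^{e_p}$) is $\ge (\tfrac12-\epsilon)\log a$, presumably by noting $e_p(L_{a,b}) = \lfloor \log_p b \rfloor$ for all $p \le \sqrt{b}$ roughly, so $p^{e_p(L_{a,b})} > b/p \ge a/p$, whence $b - a \ge p^{e_p(L_{a,b})} > a/p$ would give $p > a/(b-a)$, the \emph{wrong} direction — so the correct inequality must be $p^{e_p(L_{a,b})} \le b-a$ \emph{and} $p^{e_p(L_{a,b})+1} > b$ giving $p > b/(b-a) \ge a/(b-a)$, i.e. $b - a > a/p \cdot$, still wrong direction. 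The resolution: it must instead be that the number of integers in $[a,b]$ exactly divisible by $p^K$ is $\ge 2$ AND these are the \emph{only} contributions, and a parity/pairing refinement (as in Lemma~\ref{neverodd}) shows you actually need $\ge p$ such integers or a smallest prime $\ge \log$-size for cancellation mod $p$ to be possible — this pairing/counting refinement is the crux I would need to get right.
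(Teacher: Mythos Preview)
Your single-prime analysis is going in circles because the logarithmic gain does \emph{not} come from any one cancelling prime being large. You correctly extract that if $p$ contributes to the cancellation then $p^{e_p(L_{a,b})} \le b-a$ (this is essentially Lemma~\ref{noudaarom} in the paper, modulo a bounded factor coming from the $r_i$). But you then try, repeatedly and unsuccessfully, to promote this to a lower bound on $p$ itself. That cannot work: nothing prevents the cancelling primes from being small --- what is bounded is the total cancellation, and that requires aggregating over \emph{all} primes.

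The missing step is precisely the one you gesture at (``a counting/size argument rather than a single-prime argument'') and then abandon. Multiply the per-prime bound over all $p$ to get $g_{a,b} \le L_{b-a}\cdot L_r$, where $L_{b-a}=\mathrm{lcm}(1,\ldots,b-a)$. You also need to control the ratio $L_{a,b}/L_{a,b-1}$, and the same observation does it: any prime power $p^k$ dividing $\gcd(L_{a,b-1},b)$ must already occur as a factor of some $i\in[a,b-1]$, so $p^k\le b-a$ and hence $\gcd(L_{a,b-1},b)\le L_{b-a}$. This gives
\[
v_{a,b}=\frac{L_{a,b}}{g_{a,b}}=\frac{b\,L_{a,b-1}}{\gcd(L_{a,b-1},b)\,g_{a,b}}\ge\frac{b\,L_{a,b-1}}{L_{b-a}^2\,L_r}\,.
\]
Thus $v_{a,b}>L_{a,b-1}\ge v_{a,b-1}$ whenever $b>L_{b-a}^2 L_r$. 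Since $L_n=e^{(1+o(1))n}$ by the prime number theorem, this holds as soon as $b-a<\bigl(\tfrac12-o(1)\bigr)\log a$. The constant $\tfrac12$ is exactly the square $L_{b-a}^2$: one factor of $L_{b-a}$ bounds the possible gain in the gcd, the other bounds the possible loss in the lcm, and you need $b$ to beat their product. Your attempt to pin the bound on a single large prime cannot see this factor of $2$.
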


\begin{proof}
If $b$ is an integer with $a < b < a + \big(\frac{1}{2} - o(1)\big)\log(a)$, then we will prove that $b$ is not equal to $b(a)$. If $r_b = 0$, then we definitely have $b \neq b(a)$, so we may assume that $b$ is an integer with $r_b \neq 0$. We will then show $v_{a,b} > v_{a,b-1}$. \\

Recall $r = \max_i(|r_i|)$ and let, for this proof, $L_{b-a}$ and $L_r$ be the least common multiples of the elements in the sets $\{1, 2, \ldots, b-a\}$ and $\{1, 2, \ldots, r\}$ respectively, regardless of whether some $r_i$ are zero or not. On the other hand, $L_{a,b}$ is still the least common multiple of only those $i \in \{a, a+1, \ldots, b\}$ for which $r_i \neq 0$. \\

When $r_b \neq 0$, we have $L_{a,b} = \text{lcm}(L_{a,b-1}, b) = \frac{bL_{a,b-1}}{\gcd(L_{a,b-1}, b)}$. For a prime power divisor $p^k$ of $\gcd(L_{a,b-1}, b)$ we need $p^k \le b-a$, so that $\gcd(L_{a,b-1}, b) \le L_{b-a}$. We claim a similar upper bound on $g_{a,b}$, which will follow from the next lemma.

\begin{jamaarhoezodan} \label{noudaarom}
For all primes $p$ we have $e_p(g_{a,b}) \le e_p(L_{b-a}) + e_p(L_r)$.
\end{jamaarhoezodan}

\begin{proof}
We may assume $e(L_{a,b}) > e(L_{b-a}) + e(L_r)$, in which case there exists an $i \in [a, b]$ with $r_i \neq 0$ and $e(i) = e(L_{a,b}) > e(L_{b-a}) + e(L_r)$. Now, if $e(j)$ is also larger than $e(L_{b-a})$ for some $j \neq i$, then $|i - j| > b-a$ so that $j \notin [a, b]$. In other words, for all $j \in [a, b]$ with $j \neq i$ we have $e(j) \le e(L_{b-a}) < e(L_{a, b}) - e(L_r)$. This implies $\frac{L_{a,b}r_j}{j} \equiv 0 \pmod{p^{e(L_r)+1}}$ for all $j \in [a, b]$ different from $i$. We then obtain $X_{a, b} \equiv \frac{L_{a, b}r_i}{i} \pmod{p^{e(L_r)+1}}$, and we conclude $e(g_{a, b}) \le e(X_{a, b}) = e(r_i) \le e(L_r)$.
\end{proof}

Since $g_{a,b}$ is equal to the product of $p^{e_p(g_{a,b})}$ over all primes $p$, Lemma \ref{noudaarom} in particular implies $g_{a,b} \le L_{b-a}L_r$. To prove Theorem \ref{limba} we now apply the inequality $b > L^2_{b-a}L_r$, which follows from $b - a < \big(\frac{1}{2} - o(1)\big)\log(a)$ and PNT.
\begin{align*}
v_{a,b} &= \frac{L_{a,b}}{g_{a,b}} \\
&= \frac{bL_{a,b-1}}{\gcd(L_{a,b-1}, b)g_{a,b}} \\
&\ge \frac{bL_{a,b-1}}{L_{b-a}^2L_r} \\
&> L_{a,b-1} \\
&\ge v_{a,b-1} \qedhere
\end{align*}
\end{proof}

\subsection{Optimality of the lower bound} \label{lower0} % the start of this section does not have any lemmas. Is that bad?
As it turns out, the lower bound from the previous section is close to sharp, as we will show that the lower limit ${\displaystyle \liminf_{a \rightarrow \infty}} \left(\frac{b(a)-a}{\log a}\right)$ is finite for all sequences of $r_i$. 

\begin{generallowertight} \label{generaltight}
We have the upper bound ${\displaystyle \liminf_{a \rightarrow \infty}} \left(\frac{b(a)-a}{\log a}\right) \le t(t+1)\varphi(t)$. Moreover, if $t > 1$ and $r_i \neq 0$ for all $i$ with $\gcd(i,t) = 1$, then we can lower this bound to ${\displaystyle \liminf_{a \rightarrow \infty}} \left(\frac{b(a)-a}{\log a}\right) < 20 \log(\log(2t))$. And in the case where $r_i \neq 0$ holds for all $i$, this can be further improved to ${\displaystyle \liminf_{a \rightarrow \infty}} \left(\frac{b(a)-a}{\log a}\right) \le 2$. % Can we improve this further to \le 1 if r_i \in \mathbb{N} for all i? Can we improve this further by applying (generalizations of) the polynomials f_d from the next section?
\end{generallowertight}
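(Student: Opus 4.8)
The plan is, for each of the three hypotheses, to exhibit an infinite sequence of pairs $(a,b)$ with $b-a$ of order $(\text{const})\cdot\log a$ and $v_{a,b}<v_{a,b-1}$, the constant being $t(t+1)\varphi(t)$, $20\log\log(2t)$ and $2$ respectively. In every case we keep $L_{a,b}=L_{a,b-1}$ and manufacture the drop of the denominator out of extra cancellation at a single small prime $p$, so the argument is a local copy of the one in the proof of Theorem~\ref{ogplus}: Lemma~\ref{Lemma1}, Lemma~\ref{higherpower} and the final computation of $g_{a,b}$ in the proof of Theorem~\ref{Theorem1} reduce everything to \emph{(i)} $L_{a,b}=L_{a,b-1}$, \emph{(ii)} $e_p(X_{a,b})>e_p(X_{a,b-1})$ for some prime $p\mid L_{a,b}$, and \emph{(iii)} $\gcd\!\big(b/p^{e_p(b)},X_{a,b-1}\big)<p^{\,e_p(g_{a,b})-e_p(g_{a,b-1})}$. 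Concretely I would fix a prime $p$ with $\gcd(p,t)=1$ (in the third case $p$ minimal, in the second case the least prime coprime to $t$, which is $O(\log t)$ by elementary estimates), fix a large multiple $M$ of $\lambda$, and study an interval $[a,b]$ whose elements divisible by $p^{M}$ form a short run $j_{1}p^{M},(j_{1}+1)p^{M},\dots$, with every $j$ in the run coprime to $p$ and with $b$ equal to the largest of these multiples. Since $p^{M}\equiv 1\pmod t$, Lemma~\ref{car} gives $r_{jp^{M}}=r_{j}$, so reducing $X_{a,b}$ modulo $p$ annihilates every term except those at the $p^{M}$-multiples and leaves $X_{a,b}\equiv\frac{L_{a,b}}{p^{M}}\sum_{j}\frac{r_{j}}{j}\pmod p$; choosing the residues of the $j$'s so that this sum vanishes mod $p$ — exactly as $1+\tfrac12\equiv0\pmod3$ was used in Theorem~\ref{ogplus} — gives \emph{(ii)}, and since $b$ is itself a $p^{M}$-multiple and $p\nmid r_b$, the corresponding sum over $[a,b-1]$ has one fewer term and is a $p$-adic unit, so $p\nmid X_{a,b-1}$.

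The length $b-a$ is then squeezed between two opposing demands. On one hand $L_{a,b}=L_{a,b-1}$ forces $b\mid L_{a,b-1}$, so every prime-power part of $b$ is at most $b-a$; writing $b=u\,p^{e_p(b)}$ this says the cofactor $u$ is $(b-a)$-smooth. On the other hand $u$ must be as large as possible: since $a\asymp u\,p^{M}$ we have $\log a\asymp\log u$, and for a $(b-a)$-smooth number the best one can do is $\log u\approx\log\text{lcm}(1,\dots,b-a)\approx b-a$ by the prime number theorem. Balancing the two forces $b-a\gtrsim\log a$, and the three constants arise from how much period-$t$ bookkeeping one has to carry through this balance: in general one insists on suitable congruences mod $t$ (so the usable positions are spaced by $t$) and needs a $\varphi(t)/t$-proportion of the $p^{M}$-multiples in the run to carry a non-zero $r$-value, which inflates the constant to $t(t+1)\varphi(t)$; it drops to $20\log\log(2t)$ when $\gcd(i,t)=1$ implies $r_i\ne0$, since then those non-vanishing side conditions are automatic, one may take $p$ the least prime coprime to $t$, and the density loss is governed by $t/\varphi(t)\ll\log\log(2t)$; and it drops to $2$ when all $r_i\ne0$, where residues mod $t$ can essentially be ignored and a two-term block with a fixed small $p$ does the job.

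The step I expect to be the real obstacle is requirement \emph{(iii)}: $\gcd(u,X_{a,b-1})$ must be smaller than the relevant power of $p$, and since $u$ is built from small primes this is the assertion that $X_{a,b-1}$ avoids the small primes dividing $u$ — and we have essentially no control over the factorisation of the partial sum $X_{a,b-1}$. The way around it is to spend the freedom that remains: for each admissible $M$ there are many admissible smooth cofactors and, for each, a window of admissible $a$'s; having fixed such a window I would compute which primes $\le b-a$ actually divide $X_{a,b-1}$ — using facts like Lemma~\ref{neverodd} (and, in general, a counting argument showing these primes are few enough) — and then simply rebuild $u$ as a product of the surviving small prime powers, so that $\gcd(u,X_{a,b-1})=1$ holds by construction. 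In the worst-case accounting this deletion costs up to a factor $2$ in $\log u$, which is precisely what degrades the otherwise optimal constant $1$ to the stated $2$; and it is the interaction of this deletion with the congruence conditions mod $t$, the relation $p^{\lambda k}\equiv1\pmod t$, and the density $\varphi(t)/t$ of admissible positions that produces the cruder general constant $t(t+1)\varphi(t)$, sharpened to $20\log\log(2t)$ in the intermediate regime.
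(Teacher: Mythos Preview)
Your outline correctly identifies the three ingredients \emph{(i)}--\emph{(iii)}, but the proposed treatment of \emph{(iii)} is a genuine gap, and it is precisely the step where the paper's argument differs from yours.

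You propose to build the cofactor $u=b/p^{e_p(b)}$ out of small prime powers and then ``rebuild $u$ as a product of the surviving small prime powers'' after deleting those dividing $X_{a,b-1}$. This is circular: $X_{a,b-1}$ depends on $b$, which depends on $u$; once you change $u$ you change $b$ and $a$, and hence $X_{a,b-1}$, so the set of primes to delete moves. Even setting the circularity aside, the assertion that the primes $q\le b-a$ dividing $X_{a,b-1}$ are ``few enough'' is exactly the kind of statement we have no handle on --- Lemma~\ref{neverodd} is specific to $r_i\equiv1$ and the prime $2$, and there is no analogue for general periodic $r_i$ and general small primes.

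The paper sidesteps this entirely by building the cofactor not from \emph{arbitrary} small primes but from primes $q$ that are \emph{structurally guaranteed} not to divide $X_{a,b-1}$. Concretely $b=eQp^{\lambda k}$ with $e<p$ fixed and $Q$ a product of distinct primes $q$, each roughly of size $b-a$ and each chosen so that exactly one $i\in[a,b-1]$ satisfies $q\mid i$ and $r_i\neq0$; then $X_{a,b-1}\equiv\frac{L_{a,b-1}r_i}{i}\not\equiv0\pmod q$ automatically, so $\gcd(Q,X_{a,b-1})=1$ and \emph{(iii)} reduces to $e<p$. The divisibility $p\mid X_{a,b}$ in \emph{(ii)} is arranged not by choosing which $j$'s appear (these are forced once $a,b$ are chosen) but by imposing a congruence on $Q\pmod p$; two cases are needed according to whether some consecutive pair $r_{i_j},r_{i_{j+1}}$ fails to cancel or whether they always alternate in sign, the latter requiring a quadratic (rather than linear) congruence on $Q$. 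The three constants then come from counting how many primes $q$ in the relevant range satisfy both the ``one active multiple'' property and the congruence conditions $Q\equiv\ast\pmod p$, $Q\equiv1\pmod t$, via PNT in arithmetic progressions --- not from the smooth-number heuristic you sketch.
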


\begin{proof}
Let us recall what we did in Section \ref{iflarge}. There, $b$ was the product of two factors: a power of a prime $p > \max(r, t)$ for which $e_p(X_{a,b}) > e_p(X_{a,b-1}) = 0$, and a factor $l$, ideally with $l < p$ so that the inequality $\gcd(l, X_{a,b-1}) < p$ is immediate. To prove Theorem \ref{generaltight}, we will once again have a prime $p > \max(r, t)$ and then define $b = eQp^{\lambda k}$ as a product of three factors instead, where $k$ is large enough, $r_b \neq 0$, $e$ is smaller than $p$, and $Q$ is a certain product of distinct primes $q > p$, such that for every $q | Q$ there is exactly one $i \in [a, b-1]$ with $q | i$ and $r_i \neq 0$. \\

If we then choose $a = b - (e - c)p^{\lambda k}$ (where $e$ and $c < e$ will be defined shortly) with $r_a \neq 0$, we claim that the equality $L_{a,b} = L_{a,b-1}$ still holds, which is the analogue of Lemma \ref{lisl}. Indeed,  if $k$ is large enough, $L_{a,b-1}$ is divisible by $b - et$ and therefore by $e$, $L_{a,b-1}$ is divisible by $p^{\lambda k}$ because $a$ is, and $Q | L_{a,b-1}$ by the property that for every $q | Q$ there exists an $i \in [a, b-1]$ with $q | i$ and $r_i \neq 0$. If we assume (analogous to Lemma \ref{p-ness}) for the moment that $p$ divides $X_{a,b}$ but $p$ does not divide $X_{a,b-1}$, then we can copy our calculation of $g_{a,b}$ at the end of the proof of Theorem \ref{Theorem1} almost verbatim, but with $eQ$, instead of $l$. This results in $g_{a,b} \ge \frac{p}{\gcd(eQ, X_{a,b-1})} g_{a,b-1}$. Now we use the fact that for every prime $q | Q$ there is, by assumption, only one $i \in [a, b-1]$ with $q | i$ and $r_i \neq 0$, which makes $X_{a,b-1}$ congruent to $\frac{L_{a,b-1}r_i}{i} \not \equiv 0 \pmod{q}$. We therefore get $g_{a,b} > g_{a,b-1}$ by $e < p$, and we may still conclude $v_{a,b} < v_{a,b-1}$. \\

So we need to define $e$, $Q$ and $p$ such that the above properties hold, and ideally have $Q$ as large as possible, to ensure that $b-a$ is small compared to $a$.  \\

Let $1 \le i_1 < i_2 < \ldots$ be the sequence of indices $i$ for which $r_i \neq 0$, and define the quadratic polynomial $f(x) = x^2 + 2(i_2 - i_3)x + (i_3 - i_1)(i_3 - i_2)$. Now fix any prime $p$ larger than $2\max(r, t)$ such that $f(x) \equiv 0 \pmod{p}$ is solvable. One can check that $f(x)$ has a root modulo $p$ if, and only if, $(i_3 - i_2)(i_1 - i_2)$ is a quadratic residue modulo $p$. We will then separate the proof into two distinct cases, depending on the existence of a positive integer $j$ for which $r_{i_{j}}$ is different from $-r_{i_{j+1}}$. \\

Case I. A positive integer $j$ exists with $r_{i_{j}} \neq -r_{i_{j+1}}$. \\
To avoid too many double subscripts, define $c = i_{j}$ and $e = i_{j+1}$, and note that we may assume $e \le 2t$. We will introduce three products $Q_1, Q_2, Q_3$ of primes for which the congruence conditions $Q_i \equiv r_e(e - c)e^{-1}(r_e + r_c)^{-1} \pmod{p}$ and $Q_i \equiv 1 \pmod{t}$ hold. Note that we use the assumption $r_c \neq -r_e$ here, as otherwise $(r_e + r_c)^{-1}$ would not exist. The fact that $r_e(e - c)e^{-1}(r_e + r_c)^{-1} \pmod{p}$ is a non-zero residue class, follows from the assumption $p > 2\max(r, t)$. And for completeness' sake: if for some $i \in \{1, 2, 3 \}$ the definition that we will provide for $Q_i$ does not make sense, as no product exists for which the two congruence conditions are both true, then define $Q_i = 1$ instead.  \\

Let $Q_1$ be the largest product of the primes $q$ with $r_{e-q} \neq 0$ that are contained in the interval $\big(\frac{1}{2}(e-c)p^{\lambda k}, (e-c)p^{\lambda k}\big)$, such that the aforementioned congruence conditions on $Q_1$ hold. Let $Q_2$ be the largest product of the primes $q \in \big(\frac{1}{3}(e-c)p^{\lambda k}, \frac{1}{2}(e-c)p^{\lambda k}\big)$ with $r_{e-q} = 0 \neq r_{e-2q}$, for which the congruence conditions hold. And define $Q_3$ to be the largest product of the primes $q \in \big((\frac{e - c}{e - c + 1})p^{\lambda k}, p^{\lambda k}\big)$ with $q \equiv 1 \pmod{t}$, for which the congruence conditions hold. Finally, define $Q = \max(Q_1, Q_2, Q_3)$. \\

With $b = eQp^{\lambda k}$ and $a = b - (e - c)p^{\lambda k}$, we then claim that $v_{a,b}$ is smaller than $v_{a,b-1}$. As we mentioned at the start of this section, in order to prove this, we have to check $e_p(X_{a,b}) > e_p(X_{a,b-1}) = 0$, and we need to show that for every $q | Q$ there is exactly one $i \in [a, b-1]$ with $q | i$ and $r_i \neq 0$. Let us start with the latter. \\

This property is easiest seen for primes $q | Q_1$, since both $b - 2q$ and $b$ are then outside the interval $[a, b-1]$. We therefore see that $i = b-q$ is the only $i \in [a, b-1]$ with $q | i$, while $r_i = r_{b-q} = r_{e-q} \neq 0$ by the definition of $Q_1$. As for $q | Q_2$, the only multiples of $q$ that are contained in the interval $[a, b-1]$, are $b-q$ and $b-2q$. But $r_{b - q} = r_{e - q} = 0$, by the definition of $Q_2$. And so we see that $i = b - 2q$ is the only $i \in [a, b-1]$ with $q | i$ and $r_i = r_{e - 2q} \neq 0$. Finally, for a prime divisor $q$ of $Q_3$, the integers in the interval $[a, b-1]$ that are divisible by $q$ are precisely $b-q, b-2q, \ldots, b-(e-c)q$, since $b - (e-c+1)q < b - (e-c)p^{\lambda k} = a$. But $r_{b-iq} = r_{e-i} = 0$ for all $i$ with $1 \le i < e-c$, by the definitions of $c$ and $e$. This implies that $i = b - (e-c)q$ is the only $i \in [a, b-1]$ with $q | i$ and $r_i = r_c \neq 0$. Analogously, the only $i \in [a, b-1]$ with $p^{\lambda k} | i$ and $r_i \neq 0$ is $i = b - (e-c)p^{\lambda k} = a$. All in all we conclude $e_p(X_{a,b-1}) = e_q(X_{a,b-1}) = 0$ for all $q | Q$. \\

As for $X_{a,b} \pmod{p}$, there are now two integers $i \in [a, b]$ with $r_i \neq 0$ and $i$ divisible by $p^{\lambda k}$; $i = a$ and $i = b$. We therefore get the following:
\begin{align*}
X_{a,b} &= L_{a,b} \sum_{i=a}^b \frac{r_i}{i} & \\
&\equiv L_{a,b} \sum_{i=0}^{e-c} \frac{r_{b-ip^{\lambda k}}}{b - ip^{\lambda k}} &\pmod{p} \\
&\equiv \frac{L_{a,b}}{p^{\lambda k}} \sum_{i=0}^{e-c} \frac{r_{e-i}}{eQ - i} &\pmod{p} \\
&\equiv \frac{L_{a,b}}{p^{\lambda k}} \left(\frac{r_e}{eQ} + \frac{r_c}{eQ - (e - c)} \right) &\pmod{p} \\
&\equiv \left(\frac{L_{a,b}}{p^{\lambda k}eQ\big(eQ - (e - c)\big)}\right) \big(eQ(r_e + r_c) - r_e(e - c)\big) &\pmod{p} \\
&\equiv 0 &\pmod{p}
\end{align*}

The final equality follows from the congruence $Q \pmod{p}$ we imposed. The inequality $v_{a,b} < v_{a,b-1}$ now indeed follows from copying the calculation of $g_{a,b}$ at the end of Section \ref{iflarge}, but with $eQ$ instead of $l$. \\

Case II. For all $j \in \mathbb{N}$ we have $r_{i_{j}} = -r_{i_{j+1}}$. \\ % Is it possible to combine Case I and II into one proof that covers everything?
In this case we define $c = i_1$, $d = i_2$ and $e = i_3$, and the congruence conditions for the $Q_i$ are now $f(eQ_i) \equiv 0 \pmod{p}$ and $Q_i \equiv 1 \pmod{t}$. For the definitions of $Q_1$ and $Q_2$ one can copy the definitions we used in Case I, the only distinction being the different congruence condition we have here. And $Q_3$ is now defined as the largest product of the primes $q \in \big(p^{\lambda k}, \big(\frac{e - c}{e - d}\big)p^{\lambda k}\big)$ with $q \equiv 1 \pmod{t}$, for which the congruence conditions hold. Once again, with $Q = \max(Q_1, Q_2, Q_3)$, $b = eQp^{\lambda k}$ and $a = b - (e - c)p^{\lambda k}$, we will show the inequality $v_{a,b} < v_{a,b-1}$ in an analogous manner. \\

Since the definitions of $Q_1$ and $Q_2$ are still the same as they were in the previous case, the proofs that for every prime $q | Q_1Q_2$ there is only one $i \in [a, b-1]$ with $q|r_i$ and $r_i \neq 0$, are still the same as well. As for $q | Q_3$, the integers in the interval $[a, b-1]$ that are divisible by $q$ are $b-q, b-2q, \ldots, b - \left\lfloor\frac{b-a}{q}\right\rfloor q$. The term $\left\lfloor\frac{b-a}{q}\right\rfloor = \left\lfloor\frac{(e-c)p^{\lambda k}}{q}\right\rfloor$ is at least $e-d$ (since $q < \big(\frac{e-c}{e-d}\big)p^{\lambda k}$) and smaller than $e-c$ (since $q > p^{\lambda k}$). Since, by the definitions of $c, d$ and $e$, $r_{b - iq} = r_{e - i} = 0$ for all $i \neq e-d$ with $1 \le i < e-c$, we once again deduce that there is only one $i \in [a, b-1]$ (namely $i = b - (e-d)q$) with $q | i$ and $r_i  \neq 0$. The analogous calculation for $X_{a,b-1} \pmod{p}$ contains two non-zero terms in this case, i.e. the two terms corresponding to $i = b - (e-d)p^{\lambda k}$ and $i = b - (e-c)p^{\lambda k} = a$;
\begin{align*}
X_{a,b-1} &= L_{a,b-1} \sum_{i=a}^{b-1} \frac{r_i}{i} &\\
&\equiv L_{a,b-1} \sum_{i = 1}^{e-c} \frac{r_{b-ip^{\lambda k}}}{b - ip^{\lambda k}} &\pmod{p} \\
&\equiv \frac{L_{a,b-1}}{p^{\lambda k}} \sum_{i = 0}^{e-c} \frac{r_{e-i}}{eQ - i} &\pmod{p} \\
&\equiv \frac{L_{a,b-1}}{p^{\lambda k}} \left(\frac{r_d}{eQ - (e - d)} + \frac{r_c}{eQ - (e - c)} \right) &\pmod{p} \\
&\equiv \frac{L_{a,b-1}r_d}{p^{\lambda k}} \left(\frac{1}{eQ - e + d} - \frac{1}{eQ - e + c} \right) &\pmod{p} \\
&\equiv \frac{L_{a,b-1}r_d(c-d)}{p^{\lambda k}(eQ - e + d)(eQ - e + c)} &\pmod{p} \\
&\not \equiv 0 &\pmod{p}
\end{align*}

On the other hand, the sum $X_{a,b} \pmod{p}$ also contains the term corresponding to $i = b$;
\begin{align*}
X_{a,b} &\equiv \frac{L_{a,b}r_e}{p^{\lambda k}} \left(\frac{1}{eQ} - \frac{1}{eQ - e + d} + \frac{1}{eQ - e + c} \right) &\pmod{p} \\
&\equiv \left(\frac{L_{a,b}r_e}{p^{\lambda k}eQ(eQ - e + d)(eQ - e + c)}\right) f(eQ) &\pmod{p} \\
&\equiv 0 &\pmod{p}
\end{align*}

Where the final equality follows from the congruence $Q \pmod{p}$ we imposed for this case. And the conclusion $v_{a,b} < v_{a,b-1}$ once again follows. What remains to be done is calculate (a lower bound on) the size of $Q$, which will give us an upper bound on $\frac{b - a}{\log(a)}$. \\

In both Case I and Case II it follows from PNT that the product $Q_3$ (together with the congruence conditions) exists if $k$ is large enough. In Case $I$ we have $a > Q \ge Q_3 \ge \exp\left[\frac{(1 + o(1))p^{\lambda k}}{(t+1)\varphi(t)}\right]$, which implies the upper bound $b - a \le tp^{\lambda k} \le \big(t(t+1)\varphi(t) + o(1)\big) \log(a)$. As for Case II, note that the sequence $r_1, r_2, \ldots, r_t$ must contain at least two non-zero terms, as otherwise all non-zero terms would be equal to one another, contradicting the assumption of Case II. We therefore deduce $e - c \le t$ and $\frac{e-c}{e-d} \ge \frac{t}{t-1}$. This gives us $a > Q \ge Q_3 \ge \exp\left[\frac{(1 + o(1))p^{\lambda k}}{(t-1)\varphi(t)}\right]$ by PNT, implying $b - a \le tp^{\lambda k} \le \big(t(t-1)\varphi(t) + o(1)\big) \log(a)$. In either case we are done and this finishes the proof for arbitrary sequences. \\

For non-zero sequences we note that $Q_1$ is divisible by all primes in the interval $\big(\frac{1}{2}(e-c)p^{\lambda k}, (e-c)p^{\lambda k}\big)$, so that $a > Q \ge Q_1 = \exp\big[\big(\frac{1}{2} + o(1)\big)(e - c)p^{\lambda k}\big]$ and $b - a = (e-c)p^{\lambda k} \le \big(2 + o(1)\big)\log(a)$. We may therefore assume from now on that $r_i$ is non-zero for all $i$ coprime to $t$, with $t > 1$. \\

Define $S_1$ to be the set of positive integers $i < t$ coprime to $t$ with $r_{e - i} \neq 0$ and define $S_2$ to be the set of positive integers $i < t$ coprime to $t$ with $r_{e - i} = 0 \neq r_{e - 2i}$. From PNT it follows that $Q_1 = \exp\big[\frac{(1 + o(1)) |S_1| (e-c)  p^{\lambda k}}{2\varphi(t)}\big]$ and $Q_2 = \exp\big[\frac{(1 + o(1)) |S_2| (e-c) p^{\lambda k}}{6\varphi(t)}\big]$, which then gives us the upper bound $b - a = (e-c)p^{\lambda k} \le \big(1 + o(1)\big) \min\big(\frac{2\varphi(t)}{|S_1|}, \frac{6\varphi(t)}{|S_2|}\big) \log(a)$. It therefore suffices to show $\min\big(\frac{2\varphi(t)}{|S_1|}, \frac{6\varphi(t)}{|S_2|}\big) < 20 \log(\log(2t))$. \\

When $t = 2, 4, 6$, one can check that either $S_1$ or $S_2$ is non-empty, so that $\min\big(\frac{2\varphi(t)}{|S_1|}, \frac{6\varphi(t)}{|S_2|}\big) \le 6\varphi(t) < 20 \log(\log(2t))$. For $t = 3,5$ we have $|S_1| \ge 1$, which implies $\min\big(\frac{2\varphi(t)}{|S_1|}, \frac{6\varphi(t)}{|S_2|}\big) \le 2\varphi(t) < 20 \log(\log(2t))$. We may therefore assume $t \ge 7$ from now on. We will then use the following lemma, where $s_2 = 1$ and $s_q = 2$ for $q > 2$.

\begin{gregmartin} \label{greg}
The union $S_1 \cup S_2$ has at least $t \displaystyle \prod_{q | t} \left(1 - \frac{s_q}{q}\right)$ elements.  
\end{gregmartin}

\begin{proof}
For a positive integer $i < t$, define $i' = i$ if both $e$ and $t$ are even, and define $i' = 2i$ otherwise. Furthermore note that $i \in S_1 \cup S_2$, if $\gcd(i, t) = \gcd(e - i', t) = 1$. The goal is to count how many such $i$ there are, and we will first do this if $t$ is a prime power. \\

If $t$ is a power of $2$, then $\gcd(i, t) = \gcd(e - i', t) = 1$ for all odd $i < t$. On the other hand, if $t$ is a power of an odd prime $q$, then $\gcd(i, t) = \gcd(e - i', t) = 1$ for all $i < t$, unless $i \equiv 0 \pmod{q}$ or $e \equiv i' \pmod{q}$. The result for general $t$ now follows from the Chinese Remainder Theorem.\footnote{We thank Greg Martin for (the inspiration for) this proof, see \mbox{\cite{mo2}}.} \qedhere 
\end{proof}

In order to apply Lemma \ref{greg}, we need to be able to lower bound the product that occurs in its statement.

\begin{prodoverprimes} \label{primeprod}
For any set $S$ of odd primes $q$ we have the following inequality:

\begin{equation*}
\prod_{q \in S} \left(1 - \frac{2}{q}\right) > 0.62 \prod_{q \in S} \left(1 - \frac{1}{q}\right)^{2}
\end{equation*}
\end{prodoverprimes}

\begin{proof}
For $q = 3$ we have $\left(1 - \frac{2}{q}\right) = \frac{3}{4}\left(1 - \frac{1}{q}\right)^2$. For $q \ge 5$ we apply the inequality $\left(1 - \frac{2}{q}\right) > \left(1 - \frac{1}{q^2}\right)^{2} \left(1 - \frac{1}{q}\right)^{2}$, which can be checked by expanding the brackets. We now deduce our result from the equality $\prod_{q \ge 5} \left(1 - \frac{1}{q^2}\right)^2 = \frac{81}{\pi^4}$, which in turn follows from the Euler product for the Riemann zeta function.
\end{proof}

From Lemma \ref{greg} it follows that $\max(3|S_1|, |S_2|)$ is at least $\frac{3}{4}t \prod_{q | t} \left(1 - \frac{s_q}{q}\right)$, and we can combine this inequality with Lemma \ref{primeprod}.
\begin{align*}
\min\left(\frac{2\varphi(t)}{|S_1|}, \frac{6\varphi(t)}{|S_2|}\right) &= \frac{6\varphi(t)}{\max(3|S_1|, |S_2|)} \\
&\le \frac{8t\prod_{q| t} \left(1 - \frac{1}{q}\right)}{t\prod_{q| t} \left(1 - \frac{s_q}{q}\right)} \\
&< \frac{13\prod_{q | t \text{ odd }} \left(1 - \frac{1}{q}\right)}{\prod_{q | t \text{ odd }} \left(1 - \frac{1}{q}\right)^2} \\
&= 13\prod_{q | t \text{ odd }} \left(1 - \frac{1}{q}\right)^{-1} 
\end{align*}

This latter quantity is equal to $\frac{6.5t'}{\varphi(t')}$ where $t' = t$ or $t' = 2t$, depending on whether $t$ is even or odd. We need to manually verify that this is smaller than $20 \log(\log(2t))$ for $7 \le t \le 40$. For $t \ge 41$ we apply inequality $(3.42)$ from \mbox{\cite[p. 72]{pnt2}}:
\begin{align*}
\frac{6.5t'}{\varphi(t')} &< 6.5e^{\gamma} \log(\log(2t)) + \frac{6.5 \cdot 2.51}{\log(\log(t))} \\
&\le 6.5e^{\gamma} \log(\log(2t)) + \frac{6.5 \cdot 2.51}{\log(\log(41))} \\
&< 11.58 \log(\log(2t)) + 8.42 \log(\log(82)) \\
&\le 20 \log(\log(2t)) \qedhere
\end{align*}
\end{proof}

\subsection{Improvements in the classical case} \label{lower2}
When $r_1 = t = 1$, we can further strengthen Theorem \ref{generaltight}.

\begin{limc}\label{limco}
If $r_i = 1$ for all $i$, then $0.54 < {\displaystyle \liminf_{a \rightarrow \infty}} \left(\frac{b(a)-a}{\log a}\right) < 0.61$.
\end{limc}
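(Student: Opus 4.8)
Since $r_i\equiv 1$ never vanishes, $L_{a,b}=\mathrm{lcm}(L_{a,b-1},b)=\tfrac{bL_{a,b-1}}{\gcd(L_{a,b-1},b)}$, so
\[
v_{a,b}<v_{a,b-1}\iff g_{a,b}\cdot\gcd(L_{a,b-1},b)>b\cdot g_{a,b-1}.
\]
Both factors on the left are $(b-a)$-smooth: for $\gcd(L_{a,b-1},b)$ because a prime $>b-a$ dividing $b$ has no multiple in $[a,b-1]$, and for $g_{a,b}$ this is Lemma~\ref{noudaarom} with $L_r=L_1=1$. Hence the product is at most $L_{b-a}^{2}$, which together with PNT recovers the constant $\tfrac12$ and shows that, for this circle of ideas, everything comes down to how badly $g_{a,b}$ and $\gcd(L_{a,b-1},b)$ fail to be simultaneously as large as $L_{b-a}$. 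The plan is to quantify this failure, from below for the lower bound and from above (constructively) for the upper bound.

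\textbf{Lower bound $\liminf>0.54$.} I would assume $v_{a,b}<v_{a,b-1}$ with $b-a\le 0.54\log a$ and derive a contradiction for large $a$. The point is that a prime power $p^{\,j}$ close to $b-a$ cannot divide both $g_{a,b}$ and $\gcd(L_{a,b-1},b)$ to a large exponent: if $p^{\,j}\mid g_{a,b}$ then, arguing as in the proof of Lemma~\ref{neverodd}, there must be at least two multiples of $p^{\,e_p(L_{a,b})}$ in $[a,b]$, which bounds $e_p(L_{a,b})$ and, through $X_{a,b}=X_{a,b-1}+L_{a,b}/b$, controls $e_p(\gcd(L_{a,b-1},b))$ and the behaviour at the neighbouring primes. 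Running this bookkeeping over a partition of the primes $\le b-a$ into suitable ranges and summing should give $\sum_p\bigl(e_p(g_{a,b})+e_p(\gcd(L_{a,b-1},b))\bigr)\log p\le(2-\delta)(b-a)+o(b-a)$ for an explicit $\delta>0$; with $b>L_{b-a}^{\,2-\delta}$ and $\log L_{b-a}=(1+o(1))(b-a)$ this contradicts $b-a\le 0.54\log a$ once $\tfrac1{2-\delta}>0.54$. The constant $0.54$ then emerges from optimizing $\delta$ over the choice of ranges, with a finite computation disposing of small $a$.

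\textbf{Upper bound $\liminf<0.61$.} For a slowly growing $\Delta$, I would look for $b$ with $a:=b-\Delta$ inside a single arithmetic progression whose modulus is a controlled divisor of $L_\Delta^{2}$, arranged so that (i) a prescribed set of primes $p\le\Delta$ divides $b$ — each then contributes $p$ to $\gcd(L_{a,b-1},b)$, while having a single relevant multiple in $[a,b-1]$, so $g_{a,b-1}$ picks up none of them; and (ii) for primes $p$ below an auxiliary threshold, an extra congruence modulo a power of $p$ forces $p^{\,s_p}\mid X_{a,b}$, hence $p^{\,s_p}\mid g_{a,b}$, the congruences being exactly the demand that the short sums $\sum 1/i$ over the multiples of $p$ in $[a,b]$ vanish to the required $p$-adic order (polynomial conditions on $b$ modulo that power, solvable for a positive proportion of $p$; this is the pairing mechanism behind Theorem~\ref{ogplus} and Lemma~\ref{p-ness}). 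For such $b$ one gets $g_{a,b}\cdot\gcd(L_{a,b-1},b)>b$, hence $v_{a,b}<v_{a,b-1}$, with $b=a+\Delta$; choosing the parameters so as to capture as much of $L_\Delta^{2}$ as possible relative to the size of the modulus (which must be small enough that the progression contains a $b$ with $\Delta\le 0.61\log(b-\Delta)$) yields $\Delta<0.61\log a$ for infinitely many $a$. One must also forbid the few large prime powers from lying in $[a,b]$ so that $e_p(L_{a,b})=\lfloor\log_p\Delta\rfloor$ for the relevant $p$; this is a handful of additional congruences and does not affect the count.

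\textbf{Main obstacle.} In both directions the naive estimates are exactly balanced at $\tfrac12$, so the entire difficulty is the precise simultaneous control of $g_{a,b}$ and $\gcd(L_{a,b-1},b)$: for the lower bound, optimizing the "defect" $\delta$ across prime ranges; for the upper bound, the trade-off between how many prime powers can be forced into $X_{a,b}$ (which enlarges $g_{a,b}$) and the modulus of the progression carrying $b$ (which enlarges $a$, hence the denominator $\log a$). Getting these optimizations to land the constants strictly inside $(0.5,0.61)$, rather than at the boundary $\tfrac12$, is the delicate point, presumably together with a modest numerical verification for small cases.
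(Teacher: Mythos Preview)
Your proposal misses the key structural idea that drives both bounds in the paper, and as written it does not explain where the specific constants $0.54$ and $0.61$ come from.

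The paper's argument hinges on the polynomials $f_d(x)=\sum_{i=0}^d\prod_{j\ne i}(x-j)$. For a prime $p\in(\tfrac{n}{d+1},\tfrac{n}{d}]$ dividing $b$ (with $b-a=n$), the multiples of $p$ in $[a,b]$ are $b,b-p,\ldots,b-dp$, and one computes $X_{a,b}\equiv \tfrac{L_{a,b}}{p}\cdot\tfrac{f_d(b/p)}{\prod_i(b/p-i)}\pmod p$. Thus $p\mid X_{a,b}$ is precisely the solvability of $f_d\equiv 0\pmod p$, and the relevant density is $\delta(f_d)$, computed via Frobenius/Chebotarev and the Galois groups of the $f_d$ (which for even $d$ sit inside the hyperoctahedral group, with $\delta(f_d)=1$ for odd $d$). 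The single constant $c=\sum_{d\ge1}\delta(f_d)/\bigl(d(d+1)\bigr)$ then governs both directions: the lower bound is $\tfrac{1}{1+c}$ because primes in the ``no root'' set $T$ satisfy $e_p(g_{a,b})\le e_p(g_{a,b-1})$ (so $g_{a,b}/g_{a,b-1}\le L_n/P$ with $P=e^{(1-c+o(1))n}$), and the upper bound is $\tfrac{1}{2c}$ via a CRT construction $b=xQ$ with $Q$ the product of all primes in $S$ and $x\equiv x_pQ_p^{-1}\pmod p$ for each such $p$, forcing $p\mid X_{a,b}$ for \emph{every} $p\in S$ simultaneously. The numerical bounds $0.82<c<0.85$ (hence $0.54<\tfrac{1}{1+c}$ and $\tfrac{1}{2c}<0.61$) come from explicit Galois group computations for $d\le 60$.

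Your lower-bound sketch (``$p^j$ close to $b-a$ cannot divide both $g_{a,b}$ and $\gcd(L_{a,b-1},b)$ to large exponent'') does not isolate the correct obstruction: the paper's Lemma~\ref{notgood} shows that primes in $T$ contribute \emph{nothing} to $g_{a,b}/g_{a,b-1}$, which is much sharper than any interaction bound between the two factors, and is what produces a $\delta$ large enough to reach $0.54$. Your upper-bound sketch gestures at ``polynomial conditions solvable for a positive proportion of $p$'' but never identifies $f_d$ or explains why the proportion is exactly $\delta(f_d)$; without that, there is no mechanism to make the modulus-versus-gain trade-off land below $0.61$ rather than at some unspecified constant. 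In short, the missing ingredient is the $f_d$/Chebotarev layer: it is what converts both vague ``defect'' and ``positive proportion'' arguments into the single explicit constant $c$, and the numerical work is a Galois group computation, not a range optimization.
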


In order to show these tighter bounds on the lower limit, divisibility properties of the polynomials $f_d(x) := \displaystyle \sum_{i=0}^d \prod_{\substack{j=0 \\ j\neq i}}^d (x-j)$ turn out to be important. We therefore define $\delta(f_d)$ to be the density of primes $p$ such that $f_d(x) \equiv 0 \pmod{p}$ is solvable. By a (slight extension of a) theorem of Frobenius which we will meet shortly (see Lemma \ref{frobb}), this density exists and one can in principle calculate it. With $c$ defined as $\displaystyle \sum_{d = 1}^{\infty} \frac{\delta(f_d)}{d(d+1)}$, the proof of Theorem \ref{limco} is a combination of the following three lemmas.

\begin{upperlinc} \label{upperlink}
If $r_i = 1$ for all $i$, then ${\displaystyle\liminf_{a \rightarrow \infty}} \left(\frac{b(a)-a}{\log a}\right) \le \frac{1}{2c}$.
\end{upperlinc}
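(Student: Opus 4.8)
The plan is to mimic the construction in the proof of Theorem \ref{generaltight} (Case I, the non-zero-sequence branch), but to squeeze out a better constant by exploiting the extra structure of the harmonic case $r_i=1$, $t=1$, $\lambda=1$. Fix a large $k$ and a prime $p$. Instead of using only a single ``type'' of auxiliary prime to build the product $Q$, the idea is to choose a degree $d\ge 1$, set $e-c=d+1$ (so the window $[a,b-1]$ contains exactly $d+1$ consecutive multiples $b-p^k,b-2p^k,\dots,b-(d+1)p^k$ of $p^k$, with $a=b-(d+1)p^k$), and take $b=eQp^k$ where $Q$ is the largest product of primes $q\in\big(\frac{d}{d+1}p^k,\,p^k\big)$ satisfying a suitable congruence condition modulo $p$. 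For such a $q$, the multiples of $q$ lying in $[a,b-1]$ are exactly $b-q,b-2q,\dots,b-(d+1)q$ — that is all $d+1$ of them, since $b-(d+2)q<a$ — so the relevant sub-sum of $X_{a,b-1}$ modulo $q$ is $\frac{L_{a,b-1}}{q}\sum_{i=1}^{d+1}\frac{1}{eQ-i}$, and we need this to be a unit mod $q$; since $L_{a,b-1}/q$ is a unit and the rational number $\sum_{i=1}^{d+1}\frac{1}{eQ-i}$ has a denominator that is $o(q)$-smooth while its numerator is controlled, $e_q(X_{a,b-1})=0$ will follow for all but finitely many of these $q$. The analogous computation mod $p$ reads $X_{a,b}\equiv \frac{L_{a,b}}{p^k}\sum_{i=0}^{d+1}\frac{1}{eQ-i}\pmod p$, and one checks (after clearing denominators) that this sum is $\equiv 0 \pmod p$ precisely when $eQ$ is a root mod $p$ of a polynomial of degree $d$ — and indeed that polynomial is, up to an affine change of variable $x\mapsto eQ-x$ and scaling, exactly $f_d(x)=\sum_{i=0}^d\prod_{j\ne i}(x-j)$, since $\sum_{i=0}^{d+1}\frac{1}{X-i}=\frac{\text{(derivative-type sum)}}{\prod(X-i)}$ and the numerator is $f_d$ evaluated suitably. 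Hence the congruence condition on $Q$ modulo $p$ is solvable for a density-$\delta(f_d)$ set of primes $p$.

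The next step is the quantitative count. Fix a $d$ and a prime $p$ for which $f_d\equiv 0\pmod p$ has a root (possible since such $p$ have positive density, hence are infinite); let $eQ\equiv\rho\pmod p$ be the needed residue. By Dirichlet's theorem on primes in arithmetic progressions (PNT for APs), as $k\to\infty$ the product of all primes $q\in\big(\frac{d}{d+1}p^k,p^k\big)$ lying in the single residue class $\rho e^{-1}\pmod p$ is $\exp\big[(1+o(1))\cdot\frac{1}{p-1}\cdot\frac{1}{d+1}\,p^k\big]$; taking $p$ itself large this prefactor $\frac{1}{(p-1)(d+1)}$ can be made as close to $\frac{1}{(d+1)}\cdot\frac{1}{p}$ as we like, and then — this is the point where the classical case helps — one instead runs the argument without fixing the residue class by a prime at all, noting that when $t=1$ there is no ``$Q\equiv1\pmod t$'' constraint, so the only congruence cost is the single class mod $p$; letting $p\to\infty$ along admissible primes makes the loss negligible. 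Concretely: $a>Q\ge \exp\big[(\tfrac{1}{d+1}-o(1))\,p^k\big]$ while $b-a=(d+1)p^k$, so $\frac{b(a)-a}{\log a}\le \frac{b-a}{\log a}\le (d+1)^2+o(1)$ for this single value of $d$. To get the factor $\frac{1}{2c}$ rather than a single term $(d+1)^2$, one interleaves all values of $d$: for each $d$ the admissible primes $p$ have density $\delta(f_d)$, and a counting/union argument over a long dyadic-type range of $a$ shows that the fraction of $a\le x$ for which \emph{some} valid $(d,p,k,Q)$ yields $b(a)-a\le (2c^{-1}+\epsilon)\log a$ tends to $1$; equivalently, $\liminf_a \frac{b(a)-a}{\log a}\le \big(\sum_{d\ge1}\frac{\delta(f_d)}{d(d+1)}\big)^{-1}/2 = \frac{1}{2c}$, where the weights $\frac{1}{d(d+1)}$ arise because a given choice of $d$ ``covers'' intervals of $a$ of relative length $\asymp \frac{1}{d}-\frac{1}{d+1}=\frac{1}{d(d+1)}$ (these are exactly the endpoints $\frac{d}{d+1}$ appearing in the ranges of $q$, after one sums the geometric contributions over $k$).

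Two technical points need care and I expect the second to be the main obstacle. First, the verification that the mod-$p$ numerator is genuinely $f_d$ (up to affine substitution and a unit): this is a routine partial-fractions identity, $\sum_{i=0}^{d}\prod_{j\ne i}(x-j)=\frac{d}{dx}\prod_{i=0}^{d}(x-i)$, combined with the observation that $\sum_{i=0}^{d+1}\frac1{X-i}$ telescopes against the window length $d+1$ — I would state it as a short sublemma and check it by expanding. Second, and harder, is turning the family of pointwise constructions (one good $b$ for suitable $a$) into a genuine $\liminf$ statement with the \emph{weighted-harmonic-series} constant $\frac{1}{2c}$: one must show that as $d$ ranges and $k\to\infty$, the sets of $a$ for which the $d$-construction succeeds — i.e. $a\in(\frac{d}{d+1}eQp^k,\, eQp^k]$ roughly, with $p$ admissible for $f_d$ and the $o(1)$'s under control — together have density $1$, so that for density-$1$ many $a$ the best available $d$ gives the bound $(2c)^{-1}\log a$. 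This requires a sieve/counting argument balancing the density $\delta(f_d)$ of admissible primes against the interval lengths $\frac{1}{d(d+1)}$, and controlling the error terms in PNT for APs uniformly enough as $p\to\infty$; I would handle it by first proving the cleaner statement ``for every $\epsilon>0$ and every $d$, the density of $a$ with $b(a)-a\le((d+1)^2+\epsilon)\log a$ is at least $\delta(f_d)\cdot\frac{1}{d(d+1)}\cdot(1-\epsilon)$ within a suitable scale'', then summing and optimizing, and finally invoking the companion Lemmas (\ref{frobb} and the two unstated ones on $\delta(f_d)$ and on the numerics of $c$) to pin the constant below $0.61$.
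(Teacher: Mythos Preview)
Your proposal has a genuine gap at the ``interleaving'' step, and it is not a matter of missing details: the mechanism you sketch cannot produce the constant $\tfrac{1}{2c}$. The single-$d$ construction you describe --- fixing one prime $p$, setting $b=eQp^k$, and building $Q$ from auxiliary primes $q\in(\tfrac{d}{d+1}p^k,p^k)$ --- correctly yields $\tfrac{b-a}{\log a}\le (d+1)^2+o(1)$ and no better. Your plan to recover $\tfrac{1}{2c}$ by ``summing'' density statements of the form ``for a $\tfrac{\delta(f_d)}{d(d+1)}$-fraction of $a$ one has $b(a)-a\le((d+1)^2+\epsilon)\log a$'' does not work: a $\liminf$ is an infimum, not a weighted average, and a union of constructions each achieving ratio at least $4$ cannot exhibit any $a$ with ratio below $4$, let alone below $0.61$. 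The weights $\tfrac{1}{d(d+1)}$ you correctly anticipate do appear in the definition of $c$, but not through a density-of-$a$ argument.

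The paper's proof is structurally different and this difference is the missing idea. One fixes the gap $n=b-a$ first, then lets $Q$ be the product of \emph{all} primes $p\in(\sqrt{n},n]$ for which $f_{d(p)}$ has a root modulo $p$, where $d(p)=\lfloor n/p\rfloor$; by PNT this gives $Q=e^{(c+o(1))n}$, and the weights $\tfrac{\delta(f_d)}{d(d+1)}$ arise because the primes with $d(p)=d$ lie in $(\tfrac{n}{d+1},\tfrac{n}{d}]$. The crucial step is a single application of CRT: choose $x<Q$ with $x\equiv x_p(Q/p)^{-1}\pmod{p}$ for every $p\mid Q$ simultaneously, so that with $b=xQ$ one gets $p\mid X_{a,b}$ for \emph{all} these primes at once (this is the content of Lemma~\ref{thisbefore}). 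A further trick --- using that $f_2$ has two distinct roots modulo one chosen $q\in S_2$ --- forces $x>Q/q$, whence $b=xQ>Q^2/q=e^{(2c+o(1))n}$, and the ratio $\tfrac{b-a}{\log a}=\tfrac{n}{(2c+o(1))n}\to\tfrac{1}{2c}$. In short: in your scheme the auxiliary primes $q$ enlarge the denominator while a \emph{single} $p$ carries the divisibility; in the paper's scheme every prime in $Q$ carries divisibility, and the CRT-built cofactor $x$ (not a product of further primes) is what makes $b$ large.
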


\begin{lowerlinc} \label{lowerlink}
If $r_i = 1$ for all $i$, then ${\displaystyle\liminf_{a \rightarrow \infty}} \left(\frac{b(a)-a}{\log a}\right) \ge \frac{1}{1+c}$.
\end{lowerlinc}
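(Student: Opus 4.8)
The plan is to argue in the spirit of the proof of Theorem~\ref{limba}: fix $\epsilon>0$, and show that for all sufficiently large $a$ and every integer $b$ with $a<b<a+\bigl(\tfrac1{1+c}-\epsilon\bigr)\log a$ one has $v_{a,b}\ge v_{a,b-1}$ (here $r_b=1\neq0$, so the degenerate case does not arise), whence $b(a)\ge a+\bigl(\tfrac1{1+c}-\epsilon\bigr)\log a$. Writing $L_{a,b}=u\,L_{a,b-1}$ with $u=b/\gcd(L_{a,b-1},b)$ and recalling $v_{a,c}=L_{a,c}/g_{a,c}$, the inequality $v_{a,b}\ge v_{a,b-1}$ is equivalent to $g_{a,b}\le u\,g_{a,b-1}$, i.e. to
\[
\log\!\frac{g_{a,b}}{g_{a,b-1}}\ \le\ \log b-\log\gcd(L_{a,b-1},b).
\]
The first reduction is that $g_{a,b}$ and $g_{a,b-1}$ agree away from the primes dividing $b$: if $q\nmid b$ then $e_q(L_{a,b})=e_q(L_{a,b-1})$, and a short valuation computation of the same kind used for Lemma~\ref{Lemma1} gives $e_q(g_{a,b})=e_q(g_{a,b-1})$. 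Hence $g_{a,b}/g_{a,b-1}=\prod_{q\mid b}q^{\,e_q(g_{a,b})-e_q(g_{a,b-1})}$, and in particular $\log(g_{a,b}/g_{a,b-1})\le\sum_{q\mid b}e_q(g_{a,b})\log q$.

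I would bound the two sides separately. For the denominator, an elementary observation gives $\gcd(L_{a,b-1},b)\mid L_{b-a}$: if $q^{k}\mid\gcd(L_{a,b-1},b)$ then some $i\in[a,b-1]$ with $i\neq b$ is divisible by $q^{k}$, so $q^{k}\mid b-i$ and $q^{k}\le b-a$; hence $\log\gcd(L_{a,b-1},b)\le\log L_{b-a}=(1+o(1))(b-a)$ by PNT. For the numerator I use Lemma~\ref{noudaarom} with $L_r=1$ (as $r=1$): $e_q(g_{a,b})\le e_q(L_{b-a})$, so only primes $q\le b-a$ occur, the primes below $\sqrt{b-a}$ contribute at most $\pi(\sqrt{b-a})\log(b-a)=o(b-a)$, and for $\sqrt{b-a}<q\le b-a$ one has $e_q(g_{a,b})\le1$. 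The structural input, generalising Lemma~\ref{neverodd}, is the reciprocal–sum identity governed by the polynomials $f_d(x)=\sum_{i=0}^{d}\prod_{j\ne i}(x-j)=\frac{d}{dx}\prod_{j=0}^{d}(x-j)$: if $q\mid X_{a,b}$ and $e=e_q(L_{a,b})$, then writing the multiples of $q^{e}$ in $[a,b]$ as $q^{e}$ times $s$ consecutive integers $c,\dots,c+s-1$ and reducing $X_{a,b}$ modulo $q$ gives
\[
X_{a,b}\ \equiv\ \frac{L_{a,b}}{q^{e}}\cdot\frac{f_{s-1}\!\bigl(\lfloor b/q^{e}\rfloor\bigr)}{\prod_{j=0}^{s-1}(c+j)}\pmod q ,
\]
so $q\mid X_{a,b}$ forces $f_{s-1}$ to have a root modulo $q$; moreover for such a $q$ with $q\le b-a$ necessarily $e=e_q(L_{b-a})$ (otherwise $q^{e}>b-a$, so $s\le1$ and $f_{0}=1$ has no root). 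When in addition $q\mid b$ and $\sqrt{b-a}<q\le b-a$ this forces $e=1$, and since $b$ is then a multiple of $q$ the number of multiples is exactly $s=\lfloor(b-a)/q\rfloor+1$, pinning the index down to $s-1=\lfloor(b-a)/q\rfloor$ with no ambiguity.

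Grouping these primes by $m=\lfloor(b-a)/q\rfloor$, so that $q$ ranges over $\bigl(\tfrac{b-a}{m+1},\tfrac{b-a}{m}\bigr]$, and applying PNT together with the Frobenius density theorem (Lemma~\ref{frobb}) for the condition ``$f_m$ has a root mod $q$'', I obtain
\[
\sum_{q\mid b}e_q(g_{a,b})\log q\ \le\ o(b-a)+(1+o(1))(b-a)\sum_{m\ge1}\frac{\delta(f_m)}{m(m+1)}\ =\ \bigl(c+o(1)\bigr)(b-a),
\]
using $\sum_{m\ge1}\delta(f_m)/(m(m+1))=\sum_{d\ge1}\delta(f_d)/(d(d+1))=c$. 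Adding the two bounds yields $\log(g_{a,b}/g_{a,b-1})+\log\gcd(L_{a,b-1},b)\le(1+c+o(1))(b-a)<\log a\le\log b$, since $b-a\le\bigl(\tfrac1{1+c}-\epsilon\bigr)\log a$; this is exactly the required inequality, so $v_{a,b}\ge v_{a,b-1}$.

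The main obstacle is making the estimate $\sum_{q\mid b}e_q(g_{a,b})\log q\le(c+o(1))(b-a)$ uniform across the whole range $1\le b-a\le\bigl(\tfrac1{1+c}-\epsilon\bigr)\log a$: the density count for $\{q:f_m\text{ has a root mod }q\}$ is only reliable on intervals long compared with their own length, which here is of order $(b-a)/m^{2}$, so I would truncate the $m$–sum at a large constant $M=M(\epsilon)$, bound the tail $m>M$ (equivalently $q<(b-a)/M$) crudely by $\sum_{q<(b-a)/M}e_q(L_{b-a})\log q\le(1+o(1))(b-a)/M$, and absorb the resulting $1/M$ into $\epsilon$; the short blocks $b-a\le M$ would then be handled by the cruder argument of Theorem~\ref{limba}. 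One also has to verify that the sporadic primes with $e_q(L_{b-a})\ge2$ (those below $\sqrt{b-a}$, and any $q$ with $q^{2}\le b-a$) contribute only $o(b-a)$, and to carry out carefully the valuation bookkeeping that both restricts everything to primes $q\mid b$ and discards those $q$ for which the relevant $f$-polynomial has no root.
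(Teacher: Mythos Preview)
Your argument is correct and essentially the same as the paper's. The paper packages the structural input as Lemma~\ref{notgood} (for $p\in T$ one has $e_p(g_{a,b})\le e_p(g_{a,b-1})$), combines it with Lemma~\ref{noudaarom} to get $g_{a,b}/g_{a,b-1}\le L_n/P$, and then plugs $P=e^{(1-c+o(1))n}$, $L_n=e^{(1+o(1))n}$, $b>e^{(1+c+o(1))n}$ into $v_{a,b}\ge v_{a,b-1}\cdot bP/L_n^2$; your version first observes that $e_q(g_{a,b})=e_q(g_{a,b-1})$ whenever $q\nmid b$ (a clean reduction the paper only obtains implicitly inside the proof of Lemma~\ref{notgood}), and then bounds $\sum_{q\mid b}e_q(g_{a,b})\log q$ directly by summing over the primes where $f_d$ has a root, arriving at the same $(c+o(1))(b-a)$. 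The truncation-at-$M$ issue you flag is exactly the $o(1)$ the paper sweeps into its asymptotic for $P$ via PNT and Frobenius.
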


\begin{cbounds} \label{cbounds}
We have the inequalities $0.82 < c < 0.85$, from which $\frac{1}{2c} < 0.61$ and $\frac{1}{1+c} > 0.54$ follow by computation. 
\end{cbounds}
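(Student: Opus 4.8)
The plan is to split the series $c = \sum_{d \ge 1}\frac{\delta(f_d)}{d(d+1)}$ according to the parity of $d$. For odd $d$ I will pin down $\delta(f_d)$ exactly, so that the odd part of the series collapses to a closed form; for even $d$ I will compute $\delta(f_d)$ exactly for all $d$ up to an explicit bound $D$ and estimate the rapidly convergent tail crudely, which confines $c$ to an interval short enough to be contained in $(0.82,0.85)$.

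For the odd case, note first that $f_d(x) = \frac{d}{dx}\big[x(x-1)\cdots(x-d)\big]$, so $f_d$ is the derivative of $g_d(x) := x(x-1)\cdots(x-d)$. The set $\{0,1,\ldots,d\}$ is symmetric about $d/2$; when $d$ is odd it has an even number of elements, none equal to $d/2$, so $g_d$ is an even function of $x-d/2$, hence $f_d$ is an odd function of $x-d/2$ and $f_d(d/2)=0$. Thus $2x-d$ divides $f_d$ over $\mathbb{Q}$, the congruence $f_d(x)\equiv 0 \pmod p$ has the solution $x\equiv d/2$ for every odd $p$, and $\delta(f_d)=1$. Since $\sum_{d\text{ odd}}\frac{1}{d(d+1)} = \sum_{k\ge 0}\big(\frac{1}{2k+1}-\frac{1}{2k+2}\big) = \log 2$, the odd-indexed part of $c$ equals exactly $\log 2 = 0.6931\ldots$, leaving only $c - \log 2 = \sum_{d\text{ even}}\frac{\delta(f_d)}{d(d+1)}$ to bound.

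For even $d$ the same symmetry shows instead that $g_d$ is odd and $f_d$ even about $x-d/2$, so $f_d(x) = F_d\big((x-d/2)^2\big)$ for a polynomial $F_d$ of degree $d/2$; this halves the degrees that must be handled. For each even $d\le D$ I will determine the Galois group $G_d$ of $f_d$ over $\mathbb{Q}$ (equivalently, the Galois group of $F_d$ together with the quadratic extensions generated by square roots of its roots), and then invoke Lemma \ref{frobb} to read off $\delta(f_d) = |\{g\in G_d : g\text{ fixes some root of }f_d\}|/|G_d|$. For instance $f_2 = 3(x-1)^2-1$ gives $\delta(f_2)=\tfrac12$, and $f_4 = 5(x-2)^4-15(x-2)^2+4$ has Galois group $D_4$, whence $\delta(f_4)=\tfrac38$; the remaining even $d\le D$ are treated identically, with computer assistance. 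Because every term of the series is nonnegative, the partial sum $\log 2 + \sum_{2\le d\le D,\ d\text{ even}}\frac{\delta(f_d)}{d(d+1)}$ is a lower bound for $c$, and one checks it already exceeds $0.82$. For the upper bound I add the trivial tail estimate $\sum_{d>D}\frac{\delta(f_d)}{d(d+1)} \le \sum_{d>D}\frac{1}{d(d+1)} = \frac{1}{D+1}$ (or the sharper even-only bound $\le \frac{1}{2D}$) and verify that $\log 2 + \sum_{2\le d\le D,\ d\text{ even}}\frac{\delta(f_d)}{d(d+1)} + \frac{1}{D+1} < 0.85$. Together these give $0.82 < c < 0.85$, and then $\frac{1}{2c} < \frac{1}{2\cdot 0.82} < 0.61$ and $\frac{1}{1+c} > \frac{1}{1.85} > 0.54$ are immediate.

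The main obstacle is the Galois group computation in the even case. Since the tail bound decays only like $1/D$, one needs $D$ comfortably larger than $1/(0.85-0.82) \approx 33$ — in practice a few dozen — and for each such even $d$ one must correctly factor $f_d$ (equivalently $F_d$) and identify $G_d$; an unexpectedly small Galois group for some $d$ would raise $\delta(f_d)$ and could threaten the upper bound, so these identifications have to be carried out rigorously rather than heuristically (the generic expectation, governed by the wreath product $C_2 \wr S_{d/2}$, is $\delta(f_d)\to 1-e^{-1/2}\approx 0.39$, consistent with $c$ lying near $0.83$, but this asymptotic cannot substitute for the explicit finite computation).
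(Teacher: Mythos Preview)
Your proposal is correct and follows essentially the same route as the paper: split by parity, use the symmetry $f_d(x)=(-1)^d f_d(d-x)$ to get $\delta(f_d)=1$ for odd $d$ (so the odd part sums to $\log 2$), compute the Galois groups of $f_d$ for small even $d$ with computer assistance, invoke the Frobenius/Chebotar\"ev density as in Lemma~\ref{frobb}, and bound the tail trivially by $\delta(f_d)\le 1$.

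The only noteworthy difference is bookkeeping. The paper makes explicit that for even $d=2l$ the Galois group embeds in the hyperoctahedral group $S_l^{+}=C_2\wr S_l$ (your ``generic'' group), and that when $G_d\cong S_l^{+}$ the non-derangement proportion is exactly $1-\sum_{i=0}^{l}\frac{(-1)^i}{2^i i!}$; a Magma check shows this holds for all even $d\le 60$ except $d=8,24,48$. Rather than computing $\delta(f_d)$ for those three exceptions, the paper simply drops them from the lower-bound sum and absorbs them into the crude tail for the upper bound, which still gives $0.6931+0.1281>0.82$ and $0.6931+0.1281+0.025<0.85$. Your plan to compute each $\delta(f_d)$ exactly would also work (and would tighten the interval), but is not needed. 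One small correction: a proper subgroup of $S_l^{+}$ need not have a \emph{larger} non-derangement proportion, so the exceptional cases are a potential issue for both bounds, not just the upper one; the paper's device of excluding them from the lower sum handles this cleanly.
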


To prove Lemma \ref{upperlink} and Lemma \ref{lowerlink}, we have to introduce some more notation. Let, with $n$ a large integer and $1 \le d \le \sqrt{n} - 1$, $S_d$ be the set of primes $p$ with $\frac{n}{d+1} < p \le \frac{n}{d}$ such that $f_d(x) \equiv 0 \pmod{p}$ is solvable, and let $x_p$ be any root of $f_d(x) \pmod{p}$. Conversely, $T_d$ denotes the set of primes $p$ with $\frac{n}{d+1} < p \le \frac{n}{d}$ for which $f_d(x) \equiv 0 \pmod{p}$ is not solvable. We furthermore define $S$ and $T$ as the union of the sets $S_d$ and $T_d$ respectively, over all $d$ with $1 \le d \le \sqrt{n} - 1$. Moreover, $Q$ and $P$ are defined as the products of all primes $p \in S$ and $p \in T$ respectively, and for a prime divisor $p$ of $Q$, let us define $Q_p = \frac{Q}{p}$.  \\

From the existence of $\delta(f_d)$, it follows by PNT that $\frac{|S_d|}{\pi(n)}$ converges for fixed $d$ to $\frac{\delta(f_d)}{d(d+1)}$. We therefore get $Q = e^{(c + o(1))n}$ and $P = e^{(1 - c + o(1))n}$. And when $p \in S_d$, we have the following lemma for the roots $x_p$ of $f_d(x) \pmod{p}$.

\begin{xpsolution} \label{xpsol}
For all $i$ with $0 \le i \le d$ we have $x_p \not\equiv i \pmod{p}$.
\end{xpsolution}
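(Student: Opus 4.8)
The plan is to evaluate $f_d(i) \pmod{p}$ directly for each integer $i$ with $0 \le i \le d$ and to show this value is a non-zero residue class, so that $i$ cannot be a root of $f_d(x) \pmod{p}$, forcing $x_p \not\equiv i \pmod{p}$.

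First I would substitute $x = i$ into $f_d(x) = \sum_{k=0}^{d} \prod_{j \neq k}(x-j)$ (this polynomial is in fact $\tfrac{d}{dx}\prod_{j=0}^{d}(x-j)$, though we will not need that). For any index $k \neq i$, the product $\prod_{j \neq k}(i-j)$ runs over all $j \in \{0,1,\ldots,d\}\setminus\{k\}$, a set that contains $i$ itself; hence that product has the factor $(i-i)=0$ and the whole summand vanishes. Only the term $k=i$ survives, and
\[
f_d(i) = \prod_{\substack{j=0 \\ j \neq i}}^{d}(i-j) = \Bigl(\prod_{j=0}^{i-1}(i-j)\Bigr)\Bigl(\prod_{j=i+1}^{d}(i-j)\Bigr) = (-1)^{d-i}\, i!\,(d-i)!.
\]

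Next I would note that $f_d(i)$ is a non-zero integer all of whose prime factors are at most $d$, since the factorials $i!$ and $(d-i)!$ involve only integers $\le d$. On the other hand, by the definition of $S_d$ the prime $p$ satisfies $p > \tfrac{n}{d+1}$, and the constraint $d \le \sqrt{n}-1$ gives $d+1 \le \sqrt{n}$, hence $p > \tfrac{n}{\sqrt n} = \sqrt n \ge d+1 > d \ge \max(i,\,d-i)$. Therefore $p$ divides neither $i!$ nor $(d-i)!$, so $f_d(i) \not\equiv 0 \pmod{p}$. Since $x_p$ is by definition a root of $f_d(x) \pmod{p}$, we conclude $x_p \not\equiv i \pmod{p}$.

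I do not anticipate a genuine obstacle here: the computation of $f_d$ at an integer node is the standard Lagrange-interpolation cancellation, and the only point that needs a moment's care is extracting from the definition of $S_d$ together with the range $1 \le d \le \sqrt{n}-1$ the inequality $p > d$, which is precisely what keeps $i!$ and $(d-i)!$ coprime to $p$.
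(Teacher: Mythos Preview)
Your proof is correct and follows essentially the same approach as the paper. Both arguments reduce $f_d(i)$ to the single surviving summand $\prod_{j\neq i}(i-j)$ and then use $p>d$ (which the paper writes directly as $d<\tfrac{n}{d+1}<p$) to conclude this product is nonzero modulo $p$; you compute the product explicitly as $(-1)^{d-i}i!(d-i)!$, while the paper phrases the same step as a contradiction via Euclid's lemma on the factors $(i-j)$.
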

 
\begin{proof}
By contradiction; assume $x_p \equiv i \pmod{p}$ for some $i$ with $0 \le i \le d$. Then $0 \equiv f_d(x_p) \equiv \displaystyle \prod_{\substack{j=0 \\ j \neq i}}^d (x_p-j) \pmod{p}$ and by Euclid's lemma $x_p - j \equiv 0 \pmod{p}$ for some $j \neq i$. This gives $i \equiv j \pmod{p}$, which is impossible as $0 < |i - j| \le d < \frac{n}{d+1} < p$. 
\end{proof}

We can now prove Lemma \ref{upperlink}.

\begin{proof}[Proof of Lemma \ref{upperlink}]
Let $q$ be the largest prime in $S_2$, so that we have $f_2(x_q) = 3x_q^2 - 6x_q + 2 \equiv 0 \pmod{q}$. Then $x'_q = -x_q +2$ is a root of $f_2(x) \pmod{q}$ as well, since $f_2(x'_q) = 3(-x_q + 2)^2 - 6(-x_q + 2) + 2 = 3x_q^2 - 6x_q + 2 \equiv 0 \pmod{q}$. Moreover $x'_q = -x_q + 2 \not\equiv x_q \pmod{q}$ as otherwise $x_q \equiv 1 \pmod{q}$, which contradicts Lemma \ref{xpsol}. So $x_q$ and $x'_q$ are two distinct roots of $f_2(x) \pmod{q}$. \\

%We should point out that later on (as part of the proof of Lemma \ref{evenn}) we will prove that $f_d(x-l)$ is an even function whenever $d = 2l$ is even. And from this it follows that whenever $p$ divides $f_d(x_p)$, $p$ will divide $f_d(-x_p + d)$ as well, while, again by Lemma \ref{xpsol}, $x_p \not\equiv -x_p + d \pmod{p}$. Apart from $p = q$, in this paper we will not make use of the fact that for even $d$ and all $p \in S_d$, $f_d(x) \equiv 0 \pmod{p}$ has at least two solutions, but it might prove useful if one wants to improve upon our bounds even further. \\

Let $x_0$ and $x_1$ be the unique positive integers smaller than $Q$ such that the following congruences hold: $x_0 \equiv x_1 \equiv x_pQ_p^{-1} \pmod{p}$ for all $p \in S \setminus \{q\}$, $x_0 \equiv x_qQ_q^{-1} \pmod{q}$ and $x_1 \equiv x'_qQ_q^{-1} \pmod{q}$. Then $x_0$ and $x_1$ differ by a multiple of $Q_q$ as they are congruent modulo every prime divisor of $Q_q$, so at least one of them is larger than $Q_q$. Define $x = \max(x_0, x_1) > Q_q$ and redefine $x_q := x'_q$ if $x_1 > x_0$, so that $x \equiv x_pQ_p^{-1} \pmod{p}$ holds for all $p \in S$. \\

With $a$ and $b$ defined as $b = xQ$ and $a = b - n$ respectively, we claim $v_{a,b} < v_{a,b-1}$. Since $a = \big(1 - o(1)\big)b$ and $b = xQ > \frac{Q^2}{q} = e^{(2c + o(1))n}$, this would finish the proof of Lemma \ref{upperlink}. To prove that $v_{a,b}$ is indeed smaller than $v_{a,b-1}$, we need some results on the prime divisors of $g_{a,b}$ and $g_{a,b-1}$. 

% I might be able to improve this, by multiplying Q by some primes in T as well, by showing that there are more ways to increase Q than there are ways to choose b such that p | X_{a,b-1} for some p \in T. Because we do not want the latter. This would give us with pigeonhole at least one way to increase Q

\begin{xpnietklein} \label{psquaredfree}
For all $p \in S$, $L_{a,b}$ is not divisible by $p^2$.
\end{xpnietklein}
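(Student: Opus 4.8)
The plan is to enumerate the multiples of $p$ inside the interval $[a,b]$ and check directly that none of them is divisible by $p^2$. First I would observe that if $p \in S_d$ then $\frac{n}{d+1} < p \le \frac{n}{d}$, and since $1 \le d \le \sqrt{n}-1$ forces $d+1 \le \sqrt{n}$, we get $p > \frac{n}{d+1} \ge \sqrt{n}$; in particular $p > d$ and $n/p$ lies in the half-open interval $[d, d+1)$. Next I would note that $p$ \emph{exactly} divides $b = xQ$: the product $Q$ of distinct primes contains $p$ to the first power, and $p \nmid x$ because $x \equiv x_p Q_p^{-1} \pmod{p}$ with $x_p \not\equiv 0 \pmod{p}$ by Lemma~\ref{xpsol} (applied with $i = 0$).

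Then comes the main computation: the multiples of $p$ in $[a,b]$ are precisely $b, b-p, \dots, b-dp$, since $b-ip \ge a = b - n$ holds exactly when $ip \le n$, i.e. when $i \le n/p$, and we just saw $d \le n/p < d+1$. For each such $i$ with $0 \le i \le d$, write $b - ip = p\left(\frac{b}{p} - i\right)$; since $p \nmid \frac{b}{p}$, this is divisible by $p^2$ if and only if $\frac{b}{p} \equiv i \pmod{p}$. But $\frac{b}{p} = x Q_p \equiv x_p \pmod{p}$ by the defining congruence of $x$, and Lemma~\ref{xpsol} gives $x_p \not\equiv i \pmod{p}$ for every $i \in \{0,1,\dots,d\}$. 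Hence none of the $d+1$ multiples of $p$ in $[a,b]$ is divisible by $p^2$, and therefore $p^2 \nmid L_{a,b}$.

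The only delicate point is the counting step — confirming that the multiples of $p$ in $[a,b]$ are exactly $b - ip$ for $0 \le i \le d$ and no others — which is precisely where the two-sided bound $\frac{n}{d+1} < p \le \frac{n}{d}$ defining $S_d$ is used; everything else reduces to invoking Lemma~\ref{xpsol} over the range $0 \le i \le d$, with the endpoint $i = 0$ doubling as the reason $p$ exactly (rather than more-than-)divides $b$.
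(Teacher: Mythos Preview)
Your proof is correct and follows essentially the same approach as the paper's: enumerate the multiples of $p$ in $[a,b]$ as $b, b-p, \ldots, b-dp$ using the bounds $\frac{n}{d+1} < p \le \frac{n}{d}$, then compute $\frac{b-ip}{p} = xQ_p - i \equiv x_p - i \pmod{p}$ and invoke Lemma~\ref{xpsol}. Your write-up is simply more explicit about the intermediate justifications (e.g.\ isolating the case $i=0$ to note $p \nmid x$), but the argument is the same.
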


\begin{proof}
The integers in $[a, b]$ that are divisible by $p \in S_d$ are $b$, $b-p, \ldots, b-dp$, as $b - dp \ge b - n > b - (d+1)p$. Since $\frac{b - ip}{p} = xQ_p - i \equiv x_p - i \not\equiv 0 \pmod{p}$ for $0 \le i \le d$ by Lemma \ref{xpsol}, we see that $b - ip$ is not divisible by $p^2$ for any $0 \le i \le d$, so $L_{a,b}$ is not divisible by $p^2$ either.
\end{proof}

\begin{pxabopnieuw} \label{thisbefore}
For all $p \in S$, $X_{a,b}$ is divisible by $p$, while $p$ does not divide $X_{a,b-1}$.
\end{pxabopnieuw}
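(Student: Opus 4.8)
The plan is to mimic the computation in Lemma \ref{p-ness}: reduce $X_{a,b}$ modulo $p$ to a short sum that, after clearing denominators, becomes a constant multiple of $f_d(x_p)$. Fix $p \in S_d$ throughout. First I would record that $e_p(L_{a,b}) = 1$ and $e_p(L_{a,b-1}) = 1$: indeed $p \mid Q \mid b$, so $b$ is a multiple of $p$ lying in $[a,b]$, while $b-p$ is a multiple of $p$ lying in $[a,b-1]$ (it is in the interval since $p \le n/d \le n$); hence each of $L_{a,b}$ and $L_{a,b-1}$ is divisible by $p$, and by Lemma \ref{psquaredfree} neither is divisible by $p^2$.

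Next I would compute $X_{a,b} = L_{a,b}\sum_{i=a}^{b}\frac 1i$ modulo $p$. For $i \in [a,b]$ with $p \nmid i$ the term $\frac{L_{a,b}}{i}$ is divisible by $p$, so only the multiples of $p$ in $[a,b]$ contribute; as in the proof of Lemma \ref{psquaredfree} these are precisely $b, b-p, \ldots, b-dp$. Writing $b-jp = p(xQ_p - j)$ and using that $\frac{L_{a,b}}{p}$ is a unit modulo $p$ while each $xQ_p - j$ is invertible modulo $p$ (by Lemma \ref{xpsol}, since $xQ_p \equiv x_p \pmod p$ by the defining congruence $x \equiv x_pQ_p^{-1} \pmod p$), one obtains
\[
X_{a,b} \;\equiv\; \frac{L_{a,b}}{p}\sum_{j=0}^{d}\frac{1}{xQ_p - j} \;\equiv\; \frac{L_{a,b}}{p}\sum_{j=0}^{d}\frac{1}{x_p - j} \pmod p .
\]
Clearing the denominator $\prod_{j=0}^{d}(x_p-j)$ (a unit modulo $p$ by Lemma \ref{xpsol}) identifies the last sum with $\left(\prod_{j=0}^{d}(x_p-j)\right)^{-1}f_d(x_p)$, which is $\equiv 0 \pmod p$ because $x_p$ is a root of $f_d$; hence $p \mid X_{a,b}$.

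For the second assertion I would use the identity $X_{a,b} = \frac{L_{a,b}}{L_{a,b-1}}X_{a,b-1} + \frac{L_{a,b}}{b}$ (recall $r_b = 1$). Since $e_p(L_{a,b}) = e_p(L_{a,b-1}) = 1$, the factor $\frac{L_{a,b}}{L_{a,b-1}}$ is a $p$-adic unit, and $e_p\left(\frac{L_{a,b}}{b}\right) = 1 - e_p(b) = 0$ because $e_p(b) = e_p(x) + e_p(Q) = 0 + 1 = 1$ — here $e_p(x) = 0$ follows from $x \equiv x_pQ_p^{-1} \not\equiv 0 \pmod p$, using Lemma \ref{xpsol} with $i = 0$. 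Combining this with $p \mid X_{a,b}$ forces $e_p(X_{a,b-1}) = 0$, i.e. $p \nmid X_{a,b-1}$.

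Given Lemmas \ref{psquaredfree} and \ref{xpsol}, the argument is essentially bookkeeping; the only step needing care is verifying that exactly the $d+1$ terms coming from $b, b-p, \dots, b-dp$ survive modulo $p$ and that one may legitimately cancel the unit $\frac{L_{a,b}}{p}$ together with each $x_p - j$ — the same delicate point as in the proof of Lemma \ref{p-ness}. I do not expect any genuinely new obstacle to arise.
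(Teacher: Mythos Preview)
Your argument is correct and essentially identical to the paper's own proof: you reduce $X_{a,b}\pmod p$ to $\frac{L_{a,b}}{p}\sum_{j=0}^d\frac{1}{x_p-j}$, recognise this as a unit times $f_d(x_p)$, and then use $X_{a,b}=\frac{L_{a,b}}{L_{a,b-1}}X_{a,b-1}+\frac{L_{a,b}}{b}$ together with $p\nmid\frac{L_{a,b}}{b}$ for the second claim. You are in fact a bit more explicit than the paper in verifying $e_p(L_{a,b-1})=1$ and $e_p(x)=0$, but no new ideas are involved.
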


\begin{proof}
This should be reminiscent of Lemma \ref{p-ness}. For a prime divisor $p$ of $Q$ with $p \in S_d$, let us calculate $X_{a,b} \pmod{p}$.
\begin{align*}
X_{a,b} &= L_{a,b} \sum_{i=a}^b \frac{1}{i} & \\
&\equiv L_{a,b} \sum_{i=0}^d \frac{1}{b - ip} &\pmod{p} \\
&\equiv \frac{L_{a,b}}{p} \sum_{i=0}^d \frac{1}{xQ_p - i} &\pmod{p} \\
&\equiv \frac{L_{a,b}}{p} \sum_{i=0}^d \frac{1}{x_p - i} &\pmod{p} \\
&\equiv \frac{L_{a,b}}{p} \frac{f_d(x_p)}{\prod_{i=0}^d (x_p - i)} &\pmod{p} \\
&\equiv 0 &\pmod{p}
\end{align*}

On the other hand, $p$ does not divide $\frac{L_{a,b}}{b}$ by Lemma \ref{psquaredfree}. This implies $0 \equiv X_{a,b} = \frac{L_{a,b}}{L_{a,b-1}}X_{a,b-1} + \frac{L_{a,b}}{b} \not \equiv \frac{L_{a,b}}{L_{a,b-1}}X_{a,b-1} \pmod{p}$, from which we conclude that $X_{a,b-1}$ is not divisible by $p$.
\end{proof}

And now we can finish the proof of Lemma \ref{upperlink}. For all primes $p \in S$, we have $e_p(L_{a,b}) = e_p(L_{a,b-1}) = 1$ by Lemma \ref{psquaredfree}, which implies $e_p(g_{a,b}) = 1$ and $e_p(g_{a,b-1}) = 0$ by Lemma \ref{thisbefore}. On the other hand, for all primes $p \notin S$, we have $e_p(g_{a,b-1}) \le e_p(g_{a,b}) + \min\big(e_p(L_n), e_p(x)\big)$ by Lemma \ref{Lemma1} and Lemma \ref{noudaarom}. Adding this inequality to the equality $e_p(L_{a,b}) = e_p(L_{a,b-1}) + \max\big(0, e_p(x) - e_p(L_n)\big)$ gives $e_p(L_{a,b}) + e_p(g_{a,b-1}) \le e_p(L_{a,b-1}) + e_p(g_{a,b}) + e_p(x)$ for all $p \notin S$. Combining both the estimates on the primes that do and do not belong to S, and we get:
\begin{align*}
L_{a,b}g_{a,b-1} &= \prod_{p \text{ prime }} p^{e_p(L_{a,b}) + e_p(g_{a,b-1})} \\
&= \prod_{p \in S} p^{e_p(L_{a,b}) + e_p(g_{a,b-1})} \prod_{p \notin S} p^{e_p(L_{a,b}) + e_p(g_{a,b-1})} \\
&\le \prod_{p \in S} p^{e_p(L_{a,b-1}) + e_p(g_{a,b}) - 1} \prod_{p \notin S} p^{e_p(L_{a,b-1}) + e_p(g_{a,b}) + e_p(x)} \\
&= \prod_{p \text{ prime }} p^{e_p(L_{a,b-1}) + e_p(g_{a,b})} \prod_{p \in S} p^{-1} \prod_{p \notin S} p^{e_p(x)} \\
&= L_{a,b-1}g_{a,b} \frac{x}{Q} \\
&< L_{a,b-1}g_{a,b} \\
\end{align*}

We therefore have $v_{a,b} = \frac{L_{a,b}}{g_{a,b}} < \frac{L_{a,b-1}}{g_{a,b-1}} = v_{a,b-1}$. 
\end{proof}

To prove Lemma \ref{lowerlink}, we need need one other lemma.

\begin{tnogood} \label{notgood}
If $b-a = n$, then $e_p(g_{a,b}) \le e_p(g_{a,b-1})$ for all $p \in T$.
\end{tnogood}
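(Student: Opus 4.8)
Lemma \ref{notgood} asserts that for every $p\in T$ and every interval $[a,b]$ of length $n$, the prime $p$ contributes no more to $g_{a,b}$ than to $g_{a,b-1}$. The plan is to fix a prime $p\in T$, say $p\in T_d$ for the unique $d$ with $n/(d+1)<p\le n/d$, and to recall that $p\in T_d$ means $f_d(x)\equiv 0\pmod p$ has no solution. Since $g_{a,b}=\gcd(X_{a,b},L_{a,b})$ and $g_{a,b-1}=\gcd(X_{a,b-1},L_{a,b-1})$, it is enough to prove $\min\big(e_p(X_{a,b}),e_p(L_{a,b})\big)\le\min\big(e_p(X_{a,b-1}),e_p(L_{a,b-1})\big)$, and I would split into the cases $p\nmid b$ and $p\mid b$. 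The case $p\nmid b$ is soft and uses nothing about $T$: here $e_p(L_{a,b})=e_p(L_{a,b-1})$, and writing $X_{a,b}=\frac{b}{\gcd(L_{a,b-1},b)}X_{a,b-1}+\frac{L_{a,b}}{b}$ — where $p$ divides neither the first coefficient nor the denominator $b$ — the ultrametric inequality gives $e_p(X_{a,b})=e_p(X_{a,b-1})$ as soon as $e_p(X_{a,b-1})<e_p(L_{a,b})$. In that sub-case $e_p(g_{a,b})\le e_p(X_{a,b})=e_p(X_{a,b-1})=e_p(g_{a,b-1})$, while in the complementary sub-case $e_p(g_{a,b})\le e_p(L_{a,b})=e_p(L_{a,b-1})=e_p(g_{a,b-1})$.

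The case $p\mid b$ is where the hypothesis $p\in T$ enters, and there I would in fact show the stronger statement $p\nmid X_{a,b}$, which forces $e_p(g_{a,b})=0\le e_p(g_{a,b-1})$. Since $n/(d+1)<p\le n/d$ and $p\mid b$, the multiples of $p$ in $[a,b]$ are exactly $b,b-p,\ldots,b-dp$ (there are $d+1$ of them). Write $b=pm$. Because $d+1\le\sqrt n<p$, at most one of the $d+1$ consecutive integers $m,m-1,\ldots,m-d$ is divisible by $p$. If one of them, say $m-i_0$, is divisible by $p$, then $b-i_0p$ is the unique integer of $[a,b]$ attaining $e_p=e_p(L_{a,b})\ge 2$, so modulo $p$ the sum $X_{a,b}=\sum_{i=a}^{b}L_{a,b}/i$ reduces to the single term $L_{a,b}/(b-i_0p)$, a unit modulo $p$. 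If none of $m,\ldots,m-d$ is divisible by $p$, then $e_p(L_{a,b})=1$, the terms surviving modulo $p$ are precisely those indexed by $b,b-p,\ldots,b-dp$, and
\[
X_{a,b}\;\equiv\;\frac{L_{a,b}}{p}\sum_{i=0}^{d}\frac{1}{m-i}\;=\;\frac{L_{a,b}}{p}\cdot\frac{f_d(m)}{\prod_{i=0}^{d}(m-i)}\pmod p.
\]
Here $L_{a,b}/p$ and $\prod_{i=0}^{d}(m-i)$ are units modulo $p$, and $f_d(m)\not\equiv 0\pmod p$ exactly because $f_d$ has no root modulo $p$; hence $p\nmid X_{a,b}$.

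I expect no genuine obstacle: the argument is a close cousin of the computations in Lemma \ref{p-ness} and Lemma \ref{thisbefore}. The only point requiring a little care is the valuation bookkeeping in the case $p\mid b$ — namely confirming that $e_p(L_{a,b})=1$ in the main sub-case so that the reduction of $X_{a,b}$ to a unit multiple of $f_d(m)$ is legitimate, and that $b,b-p,\ldots,b-dp$ really are all the multiples of $p$ in $[a,b]$ — and this rests entirely on the inequality $p>\sqrt n\ge d+1$ that is built into the definition of $T_d$.
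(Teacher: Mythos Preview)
Your argument is correct and follows essentially the same route as the paper's proof: the heart of the matter is the reduction of $X_{a,b}\pmod p$ to a unit multiple of $f_d(m)$ when $p\mid b$ and $e_p(L_{a,b})=1$, together with the observation that a multiple of $p^2$ in $[a,b]$ is unique since $p^2>n$. The only cosmetic difference is the case organization --- you split first on $p\mid b$ versus $p\nmid b$ and then, within $p\mid b$, on whether some $m-i$ is divisible by $p$, whereas the paper splits first on $e_p(L_{a,b})\ge 2$ versus $e_p(L_{a,b})=1$ --- but the two decompositions cover the same ground and the computations are identical.
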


\begin{proof}
If $e_p(L_{a,b}) \ge 2$ for some prime $p \in T_d$, then $p^2$ does not divide any other integer in the interval $[a,b]$, since $b - a = n < p^2$. So we see $X_{a,b} \not\equiv 0 \pmod{p}$, as we only have one non-zero term modulo $p$. We are therefore free to assume $e_p(L_{a,b}) = 1$. If $p$ does not divide $b$, then $X_{a,b} = \frac{L_{a,b}}{L_{a,b-1}}X_{a,b-1} + \frac{L_{a,b}}{b} \equiv \frac{L_{a,b}}{L_{a,b-1}}X_{a,b-1} \pmod{p}$ which is equal to zero if and only if $X_{a,b-1} \equiv 0 \pmod{p}$ as well. On the other hand, if $p$ does divide $b$, then we can follow the analogous calculation of $X_{a,b} \pmod{p}$ in Lemma \ref{thisbefore}. This implies $X_{a,b} \not\equiv 0 \pmod{p}$, as otherwise $f_d(x) \equiv 0 \pmod{p}$ would be solvable, contrary to $p \in T_d$. 
\end{proof}

\begin{proof}[Proof of Lemma \ref{lowerlink}]
If $b-a = n < \frac{\log(a)}{1 + c + o(1)} < \frac{\log(b)}{1 + c + o(1)}$, then $b > e^{(1 + c + o(1))n}$. Now, by combining Lemma \ref{notgood} with Lemma \ref{noudaarom}, we get the inequality $\frac{g_{a,b}}{g_{a,b-1}} \le \frac{L_n}{P}$. A calculation similar to the one at the end of the proof of Theorem \ref{limba} then implies $v_{a,b} \ge \frac{L_{a,b-1}}{g_{a,b-1}} \frac{bP}{L_n^2} = v_{a,b-1} \frac{bP}{L_n^2}$. And with $P = e^{(1 - c + o(1))n}$, $L_n = e^{(1 + o(1))n}$ and $b > e^{(1 + c + o(1))n}$, we may finally deduce $v_{a,b} > v_{a,b-1}$.
\end{proof}

To prepare the proof of Lemma \ref{cbounds}, we need information on the values of $\delta(f_d)$, in order to be able to estimate $c$. As it turns out, $\delta(f_d) = 1$ for all odd $d$.

\begin{polproperties} \label{polprop}
For all $d \in \mathbb{N}$ and all $x \in \mathbb{R}$ we have $f_d(x) = (-1)^d f_d(d-x)$. In other words, $f_d(x+\frac{d}{2})$ is an odd function when $d$ is odd and it is an even function when $d$ is even. In particular, $\delta(f_d) = 1$ when $d$ is odd.\footnote{This was suggested by Will Jagy, see \mbox{\cite{mo}}.}
\end{polproperties}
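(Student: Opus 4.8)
The plan is to prove the functional equation $f_d(x) = (-1)^d f_d(d-x)$ directly from the definition $f_d(x) = \sum_{i=0}^d \prod_{j \neq i}(x-j)$, and then deduce the parity and density statements as formal consequences. First I would substitute $d-x$ for $x$ and reindex. For a fixed $i$, consider the factor $\prod_{j=0, j\neq i}^d ((d-x)-j)$. Writing $j' = d-j$, as $j$ ranges over $\{0,1,\ldots,d\}\setminus\{i\}$, $j'$ ranges over $\{0,1,\ldots,d\}\setminus\{d-i\}$, and $(d-x)-j = -(x-(d-j)) = -(x-j')$. Hence $\prod_{j\neq i}((d-x)-j) = (-1)^d \prod_{j'\neq d-i}(x-j')$, since the product has $d$ factors. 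Summing over $i$ (equivalently over $d-i$) gives $f_d(d-x) = (-1)^d \sum_{i=0}^d \prod_{j'\neq i}(x-j') = (-1)^d f_d(x)$, which rearranges to the claimed identity.

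Next, translating the variable: set $g(y) = f_d\!\left(y + \tfrac{d}{2}\right)$. Then $g(-y) = f_d\!\left(\tfrac{d}{2} - y\right) = f_d\!\left(d - (\tfrac{d}{2}+y)\right) = (-1)^d f_d\!\left(\tfrac{d}{2}+y\right) = (-1)^d g(y)$, so $g$ is odd when $d$ is odd and even when $d$ is even, as stated. Finally, for odd $d$, the function $g$ is an odd polynomial, hence $g(0)=0$, i.e. $f_d(d/2)=0$. If $d$ is odd then $f_d$ has degree $d$ (the top-degree terms of the $d+1$ summands are each $x^d$, so the leading coefficient is $d+1 \neq 0$), and $x - d/2$ — or rather, after clearing the denominator $2$, the linear polynomial $2x-d$ — divides $2^{?}f_d(x)$ appropriately; more cleanly, for any odd prime $p$ the congruence $f_d(x)\equiv 0 \pmod p$ has the solution $x \equiv (d/2) \bmod p$, where $d/2$ is interpreted via the inverse of $2$ mod $p$ (valid since $p$ is odd), because $f_d$ has integer coefficients and $f_d(d/2)=0$ as a rational identity forces $f_d$ evaluated at the corresponding residue to vanish mod $p$. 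Since this holds for all odd primes $p$, the density $\delta(f_d)$ of primes for which $f_d$ has a root is $1$.

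I do not anticipate a serious obstacle here; the only point requiring mild care is the bookkeeping in the reindexing $j \mapsto d-j$ and the justification that $f_d(d/2)=0$ over $\mathbb{Q}$ yields a root mod $p$ for every odd $p$ — one should phrase this as: $2^{\deg}f_d(d/2)$ lies in $\mathbb{Z}$ and equals $0$, hence reducing mod $p$ the polynomial $f_d$ has $2^{-1}d$ as a root in $\mathbb{F}_p$. (One must also note $p=2$ is a single prime and so irrelevant to the density.) The rest is a routine symmetry argument.
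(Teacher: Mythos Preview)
Your proof is correct and follows essentially the same route as the paper: the reindexing $j\mapsto d-j$ to obtain $f_d(d-x)=(-1)^d f_d(x)$, the translation $x\mapsto x+\tfrac{d}{2}$ for the parity statement, and the observation that for odd $d$ the rational root $d/2$ yields the root $2^{-1}d\in\mathbb{F}_p$ for every odd prime $p$. Your write-up is in fact slightly more explicit than the paper's in tracking how the excluded index $i$ becomes $d-i$ under the substitution.
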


\begin{proof}
By direct calculation:
\begin{align*}
f_d(d-x) &= \sum_{i=0}^d \prod_{\substack{j=0 \\ j\neq i}}^d \big((d-x)-j\big) \\
&= \sum_{i=0}^d (-1)^d \prod_{\substack{j=0 \\ j\neq i}}^d \big(x - (d-j)\big) \\
&= (-1)^d \sum_{i=0}^d \prod_{\substack{j=0 \\ j\neq i}}^d (x - j) \\
&= (-1)^d f_d(x)
\end{align*}

Plugging in $\frac{d}{2} + x$ gives $f_d(\frac{d}{2} + x) = (-1)^d f_d(\frac{d}{2} - x)$, which implies $f_d(\frac{d}{2}) = 0$ when $d$ is odd. This in turn implies $f_d(x) \equiv 0 \pmod{p}$ with $x \equiv 2^{-1}d \pmod{p}$, for all odd primes $p$.
\end{proof}

Now, for a group of permutations on a set $X$, we say a permutation $\sigma$ is a derangement if $\sigma(x) \neq x$ for all $x \in X$. To find the value of $\delta(f_d)$ for even $d$, the number of derangements in the Galois group of $f_d(x)$ will be important.

\begin{frob} \label{frobb}
Let $G_d$ be the Galois group of $f_d(x)$, viewed as a group of permutations on the set of roots of $f_d(x)$. If $f_d(x)$ is irreducible, the density $\delta(f_d)$ is equal to the proportion of $\sigma \in G_d$ such that $\sigma$ is not a derangement.
\end{frob}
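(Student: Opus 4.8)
The plan is to deduce this from the classical density theorem of Frobenius — which precedes, and is a special case of, the Chebotarev density theorem — in the form relating the shape of the factorization of $f_d \bmod p$ to the cycle structure of the Frobenius substitution acting on the roots. First I would dispose of the elementary reductions. Since $f_d$ has degree $d$ and leading coefficient $d+1$, and since $f_d$ is assumed irreducible over $\mathbb{Q}$, it is separable (we are in characteristic zero); hence there is a finite set $B$ of primes — those dividing $d+1$, together with those at which $f_d \bmod p$ fails to be separable — outside of which $f_d \bmod p$ is a separable polynomial of degree $d$ over $\mathbb{F}_p$. Removing $B$ does not affect any density, so it suffices to compute, among $p \notin B$, the density of primes for which $f_d(x) \equiv 0 \pmod p$ has a solution, and for such $p$ this happens exactly when $f_d \bmod p$ has a factor of degree $1$ over $\mathbb{F}_p$.

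Next I would apply Frobenius's theorem proper. Let $K$ be the splitting field of $f_d$ over $\mathbb{Q}$ and let $G_d = \mathrm{Gal}(K/\mathbb{Q})$, acting faithfully as a permutation group on the $d$ roots of $f_d$. For $p \notin B$ the degrees of the irreducible factors of $f_d \bmod p$ are precisely the lengths of the cycles of the Frobenius conjugacy class $\mathrm{Frob}_p \subseteq G_d$, and Frobenius's density theorem asserts that for every partition of $d$ the density of primes $p$ whose factorization shape is that partition equals the proportion of elements of $G_d$ with the corresponding cycle type. In particular, $f_d \bmod p$ has a linear factor if and only if $\mathrm{Frob}_p$ fixes one of the roots, that is, if and only if $\mathrm{Frob}_p$ is not a derangement. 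The property of being a non-derangement is invariant under conjugation, so the non-derangements form a union of conjugacy classes in $G_d$; summing the Frobenius densities over those classes shows that the density in question exists and equals $\#\{\sigma \in G_d : \sigma \text{ is not a derangement}\} / |G_d|$, which by definition is $\delta(f_d)$.

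The only genuinely fiddly point — the ``slight extension'' of the classical statement — is that Frobenius's theorem is usually quoted for monic integer polynomials, whereas $f_d$ has leading coefficient $d+1$. I would handle this in one line, either by replacing $f_d(x)$ with the monic polynomial $(d+1)^{d-1} f_d\left(\frac{x}{d+1}\right) \in \mathbb{Z}[x]$, which has the same splitting field and, for every $p \nmid (d+1)$, the same factorization shape modulo $p$, or by citing a version of the theorem stated for primitive separable polynomials away from the primes dividing the leading coefficient and the discriminant. Everything else is standard, so I would simply cite a textbook account of the Frobenius density theorem for the core input. The irreducibility hypothesis is used only to obtain separability for free and to keep the statement clean; its verification for the relevant polynomials $f_d$ is carried out separately.
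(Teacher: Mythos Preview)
Your proposal is correct and follows exactly the approach the paper has in mind: the paper's own ``proof'' is simply a citation to the Stevenhagen--Lenstra survey on the Frobenius/Chebotarev density theorem, together with the remark that the monicity assumption there can be dropped. Your write-up is therefore more detailed than the paper's, and your handling of the non-monic case via the substitution $(d+1)^{d-1}f_d\!\left(\frac{x}{d+1}\right)$ is precisely the ``slight extension'' the paper alludes to.
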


\begin{proof}
See \mbox{\cite{frob}} for a nice survey with references. They generally work with monic polynomials there, but this assumption can be omitted.
\end{proof}

Define $S_l^{+}$ to be the signed symmetric or hyperoctahedral group, which is the group of permutations $\sigma$ on $\{-l, -l+1, \ldots, -1, 1, 2, \ldots, l\}$ such that $\sigma(i) = -\sigma(-i)$, for all $i$. We then have the following result:

\begin{even} \label{evenn}
When $d = 2l$ is even, $G_d$ is isomorphic to a subgroup of $S_l^{+}$.
\end{even}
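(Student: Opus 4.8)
The plan is to make the roots of $f_d$ completely explicit and then let the Galois action inherit the symmetry $x \mapsto d-x$ that Lemma \ref{polprop} provides.

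First I would observe that $f_d = P_d'$, where $P_d(x) := \prod_{j=0}^{d}(x-j)$. Since $P_d$ has the $d+1$ distinct real roots $0,1,\dots,d$, Rolle's theorem puts a root of $f_d$ strictly inside each interval $(k,k+1)$ for $0 \le k \le d-1$; as $\deg f_d = d$, these are all of its roots, so $f_d$ is squarefree and has no integer root. Writing $d = 2l$ and applying Lemma \ref{polprop}, we get $f_d(x) = f_d(2l-x)$, hence $f_d(l+x)$ is an even polynomial in $x$ of degree $2l$; write $f_d(l+x) = h(x^2)$ with $\deg h = l$ and $h(0) = f_d(l) \ne 0$. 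Now $r$ is a root of $f_d$ if and only if $h\big((r-l)^2\big) = 0$, and by the symmetry $2l-r$ is then also a root; since $f_d$ has $2l$ distinct roots and none equals the integer $l$, these split into exactly $l$ pairs $\{\, l+\gamma,\ l-\gamma \,\}$ with $\gamma \ne 0$, the values $\gamma^2$ being the $l$ distinct roots of $h$. Fixing for each root of $h$ one of its two square roots $\gamma_1,\dots,\gamma_l$ and setting $\gamma_{-k} := -\gamma_k$, the root set of $f_d$ is exactly $\{\, l+\gamma_k : k \in \{\pm 1,\dots,\pm l\}\,\}$, with all $2l$ entries distinct.

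With this labelling in hand the rest is routine. Let $K$ be the splitting field of $f_d$ over $\mathbb{Q}$ and $G_d = \mathrm{Gal}(K/\mathbb{Q})$. The roots generate $K$, so $G_d$ acts faithfully on them, yielding an injective homomorphism $\rho \colon G_d \hookrightarrow \mathrm{Sym}\big(\{\pm 1,\dots,\pm l\}\big)$ via $\sigma(l+\gamma_k) = l + \gamma_{\rho(\sigma)(k)}$, equivalently $\sigma(\gamma_k) = \gamma_{\rho(\sigma)(k)}$. For any $\sigma$ and any index $k$ this gives $\sigma(\gamma_{-k}) = -\sigma(\gamma_k) = -\gamma_{\rho(\sigma)(k)} = \gamma_{-\rho(\sigma)(k)}$, so $\rho(\sigma)(-k) = -\rho(\sigma)(k)$, i.e.\ $\rho(\sigma) \in S_l^{+}$. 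Hence $G_d \cong \rho(G_d)$ is a subgroup of $S_l^{+}$.

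The only place where care is needed is the second paragraph: I need that $f_d$ has $2l$ distinct roots, that they are interchanged in $\pm$-pairs about $l$, and that $l$ is not itself a root, so that the index set $\{\pm 1,\dots,\pm l\}$ is unambiguous and $\rho$ is well defined. All three follow from $f_d = P_d'$ (simple real roots, none an integer) together with the symmetry in Lemma \ref{polprop}; once they are in place, the verification that $\rho(G_d)$ respects the sign structure is immediate. It is worth noting that this argument uses no irreducibility of $f_d$, so $G_d \le S_l^{+}$ holds for every even $d$; when $f_d$ happens to be irreducible, $\rho(G_d)$ is moreover transitive on the $2l$ roots, which is what the later derangement count will exploit.
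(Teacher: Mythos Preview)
Your proof is correct and follows essentially the same route as the paper: shift $f_d$ by $l$ to obtain an even polynomial, pair up the roots as $\pm\gamma_k$, and observe that any field automorphism commutes with negation so the induced permutation lies in $S_l^{+}$. The paper simply writes down the root labelling and the homomorphism without further justification; your additional observation that $f_d = P_d'$ together with Rolle's theorem is a clean way to certify that the $2l$ roots are distinct and that none equals the integer $l$, which the paper tacitly assumes when it indexes the roots by $\{\pm 1,\dots,\pm l\}$.
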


\begin{proof}
Define $g_d(x) = f_d(x + \frac{d}{2})$. By Lemma \ref{polprop}, $g_d(x)$ is even and this makes it slightly easier to work with. As $g_d(x)$ and $f_d(x)$ are translates of each other, they have the same Galois group, so it suffices to find the Galois group of $g_d(x)$. Let $\{x_{-l}, x_{-l-1}, \ldots, x_{-1}, x_1, \ldots, x_l\}$ be the roots of $g_d(x)$ with $x_i = -x_{-i}$ and let $\sigma$ be an element of $G_d$. If $\sigma(x_i) = x_j$, then $\sigma(-x_i) = -x_j$, since $\sigma$ is a field automorphism. We can thusly define an injective homomorphism $\phi$ from $G_d$ to $S_l^{+}$ such that for all $i$, if $\sigma \in G_d$ sends $x_i$ to $x_j$, then $\phi(\sigma)$ sends $i$ to $j$.
\end{proof}

Whenever $G_d$ is isomorphic to the full group $S_l^{+}$, we have an exact formula for the number of elements that are not derangements.

\begin{derange} \label{derange}
The fraction of elements in $S_l^{+}$ that are not derangements is equal to $1 - \displaystyle \sum_{i=0}^l \frac{(-1)^i}{2^ii!}$. %$\displaystyle \sum_{m=0}^l \frac{2^{l-m}l!}{m!} \sum_{k=0}^{l-m} \frac{(-1)^k}{k!}$.
\end{derange}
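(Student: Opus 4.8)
The plan is to compute the number of derangements in $S_l^{+}$ directly by inclusion--exclusion, then divide by $|S_l^{+}| = 2^l\,l!$ and pass to the complement.

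First I would record the basic observation that fixed points of a signed permutation occur in pairs. If $\sigma \in S_l^{+}$ and $j \in \{1, \dots, l\}$, then the defining relation $\sigma(-j) = -\sigma(j)$ shows that $\sigma(j) = j$ holds if and only if $\sigma(-j) = -j$, whereas a sign flip $\sigma(j) = -j$ is \emph{not} a fixed point of $\sigma$ acting on $X = \{-l, \dots, -1, 1, \dots, l\}$ (it merely swaps $j$ with $-j$). Hence $\sigma$ is a derangement on $X$ precisely when $\sigma(j) \neq j$ for every $j$ in the positive index set $\{1, \dots, l\}$.

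Next, for a subset $J \subseteq \{1, \dots, l\}$ let $A_J \subseteq S_l^{+}$ be the set of $\sigma$ with $\sigma(j) = j$ for all $j \in J$ (equivalently, $\sigma$ fixes each of $j$ and $-j$ for $j \in J$). Such a $\sigma$ restricts to an arbitrary signed permutation of the remaining $2(l - |J|)$ indices, so $|A_J| = 2^{\,l - |J|}(l - |J|)!$, which depends only on $|J|$. By inclusion--exclusion the number of derangements in $S_l^{+}$ equals
$$\sum_{J \subseteq \{1, \dots, l\}} (-1)^{|J|}\,|A_J| \;=\; \sum_{s=0}^{l} \binom{l}{s}(-1)^s\, 2^{\,l-s}(l-s)! \;=\; 2^l\, l! \sum_{s=0}^{l} \frac{(-1)^s}{2^s\, s!},$$
using $\binom{l}{s}(l-s)! = l!/s!$ in the last step. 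Dividing by $|S_l^{+}| = 2^l\, l!$ shows that the fraction of derangements in $S_l^{+}$ is $\sum_{i=0}^{l} (-1)^i/(2^i\, i!)$, and therefore the fraction of elements that are \emph{not} derangements equals $1 - \sum_{i=0}^{l} (-1)^i/(2^i\, i!)$, as claimed.

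There is essentially no serious obstacle here; the only point requiring a moment's care is the pairing of fixed points, so that the inclusion--exclusion is indexed by subsets of $\{1, \dots, l\}$ rather than of $X$, and so that $\sigma(j) = -j$ is correctly not counted as a fixed point. An alternative route, should one prefer it, is to classify elements of $S_l^{+}$ by the number $k$ of positive fixed points of their underlying unsigned permutation of $\{1, \dots, l\}$: there are $\binom{l}{k} D_{l-k}$ such underlying permutations and exactly $2^{\,l-k}$ sign patterns producing a derangement, giving $\sum_{k} \binom{l}{k} D_{l-k}\, 2^{\,l-k}$ derangements, which one checks equals the same closed form via $D_m = m!\sum_{i=0}^{m} (-1)^i/i!$; the inclusion--exclusion argument above is the shortest.
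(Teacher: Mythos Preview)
Your proof is correct. The pairing observation reduces the derangement condition on $X$ to the condition $\sigma(j)\neq j$ for all positive $j$, and the inclusion--exclusion over subsets $J\subseteq\{1,\dots,l\}$ with $|A_J|=2^{\,l-|J|}(l-|J|)!$ is exactly right.

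The paper takes a different route: it simply cites Theorem~2.1 of Assaf's paper on cyclic derangements and divides by $|S_l^{+}|=2^l l!$. Your argument is genuinely more self-contained and elementary---a two-line inclusion--exclusion that requires no external reference and no machinery beyond the definition of $S_l^{+}$. What the citation buys is brevity and a pointer to a more general setting (cyclic derangements in $\mathbb{Z}_r\wr S_l$ for arbitrary $r$, of which $r=2$ is the case here); what your direct computation buys is that the reader can verify the formula on the spot without chasing a reference. Either is perfectly adequate for the purpose at hand.
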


\begin{proof}
This follows directly from Theorem 2.1 in \mbox{\cite[p. 3]{derange}}, by applying the fact that $S_l^{+}$ contains $2^ll!$ integers.
\end{proof}

\begin{proof}[Proof of Lemma \ref{cbounds}]
Using the functions \textit{polisirreducible} and \textit{GaloisGroup} from the computer programs PARI/GP and Magma respectively, we have found that $f_d$ is irreducible for all even $d \le 500$, while $G_d$ is isomorphic to $S_l^{+}$ for all even $d \le 60$, except for $d = 8, 24, 48$. We can then apply Lemma \ref{derange} in order to find lower and upper bounds on $c$.
\begin{align*}
c &= \displaystyle \sum_{d = 1}^{\infty} \frac{\delta(f_d)}{d(d+1)} \\ 
&= \sum_{l = 1}^{\infty} \frac{\delta(f_{2l-1})}{2l(2l-1)} + \sum_{\substack{1 \le l \le 30 \text{ and} \\ l \notin \{4,12,24\}}} \frac{\delta(f_{2l})}{2l(2l+1)} + \sum_{\substack{l \ge 31 \text{ or} \\ l \in \{4,12,24\}}} \frac{\delta(f_{2l})}{2l(2l+1)}\\
&= \sum_{l = 1}^{\infty} \frac{1}{2l(2l-1)} + \sum_{\substack{1 \le l \le 30 \text{ and} \\ l \notin \{4,12,24\}}} \frac{1 - \displaystyle \sum_{i=0}^l \frac{(-1)^i}{2^ii!}}{2l(2l+1)} + \sum_{\substack{l \ge 31 \text{ or} \\ l \in \{4,12,24\}}} \frac{\delta(f_{2l})}{2l(2l+1)} 
\end{align*}

The first sum equals $\log(2) \approx 0.6931$ and the second sum is approximately equal to $0.1281$, giving $c > 0.82$. On the other hand, applying $\delta(f_{2l}) \le 1$ gives $0.025$ as an upper bound for the third sum, so that $c < 0.85$.
\end{proof}

% claim: for l = 4, 12, 24 we have \delta(f_{2l} = 1 - sum_{k=0}^l \frac{(-1)^k}{k!2^{\min(l-1,k)}}
% this gives: c \ge log(2) + sum(l=1, 30, (1/(2*l*(2*l+1)))*if(l!=4&&l!=12&&l!=24, 1 - sum(m = 0, l, sum(k=0, l-m, (-1)^k/k!)/(m!*2^m)), 1 - sum(k=0, l, (-1)^k/(k!*2^(min(l-1, k)))))) > 0.8275
% and c \le the above plus (1 - log(2) - sum(l=1, 30, 1/(2*l*(2*l+1)))) < 0.8357

% at least we have \delta(f_{2l} = 1 - sum_{k=0}^l \frac{(-1)^k}{k!2^{\min(l-1,k)}} for l = 4, so that 
% this gives: c \ge log(2) + sum(l=1, 30, (1/(2*l*(2*l+1)))*if(l!=4&&l!=12&&l!=24, 1 - sum(m = 0, l, sum(k=0, l-m, (-1)^k/k!)/(m!*2^m)))) + (1 - sum(k=0, 4, (-1)^k/(k!*2^(min(3, k)))))/72 > 0.8267
% and c \le the above plus (1 - log(2) - sum(l=1, 30, 1/(2*l*(2*l+1)))) + 1/(24*25) + 1/(48*49) < 0.837

% if I only want to claim knowledge on G_d for d \le 20, that would also be sufficient
% log(2) + sum(l=1, 11, (1/(2*l*(2*l+1)))*if(l!=4, 1 - sum(m = 0, l, sum(k=0, l-m, (-1)^k/k!)/(m!*2^m)))) + (1 - sum(k=0, 4, (-1)^k/(k!*2^(min(3, k)))))/72
% the above plus (1 - log(2) - sum(l=1, 11, 1/(2*l*(2*l+1)))) < 0.8436

% best guess: log(2) + sum(l=1, 30, (1/(2*l*(2*l+1)))*if(l!=4&&l!=12&&l!=24, 1 - sum(m = 0, l, sum(k=0, l-m, (-1)^k/k!)/(m!*2^m)), 1 - sum(k=0, l, (-1)^k/(k!*2^(min(l-1, k)))))) + (1-exp(-1/2))*(1 - log(2) - sum(l=1, 30, 1/(2*l*(2*l+1)))) = 0.8307 --> c = 0.546..

% I could use general bounds like $\frac{1}{d} \le \delta(f_d) \le 1 - \frac{1}{d}$ if I wanted to

\newpage

\section{Generalizations}\label{noperiod}
\subsection{Perfect powers as denominators} \label{powerino}
Let $d$ be a positive integer. It seems natural to look at sums of the form $\displaystyle \sum_{i=a}^b \frac{r_i}{i^d}$ to see which results, if any, still hold in this more general case. We will focus on the results from Section \ref{upper} and, for a start, it is possible to generalize Theorem \ref{Theorem1} with essentially the same proof. We will use analogous definitions ($L_{a,b}$ should now be the least common multiple of all integers $i^d \in \{a^d, (a+1)^d, .., b^d \}$ for which $r_i \neq 0$) and to specify the dependence on $d$, $b_d(a)$ will denote the smallest $b$ such that $v_{a,b} < v_{a,b-1}$. Let, analogous to Section \ref{iflarge}, $p \ge m = 1 + \max(r,t)$ be a prime number such that $p | X_{lp^k}$, where $lp^k \ge i_1$ is the smallest such integer. Let $k_1$ be an integer with $p^{\lambda k_1 + k} \ge \max(a, 2t)$ and choose $b = lp^{\lambda k_1 + k}$. We then obtain the following generalization of Theorem \ref{Theorem1}.

\begin{Theoremonevtwo} \label{onetwo}
If $\gcd(l^d, X_{a,b-1}) < p$, then $v_{a,b} < v_{a,b-1}$. Furthermore, if the condition $\gcd(l, X_{a,b-1}) < p$ is satisfied for the smallest $k_1$ such that $p^{\lambda k_1 + k} \ge \max(a, 2t)$ holds, then $b_d(a) \le \max(a-1,2t-1)lp^{\lambda}$.  % if $|r_i|$ = 1 for all $i$, you can take any divisor p of X_2, as $\gcd(2^d, X_{a,b-1}) = 1$
\end{Theoremonevtwo}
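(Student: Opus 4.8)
The plan is to mirror the proof of Theorem \ref{Theorem1} essentially line by line, tracking where the exponent $d$ enters. Write $n = lp^k$ with $\gcd(l,p)=1$ and $b = lp^{\lambda k_1 + k}$. The second assertion is immediate exactly as before: for the smallest admissible $k_1$ we have $p^{\lambda(k_1-1)+k} \le \max(a-1,2t-1)$, hence $b = lp^{\lambda k_1 + k} \le \max(a-1,2t-1)lp^\lambda$. For the first assertion, the strategy is again to show $L_{a,b} = L_{a,b-1}$ while $g_{a,b} > g_{a,b-1}$, so that $v_{a,b} = L_{a,b}/g_{a,b} < L_{a,b-1}/g_{a,b-1} = v_{a,b-1}$.

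First I would reprove the analogue of Lemma \ref{prelimprop}: looking at $X_n \bmod p$ and discarding terms $\frac{L_n r_i}{i^d}$ with $p\mid i$, one finds (since $p > \max(r,t) \ge |r_n|$) that there must exist $1 \le j < l$ with $r_{jp^k}\neq 0$, and that $p^{dk}$ exactly divides $L_n$ while $p^{d(\lambda k_1 + k)}$ exactly divides $L_{a,b}$; the argument that a higher power of $p$ dividing $L_{a,b}$ would force $p^{d(k+1)} \mid L_n$ and contradict minimality of $n$ goes through verbatim, using $b \equiv n \pmod t$ so $r_b = r_n \neq 0$. Next, the analogue of Lemma \ref{lisl}: to see $L_{a,b} = \mathrm{lcm}(b^d, L_{a,b-1}) = L_{a,b-1}$ it suffices that $l^d$ and $p^{d(\lambda k_1 + k)}$ both divide $L_{a,b-1}$; the first follows since $r_{b-lt} = r_b \neq 0$ and $(b-lt)^d$ is the $d$-th power of a multiple of $l$ lying in $[a,b-1]$ (the interval bound $b - lt \ge a$ is unchanged), and the second follows from $r_{jp^{\lambda k_1 + k}} = r_{jp^k} \neq 0$ with $jp^{\lambda k_1 + k} \in [a,b)$. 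Then the analogue of Lemma \ref{p-ness}: the computation $X_n \equiv \frac{L_n}{p^{dk}}\sum_{i=1}^{l}\frac{r_{ip^k}}{i^d} \pmod p$ shows $\sum_{i=1}^l \frac{r_{ip^k}}{i^d} \equiv 0 \pmod p$, and feeding this into the analogous sum for $X_{a,b}$ gives $p \mid X_{a,b}$, while $p \nmid \frac{L_{a,b}r_b}{b^d}$ gives $p \nmid X_{a,b-1}$.

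Finally I would rerun the computation of $g_{a,b}$. The analogue of Lemma \ref{Lemma1} becomes $e_q(g_{a,b}) \ge e_q(g_{a,b-1}) - \min\big(e_q(X_{a,b-1}), d\,e_q(b)\big)$, since now $e_q\big(\frac{L_{a,b}r_b}{b^d}\big)$ loses $d\,e_q(b)$ relative to $e_q(L_{a,b-1})$ in the worst case. Then, splitting the product over primes into the factor at $p$, the primes dividing $l$ (equivalently dividing $b$ but not $p$), and the primes not dividing $b$, one gets
\begin{align*}
g_{a,b} &\ge p^{e_p(g_{a,b-1})+1}\prod_{q\mid l} q^{e_q(g_{a,b-1}) - \min(e_q(X_{a,b-1}),\, d\,e_q(l))}\prod_{q\nmid b} q^{e_q(g_{a,b-1})} \\
&= \frac{p}{\gcd(l^d, X_{a,b-1})}\, g_{a,b-1} \\
&> g_{a,b-1},
\end{align*}
where the last step uses the hypothesis $\gcd(l^d, X_{a,b-1}) < p$ — this is precisely why the condition is stated with $l^d$ rather than $l$. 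I do not expect a serious obstacle here; the one point needing mild care is bookkeeping the factor $d$ in the exponents (the $p^{dk}$ in place of $p^k$ throughout, and the $d\,e_q(l)$ in the gcd), and confirming that all the interval inequalities from Section \ref{iflarge} — which concern the indices $i$, not their $d$-th powers — are unaffected by raising to the $d$-th power. The second sentence of the statement (with the typo'd $\gcd(l,X_{a,b-1})<p$, which should read $\gcd(l^d,X_{a,b-1})<p$ to match the first sentence) then follows immediately from the bound on $b$ established at the outset.
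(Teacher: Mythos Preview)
Your proposal is correct and matches the paper's approach exactly: the paper does not spell out a proof of Theorem~\ref{onetwo} but simply states that it is a generalization of Theorem~\ref{Theorem1} ``with essentially the same proof,'' the only difference being $l^d$ in place of $l$ in the gcd condition. Your line-by-line tracking of the exponent $d$ through Lemmas~\ref{prelimprop}--\ref{Lemma1} and the final $g_{a,b}$ computation is precisely the intended argument, and your observation about the typo in the second sentence ($\gcd(l,X_{a,b-1})$ should read $\gcd(l^d,X_{a,b-1})$) is apt.
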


The only difference here is $l^d$ instead of $l$, in the condition $\gcd(l^d, X_{a,b-1}) < p$. And this condition is of course harder to satisfy when $d$ is large. For a prime divisor $q$ of $l$, recall that Lemma \ref{Lemma3} provided intervals $I$ such that $e_q(X_n)$ is small for all $n \in I$. Now, it is possible to generalize Lemma \ref{Lemma3} so that it works for general $d$. Unfortunately, this is not sufficient to guarantee that $\gcd(l^d, X_{a,b-1}) < p$ holds, due to potential other prime divisors of $l$. \\

One way to try to get around this problem is to search for positive integers $n$ such that, simultaneously for all prime divisors $q_i$ of $l$, $e_{q_i}(X_n)$ is bounded. If one assumes that the terms $\theta_i = \frac{\log(q_1)}{\log(q_i)}$ are rationally independent, then this can be done along the same lines as the proposed proof of Theorem $4$ in \mbox{\cite[p. 5]{dhn}}. And as we mentioned in Section \ref{intro}, the rational independence of the $\theta_i$ does follow from Schanuel's Conjecture, but is currently unknown. But we do get the following corollary:

\begin{finitedsjonnie} \label{sjonnie}
If Schanuel's conjecture is true, then $b_d(a)$ is finite, for all positive integers $d$ and $a$.
\end{finitedsjonnie}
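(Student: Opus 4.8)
The plan is to reduce Corollary \ref{sjonnie} to Theorem \ref{onetwo} by showing that, under Schanuel's conjecture, the condition $\gcd(l^d, X_{a,b-1}) < p$ can be met. Fix $a$ and $d$. Applying Theorem \ref{ohyah} (which holds with any $M \ge m$ in place of $m$) we first secure an integer $n = lp^k$ together with a prime $p \ge M$ dividing $X_n$, where $M$ is chosen large enough that, as in Section \ref{dioph}, either $l < p$ (in which case $\gcd(l^d, X_{a,b-1}) \le l^d$ is still potentially too large, so we actually want $l=1$, i.e. $p > n$, which can be arranged by taking $M$ past the explicit bound on $n$ coming from Lemma \ref{priemteller}) or else every prime divisor $q$ of $l$ lies outside $\Sigma_3$. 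So the real work is the case $l > 1$: write $l = q_1^{a_1} \cdots q_s^{a_s}$ with the $q_i \notin \Sigma_3$, and for each $i$ invoke the $d$-analogue of Lemma \ref{Lemma3} to obtain a constant $c_{q_i}$ and an exponent cap $\mu_{q_i}^{(d)}$ such that $e_{q_i}(X_{b-1}) \le \mu_{q_i}^{(d)}$ whenever $b-1 \in [c_{q_i} q_i^{\lambda k_i}, (c_{q_i}+1) q_i^{\lambda k_i})$; the $d$-analogue of Lemma \ref{xbisxab} then upgrades this to a bound on $e_{q_i}(X_{a,b-1})$ once $q_i^{\lambda k_i} \ge c_{q_i} a m$.

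The crux is then a simultaneous Diophantine approximation problem. With $b = n p^{\lambda k_1} = l p^{\lambda(k_1) + k}$ we need $k_1$ and integers $k_1^{(i)} = k_i$ such that, for every $i$, $b-1$ lands in the prescribed interval around $c_{q_i} q_i^{\lambda k_i}$. Taking logarithms this becomes the system of inequalities requiring $\lambda k_1 \log p + \log(lp^k)$ to be simultaneously close, modulo $\lambda \log q_i$, to $\log c_{q_i}$ for each $i$; equivalently we want $(k_1, k_1, \ldots, k_1)$ — with a single free multiplier — to solve a system in the $s$ real numbers $\log p / \log q_i$ plus shifts. This is exactly the situation handled by the Weyl/Kronecker equidistribution argument in the proposed proof of Theorem 4 of \mbox{\cite{dhn}}: if $1, \log q_1, \ldots, \log q_s, \log p$ are linearly independent over $\mathbb{Q}$ (equivalently, the $\theta_i = \log q_1 / \log q_i$ together with $\log p / \log q_1$ are appropriately independent), then the relevant orbit is equidistributed on a torus and the system has infinitely many solutions $k_1$. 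Schanuel's conjecture supplies precisely this linear independence: the numbers $\log q_1, \ldots, \log q_s, \log p$ are logarithms of multiplicatively independent positive rationals, hence Schanuel forces them to be linearly independent over $\overline{\mathbb{Q}}$, in particular over $\mathbb{Q}$, and $1$ can be thrown in since $e = \exp(1)$ is transcendental relative to the algebraic numbers involved.

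Granting such a $k_1$, we obtain a $b$ for which $e_{q_i}(X_{a,b-1}) \le \mu_{q_i}^{(d)} \le \mu_{q_i}$ for every $i$ simultaneously, whence $\gcd(l^d, X_{a,b-1}) \le \prod_i q_i^{\mu_{q_i}} < \prod_i q_i^{(\mu_{q_i})}\le \big(\prod_{q < m} q^{\mu_q}\big) < m^{\pi(m)}$, which is $< M \le p$ once $M$ was chosen large enough at the outset (here one uses $\mu_q \le \log(m-1)/\log q$, so $\prod_{q<m} q^{\mu_q} < (m-1)^{\pi(m)} < e^{m}$ by Lemma \ref{priemteller}, far below the doubly-exponential-in-$m$ size of $M$). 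Thus the hypothesis $\gcd(l^d, X_{a,b-1}) < p$ of Theorem \ref{onetwo} is satisfied, giving $v_{a,b} < v_{a,b-1}$; and since there are infinitely many admissible $k_1$, the smallest one is finite, so $b_d(a) \le b < \infty$. This works uniformly for every $a$ and every $d$.

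The main obstacle is making the simultaneous approximation step fully rigorous: one must verify that the $d$-analogues of Lemma \ref{Lemma3} and Lemma \ref{xbisxab} genuinely produce intervals of the stated shape $[c_q q^{\lambda k}, (c_q+1) q^{\lambda k})$ with a $d$-independent (or at least $d$-controlled) exponent cap, and that the resulting logarithmic system is of the exact form to which the equidistribution argument of \mbox{\cite{dhn}} applies — in particular that the single multiplier $k_1$ (rather than independent multipliers) suffices, which is where the structure $b = n p^{\lambda k_1}$ is essential. A secondary point requiring care is the precise deduction of $\mathbb{Q}$-linear independence of $\{1\} \cup \{\log q : q \mid l\} \cup \{\log p\}$ from Schanuel; this is standard but should be spelled out, since it is the only place the conjecture enters. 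Everything else is a routine transcription of Section \ref{iflarge} and Section \ref{dioph} with $l$ replaced by $l^d$.
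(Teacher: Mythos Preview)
Your proposal is correct and follows essentially the same route as the paper's own (one-paragraph) sketch: reduce to Theorem~\ref{onetwo}, invoke the $d$-analogue of Lemma~\ref{Lemma3} for each prime $q_i \mid l$, and use a Kronecker-type simultaneous approximation --- precisely the method of Theorem~4 in \cite{dhn} --- to place $b-1$ in all the required intervals at once, with Schanuel supplying the independence condition. One small imprecision worth fixing: the condition you actually need for Kronecker is not $\mathbb{Q}$-linear independence of $1, \log q_1, \ldots, \log q_s, \log p$ (that holds unconditionally by unique factorization and Lindemann), but $\mathbb{Q}$-linear independence of $1, \log p/\log q_1, \ldots, \log p/\log q_s$, which genuinely requires the \emph{algebraic} independence of the logarithms that Schanuel yields --- your parenthetical ``equivalently'' is therefore not an equivalence, though you rightly flag this deduction as the point needing care.
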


Another idea to ensure that the inequality $\gcd(l^d, X_{a,b-1}) < p$ holds for some $b$, is to try to make sure that $l = q^k$ is itself a prime power. And somewhat surprisingly, here a large value of $d$ can actually be advantageous. 

\begin{easyasabc} \label{abc}
Let $i$ and $j > i$ be the smallest two (positive) indices such that $r_i$ and $r_j$ are non-zero. There exists an absolute constant $K$ such that for all $M \ge m$ and all $d > Ke^{M(1 + \frac{3}{\log(M)})}$, $X_j$ is divisible by a prime $p > M$.
\end{easyasabc}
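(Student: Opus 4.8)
The statement to prove is Lemma~\ref{abc}: writing $i,j$ for the first two indices with $r_i,r_j\neq 0$ (so $i=i_1$ in the earlier notation), there is an absolute constant $K$ so that whenever $d>Ke^{M(1+3/\log M)}$ with $M\ge m$, the value $X_j$ (now the numerator of $\sum_{a=i}^{j}r_a/a^d$ with $L_{i,j}$ in front) has a prime divisor exceeding $M$. The key observation is that for the sum over only two non-zero terms, $X_j$ has a very clean shape. With the generalized definitions, $L_{i,j}=\operatorname{lcm}(i^d,j^d)=\bigl(\operatorname{lcm}(i,j)\bigr)^d = (ij/g)^d$ where $g=\gcd(i,j)$, and
\begin{equation*}
X_j = L_{i,j}\left(\frac{r_i}{i^d}+\frac{r_j}{j^d}\right) = r_i\Bigl(\frac{j}{g}\Bigr)^d + r_j\Bigl(\frac{i}{g}\Bigr)^d .
\end{equation*}
So the whole problem reduces to showing that the integer $r_i B^d + r_j A^d$, with $A=i/g$, $B=j/g$ coprime and $1\le A<B\le j\le 2t<2m\le 2M$ and $0<|r_i|,|r_j|<m\le M$, has a prime factor larger than $M$ once $d$ is large enough in terms of $M$.

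\textbf{Main step: forcing a large prime factor via size versus smooth part.} The plan is the standard ``a number that is too big to be $M$-smooth must have a large prime factor'' argument, made quantitative. First, $|X_j| = |r_iB^d+r_jA^d| \ge B^d - (m-1)A^d \ge B^d(1-(m-1)(A/B)^d)$; since $A/B\le (B-1)/B\le 1-1/(2M)$ and $d$ is huge, this is $\ge \tfrac12 B^d \ge \tfrac12 2^d$, so $|X_j|$ grows at least like $2^d$. Next, suppose for contradiction that every prime factor of $X_j$ is $\le M$. Then $X_j$ divides $\prod_{p\le M}p^{e_p(X_j)}$, and I need an upper bound on each $e_p(X_j)$. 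This is the usual lifting-the-exponent / Zsygmondy-type estimate: for a fixed prime $p\le M$, since $A,B$ are bounded by $2M$, the exponent $e_p(r_iB^d+r_jA^d)$ is at most $e_p(r_i)+e_p(\text{something depending on the multiplicative order of }B/A \bmod p^{\cdot})$ plus $O(\log_p d)$ — concretely, one gets a bound of the shape $e_p(X_j)\le C_1\log(2M) + \log_p d$ uniformly, using that $p^{e_p(X_j)}\mid X_j$ forces either $p\mid AB$ (finitely many such $p$, each contributing $\le d\cdot e_p(\operatorname{lcm}(i,j))$, which is $O(d\log M)$ — wait, that's too big) or $p\nmid AB$ in which case $p^{e_p(X_j)}$ divides $B^d - (-r_j/r_i)A^d$ and LTE gives $e_p \le e_p(B^{\operatorname{ord}} - \cdots) + e_p(d) + O(1) \le O(\log M) + \log_p d$. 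For the finitely many $p\mid AB$, note $p\le B\le 2M$ contributes a bounded total: $\prod_{p\mid AB}p^{e_p(X_j)} \le |r_i|B^d + \cdots$ is not obviously small, so instead I should handle those primes by observing $\gcd(B^d-(-r_j/r_i)A^d,\; AB)$ divides $r_i\cdot r_j$ or similar — since $A,B$ coprime, any $p\mid AB$ dividing $r_iB^d+r_jA^d$ must divide $r_i$ or $r_j$, hence $p<m\le M$ and $e_p(X_j)\le e_p(r_i)+e_p(r_j) + (\text{small})$. Assembling, $\prod_{p\le M}p^{e_p(X_j)} \le \Bigl(\prod_{p\le M}p\Bigr)^{C_2\log(2M)}\cdot d^{\pi(M)}$. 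Using Lemma~\ref{priemteller} (i.e.\ $\prod_{p\le M}p < e^{M(1+1/(2\log M))}$ and $\pi(M) < 1.26 M/\log M$), this is at most $e^{C_3 M\log M}\cdot d^{1.26M/\log M}$. Taking logs, $|X_j|$ being $\ge 2^d$ forces $d\log 2 \le C_3 M\log M + \tfrac{1.26 M}{\log M}\log d$. For $d > Ke^{M(1+3/\log M)}$ the left side wins (the $\log d$ term is $\approx M(1+3/\log M)/\log 2 \cdot \tfrac{1.26M}{\log M}$ which is $o(d)$, and $M\log M = o(d)$), a contradiction. Hence $X_j$ has a prime factor $>M$, provided $K$ is chosen large enough to absorb all the constants $C_1,C_2,C_3$ and the small-$M$ cases.

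\textbf{Where the difficulty lies.} The substantive obstacle is the uniform bound $e_p(X_j) = e_p(r_iB^d+r_jA^d) \le O(\log M) + \log_p d$ for primes $p\nmid AB$; this is exactly a lifting-the-exponent statement and requires care when $p=2$ (the extra factor-of-two nuisance in LTE) and when $-r_j/r_i$ is not a perfect power residue. The clean way is: let $n_p$ be the multiplicative order of $B A^{-1}$ modulo $p$; if $p\nmid X_j$ nothing to prove; otherwise $(BA^{-1})^d\equiv -r_j r_i^{-1}\pmod p$, and then $e_p(X_j)=e_p\bigl((BA^{-1})^d - (-r_jr_i^{-1})\bigr) + d\,e_p(A)$, but $e_p(A)=0$, and by LTE $e_p\bigl((BA^{-1})^d-\zeta\bigr)\le e_p\bigl((BA^{-1})^{n_p}-\zeta'\bigr)+e_p(d)+1$ where the first term is $O(\log(p^{n_p}))=O(\log M)$ crudely bounded since the base and target are bounded rationals with numerator/denominator $O(M^{2M})$... — here one must be slightly careful that $n_p$ itself can be as large as $p-1\le M$, making $p^{n_p}$ enormous, so the crude bound $e_p \le \log_p(p^{n_p}\cdot\text{stuff})$ is useless; instead one uses that $(BA^{-1})^{n_p}-\zeta'$ as an element of $\mathbb Z[\zeta']$ has bounded norm and deduce $e_p$ of it is bounded by $\log$ of that norm, which is $O(n_p\log M) = O(M\log M)$, feeding $\prod_p p^{e_p}\le e^{O(M^2\log M)}\cdot d^{\pi(M)}$ — still fine since $M^2\log M = o(d)$. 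So the bookkeeping survives even with the generous estimate; the only real art is making sure the exponent of $d$ is exactly $\pi(M)\sim M/\log M$ and not something larger, which is what produces the precise threshold $e^{M(1+3/\log M)}$ after dividing through by $\log 2$ and comparing with Lemma~\ref{priemteller}. I would state and prove the LTE bound as a short sub-lemma, then run the smooth-part contradiction in a handful of lines.
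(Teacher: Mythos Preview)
Your overall strategy --- lower-bound $|X_j|$ by $2^{d-1}$, upper-bound its $M$-smooth part prime by prime, and compare --- is sound, and the reduction to $X_j = r_i B^d + r_j A^d$ with $A=i/\gcd(i,j)$, $B=j/\gcd(i,j)$ coprime is correct. The gap is in the step you yourself flag as the ``substantive obstacle'': the bound $e_p(r_iB^d+r_jA^d)\le O(\text{poly}(M))+\log_p d$. Standard LTE handles $v_p(x^n-y^n)$ when $p\mid x-y$; it does \emph{not} give a formula for $v_p(r_iB^d+r_jA^d)$ when $-r_j/r_i$ is not an exact integral power of $B/A$. In that generic case, writing $\gamma=B/A$ and $c=-r_j/r_i$ as $p$-adic units, one has $e_p(u_d)=v_p(\gamma^d-c)=s'+v_p(d-\delta)$ with $\delta=\log_p\langle c\rangle/\log_p\langle\gamma\rangle\in\mathbb{Q}_p$, and there is no elementary reason $v_p(d-\delta)$ cannot be far larger than $\log_p d$ for some particular $d$ --- for instance, with $A=1,B=2,r_i=1,r_j=3,p=7$ one already finds $e_7(u_{122})\ge 3$ even though the na\"ive LTE prediction is $1$. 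Bounding $v_p(d-\delta)$ effectively is exactly a linear form in $p$-adic logarithms, and requires Yu's $p$-adic Baker theorem; the norm-in-$\mathbb{Z}[\zeta']$ idea you sketch does not apply because $c$ is a rational number, not a root of unity.

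The paper avoids this per-prime analysis altogether: it sets $A'=r_ij^d/g$, $B'=r_ji^d/g$ with $g=\gcd(r_ij^d,r_ji^d)$ so that $A',B'$ are coprime and $A'+B'\mid X_j$, observes that $\mathrm{rad}(A'B')\le\mathrm{rad}(r_ir_jij)<2m^4$ is tiny, and then invokes the Stewart--Yu effective abc bound to get $\mathrm{rad}(A'+B')>\log|A'+B'|/(K'm^4)>d/(2K'm^4)$. Since an $M$-smooth number has radical at most $\prod_{p\le M}p<e^{M(1+1/(2\log M))}$, the threshold $d>Ke^{M(1+3/\log M)}$ forces a prime factor exceeding $M$. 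Stewart--Yu is itself built on Yu's $p$-adic Baker theorem, so ultimately the same deep input is required; the paper just packages it as a single black-box inequality on radicals rather than summing prime-by-prime.
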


\begin{proof}
One can check that $X_j$ is equal to $\frac{r_ij^d + r_ji^d}{\gcd(i, j)^d}$. By defining $g = \gcd(r_ij^d, r_ji^d)$, $A = g^{-1}r_{i}j^d$ and $B = g^{-1}r_{j}i^d$, we then get that $A$ and $B$ are coprime, and $A+B$ divides $X_j$. We will prove that $A+B$ has a large prime divisor, by applying known bounds on the abc conjecture. But first we have to show a lower bound on $A+B$ itself.

\begin{largedlargesum} \label{largedlargesum}
If $d > 2m\log(2m)$, then $|A+B| > e^{\frac{d}{2}}$.
\end{largedlargesum}

\begin{proof}
We first provide a lower bound on the ratio $|A/B|$, by using the inequalitiy $\log(1+x) > \frac{x}{2}$, which is valid for all $x$ with $0 < x \le 1$.
\begin{align*}
|A/B| &> \frac{1}{m}\left(\frac{j}{i}\right)^d \\
&> 2e^{-\log(2m)}\left(\frac{m+1}{m}\right)^d \\
&= 2e^{d \log(1 + \frac{1}{m}) - \log(2m)} \\
&> 2e^{\frac{d}{2m} -\log(2m)} \\
&> 2
\end{align*}

On the other hand, $|A| \ge \frac{1}{m-1} \left(\frac{j}{\gcd(i,j)}\right)^d \ge \frac{2^d}{m-1}$, since $\frac{j}{\gcd(i,j)} \in \mathbb{N}$ and $j > i \ge \gcd(i,j)$. Combining these bounds, we get the following:
\begin{align*}
|A + B| &\ge |A| - |B| \\
&> |A| - \frac{1}{2}|A| \\
&> \frac{2^d}{2(m-1)} \\
&> \frac{e^{\frac{2d}{3}}}{e^{\frac{d}{6}}} \\
&= e^{\frac{d}{2}} \qedhere
\end{align*}
\end{proof}

Let $rad(x)$ be the radical of $x$; the largest squarefree divisor of $x$. We then have the following lower bound on $rad(A + B)$.

\begin{radineqs} \label{radineq}
There exists an absolute constant $K \ge 1$ such that $rad(A+B) > \frac{2\log(|A+B|)}{Km^4}$.
\end{radineqs}

\begin{proof}
Since $\displaystyle \max_{x \ge 1} \textstyle \frac{\log^3(x)}{x^{\frac{2}{3}}} < 5$, Theorem $1$ from \mbox{\cite[p. 170]{abc}} implies (for some constant $c$) $|A+B| < \exp\big(5c \cdot rad(AB)rad(A+B)\big)$. Equivalently, we get that $rad(A+B)$ is larger than $\frac{\log(|A+B|)}{5c \cdot rad(AB)}$. The lemma now follows by proving $rad(AB) < 2m^4$ and taking $K = \max(1, 20c)$.
%you can get some explicit ABC-bounds via https://mathoverflow.net/questions/492264/explicit-constants-in-exponentially-weak-abc-conjecture that leads to https://www.sciencedirect.com/science/article/pii/S0022314X13001583
%
\begin{align*}
rad(AB) &\le rad(r_ij^dr_ji^d) \\
&\le rad(r_i)rad(j^d)rad(r_j)rad(i^d) \\
&= rad(r_i)rad(j)rad(r_j)rad(i) \\
&< 2m^4
\end{align*}

Here, the final inequality follows from $\max(|r_i|, |r_j|, i) < m$ and $j < 2m$.
\end{proof}

By combining Lemma \ref{largedlargesum} and Lemma \ref{radineq}, we get $rad(A+B) > \frac{d}{Km^4}$. In particular, if $d > Ke^{M(1 + \frac{3}{\log(M)})} > Km^4e^{M(1 + \frac{1}{2\log(M)})}$, then $rad(A + B) > e^{M(1 + \frac{1}{2\log(M)})}$, which, by Theorem 4 from \mbox{\cite[p. 70]{pnt2}}, is larger than the product of all primes smaller than or equal to $M$. We therefore conclude that $A+B$ must be divisible by a prime larger than $M$. 
\end{proof}

\begin{stormer} \label{storm}
If at least two out of $r_1, r_2, r_3, r_4, r_5$ are non-zero, then for all but finitely many $d$, $b_d(a)$ is finite for all $a$.
\end{stormer}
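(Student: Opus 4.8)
The plan is to re-run the argument of Sections~\ref{iflarge}, \ref{primebound} and \ref{dioph}, but with the second nonzero index $j$ playing the role of $n$, taking advantage of the fact that $j$ is a prime power. Let $i<j$ be the two smallest indices with $r_i,r_j\neq 0$; the hypothesis forces $j\le 5$, and inspecting the cases $j\in\{2,3,4,5\}$ shows $j=q_0^{k_0}$ with $q_0\in\{2,3,5\}$ and $k_0\in\{1,2\}$. Fix once and for all $M=\max(m^2,5)$, which depends only on the sequence $\{r_i\}$. By Lemma~\ref{abc} there is an absolute constant $K$ such that as soon as $d>Ke^{M(1+3/\log M)}$ — a threshold depending only on $\{r_i\}$ — the integer $X_j$ has a prime divisor $p>M$. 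Since $p>M\ge j$, in the factorisation $j=lp^k$ we get $k=0$ and $l=j=q_0^{k_0}$, so the ``$l$'' of the generalised Theorem~\ref{onetwo} is a power of the single prime $q_0$; moreover $j$ is genuinely the smallest integer with $p\mid X_j$, since for $i\le n<j$ one has $X_n=r_i$, which is not divisible by $p$. We set $b=jp^{\lambda k_1}$, and by Theorem~\ref{onetwo} it remains to choose $k_1$ so that $\gcd(l^d,X_{a,b-1})<p$.

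Because $l^d=q_0^{k_0d}$ is a prime power, $\gcd(l^d,X_{a,b-1})=q_0^{\min(k_0d,\,e_{q_0}(X_{a,b-1}))}$, so it suffices to make $e_{q_0}(X_{a,b-1})$ at most some constant that is smaller than $\log(p)/\log(q_0)$; as $p>M\ge m^2$, a bound of the shape $\mu_{q_0}$ or $f_{q_0}$ is more than enough (recall $q_0^{\mu_{q_0}}<m$ and $q_0^{f_{q_0}}<m^2$). Here I would invoke the $i^d$-analogues of Lemma~\ref{Lemma3} and Lemma~\ref{Lemma3c} announced in the text, together with the analogue of Lemma~\ref{xbisxab} that replaces $X_{a,b-1}$ by $X_{b-1}$: for $q_0\notin\Sigma_3$ there is a constant $c_{q_0}$ with $e_{q_0}(X_n)\le\mu_{q_0}$ for all $n$ in any interval $[c_{q_0}q_0^{\lambda k},(c_{q_0}+1)q_0^{\lambda k})$, while for $q_0\in\Sigma_3$ the exponent $e_{q_0}(X_n)$ stays bounded for all $n$ in a suitable residue class. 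It then suffices to pick $k_1$ so that $b-1=jp^{\lambda k_1}-1$ falls into the relevant interval (resp.\ residue class) and so that $b$ exceeds the required constant multiple of $a$; since $\log(p)/\log(q_0)$ is irrational this is possible — indeed for infinitely many $k_1$ — by exactly the Diophantine-approximation argument of Lemmas~\ref{dat}, \ref{kron} and \ref{gamm} together with the device in Corollary~\ref{inf}. Feeding this $b$ into Theorem~\ref{onetwo} gives $v_{a,b}<v_{a,b-1}$, hence $b_d(a)\le b<\infty$. As the threshold on $d$ from Lemma~\ref{abc} depends only on $M$, and $M$ only on $\{r_i\}$ — not on $a$ — we conclude that $b_d(a)$ is finite for every $a$ for all but finitely many $d$.

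The main obstacle is precisely the uniform-in-$d$ control of $e_{q_0}(X_{a,b-1})$. For $q_0\notin\Sigma_3$ this comes out because, in the $i^d$-version of Lemma~\ref{Lemma3}, the $d\cdot e_{q_0}(\cdot)$ contributions cancel, leaving the $d$-free bound $e_{q_0}(r_{c_{q_0}})\le\mu_{q_0}$ (valid once $d$ exceeds a small constant). The delicate case is $q_0\in\Sigma_3$: then $q_0\mid t$ and one only has $e_{q_0}(L_n)\le d\bigl(e_{q_0}(t)-1\bigr)$, so $L_n$ itself already carries a power of $q_0$ growing with $d$, and a naive pairing of the terms of $X_n$ yields a bound that also grows with $d$. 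The intended fix is to choose $n$ in a residue class for which the number of terms of $X_n$ surviving modulo $q_0$ is coprime to $q_0$ — which collapses $e_{q_0}(X_n)$ to a bounded value — and then to verify that the residue class thereby demanded of $b-1=jp^{\lambda k_1}-1$ is actually attainable, given the multiplicative relation between $j$, $p^{\lambda}$ and the modulus (or else to enlarge $b$ to restore attainability). Securing that compatibility is where the real work lies; the remainder is a routine transcription of Sections~\ref{iflarge}, \ref{primebound} and \ref{dioph}.
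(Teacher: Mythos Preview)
Your approach is essentially the paper's own: use Lemma~\ref{abc} to produce a prime $p>M$ dividing $X_j$, note that $j\le 5$ forces $j=q_0^{k_0}$ to be a prime power so that $l=j$ in Theorem~\ref{onetwo}, control $e_{q_0}(X_{a,b-1})$ via the $i^d$-analogue of Lemma~\ref{Lemma3}, and finish with the Diophantine approximation of Section~\ref{dioph}. The paper's sketch takes $M=m$ and simply cites ``a generalization of Lemma~\ref{Lemma3}'' without further comment; your choice $M=\max(m^2,5)$ and your explicit split into $q_0\notin\Sigma_3$ versus $q_0\in\Sigma_3$ is more careful, and your verification that the $d\cdot e_{q_0}(\cdot)$ contributions cancel in the non-$\Sigma_3$ case, leaving the $d$-free bound $\mu_{q_0}$, is exactly right.

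Your worry about $q_0\in\Sigma_3$ is legitimate and is a point the paper's sketch glosses over: this case genuinely occurs (e.g.\ $t=9$, $r_1,r_3\neq 0$, $r_2=r_9=0$ gives $j=3$ with $3\in\Sigma_3$), and the residue-class condition coming from the $i^d$-analogue of Lemma~\ref{Lemma3c} need not be compatible with $b=jp^{\lambda k_1}$, since $b\equiv 0\pmod{q_0^{k_0}}$ forces $b-1\equiv -1\pmod{q_0^{k_0}}$ regardless of $k_1$. You correctly flag this as ``where the real work lies''. One way through is to observe that the correct anchor for the generalised Lemma~\ref{Lemma3c} is not $i_1$ but the smallest index $i^*$ with $r_{i^*}\neq 0$ and $e_{q_0}(i^*)$ maximal; in many cases $i^*=j$ itself, which makes the required congruence $b-1\equiv j-1\pmod{\cdot}$ automatically consistent with $q_0^{k_0}\mid b$. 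In any event, your sketch and the paper's are at the same level of completeness on this point, and your identification of the obstruction is sharper than what the paper records.
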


\begin{proof}[Proof (sketch).]
If at least two out of $r_1, r_2, r_3, r_4, r_5$ are non-zero, then $j = q^k$ from Lemma \ref{abc} is a prime power. We can then choose $M = m$ to get a prime divisor $p > m$ of $X_j$, while a generalization of Lemma \ref{Lemma3} provides intervals $I$ such that $q^{e_q(X_n)} < m < p$, for all $n \in I$. The arguments from Section \ref{dioph} can be generalized to work for general $d$ as well, and then provide infinitely $b$ for which $v_{a,b} < v_{a,b-1}$. Finally, Baker's method (see Section \ref{final}) allows one to make everything explicit again.
\end{proof}

\subsection{Perfect powers in the classical case}
With the notation of the previous section, we will now consider the case where $r_i = 1$ for all $i$. Let $p_d$ be the smallest prime $p$ for which $p-1$ does not divide $d$, set $j = \frac{1}{2}(p_d - 1)$, define $q_i$ to be the smallest prime divisor of $X_i$, and let $c_d$ be the smallest constant such that $b_d(a) \le c_d\max(1,a-1)$ holds for all $a \in \mathbb{N}$. Recall that Corollary $\ref{classical}$ gave us $c_1 = 6$, since $b_1(1) = b_1(2) = 6$. It is possible to generalize this and calculate $c_d$ for all $d$.

\begin{powers} \label{powerr}
If $d$ is odd, then $c_d = 6$. For even $d$ we have the (in)equalities $c_d = b_d(1) = \displaystyle \min_{2 \le i \le j} (iq_i) \le \textstyle jp_d$.
\end{powers}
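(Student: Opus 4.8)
The plan is to treat odd and even $d$ separately, in both cases leaning on the generalization Theorem~\ref{onetwo}, a perfect-power version of Lemma~\ref{neverodd}, and a Fermat--Wolstenholme type lemma restricting which primes below $p_d$ can divide a numerator $X_N$. Since $r_i=1$ we have $m=2$, $\lambda=1$, $i_1=1$, and $e_p(i^d)=d\,e_p(i)$, so $L_{a,b}$ is the $d$-th power of the usual lcm. Two structural facts, valid for all $d$, come first: (i) the proof of Lemma~\ref{neverodd} goes through unchanged (there is still a unique $i\in[a,b]$ of maximal $2$-valuation), so every $X_{a,b}$ is odd; and (ii) if $b=\rho^e$ is a prime power then $v_{1,b}>v_{1,b-1}$, because $L_b=\rho^d L_{b-1}$ while a short valuation computation gives $e_\rho(X_b)=0$ and $e_r(g_{1,b})=e_r(g_{1,b-1})$ for all primes $r\neq\rho$, hence $g_{1,b}\le g_{1,b-1}$.

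For odd $d$: since $X_1=1$ and $X_2=2^d+1\equiv(-1)^d+1\equiv0\pmod3$, the integer $n=2$ is the least one with $3\mid X_n$, so Theorem~\ref{onetwo} applies with $n=2=l\cdot3^0$, i.e.\ $l=2$, $k=0$; its hypothesis $\gcd(l^d,X_{a,b-1})=\gcd(2^d,X_{a,b-1})<3$ holds for every $a$ because $X_{a,b-1}$ is odd by (i), and the conclusion is $b_d(a)\le\max(a-1,1)\cdot2\cdot3=6\max(a-1,1)$, so $c_d\le6$. For the reverse bound I would check $b_d(1)=6$: the integers $2,3,4,5$ are prime powers, so $v_{1,b}>v_{1,b-1}$ there by (ii), whereas $6$ is not a prime power, $L_6=L_5$, and keeping only the terms $i=3,6$ gives $X_6\equiv(\text{unit})(1+2^{-d})\equiv(\text{unit})(1+(-1)^d)\equiv0\pmod3$, so $3\mid\gcd(X_6,L_6)$ forces $v_{1,6}<v_{1,5}$. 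Hence $c_d\ge b_d(1)=6$.

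For even $d$ the key lemma is: for every $N\ge1$ and every prime $r<p_d$, $r\nmid X_N$. To prove it, reduce $X_N$ modulo $r$ to $(\text{unit})\cdot\sum_{m=1}^{J}m^{-d}\pmod r$ where $J=\lfloor N/r^K\rfloor$, $K=e_r(L_N)$; since $r^K\le N<r^{K+1}$ we get $1\le J\le r-1$, and $r<p_d$ forces $(r-1)\mid d$, so $m^{-d}\equiv1\pmod r$ and the sum is $\equiv J\not\equiv0$. Consequently every prime factor of any $X_i$ is $\ge p_d$, so $q_i\ge p_d>i$ for $2\le i\le j$. The upper bound $b_d(1)\le\min_{2\le i\le j}(iq_i)$ follows: fix $i$, let $n_0\le i$ be least with $q_i\mid X_{n_0}$ (so $n_0\ge2$, $q_i\nmid n_0$), and apply Theorem~\ref{onetwo} with $p=q_i$, $l=n_0$, $k=0$, $b=n_0q_i$; the hypothesis $\gcd(l^d,X_{1,b-1})<q_i$ holds because the prime factors of $l=n_0<p_d$ are all $<p_d$ while $X_{b-1}$ has none, so the gcd is $1$. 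For the matching lower bound set $B=\min_{2\le i\le j}(iq_i)$ and suppose $b<B$ with $v_{1,b}<v_{1,b-1}$. By (ii) $b$ is not a prime power, so $L_b=L_{b-1}$; the computation of $g_{a,b}$ through Lemma~\ref{gcd1} then shows some prime $r\mid b$ has $r\mid X_b$, and by the key lemma $r\ge p_d$. Writing $b=rs$ we get $2\le s<p_d$ and $s=b/r<B/p_d\le j$, and since $b<p_d^2\le r^2$ we have $e_r(L_b)=1$, whence $X_b\equiv(\text{unit})\sum_{m=1}^{s}m^{-d}\pmod r$ and $r\mid X_b\iff r\mid X_s$. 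Thus $q_s\le r$, so $b=rs\ge sq_s\ge B$, contradicting $b<B$; hence $b_d(1)=B$. Finally $B\le jp_d$ because pairing $i$ with $p_d-i$ (using $d$ even) gives $\sum_{i=1}^{j}i^{-d}\equiv\frac12\sum_{i=1}^{p_d-1}i^{-d}\equiv0\pmod{p_d}$, i.e.\ $q_j\le p_d$.

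It remains to upgrade $b_d(1)$ to $c_d$, i.e.\ to check $b_d(a)\le b_d(1)\max(1,a-1)$ for $a\ge2$; this is done by the same application of Theorem~\ref{onetwo} (minimizing $i$, with the least $k_1$ such that $q_i^{\,k_1}\ge a$), yielding $b_d(a)\le(a-1)n_0q_i\le(a-1)b_d(1)$. The one extra step there — and in the general-$a$ part of the odd case it is free, since ``$X_{a,b-1}$ is odd'' is interval-independent — is verifying $\gcd(l^d,X_{a,b-1})<p$, for which I would prove a perfect-power analogue of Lemma~\ref{xbisxab}: first $L_{a,b-1}=L_{b-1}$ for $b$ large relative to $a$, and then a valuation argument reducing $X_{a,b-1}$ to $X_{b-1}$ so that the numerator lemma applies. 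I expect this bookkeeping (getting the valuation hypotheses right when replacing the $[a,b-1]$ sum by the $[1,b-1]$ sum) to be the only genuinely fiddly part; the real content is the Fermat--Wolstenholme lemma together with the dichotomy that prime powers never decrease $v_{1,b}$ while a non-prime-power can only do so through a prime factor $\ge p_d$.
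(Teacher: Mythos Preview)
Your approach is correct and mirrors the paper's: both hinge on Theorem~\ref{onetwo}, the Fermat-type lemma that no prime $r$ with $(r-1)\mid d$ divides any numerator (the paper's Lemma~\ref{dismultiple}), and the fact that $p_d\mid X_j$ for even $d$ (the paper's Lemma~\ref{pdivpower}). Your lower-bound argument for $b_d(1)$ (assume $b<B$ and derive a contradiction) is a direct repackaging of the paper's observation that $b_d(1)=lp$ with $p\mid X_l$, together with the range restriction $l\le j$.

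The one place you make life harder than necessary is the last paragraph. You prove your key lemma only for $X_N=X_{1,N}$ and then propose reducing $X_{a,b-1}$ to $X_{b-1}$ via a perfect-power analogue of Lemma~\ref{xbisxab}. That detour is unnecessary: your own proof works verbatim for arbitrary $a$. If $r^{dk}$ exactly divides $L_{a,n}$, the multiples of $r^k$ in $[a,n]$ are $j_1r^k,\ldots,j_2r^k$ with none divisible by $r$, so $1\le j_2-j_1+1\le r-1$, and since $(r-1)\mid d$ gives $j_m^{-d}\equiv1\pmod r$, the reduction yields $X_{a,n}\equiv(\text{unit})(j_2-j_1+1)\not\equiv0\pmod r$. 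This is exactly how the paper states Lemma~\ref{dismultiple}, and once you have it for general $a$ the hypothesis $\gcd(l^d,X_{a,b-1})=1<q_i$ in Theorem~\ref{onetwo} is immediate for every $a$, so the ``fiddly bookkeeping'' you anticipate evaporates. (Minor slip: with your convention that $L_{a,b}$ is the $d$-th power of the usual lcm, the line ``$e_r(L_b)=1$'' should read $e_r(L_b)=d$; the subsequent reduction is unaffected.)
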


\begin{proof}
Let $b$ be equal to $b_d(1)$ for this proof. Since $v_{1,b} < v_{1,b-1}$, we see that $g_b := \gcd(X_b, L_b)$ is larger than $g_{b-1}$. With $p$ any prime divisor for which $e_p(g_b) > e_p(g_{b-1})$, we claim that $p$ divides $b$. First, $b$ is not a power of $p$, as otherwise $X_b \equiv \frac{L_b}{b^d} \not \equiv 0 \pmod{p}$. But if $p$ does not divide $b$, then $X_{b} = X_{b-1} + \frac{L_b}{b^d} \equiv X_{b-1} \pmod{p^{e_p(L_b)}}$, contradicting $e_p(g_b) > e_p(g_{b-1})$. Now with $b = lp$, we see that $p$ must divide $X_l$, in much the same way as the proof of Lemma \ref{p-ness}. And since $p | X_l$, we conclude $c_d \ge b_d(1) = lp \ge \displaystyle \min_{i \ge 2} (iq_i)$. \\

On the other hand, we claim $\gcd(i^d, X_{a,n}) = 1 < q_i$ for all $a$, $n \ge a$, and $i$ with $2 \le i < p_d$. With $l = i$ and $p = q_i$, the upper bound on $b_d(a)$ in Theorem \ref{onetwo} then simplifies and can be rewritten as $c_d \le iq_i$. And this upper bound holds for all $i$ with $2 \le i < p_d$.

\begin{whendismultiple} \label{dismultiple}
Let $p$ be a prime such that $p-1$ divides $d$. Then $p$ does not divide $X_{a,n}$, for all positive integers $a$ and $n \ge a$. In particular, $X_{a,n}$ does not have any prime divisors smaller than $p_d$, and $\gcd(i^d, X_{a,n}) = 1$ for all $i < p_d$.
\end{whendismultiple}

\begin{proof}
Assume that $p^{dk}$ exactly divides $L_{a,n}$ and let $j_1$ and $j_2$ be such that $(j_1-1)p^k < a \le j_1p^k \le j_2p^k \le n < (j_2+1)p^k$ with $1 \le j_1\le j_2 \le p-1$. Then let us take a look at $X_n \pmod{p}$, and use the fact that $d$ is a multiple of $\varphi(p) = p-1$, which implies $i^d \equiv 1 \pmod{p}$ for all $i$ with $1 \le i \le p-1$.
\begin{align*}
X_{a,n} &\equiv \frac{L_{a,n}}{p^{dk}} \sum_{i=j_1}^{j_2} \frac{1}{i^d} &\pmod{p} \\
&\equiv \frac{L_{a,n}}{p^{dk}} (j_2+1 - j_1) &\pmod{p} 
\end{align*}

And this is non-zero since $1\le j_2 + 1 - j_1 \le p-1$.
\end{proof}

To recap, we now have $\displaystyle \min_{i \ge 2} (iq_i) \le b_d(1) \le c_d \le \displaystyle \min_{2 \le i < p_d} (iq_i)$, with $q_i \ge p_d$ for all $i$. \\

For odd $d$ we have $p_d = q_2 = 3$, so this string of inequalities becomes a string of equalities, and $c_d = 6$. When $d$ is even we will show $q_j = p_d$, which implies ${\displaystyle\min_{i \ge 2}} (iq_i) = {\displaystyle \min_{2 \le i \le j}} (iq_i) \le jp_d$, finishing the proof of Theorem \ref{powerr}.

\begin{pdivpowersum} \label{pdivpower}
Let $p$ be a prime such that $p-1$ does not divide $d$. If $d$ is even, then $X_{\frac{1}{2}(p-1)} \equiv 0 \pmod{p}$.
\end{pdivpowersum}

\begin{proof}
Let $g$ be a primitive root of $p$ and recall that $\{g, 2g, \ldots, (p-1)g\}$ and $\{\frac{1}{1}, \frac{1}{2}, \ldots, \frac{1}{p-1} \}$ are both complete sets of non-zero residues modulo $p$. In particular we see $\displaystyle \sum_{i=1}^{p-1} (ig)^d \equiv \sum_{i=1}^{p-1} i^d  \equiv \sum_{i=1}^{p-1} \frac{1}{i^d} \pmod{p}$, and we use this to prove that $p$ divides $X_{\frac{1}{2}(p-1)}$.
\begin{align*}
0 &\equiv L_{\frac{1}{2}(p-1)}  \sum_{i=1}^{p-1} \left((ig)^d - i^d\right) &\pmod{p} \\
&\equiv (g^d - 1) L_{\frac{1}{2}(p-1)}  \sum_{i=1}^{p-1} i^d &\pmod{p} \\
&\equiv (g^d - 1) L_{\frac{1}{2}(p-1)} \sum_{i=1}^{p-1} \frac{1}{i^d} &\pmod{p} \\
&\equiv (g^d - 1)L_{\frac{1}{2}(p-1)} \left(\sum_{i=1}^{\frac{1}{2}(p-1)} \frac{1}{i^d} + \sum_{i=1}^{\frac{1}{2}(p-1)} \frac{1}{(-i)^d} \right) &\pmod{p}\\
&\equiv 2(g^d - 1)X_{\frac{1}{2}(p-1)} &\pmod{p}
\end{align*}

Since $p-1$ does not divide $d$, we know $p \neq 2$. Moreover, $p$ does not divide $g^d - 1$ either, as $g$ is a primitive root of $p$ and $p-1 \nmid d$. We therefore conclude that $X_{\frac{1}{2}(p-1)}$ must be divisible by $p$. 
\end{proof}
\vspace{-0.55cm}\phantom\qedhere
\end{proof}

\begin{linnik}
For all $d$, $c_d = O\big(\log^{10}(d)\big)$. On the other hand, there are infinitely many $d$ with $c_d > 3\log(d)$.
\end{linnik}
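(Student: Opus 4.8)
The plan is to split the corollary into its two halves, both resting on Theorem~\ref{powerr}, and to supply the quantitative number-theoretic input separately.

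For the upper bound $c_d = O(\log^{10}(d))$: when $d$ is odd, Theorem~\ref{powerr} already gives $c_d = 6$, so there is nothing to prove. When $d$ is even, Theorem~\ref{powerr} gives $c_d \le jp_d = \tfrac12(p_d-1)p_d < \tfrac12 p_d^2$, so it suffices to show $p_d = O(\log^5(d))$. The key point is that, by the very definition of $p_d$, every prime $\ell < p_d$ satisfies $\ell - 1 \mid d$, so $\mathrm{lcm}\{\ell - 1 : \ell < p_d \text{ prime}\}$ divides $d$. I would then invoke Linnik's theorem on the least prime in an arithmetic progression: there is an absolute constant $L$ such that for every modulus $m$ there is a prime $\equiv 1 \pmod m$ which does not exceed $m^L$. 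Applying this with $m$ running over all prime powers $m \le (p_d-1)^{1/L}$ shows that each such prime power divides some $\ell - 1$ with $\ell < p_d$, hence divides $d$; therefore $\mathrm{lcm}\{1,2,\ldots,\lfloor (p_d-1)^{1/L}\rfloor\}$ divides $d$, and by Chebyshev's estimate this lcm is at least $e^{(1+o(1))(p_d-1)^{1/L}}$. Rearranging yields $p_d \ll (\log d)^L$, and inserting the current best admissible value of $L$ gives $c_d = O(\log^{10}(d))$.

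For the lower bound — infinitely many $d$ with $c_d > 3\log(d)$ — I would exhibit them explicitly: for a large prime $p$ set $d = d(p) = \mathrm{lcm}\{\ell - 1 : \ell < p \text{ prime}\}$. This $d$ is even (the term $\ell = 3$ contributes $2$), so the even-$d$ clause of Theorem~\ref{powerr} applies, and distinct primes $p$ give distinct $d$. Since every prime $\ell < p$ has $\ell - 1 \mid d$, we get $p_d \ge p$, and Lemma~\ref{dismultiple} shows that no prime $< p$ divides any $X_i$, so $q_i \ge p$ for every $i \ge 2$; hence $c_d = \min_{2 \le i \le j}(iq_i) \ge 2p$. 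It remains to bound $\log d$ from above, and this is where a little prime-counting enters: no prime $q$ with $p/2 < q < p$ can divide $\ell - 1$ for a prime $\ell < p$, because the only positive multiples of such a $q$ below $p$ are $q$ and $q+1$, and $q+1$ is even. Thus only primes $q \le p/2$ occur in $\mathrm{lcm}\{\ell - 1 : \ell < p\}$, so $\log d = \sum_{q \le p/2}(\max_{\ell < p} v_q(\ell-1))\log q \le \sum_{q \le p/2}\lfloor\log_q(p-2)\rfloor\log q = \psi(p-2) - \theta(p-2) + \theta(p/2) = (\tfrac12 + o(1))p$ by the prime number theorem. Combining the two estimates, $c_d \ge 2p \ge (4 - o(1))\log d > 3\log d$ once $p$ is large enough, which produces infinitely many such $d$.

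The main obstacle is the quantitative step in the upper bound. Everything else is elementary bookkeeping together with Theorem~\ref{powerr}, Lemma~\ref{dismultiple} and Chebyshev-type estimates, but turning ``$\ell - 1 \mid d$ for all primes $\ell < p_d$'' into a genuine lower bound for $d$ genuinely needs a result on primes in arithmetic progressions with power-of-the-modulus uniformity: Siegel--Walfisz only reaches moduli of size $(\log d)^{O(1)}$ and would give a merely quasi-polynomial-in-$\log d$ bound on $p_d$, so Linnik's theorem — with its still non-optimal exponent — is exactly what is needed to pin down the exponent $10$.
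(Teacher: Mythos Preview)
Your proof is correct and follows essentially the same route as the paper: both halves rest on Theorem~\ref{powerr}, with Linnik's theorem (Xylouris's exponent $L=5$) feeding the upper bound $p_d=O(\log^5 d)$, and the explicit choice $d=\mathrm{lcm}\{\ell-1:\ell<p\ \text{prime}\}$ supplying the lower bound. One small slip: the claim that distinct primes $p$ give distinct $d(p)$ is false (for example $d(13)=d(17)=60$), but this is harmless since $d(p)\to\infty$, so the construction still produces infinitely many admissible $d$.
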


\begin{proof}
Let $c$ be a small enough constant and $q$ be a prime smaller than $cp_d^{\frac{1}{5}}$. Then in \mbox{\cite{lin2}} it is proven that there exists a prime $p < p_d$ such that $p \equiv 1 \pmod{q}$. Since $d$ is divisible by $p-1$ for all $p < p_d$, $q$ divides $d$ as well. Therefore $d \ge \displaystyle \prod_{q < cp_d^{\frac{1}{5}}} q = e^{(1 + o(1))cp_d^{\frac{1}{5}}}$, implying $p_d = O\big(\log^5(d)\big)$. Since $c_d < p_d^2$, the upper bound follows. For the lower bound, choose $d = \text{lcm}(1, 2, 4, 6, 10, \ldots, p_d - 1)$, and note $d \le 2L_{\frac{1}{2}(p_d - 1)}$. Since $L_n < e^{1.04n}$ by Theorem $12$ in \mbox{\cite[p. 71]{pnt2}}, we get $d < 2e^{\frac{1.04}{2}(p_d - 1)} < e^{\frac{2}{3}p_d}$ and $c_d = \displaystyle \min_{i \ge 2}(iq_i) \ge 2p_d > 3\log(d)$.
\end{proof}

\begin{somecdvalues} \label{somecds}
$c_d = \begin{cases} 

6 &\mbox{if } d \equiv 1 \pmod{2} \\ 
10 &\mbox{if } d \equiv 2 \pmod{4} \\  
21 &\mbox{if } d \equiv 4,8 \pmod{12} \\ 
34 &\mbox{if } d \equiv 12 \pmod{24} \\ 
55 &\mbox{if } d \equiv 24,48,72,96 \pmod{120} \\ 
%102 &\mbox{if } d \equiv 120 \pmod{240} \\  %(6,17)
%111 &\mbox{if } d \equiv 240,480 \pmod{720} \\ %(3,37)
%253 &\mbox{if } d \equiv 720,1440,2160,2880,3600,4320,5040,5760,6480,7200 \pmod{7920} \\ %(11,23)
%282 &\mbox{if } d = 7920 \\ %(6, 47)
%conclusion: c \le 282 for d < 15840
\end{cases}$
\end{somecdvalues}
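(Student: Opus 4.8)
The plan is to run Theorem~\ref{powerr} through each residue class explicitly. Recall what that theorem and its proof supply: for odd $d$ we already have $c_d=6$, and for even $d$ we have $c_d=\min_{2\le i\le j}(iq_i)$ with $j=\frac12(p_d-1)$, where Lemma~\ref{pdivpower} together with Lemma~\ref{dismultiple} forces $q_j=p_d$, while Lemma~\ref{dismultiple} forces $q_i\ge p_d$ for every $i\ge 2$. So for each of the four even classes it remains to (a) identify $p_d$, hence $j$, from the small primes $p$ with $p-1\mid d$, and (b) either compute the relevant $q_i$ exactly or bound $iq_i$ from below accurately enough to evaluate the minimum.

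For step (b) I would use the elementary observation that if $\ell$ is a prime with $\ell>i$ then $\ell\nmid L_i$, so $X_i\equiv L_i\sum_{k=1}^i k^{-d}\pmod{\ell}$, and hence $\ell\mid X_i$ exactly when $\sum_{k=1}^i k^{-d}\equiv 0\pmod{\ell}$; by Fermat this depends only on $d\bmod(\ell-1)$, and each of the four classes pins down $d$ modulo $\ell-1$ for all the small primes $\ell$ that occur. The only quantities actually needed are $q_2$ (the least prime $\ge p_d$ dividing $X_2=2^d+1$) and, in two of the cases, $q_3$ and $q_4$ (read off from $X_3=6^d+3^d+2^d$ and $X_4=3^d(4^d+2^d+1)+4^d$), so every step collapses to a finite check on the orders of $2,3,6$ modulo a few primes below $30$.

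Carrying this out: for $d\equiv 2\pmod 4$ one gets $p_d=5$, $j=2$, and $q_2=p_d=5$, so $c_d=10$. For $d\equiv 4,8\pmod{12}$ one has $4\mid d$ but $6\nmid d$, hence $p_d=7$, $j=3$, $q_3=7$; the order of $2$ modulo $7$ is odd, so $7\nmid 2^d+1$, forcing $q_2\ge 11$ and $2q_2\ge 22>21=3q_3$, so $c_d=21$. For $d\equiv 24,48,72,96\pmod{120}$ one has $24\mid d$ but $5\nmid d$, hence $p_d=11$, $j=5$, $q_5=11$, $5q_5=55$; order computations give $11,13,17,19,23\nmid 2^d+1$, and $11,13,17\nmid X_3$ and $11,13\nmid X_4$, whence $q_2\ge 29$, $q_3\ge 19$, $q_4\ge 17$, so $2q_2,3q_3,4q_4$ all exceed $55$ and $c_d=55$.

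The subtle case is $d\equiv 12\pmod{24}$, where $p_d$ is not constant: here $16\nmid d$ and $12\mid d$ always, but $10\mid d$ only for some such $d$, so $p_d=11$ when $10\nmid d$ and $p_d=17$ when $10\mid d$. Uniformity is rescued by the fact that $d\equiv 12\pmod{24}$ forces $d\equiv 4\pmod 8$, whence $17\mid 2^d+1$; together with $11,13\nmid 2^d+1$ and $q_2\ge p_d\ge 11$ this gives $q_2=17$, so $2q_2=34$, in both subcases. When $p_d=17$ every other $q_i$ is $\ge 17$, so $iq_i\ge 51$ for $i\ge 3$ and the minimum is $34$; when $p_d=11$ (so $j=5$) one checks in addition that $11\nmid X_3$, giving $q_3\ge 13$, hence $3q_3\ge 39$, $4q_4\ge 44$, $5q_5=55$, and the minimum is again $34$. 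The only real difficulty I anticipate is organizational: keeping the orders of $2,3,6$ modulo the relevant primes straight across all subcases, and — the point that actually needs care — checking in each class not merely that the advertised index attains the minimum but that every admissible $i\le j$ gives $iq_i$ strictly larger; the non-constant $p_d$ for $d\equiv 12\pmod{24}$ is the single place where a genuine case split is unavoidable.
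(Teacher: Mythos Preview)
Your approach is essentially the paper's: apply Theorem~\ref{powerr} to reduce to computing $p_d$ and the first few $q_i$, then check the order of $2$ (and $3$, $6$) modulo the relevant small primes. The paper only spells out the case $d\equiv 24,48,72,96\pmod{120}$ and leaves the others to the reader, so your write-up is in fact more detailed.

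One point worth flagging: your observation that $p_d$ is \emph{not constant} on the class $d\equiv 12\pmod{24}$ (equal to $11$ when $10\nmid d$ but $17$ when $10\mid d$, e.g.\ $d=60$) is a genuine subtlety that the paper's terse ``calculate $p_d$'' glosses over. Your rescue---that $d\equiv 4\pmod 8$ forces $17\mid 2^d+1$, so $q_2=17$ uniformly, and then in the $p_d=11$ subcase one still needs $11\nmid X_3$ to push $3q_3$ above $34$---is exactly what is required and is correct. This is the only place where the argument is not a mechanical lookup, and you handled it properly.
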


\begin{proof}
All of these can be relatively quickly checked by calculating $p_d$, finding the possible values of $q_i$ for the first few $i$, and applying $c_d = \min_i (iq_i) \le jp_d$, when $d$ is even. Let us do this for the final case of $d \equiv 24,48,72,96 \pmod{120}$, and leave the rest for the interested reader. So we will assume that $24$ divides $d$ but $5$ does not divide $d$. Since $24$ is divisible by $1, 2, 4$ and $6$, but not by $10$, we see $p_d = 11$ and, using Theorem \ref{powerr}, we obtain $c_d \le 55$ right away. Furthermore, we claim that $X_i$ is not divisible by $13$ for any $i$, not divisible by $17$ for $i \le 3$ and not divisible by $19$ or $23$ for $i = 2$, so that $iq_i$ is minimized for $i = 5$, $q_i = 11$. To prove that $X_i$ is not divisible by $13, 17, 19$ or $23$ for the relevant values of $i$, let us deal with them one prime at a time. \\

By Lemma \ref{dismultiple} we have that $13$ does not divide $X_i$ for any $i$, as $12 | 24$. We furthermore have $\frac{1}{i^d} \equiv \pm 1 \pmod{17}$, as $8 | d$. But $\frac{1}{2^8} \equiv 1 \pmod{17}$, so that $X_2 \equiv 2 \pmod{17}$, while $X_3 \pmod{17}$ is either $1$ or $3$, and definitely non-zero as well. Finally, the only way either $19$ or $23$ divides $X_2$ is if $\frac{1}{2^d}$ is congruent to $-1$ modulo $19$ or $23$. But for $23$ this congruence is not solvable, while $2$ is a primitive root modulo $19$, so $\frac{1}{2^d} \equiv -1 \pmod{19}$ precisely when $d \equiv 9 \pmod{18}$. But this is impossible as $d$ is even.
\end{proof}

With the help of a computer it is not hard to extend Corollary \ref{somecds}. For example, $17 | X_6$ when $d \equiv 120 \pmod{240}$, $37 | X_3$ when $d \equiv \pm 240 \pmod{720}$, $p_d = 23$ when $\gcd(d, 11 \cdot 720) = 720$, $p_d = 29$ when $\gcd(d, 7 \cdot 7920) = 7920$ and $193$ divides $X_2$ when $d = 7\cdot7920$. Working this all out gives $c_d \le 406$ for $d < 110880$. \\

Theorem \ref{powerr} shows that $c_d$ is always equal to $b_d(1)$. But analogously to Theorem \ref{ogplus}, the upper bound on $b_d(a)$ can often be improved upon, for larger values of $a$. Let $C_d$ be the smallest constant such that $b_d(a) \le C_d(a-1)$ holds for all $a \ge 4$.\footnote{We choose $a \ge 4$ just because it happens to work in all cases we will consider. We conjecturally have $b_d(a) < (1+\epsilon)a$ for large enough $a$.} Then for all even $d < 120$ we can improve Corollary \ref{somecds}.

\begin{somebettercdvalues} \label{bettercd}
$C_d \le \begin{cases} 

\frac{25}{3} = 8.\overline{3} &\mbox{if } d \equiv 2 \pmod{4}  \\  
\frac{147}{8} = 18.375 &\mbox{if } d \equiv 4,8 \pmod{12}  \\ 
\frac{34}{3} = 11.\overline{3} &\mbox{if } d \equiv 12 \pmod{24}  \\ 
\frac{55}{3} = 18.\overline{3} &\mbox{if } d = 24  \\ 
%\frac{1369}{62} \approx 22.1 &\mbox{if } d = 48  \\ 
\frac{111}{5} = 22.2 &\mbox{if } d = 48  \\ 
\frac{1587}{47} \approx 33.8 &\mbox{if } d = 72  \\ 
\frac{605}{23} \approx 26.3 &\mbox{if } d = 96  \\ 
\end{cases}$
\end{somebettercdvalues}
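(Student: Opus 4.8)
The plan is to carry over the method used to prove Theorem~\ref{ogplus}, replacing the prime $3$ in each case by the prime $p$ that witnesses $c_d$ in Corollary~\ref{somecds}: thus $p=5$ when $d\equiv 2\pmod 4$, $p=7$ when $d\equiv 4,8\pmod{12}$, $p=17$ when $d\equiv 12\pmod{24}$, and $p=11$ when $d\in\{24,48,72,96\}$. Re-running the computation of $g_{a,b}$ from the end of the proof of Theorem~\ref{Theorem1} (the $d$-analogue of Lemma~\ref{higherpower}), together with Theorem~\ref{onetwo}, shows that it suffices to produce, for each $a\ge 4$, an integer $f(a)=lp^{k_1}$ with $\gcd(l,p)=1$, $f(a)\le C_d(a-1)$, $L_{a,f(a)}=L_{a,f(a)-1}$, and $\gcd(l^d,X_{a,f(a)-1})<p^{e_p(g_{a,f(a)})-e_p(g_{a,f(a)-1})}$. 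The decisive simplification over Theorem~\ref{ogplus} is Lemma~\ref{dismultiple}: if $l$ has only prime factors below $p_d$, then $\gcd(l^d,X_{a,f(a)-1})=1$ outright, so one only needs $L_{a,f(a)}=L_{a,f(a)-1}$ together with $e_p(g_{a,f(a)})>e_p(g_{a,f(a)-1})$.

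For $4\le a<p^{K_0}$, with $K_0$ a small explicit bound depending on the case, I would tabulate a valid $f(a)$ and check $f(a)\le C_d(a-1)$ and $v_{a,f(a)}<v_{a,f(a)-1}$ by direct computation, exactly as with the tables in Theorem~\ref{ogplus}. For $a\ge p^{K_0}$ let $k$ satisfy $p^k<a\le p^{k+1}$; I would then partition $(p^k,p^{k+1}]$ into finitely many subintervals and on each one set $f(a)=lp^{k_1}$ with $l$ built from primes below $p_d$, $k_1\in\{k-1,k,k+1\}$, and $a<f(a)<p^{k+2}$. As in Lemma~\ref{lisl}, the equality $L_{a,f(a)}=L_{a,f(a)-1}$ reduces to the divisibilities $l\mid L_{a,f(a)-1}$ and $p^{k_1}\mid L_{a,f(a)-1}$, which hold once $l(p^{k_1}-1)\ge a$ and $(l-1)p^{k_1}\ge a$; and $f(a)\le C_d(a-1)$ is a finite check at the endpoints of the subintervals.

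The heart of the argument is the divisibility $p\mid X_{a,f(a)}$. Discarding the terms $\frac{L_{a,f(a)}}{i^d}$ that vanish modulo $p$ leaves the requirement $\sum_j\frac{1}{j^d}\equiv 0\pmod p$, the sum being over the $j$ with $p\nmid j$ and $jp^{k_1}\in[a,f(a)]$; since the lower end of this range of $j$ moves with $a$ across a subinterval, one needs each maximal block of consecutive admissible $j$ to sum to $0$ modulo $p$ by itself, which --- just as in the displayed identities in the proof of Theorem~\ref{ogplus} --- is arranged by pairing up terms. Because $p-1\nmid d$ for each prime above, the residue $1/j^d$ modulo $p$ depends only on $j$ modulo $p$ and on $d$ modulo the order of that residue, which is exactly why the statement breaks $d$ into classes modulo $4$, $12$, $24$, and $120$; within each class the required block sums are fixed and can be written down explicitly. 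In a subinterval where keeping $f(a)\le C_d(a-1)$ forces $[a,f(a)]$ to contain a power $p^{k_1+1}$ or $p^{k_1+2}$ of $p$, one compensates --- as in the intervals $I_2$ and $I_5$ in the proof of Theorem~\ref{ogplus} --- by establishing $p^2\mid X_{a,f(a)}$ or $p^3\mid X_{a,f(a)}$ via the same pairing carried out modulo $p^2$ or $p^3$, this being enough because $e_p(g_{a,f(a)-1})$ is then larger by only $1$ or $2$; the value of $e_p(X_{a,f(a)-1})$ needed for that comparison is read off from $X_{a,f(a)-1}=X_{a,f(a)}-\frac{L_{a,f(a)}}{f(a)^d}$, exactly as in Lemma~\ref{p-ness}.

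The main obstacle I anticipate is bookkeeping and search: for each residue class of $d$ one must actually exhibit a partition of $(p^k,p^{k+1}]$ and integers $f(a)=lp^{k_1}$ for which (i) the relevant block sums of $1/j^d$ genuinely vanish modulo the appropriate power of $p$, (ii) $f(a)$ falls on the intended side of the nearby powers of $p$ so that $e_p(L_{a,f(a)})$ is what the computation assumes, and (iii) the ratio $f(a)/(a-1)$ never exceeds the claimed $C_d$. Requirements (i) and (iii) pull against one another --- forcing a block sum to vanish tends to enlarge $l$, and hence $f(a)$ --- and in the cases with the larger prime $p=11$ (that is, $d=48,72,96$) locating a workable partition will likely need a short computer search; once such a partition is written down, every remaining verification is finite and elementary.
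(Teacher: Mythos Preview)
Your plan is precisely the paper's own sketch: it too writes down explicit piecewise functions $f_d(a) = lp^{k_1}$ and appeals to Lemma~\ref{dismultiple} so that $\gcd(l^d, X_{a,f(a)-1}) = 1$ automatically, leaving only the divisibility $p \mid X_{a,f(a)}$ and the ratio bound $f(a) \le C_d(a-1)$ to verify. Two points diverge from the paper and are worth correcting. First, the right hypothesis on $l$ is that every prime $q \mid l$ satisfies $q - 1 \mid d$, not the stronger $q < p_d$; for $d = 48$ the paper exploits this to allow $q = 17$ (taking $l \in \{16, 17, 18, 34\}$) even though $17 > p_d = 11$. Second, and more consequentially, for $d = 48$ and $d = 72$ the paper does \emph{not} base the construction on $p = 11$ but on $p = 37$ and $p = 23$ respectively, and the announced constants are fingerprints of those choices: $\tfrac{111}{5} = \tfrac{3 \cdot 37}{5}$ is the worst ratio $f(a)/(a-1)$ on the subinterval $(5 \cdot 37^k,\, 37^{k+1}]$ with $f(a) = 3 \cdot 37^{k+1}$, and $\tfrac{1587}{47} = \tfrac{3 \cdot 23^2}{47}$ arises from $(47 \cdot 23^{k-1},\, 23^{k+1}]$ with $f(a) = 3 \cdot 23^{k+1}$. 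A search with $p = 11$ would presumably yield \emph{some} bound for these two $d$, but it will not reproduce those particular numbers; to match the statement as written you should switch primes there.
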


\begin{proof}[Proof (sketch)]
We will not give all the details, but instead construct functions $f_d(a)$ such that the motivated reader can check themselves that $v_{a,f_d(a)} < v_{a,f_d(a)-1}$ and $f_d(a) \le C_d(a-1)$ hold whenever $f_d(a)$ is defined, using the ideas that were already present in Section \ref{return}. Moreover, in every case we make sure that if $f_d(a) = lp^k$ (where the meaning of $p$ in the different cases should be clear), then every prime divisor $q$ of $l$ will be such that $q-1$ divides $d$, so that $\gcd(l^d, X_{a,f_d(a)-1}) = 1 < p$ follows immediately from Lemma \ref{dismultiple} and does not have to be checked separately. Finally, there is little doubt that these values can be extended and improved upon even further, but this paper is long enough as it is.

$$
\hspace{6pt}\text{If } d \equiv 2 \hspace{-5pt} \pmod{4}: f_{d}(a) = \begin{cases} 

10 &\mbox{if } 3 \le a \le 5  \\ 
21 &\mbox{if } a = 6 \text { and } d \equiv 2,10 \pmod{12} \\ 
26 &\mbox{if } a = 6 \text { and } d \equiv 6 \pmod{12} \\
9\cdot5^{k-1} &\mbox{if } 5^k < a \le 6\cdot5^{k-1} \mbox{ for some } k \ge 2 \\ 
2\cdot5^{k+1} &\mbox{if } 6\cdot5^{k-1} < a \le 5^{k+1} \mbox{ for some } k \ge 1 \\ 

\end{cases} 
$$

$$
\hspace{-10pt}\text{If } d \equiv 4,8 \pmod{12}: f_{d}(a) = \begin{cases} 

21 &\mbox{if } 3 \le a \le 7  \\ 
78 &\mbox{if } a = 8 \\ 
10\cdot7^{k-1} &\mbox{if } 7^k < a \le 8\cdot7^{k-1} \mbox{ for some } k \ge 2 \\ 
3\cdot7^{k+1} &\mbox{if } 8\cdot7^{k-1} < a \le 7^{k+1} \mbox{ for some } k \ge 1\\ 

\end{cases} 
$$

$$
\text{If } d \equiv 12 \hspace{-5pt} \pmod{24}: f_{d}(a) = \begin{cases} 

7\cdot17^{k} &\mbox{if } 17^k < a \le 2\cdot17^{k} \mbox{ for some } k \ge 1 \\ 
8\cdot17^{k} &\mbox{if } 2\cdot17^k < a \le 3\cdot17^{k} \mbox{ for some } k \ge 1 \\ 
2\cdot17^{k+1} &\mbox{if } 3\cdot17^k < a \le 17^{k+1} \mbox{ for some } k \ge 0 \\ 

\end{cases} 
$$

$$
\hspace{45pt}\text{If } d = 24: f_{d}(a) = \begin{cases} 

8\cdot11^{k} &\mbox{if } 11^k < a \le 2\cdot11^{k} \mbox{ for some } k \ge 1 \\ 
9\cdot11^{k} &\mbox{if } 2\cdot11^k < a \le 3\cdot11^{k} \mbox{ for some } k \ge 1 \\ 
5\cdot11^{k+1} &\mbox{if } 3\cdot11^k < a \le 11^{k+1} \mbox{ for some } k \ge 0 \\ 

\end{cases} 
$$

$$
\hspace{45pt}\text{If } d = 48: f_{d}(a) = \begin{cases} 

55 &\mbox{if } 4 \le a \le 5  \\ 
%231 &\mbox{if } a = 186 \\ 
16\cdot37^{k} &\mbox{if } 37^{k} < a \le 2\cdot37^{k} \mbox{ for some } k \ge 1 \\ 
17\cdot37^{k} &\mbox{if } 2\cdot37^{k} < a \le 3\cdot37^{k} \mbox{ for some } k \ge 1 \\ 
18\cdot37^{k} &\mbox{if } 3\cdot37^{k} < a \le 4\cdot37^{k} \mbox{ for some } k \ge 1 \\ 
34\cdot37^{k} &\mbox{if } 4\cdot37^{k} < a \le 5\cdot37^{k} \mbox{ for some } k \ge 1 \\ 
%221\cdot37^{k-1} &\mbox{if } 5\cdot37^{k} < a \le 186\cdot37^{k-1} \mbox{ for some } k \ge 2 \\ 
3\cdot37^{k+1} &\mbox{if } 5\cdot37^{k} < a \le 37^{k+1} \mbox{ for some } k \ge 0 \\ 

\end{cases} 
$$

$$
\hspace{45pt}\text{If } d = 72: f_{d}(a) = \begin{cases} 

69 &\mbox{if } 4 \le a \le 23  \\ 
68 &\mbox{if } a = 47 \\ 
9\cdot23^{k} &\mbox{if } 23^k < a \le 2\cdot23^{k} \mbox{ for some } k \ge 1	\\ 
49\cdot23^{k-1} &\mbox{if } 2\cdot23^k < a \le 47\cdot23^{k-1} \mbox{ for some } k \ge 2	\\ 
3\cdot23^{k+1} &\mbox{if } 47\cdot23^{k-1} < a \le 23^{k+1} \mbox{ for some } k \ge 1 \\ 

\end{cases} 
$$

$$
\hspace{45pt}\text{If } d = 96: f_{d}(a) = \begin{cases} 

55 &\mbox{if } 4 \le a \le 11  \\ 
111 &\mbox{if } a = 23 \\ 
7\cdot11^{k} &\mbox{if } 11^k < a \le 2\cdot11^{k} \mbox{ for some } k \ge 1 \\ 
27\cdot11^{k-1} &\mbox{if } 2\cdot11^k < a \le 23\cdot11^{k-1} \mbox{ for some } k \ge 2 \\ 
5\cdot11^{k+1} &\mbox{if } 23\cdot11^{k-1} < a \le 11^{k+1} \mbox{ for some } k \ge 1 \\
\end{cases} 
$$  
\end{proof}

% If p | X_l and there exists a prime $q$ such that both $q | (l+p)(2p-1)$ and $(q-1) | d$, then c_d(a) \le \frac{lp^2}{p+1} for a \notin \{1,2,p+1\}. Assume $q | (l+p)(2p-1)$ and define $l' = l$ if $q | (l+p)$ and $l'= p-1$ if $q | 2p-1$. Now define $f_d(a) = lp$ for $1 \le a \le p$ and $f_d(a) = lp^2$ for $a = p+1$. For $a > p+1$, let $k \in \mathbb{N}$ be such that $p^k < a \le p^{k+1}$. Then choose $f_d(a) = (l'+p)p^{k-1}$ if $a \le (p+1)p^{k-1} and $f_d(a) = lp^{k+1}$ otherwise. Then the only case not essentially already covered by Theorem \ref{Theorem1} is when we choose $p+1 < p^k < a \le (p+1)p^{k-1}$. In this case we need $p | X_{a,f(a)}$ and $\gcd(l'+p, X_{a,f_d(a)-1}) < p$. To prove the latter, let $q$ be a prime such that $q | l+p$ and $(q-1) | d$. Then $X_{a,f(a)-1} \equiv 1 + 1 + \ldots + 1 \not\equiv 0 \pmod{q}$. To prove the former, we claim that $p$ divides both X_{a,(l+p)p^{k-1}} and X_{a,(2p-1)p^{k-1}}. The first is obvious and the second https://math.stackexchange.com/questions/433678/sums-of-powers-below-a-prime, https://math.stackexchange.com/questions/226023/congruence-modulo-p/226044#226044

%For example, note that the above theorem is applicable with $q = 2$ whenever $l$ is odd. Furthermore, one can often prove that $b_d(p+1) \le \frac{lp^3}{p+1}$, so that c_d(a) \le \frac{lp^2}{p+1} holds for all $a \ge 3$. To give an example, when $d \equiv 2,4 mod 6$ we have $7 | X_3$ and one choose $f_d(8) = 78$ instead of $f_d(8) = 147$.

%later: 2 \le l_1 < l_2 \ldots < l_p = p-1. If l_i + p is composite, better b_d(a) bound for a \notin \{1, 2, p+1\}

\subsection{Non-periodic sequences of numerators}
In this section we will drop the periodicity assumption on the sequence of $r_i$, and merely assume that there exists an $m$ such that $|r_i| < m$ for all $i$. We then ask ourselves: which, if any, of our results generalize to this case? For example, can we still prove upper or lower bounds on $b(a)$? \\

As it turns out, for upper bounds the answer is no. Perhaps somewhat surprisingly, given almost any set of integers $A$, if all we assume is that $r_i \in A$ for all $i$, then we cannot even exclude the possibility that $v_{1,n} = L_n$ holds for all $n \in \mathbb{N}$, unless $A$ is of a special form. More precisely:

\begin{erniesquestion}
If $A$ is a set of integers which contains at least one odd integer, and, for every odd prime $p$, there exist $a_1, a_2 \in A$ such that $a_1 \not \equiv a_2 \pmod {p}$, then it is possible to assign the $r_i$ values in $A$, such that the denominator of $\displaystyle \sum_{i=1}^n \frac{r_i}{i}$ equals $L_n$ for all $n \in \mathbb{N}$.
\end{erniesquestion}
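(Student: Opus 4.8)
The plan is to reformulate the claim and then build the sequence $(r_i)$ one term at a time. Since $v_{1,n}=L_n/g_{1,n}$ with $g_{1,n}=\gcd(X_n,L_n)$, the assertion that the denominator of $\sum_{i=1}^n r_i/i$ equals $L_n$ for all $n$ is equivalent to $\gcd(X_n,L_n)=1$ for all $n$, i.e.\ to the statement that no prime $p$ divides both $X_n$ and $L_n$. So it suffices, for every prime $p$ and every $n$ with $p\mid L_n$, to arrange $p\nmid X_n$. I would do this prime by prime via the familiar ``kill the divisible terms'' computation.

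First note that if every prime power $p^m$ (and the index $1$) is assigned a nonzero value, then $L_n=\text{lcm}(1,2,\ldots,n)$ for all $n$, because $e_p(L_n)\ge e_p\big(p^{\lfloor\log_p n\rfloor}\big)=\lfloor\log_p n\rfloor$ while the reverse inequality is automatic. Writing $k=e_p(L_n)=\lfloor\log_p n\rfloor$ and $S=\lfloor n/p^k\rfloor$, the only terms $\tfrac{L_nr_i}{i}$ of $X_n$ that survive modulo $p$ are those with $e_p(i)=k$ and $p\nmid r_i$; these have $i=jp^k$ with $1\le j\le S\le p-1$, so $p\nmid j$, and one gets
\begin{equation*}
X_n\equiv \frac{L_n}{p^{k}}\sum_{j=1}^{S}\frac{r_{jp^{k}}}{j}\pmod p,
\end{equation*}
with $L_n/p^k$ a unit modulo $p$. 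Hence it is enough to guarantee, for every prime $p$, every $k\ge 1$ and every $s\in\{1,\ldots,p-1\}$, that the partial sum $c^{(p,k)}_s:=\sum_{j=1}^{s} r_{jp^{k}}/j$ is $\not\equiv 0\pmod p$ (for $p=2$ this is just ``$r_{2^k}$ is odd'').

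The construction then exploits a decoupling: every index $i\ge 2$ occurs in at most one of these conditions. Indeed $i=jp^k$ contributes to the mod-$p$ condition only when $p\mid i$ and the $p$-free part $i/p^{e_p(i)}$ is $<p$, and at most one prime $q=q(i)$ can have this property, since if $q_1<q_2$ both did then $i/q_1^{e_{q_1}(i)}\ge q_2^{e_{q_2}(i)}\ge q_2>q_1$, contradicting that the $q_1$-free part of $i$ is $<q_1$. So I would set $r_1$ to an odd element $a_0\in A$, and choose $r_2,r_3,\ldots$ in increasing order as follows: if $i$ has no associated prime $q(i)$, take $r_i=a_0$; if $q(i)=2$ (equivalently $i$ is a power of $2$), take $r_i=a_0$; and if $q(i)=q$ is an odd prime, write $i=jp^k$ with $p=q$, note that $r_{q^{k}},r_{2q^{k}},\ldots,r_{(j-1)q^{k}}$ are already defined, and observe that the requirement $c^{(q,k)}_j=c^{(q,k)}_{j-1}+r_i/j\neq 0$ forbids exactly one residue of $r_i$ modulo $q$. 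Since $A$ represents at least two residues mod $q$, a legal choice $r_i\in A$ exists; when $i=q^{k}$ (the case $j=1$, forbidden residue $0$) I additionally take $r_i\not\equiv 0\pmod q$, which is possible because at most one of the two available residues equals $0$. By induction all the $c^{(p,k)}_s$ with $1\le s\le p-1$ are nonzero mod $p$, and all prime powers and the index $1$ carry nonzero values.

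The one point needing care — and where the hypothesis on $A$ is used beyond the odd-element part — is that in unfavourable cases we may be \emph{forced} to take $r_i=0$ for certain composite $i$ (when the unique allowed residue class of $A$ modulo $q(i)$ happens to be $0$). This causes no harm: $L_n$ is unaffected, being already pinned down by the prime-power indices, and a vanishing $r_i$ simply drops out of the displayed congruence while still leaving $c^{(q,k)}_j=c^{(q,k)}_{j-1}\neq 0$. Assembling everything, for every $n\ge 2$ and every prime $p\mid L_n$ we obtain $X_n\equiv (L_n/p^{k})\,c^{(p,k)}_{\lfloor n/p^{k}\rfloor}\not\equiv 0\pmod p$, so $\gcd(X_n,L_n)=1$ and the denominator of $\sum_{i=1}^n r_i/i$ is $L_n$; the case $n=1$ is immediate. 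The main obstacle to handle carefully is exactly this interplay — checking that forced zero values do not disturb the mod-$p$ analysis — together with the uniqueness of $q(i)$, which is what makes the greedy choice of the $r_i$ mutually consistent.
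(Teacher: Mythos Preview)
Your proof is correct and follows essentially the same approach as the paper's: both reduce to $\gcd(X_n,L_n)=1$, both hinge on the observation that each index $i$ is governed by at most one prime (your uniqueness of $q(i)$ is exactly the paper's Case~II argument), and both make a greedy choice of $r_i$ to avoid a single forbidden residue modulo that prime. The paper phrases this as induction on $n$ with a three-case split on the shape of $n$, while you phrase it as a decoupling of the mod-$p$ constraints across primes, but the content is the same.
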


\begin{proof}
We will prove this via induction. For a start, it does not matter what the value of $r_1$ is. Assume now that we have chosen $r_1, r_2, \ldots, r_{n-1} \in A$ so that $\displaystyle \frac{X_{n-1}}{L_{n-1}} = \sum_{i=1}^{n-1} \frac{r_i}{i}$ with $\gcd(X_{n-1}, L_{n-1}) = 1$. Then we will show that we can choose $r_n \in A$ so that $\gcd(X_n, L_n) = 1$ holds as well. \\

In general, $\gcd(X_n, L_n) = 1$ is equivalent to the statement that the smallest prime divisor of $X_n$ is larger than $n$. In particular, with the induction hypothesis we assume $X_{n-1} \not \equiv 0 \pmod{q}$ for all primes $q \le n-1$. Now there are three different cases to consider. \\

Case I. The integer $n$ is a prime power. \\
Assume $n = p^k$, let $q \neq p$ be any other prime smaller than $n$ and choose an arbitrary $r_n \in A$ that is not divisible by $p$. We then claim that both $p$ and $q$ do not divide $X_n$, by applying the fact that this case is the only one where $L_n \neq L_{n-1}$ and, more precisely, $L_n = pL_{n-1}$. On the one hand, $X_n = pX_{n-1} + \frac{L_n r_n}{n} \equiv \frac{L_n r_n}{n} \not \equiv 0 \pmod{p}$. While on the other hand, $X_n = pX_{n-1} + \frac{L_n r_n}{n} \equiv pX_{n-1} \not \equiv 0 \pmod{q}$, by the induction hypothesis. \\

Case II. One can write $n = lp^k$, for some $1 < l < p$ and $k \ge 1$. \\
In this case we claim that this prime $p$ is unique. Indeed, if $n$ could also be written as $n = \tilde{l}q^{\tilde{k}}$ for some prime $q \neq p$ with $\tilde{l} < q$ and $\tilde{k} \ge 1$, then unique factorization implies $q^{\tilde{k}} | l$ and $p^k | \tilde{l}$, from which we would get $l \ge q^{\tilde{k}} > \tilde{l} \ge p^k > l$; contradiction. In other words, if $n = \tilde{l}q^{\tilde{k}}$, then $\tilde{l} > q$, so that, in particular, $q^{\tilde{k}+1}$ must divide $L_n$. Let now $a_1, a_2 \in A$ be such that $a_1 \not \equiv a_2 \pmod{p}$. Then, regardless of whether we choose $r_n = a_1$ or $r_n = a_2$, for any $q < n$ different from $p$ we have $X_n = X_{n-1} + \frac{L_n r_n}{n} \equiv X_{n-1} \pmod{q}$, which we assumed to be non-zero for all $q < n$. On the other hand, $X_{n-1} + \frac{L_n a_1}{n} \not \equiv X_{n-1} + \frac{L_n a_2}{n} \pmod{p}$, so that at least one of those is non-zero modulo $p$. Set $r_n$ to an $a_i$ for which this holds, and $X_n = X_{n-1} + \frac{L_n r_n}{n} \not \equiv 0 \pmod{p}$. \\

Case III. For all $p < n$, writing $n = lp^k$ implies $l > p$. \\
As we noted in the previous case, this implies $e_p(L_n) \ge k + 1$. And so regardless of the value of $r_n$ we get $X_n = X_{n-1} + \frac{L_n r_n}{n} \equiv X_{n-1} \pmod{p}$ which is non-zero for all $p < n$, by the induction hypothesis. And we conclude that, for this case, we may choose $r_n$ arbitrarily. \\

In all cases it was possible for us to choose $r_n \in A$ in such a way that $X_n \not \equiv 0 \pmod{p}$ holds for all $p \le n$, and the theorem is proved.
\end{proof}

We therefore cannot give an upper bound on $b(a)$ that holds for all bounded sequences of $r_i$. On the other hand, the lower bound from Theorem \ref{limba} does still hold. Indeed, its proof does not require the $r_i$ to be periodic, and one can check that the $r_i$ are even allowed to grow a little as a function of $i$. Moreover, we claim that this lower bound is tight.

\begin{nonperiodiclower} \label{nonperiodiclower}
There exists a (non-periodic) sequence $r_1, r_2, \ldots$ with $r_i \in \{0, 1\}$ for all $i \in \mathbb{N}$, for which ${\displaystyle \liminf_{a \rightarrow \infty}} \left(\frac{b(a)-a}{\log a}\right) = \frac{1}{2}$.
\end{nonperiodiclower}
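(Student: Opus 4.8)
The lower bound $\liminf\ge\tfrac12$ is already supplied by Theorem~\ref{limba}, whose proof nowhere uses periodicity: it only uses that $r_i\in\{0,1\}$ forces $L_r=1$, together with Lemma~\ref{noudaarom} and PNT. So the whole task is to exhibit one $\{0,1\}$-sequence along which $b(a)-a$ comes back down to $(\tfrac12+o(1))\log a$ infinitely often. The plan is to build the sequence blockwise. I would fix a rapidly increasing sequence of primes $n_1<n_2<\cdots$ and, around each $n_k$, install a ``window'' $[a_k,b_k]$ with $b_k-a_k=n_k$ and $a_k=e^{(2-o(1))n_k}$, so that $\tfrac{b_k-a_k}{\log a_k}\to\tfrac12$; outside the windows I put $r_i=0$, and I make the windows so widely spaced that they do not interact. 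It then suffices to arrange $v_{a_k,b_k}<v_{a_k,b_k-1}$: combined with Theorem~\ref{limba} this gives $a_k+(\tfrac12-o(1))\log a_k\le b(a_k)\le b_k=a_k+n_k=a_k+(\tfrac12+o(1))\log a_k$, whence $\liminf\le\tfrac12$.

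Inside a window $[a,b]$ (write $n=b-a$, a prime) the nonzero positions should be exactly $S=\{b\}\cup\{\,b-q:\ q\le n\ \text{prime}\,\}$ (note $r_a=r_{b-n}=1$ since $n$ is prime), all other indices in $[a,b-1]$ being $0$. The crucial combinatorial feature of this $S$ is that, modulo any prime $q\le n$, the only elements of $S$ divisible by $q$ are $b$ and $b-q$ — because $b-q'$ is divisible by $q$ only if $q\mid q'$, i.e.\ $q'=q$. Thus different small primes do not interfere, which is exactly the obstruction in the periodic constructions. I would then choose $b$ divisible by $\prod_{q\le n}q$ (with $4\mid b$) and such that for every odd prime $q\le n$ one has $b/q\equiv (q+1)/2\pmod q$; this last congruence says precisely that $c:=b/q$ is a root modulo $q$ of the polynomial $f_1(x)=2x-1$ of Theorem~\ref{limco} — which has a root modulo every odd prime, the $d=1$ case of Lemma~\ref{polprop} — and it is exactly what forces $q\mid X_{a,b}$: reducing $X_{a,b}=L_{a,b}\sum_{i\in S}1/i$ modulo $q$ kills every term except $L_{a,b}\bigl(\tfrac1b+\tfrac1{b-q}\bigr)=L_{a,b}\cdot\tfrac{f_1(c)}{q\,c(c-1)}$, which has positive $q$-adic valuation. (A handful of extra mod-$q^2$ conditions, plus $4\mid b$, also give $e_q(b-q)=1$, so $e_q(L_{a,b})=1$ for all $q\le n$.)

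With this choice the bookkeeping runs parallel to Section~\ref{iflarge}. Since $\prod_{q\le n}q\mid b$ and each $b-q$ contributes that prime $q$ to $L_{a,b-1}$, one gets $\gcd(L_{a,b-1},b)=\prod_{q\le n}q$ and $L_{a,b}=\bigl(b/\!\prod_{q\le n}q\bigr)L_{a,b-1}$; the reduction above gives $q\mid X_{a,b}$ for every odd $q\le n$, hence $g_{a,b}=\prod_{3\le q\le n}q$, while no prime $\ell>n$ can divide $g_{a,b}$ or $g_{a,b-1}$ (it would have to divide two elements of $S$, impossible inside a length-$n$ interval), and $g_{a,b-1}=1$ because each prime dividing $L_{a,b-1}$ divides exactly one $b-q$, so $X_{a,b-1}$ reduces to a single unit modulo it. Feeding this into $v_{a,b}=L_{a,b}/g_{a,b}$ against $v_{a,b-1}=L_{a,b-1}/g_{a,b-1}=L_{a,b-1}$, the inequality $v_{a,b}<v_{a,b-1}$ collapses to $b<\tfrac12\bigl(\prod_{q\le n}q\bigr)^2$, i.e.\ $\log b<2\theta(n)-\log 2=(2-o(1))n$ by PNT. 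Since $a=b-n$, this is exactly the regime $\tfrac{b-a}{\log a}=\tfrac12+o(1)$.

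The hard part will be that the congruences imposed on $b$ (namely $\prod_{q\le n}q\mid b$ and $b/q\equiv(q+1)/2\pmod q$ for each odd $q\le n$) pin $b$ down to a single residue class modulo $\approx\bigl(\prod_{q\le n}q\bigr)^2$ — precisely the threshold below which the drop occurs — so there is no slack to spare. One therefore has to show that this forced value $b^\ast=b^\ast(n)$ satisfies $\log b^\ast\ge(2-o(1))n$ for infinitely many primes $n$. I would obtain this from an equidistribution/pigeonhole argument: passing from one prime $n$ to the next prime $n'$ multiplies the modulus by $n'^2$ and shifts $b^\ast$ by a multiple of the old modulus determined by a congruence mod $n'^2$, and these shifts cannot keep $b^\ast/\bigl(\prod_{q\le n}q\bigr)^2$ bounded away from $1$ for all large $n$. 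Alternatively one can manufacture slack by enlarging each run to $b,b-q,\dots,b-(2\ell-1)q$, replacing $f_1$ by the odd-degree polynomial $f_{2\ell-1}$ of Theorem~\ref{limco}, which by Lemma~\ref{polprop} still has a root mod $q$ — now usually several — so that $b$ is only constrained to a large set of residues and a suitably large admissible $b$ can be chosen outright, at the cost of controlling the few cross-terms this introduces at the primes $q\le 2\ell-1$. The remaining points (sparseness of the windows, taking $n$ prime so that $r_a\neq0$, and the treatment of $q=2$) are routine and handled exactly as in Section~\ref{iflarge}.
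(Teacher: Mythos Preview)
Your architecture is essentially the paper's: sparse windows, $r_i=1$ only at $b$ and at $b-p$ for suitable primes $p$, and $b$ chosen by CRT so that every such $p$ divides $X_{a,b}$. The lower bound via Theorem~\ref{limba} is fine. But the step you yourself flag as ``the hard part'' is a genuine gap, not a routine detail. With only the linear polynomial $f_1$ at every prime, the CRT conditions pin $x$ (where $b=xQ$, $Q=\prod_{q\le n}q$) to a \emph{single} residue class modulo $Q$, and you then need $x$ to land in the top slice $[Q^{1-o(1)},Q)$ to get $\log b=(2-o(1))n$. You have no mechanism to force this. The ``equidistribution/pigeonhole'' sketch is not an argument: the residue $x\bmod Q$ is a completely determined arithmetic quantity depending on $n$, and nothing you have written excludes $x=O(Q^{1-\delta})$ for all large $n$. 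Your fallback of passing to $f_{2\ell-1}$ does not help as stated either, since Lemma~\ref{polprop} only guarantees the single root $(2\ell-1)/2$ modulo $p$, not several.

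The paper closes exactly this gap with one localized idea that you are missing. It keeps $f_1$ (i.e.\ $x_p=(p+1)/2$) at all primes \emph{except one}: at a single prime $q_n$ in $(\sqrt{n},n/2]$ with $q_n\equiv\pm1\pmod{12}$ it uses the quadratic $f_2(x)=3x^2-6x+2$ instead, and correspondingly adds one extra nonzero position $b-2q_n$ to the window. Because $f_2$ has \emph{two} roots modulo $q_n$, CRT now produces two candidates $x_0,x_1\in[1,Q)$ that differ by a nonzero multiple of $Q/q_n$; taking $x=\max(x_0,x_1)$ forces $x>Q/q_n$, hence $b=xQ>Q^2/q_n$ and $\log b\ge 2\log Q-\log n=(2-o(1))n$ automatically. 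This is precisely the two-root trick already used in the proof of Lemma~\ref{upperlink}, transplanted verbatim; once you insert it, the rest of your outline goes through (and the final inequality reduces to $x<Q$, which is automatic, rather than to your $b<\tfrac12Q^2$). Your side remarks about $q=2$ and the ``extra $\bmod\,q^2$ conditions'' also need tidying (asking for $4\mid b$ contradicts $e_q(L_{a,b})=1$ at $q=2$), but those are minor once the main gap is fixed.
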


\begin{proof}
We will employ the same ideas and notation we used in Section \ref{lower2}, so familiarity with that section is assumed. For a quick reminder, recall that we set $b = xQ$ where $Q$ was defined as a product of primes $p$ for which a certain polynomial $f$ had a root $x_p$ modulo $p$, and $x$ was such that $x \equiv x_pQ_p^{-1} \pmod{p}$ where $Q_p = \frac{Q}{p}$. Moreover, there was one unique prime $q$ for which the root $x_q$ was specifically chosen, in order to obtain $x > Q_q$. \\

For this proof we are going to do the same thing, but by choosing $r_i = 0$ for most $i$ we can make sure that $f$ is, for all relevant primes $p \neq q$, a linear polynomial. This guarantees that it has a root modulo $p$. \\

Let $b_0$ be large enough so that for all $b \ge b_0$ there exists a prime $q \in (\sqrt{b}, \frac{b}{2}]$ for which $f_2(x) := 3x^2 - 6x + 2 \equiv 0 \pmod{q}$ is solvable.\footnote{Even though we did not mention this in Section \ref{lower2}, one can check by quadratic reciprocity that $f_2(x)$ has a root modulo an odd prime $q$ if, and only if, $q \equiv \pm 1 \pmod{12}$. By results in \mbox{\cite{prim12}} we can then deduce that $b_0 = 22$ would suffice.} If $b_{n-1}$ is defined for some $n \in \mathbb{N}$, then define $Q$ to be the product of all primes $p \in I_n := (\sqrt{b_{n-1}}, b_{n-1}]$, set $Q_p = \frac{Q}{p}$, and let $q_n$ be any prime in $(\sqrt{b_{n-1}}, \frac{b_{n-1}}{2}]$ for which $f_2(x) \equiv 0 \pmod{q_n}$ is solvable. For a prime $p \in I_n$ different from $q_n$, define $x_p = \frac{p+1}{2}$ and define $x$ as the largest integer smaller than $Q$ with $x \equiv x_pQ_p^{-1} \pmod{p}$ for all $p \in I_n$ different from $q_n$, and $f_2(xQ_{q_n}) \equiv 0 \pmod{q_n}$. Analogous to what we observed in Section \ref{lower2}, we have $x > Q_{q_n}$ since $f_2$ has two roots modulo $q_n$. \\

Now define $b_n = xQ$ and let $a_n$ be equal to $b_n - b_{n-1}$. This defines an infinite sequence of ever-growing $a_n$ and $b_n$, and one can check that PNT implies ${\displaystyle \lim_{n \rightarrow \infty}} \left(\frac{b_n - a_n}{\log a_n}\right) = \frac{1}{2}$. All we need to do is choose $r_i$ such that $b(a_n) \le b_n$ for all $n \in \mathbb{N}$. \\

For any positive integer $i$, choose $r_i = 0$, unless there exists an $n \in \mathbb{N}$ with either $i = b_n$, or $i = b_n - p$ for some $p \in I_n$, or $i = b_n - 2q_n$. Choose $r_i = 1$ in these latter three cases. We claim that indeed $b(a_n) \le b_n$ holds for all $n \in \mathbb{N}$. We will not repeat all details from Section \ref{lower2}, but essentially all we need to do, is check that $p$ divides $X_{a_n, b_n}$ for all $p \in I_n$. This is a consequence of the following congruence, where $d = 1$ for $p \neq q_n$ and $d = 2$ for $p = q_n$: 
\begin{equation*}
X_{a_n, b_n} \equiv \frac{L_{a_n,b_n}}{p} \sum_{i=0}^d \frac{1}{x_p - i} \pmod{p}
\end{equation*}

And this is congruent to $0 \pmod{p}$ by the construction of $x_p$. 
\end{proof}

We conclude that we can, in the non-periodic but bounded case, still prove a lower bound on $b(a)$, and that this lower bound is actually tight. There is however one other important result that we can generalize to the non-periodic case, and that is Theorem \ref{ohyah}. \\

To properly state this generalization, let $\{r_i\}_{i \in \mathbb{N}}$ be any bounded sequence of non-zero integers, with $r = \max_i |r_i|$. Let $m$ be any integer with $m > \max(3, r)$ and assume that there are $z$ primes strictly smaller than $m$. Moreover, let $\tilde{m}$ be any integer larger than $20m^{2z}$ such that $\tilde{m}$ has a prime divisor larger than $m^{2z-1}$, and define the interval $I = [\tilde{m} - m^{2z-1}, \tilde{m} + m^{2z-1})$. We can then state our generalization of Theorem \ref{ohyah}.

\begin{oldschool} \label{oldschool}
There exists an integer $n \in I$ for which $X_n$ is divisible by a prime larger than or equal to $m$. 
\end{oldschool}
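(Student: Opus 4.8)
The plan is to transcribe the proof of Theorem~\ref{ohyah} (Section~\ref{largeprimedivisors}) almost line for line, exploiting the three simplifications the new hypotheses give: there is no modulus $t$, so no Carmichael function and $\lambda=1$ everywhere; the set $\Sigma_3$ is empty, so the Lemma~\ref{notin3}-type statements are trivial (every window of indices contains an $i$ with $r_i\neq 0$ — indeed all of them do); and accordingly the exponents of $m$ shrink from $\sim m^{3z}$ to $\sim m^{2z}$. As a substitute for Lemma~\ref{lcmbound} one uses $L_n=\operatorname{lcm}(1,\dots,n)\ge\prod_{p\le n}p>e^{0.98n}$ for $n\ge 563$ from \cite{pnt2}.

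First I would prove the analogue of Lemma~\ref{xpo}. Let $q_0$ be a prime divisor of $\tilde m$ with $q_0>m^{2z-1}$; since $I$ has radius $m^{2z-1}<q_0$, the point $\tilde m$ is the unique multiple of $q_0$ in $I$, so for any window $[w+1,w+k]\subseteq I$ containing $\tilde m$ the sum $\sum_{i=w+1}^{w+k}\tfrac{r_i}{i}$, written over the denominator $L_{w+1,w+k}$, has numerator $\not\equiv 0\pmod{q_0}$ (the term $i=\tilde m$ survives because $0<|r_{\tilde m}|<m<q_0$), hence $\bigl|\sum_{i=w+1}^{w+k}\tfrac{r_i}{i}\bigr|\ge (w+k)^{-k}$, exactly as in Lemma~\ref{bla}. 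Splitting $I=J_1\cup J_2$ as in the excerpt, if some $w\in J_1$ had $|X_w|\le m^2w^z$, then for every $w+k\in J_2$,
\[
|X_{w+k}|\ge L_{w+k}\Bigl|\sum_{i=w+1}^{w+k}\tfrac{r_i}{i}\Bigr|-\tfrac{L_{w+k}}{L_w}|X_w|\ge\tfrac{L_{w+k}}{(w+k)^k}-(w+k)^k w^{z+1}>(w+k)^{z+1}>m^2(w+k)^z ,
\]
the last step being the analogue of Lemma~\ref{boring}: it reduces to $e^{0.98(w+k)}>2(w+k)^{2k+z+1}$, which holds because $w+k\ge\tilde m>20m^{2z}$, $k<2m^{2z-1}$, $z<m$ (Lemma~\ref{priemteller}), and $x/\log x$ is increasing. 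So $|X_n|>m^2n^z$ on one of $J_1,J_2$; call that half $I_0$.

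Next, the nested-interval construction — the analogues of Lemmas~\ref{decint} and \ref{pfree}. The bad primes are just $\Sigma_2=\{p<m\}$, say $p_1<\dots<p_z$, with $e_p(r_i)\le\mu_p:=\lfloor\log(m-1)/\log p\rfloor$ always. Put $n_1=\min I_0$ and $\mathcal P_0=m^{2z-1}$. Given $n_j\in I_0$: if $\prod_{p<m}p^{e_p(X_{n_j})}<n_j^z$ we stop; otherwise pigeonholing the $z$ factors against $n_j^z$ yields a prime $p_{\sigma(j)}$ with $p_{\sigma(j)}^{e_{\sigma(j)}(X_{n_j})}\ge n_j$. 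Take $k_j\ge 1$ maximal with $m\,p_{\sigma(j)}^{k_j}\le\mathcal P_{j-1}$, set $\mathcal P_j=p_{\sigma(j)}^{k_j}$, let $\widetilde n_{j+1}$ be the least integer $>n_j$ with $e_{\sigma(j)}(\widetilde n_{j+1})-e_{\sigma(j)}(r_{\widetilde n_{j+1}})\ge k_j$ (so $\widetilde n_{j+1}\le n_j+(m-1)\mathcal P_j$, the least multiple of $p_{\sigma(j)}^{k_j+\mu_{p_{\sigma(j)}}}$ above $n_j$ qualifying), put $n_{j+1}=\widetilde n_{j+1}$ and $I_j=[n_{j+1},n_{j+1}+\mathcal P_j)$. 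Then $n_{j+1}+\mathcal P_j\le n_j+m\mathcal P_j\le n_j+\mathcal P_{j-1}$, so $I_0\supset I_1\supset\dots\supset I_z$; and for $n\in I_j$ the proof of Lemma~\ref{pfree} applies verbatim, showing $i=\widetilde n_{j+1}$ is the unique term of $X_n=\tfrac{L_n}{L_{n_j}}X_{n_j}+\sum_{i=n_j+1}^n\tfrac{L_nr_i}{i}$ not divisible by $p_{\sigma(j)}^{e_{\sigma(j)}(L_n)-k_j+1}$ (the head is divisible by $p_{\sigma(j)}^{e_{\sigma(j)}(L_n)}$ since $p_{\sigma(j)}^{e_{\sigma(j)}(X_{n_j})}\ge n_j\ge p_{\sigma(j)}^{e_{\sigma(j)}(L_{n_j})}$, and $(\widetilde n_{j+1},n]$ holds no multiple of $p_{\sigma(j)}^{k_j}$), whence $p_{\sigma(j)}^{e_{\sigma(j)}(X_n)}\le p_{\sigma(j)}^{e_{\sigma(j)}(L_n)}/p_{\sigma(j)}^{k_j}<n$. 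Finishing exactly as for Theorem~\ref{ohyah}: the $\sigma(j)$ must be distinct (for $i<j$, $n_j\in I_i$ forces $p_{\sigma(i)}^{e_{\sigma(i)}(X_{n_j})}<n_j\le p_{\sigma(j)}^{e_{\sigma(j)}(X_{n_j})}$), so $\sigma$ is a permutation, and for $n_{z+1}\in I_z$ we get $\prod_{p<m}p^{e_p(X_{n_{z+1}})}=\prod_j p_{\sigma(j)}^{e_{\sigma(j)}(X_{n_{z+1}})}<n_{z+1}^z<|X_{n_{z+1}}|/m^2<|X_{n_{z+1}}|$, so $X_{n_{z+1}}$ has a prime divisor $\ge m$; and if some $n_j$ triggered the stopping rule the same inequality at $n_j$ suffices.

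The one delicate point — which I expect to be the main obstacle — is verifying that an interval of length $m^{2z-1}$ really is long enough for the recursion to reach $j=z$ with $k_z\ge 1$, i.e. $\mathcal P_{j-1}\ge m\,p_{\sigma(j)}$ at every step. The crude estimate only gives $\mathcal P_z>m^{2z-1}/\bigl(m^z\prod_{p<m}p\bigr)$, and since $m^z$ and $\prod_{p<m}p$ are both $e^{(1+o(1))m}$ the bald inequality $m^{z-1}\ge\prod_{p<m}p$ fails for the smallest $m$; so one must either argue that the multiplicative shrink $\prod_j(\mathcal P_{j-1}/\mathcal P_j)$ cannot be simultaneously worst-case at every step (it isn't, e.g. whenever $\mathcal P_{j-1}/m$ happens to be an exact power of $p_{\sigma(j)}$), or combine the explicit bounds $\prod_{p\le m}p<e^{m(1+1/(2\log m))}$ and $\pi(m-1)\log m>\log\!\bigl(\prod_{p<m}p\bigr)$ from \cite{pnt2} for large $m$ with a finite computer check for the small values of $m$. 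The analogous near-optimal inequality $e^{0.98(w+k)}>2(w+k)^{2k+z+1}$ near $w+k\approx 20m^{2z}$ in the second step needs the same sort of care. Everything else is a word-for-word specialisation of Section~\ref{largeprimedivisors} to $t=\lambda=1$ and $\Sigma_3=\emptyset$.
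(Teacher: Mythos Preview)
Your proposal is correct and follows exactly the route the paper intends: the paper gives no detailed argument for Theorem~\ref{oldschool} at all, only the remark that it is the proof of Theorem~\ref{ohyah} rerun with $\Sigma_3=\emptyset$ (so no congruence condition on the $n_j$ and hence smaller exponents of $m$), which is precisely what you do. You have in fact supplied more detail than the paper, and your flagged ``delicate point'' about whether the recursion survives $z$ steps inside an interval of length $m^{2z-1}$ is a genuine numerical check the paper leaves implicit; your suggested fix (sharpen the shrink factor at each step, or handle small $m$ separately) is adequate---indeed, replacing your crude bound $\widetilde n_{j+1}-n_j\le (m-1)\mathcal P_j$ by the tighter $p_{\sigma(j)}^{\mu_{p_{\sigma(j)}}}\mathcal P_j$ already rescues the borderline cases.
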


Note that both $|I|$ and $\tilde{m}$ are slightly smaller than they were in Theorem \ref{ohyah}. This is due to the assumption $r_i \neq 0$ for all $i$, which guarantees that $\Sigma_3$ as defined in Section \ref{largeprimedivisors} is empty. Recall that, in the original proof of Theorem \ref{ohyah}, we needed $n_j$ to be congruent to $i_1 \pmod{t^3r_{i_1}^2}$ because of Lemma \ref{Lemma3c}. For the analogous proof of Theorem \ref{oldschool} however, Lemma \ref{Lemma3c} would no longer be relevant, since $\Sigma_3 = \emptyset$. \\

A natural follow-up question is now: how many of the $r_i$ have to be $0$ in order for Theorem \ref{oldschool} to become false? Or, moving even further astray, fix $m$ and let $|r_i| < m$ for all $i$. Furthermore assume that $\displaystyle \left(\sum_{i=1}^k \frac{r_i}{i}\right)^{-1}$ is an integer for all $k$ with $1 \le k \le n$. What is the largest possible subset $A$ of $\{1, 2, \ldots, n\}$ such that for all $i \in A$ we have $r_i \neq 0$? \\

For example, it is easy to check that $A$ can be the set of powers of two, with $r_1 = 1$ and $r_{2^k} = -1$ for all $k \ge 1$. This gives $|A| > c\log(n)$ for $m = 2$, but it seems likely that much better constructions are possible. However, these questions, interesting and tempting as they may be, do lead us away from the original subject of this paper. So for now we gladly pass these questions on to the next brave soul. 

\section{Final thoughts and remarks}\label{remarks}
It is not hard to show that for every $\epsilon \in (0, 1]$ we can improve the inequality $v_{a,b} < v_{a,b-1}$ from Corollary \ref{inf} to the slightly stronger $v_{a,b} < \epsilon v_{a,b-1}$. To prove this, first recall that we chose $M$ in Section \ref{dioph} equal to $\left\lfloor e^{2m + \frac{4m}{3\log(m)}} \right\rfloor$ to make sure that $l > M$ was either divisible by a prime $q \ge m$, or by a prime $q < m$ with $q^{e_q(l)} \ge m^2$. If we instead choose $M$ to be equal to $\left\lfloor e^{2\epsilon^{-1}m + \frac{3\epsilon^{-1}m}{\log(m)}} \right\rfloor$, then we claim that $l > M$ is either divisible by a prime $q \ge \epsilon^{-1}m$, or by a prime $q < \epsilon^{-1}m$ with $q^{e_q(l)} \ge \epsilon^{-1}m^2$. Along similar lines as the proof of Lemma \ref{primeorpower}:
\begin{align*}
\prod_{q < \epsilon^{-1}m} q^{\left\lfloor\frac{\log(\epsilon^{-1}m^2)}{\log(q)}\right\rfloor} &\le \prod_{q < \epsilon^{-1}m} \epsilon^{-1}m^2 \\
&< (\epsilon^{-1}m^2)^{\frac{\epsilon^{-1}m}{\log(\epsilon^{-1}m)} \left(1 + \frac{3}{2\log(m)}\right)} \\
&< e^{2\epsilon^{-1}m \left(1 + \frac{3}{2\log(m)}\right)}
\end{align*}

To find an explicit bound on the smallest $b$ such that $v_{a,b} < \epsilon v_{a,b-1}$, one can then go through the calculations from Section \ref{final} again, which results in the constant $c$ from Theorem \ref{god} increasing to $c = e^{e^{e^{\epsilon^{-1}m\left(4 + \frac{11}{\log(m)}\right)}}}$. \\

%We may assume $m \ge 3$. We then have, with this new $M$, $pm < nm < \exp[3.4M] < \exp[\exp[2\epsilon^{-1}m+\frac{3.5\epsilon^{-1}m}{\log(m)}]] < \exp[\exp[6\epsilon^{-1}m$. The quantity $-5586 \log^2(m) \log^2(pm)$ from Lemma \ref{baker} can then be lower bounded by $-\exp[4\epsilon^{-1}m+\frac{10.3\epsilon^{-1}m}{\log(m)}]$

In fact, in the classical case where $r_i = 1$ for all $i$, we can use Linnik's Theorem to provide us with a prime $p$ that we can apply in Theorem \ref{Theorem1} to effectively get $\displaystyle \liminf_{b \rightarrow \infty} \frac{v_{a,b}}{v_{a,b-1}} = 0$. To see this, let $k_0 \in \mathbb{N}$ be arbitrary and let $p$ be the smallest prime congruent to $1 \pmod{2^{k_0}}$. By the current best known bound on Linnik's Theorem (see \mbox{\cite{lin2}}), we have $p < c_12^{5k_0}$ for some constant $c_1$. Moreover, by Wolstenholme's Theorem (or common sense), $p$ divides $X_n$ for $n = l = p-1$, while $\gcd(l, X_{a,b-1}) \le l2^{-k_0}$ by Lemma \ref{neverodd}. Applying the proof of Theorem \ref{Theorem1} we then obtain $v_{a,b} < 2^{-k_0} v_{a,b-1}$ with $b < c_1^22^{10k_0}a$. For the sake of clarity and completeness, let us formally state these two results. 

\begin{vabepsilongeneral}
For all $\epsilon \in (0, 1]$ there exists a constant $c_{\epsilon} := e^{e^{e^{\epsilon^{-1}m\left(4 + \frac{11}{\log(m)}\right)}}}$ such that for all $a \in \mathbb{N}$ there exists a $b < c_{\epsilon}a$ for which $v_{a,b} < \epsilon v_{a,b-1}$.
\end{vabepsilongeneral}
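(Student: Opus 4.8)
The plan is to repeat the proof of Theorem \ref{god} with one change of parameter. In Section \ref{dioph} the threshold $M = \lfloor e^{2m + \frac{4m}{3\log(m)}} \rfloor$ was chosen precisely so that the factor $l$ of $n = lp^k$, being larger than $M$, is forced to have a prime divisor $q$ with $q \ge m$, or with $q < m$ and $q^{e_q(l)} \ge m^2$. Here I would instead take $M_\epsilon = \lfloor e^{2\epsilon^{-1}m + \frac{3\epsilon^{-1}m}{\log(m)}} \rfloor$ and establish the corresponding strengthening of Lemma \ref{primeorpower}: if every prime divisor $q$ of $l$ satisfies $q < \epsilon^{-1}m$ and $q^{e_q(l)} < \epsilon^{-1}m^2$, then $l \le M_\epsilon$. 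As displayed at the start of Section \ref{remarks}, this follows from bounding $\prod_{q < \epsilon^{-1}m} q^{\lfloor \log(\epsilon^{-1}m^2)/\log(q)\rfloor}$ above by $(\epsilon^{-1}m^2)^{\pi(\epsilon^{-1}m)}$ and invoking the Chebyshev-type estimates of Lemma \ref{priemteller}. Since Theorem \ref{ohyah} holds with any bound $\ge m$ in place of $m$, we again obtain a least $n = lp^k$ for which $X_n$ has a prime divisor $p \ge M_\epsilon$, with $p < l < pt \le pm$ as before, so $l$ has a prime divisor $q \notin \Sigma_3$ with $q \ge \epsilon^{-1}m$ or $q^{e_q(l)} \ge \epsilon^{-1}m^2$.

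Next I would rerun Lemma \ref{close}. By Lemma \ref{Lemma3} and Lemma \ref{xbisxab}, whenever $b = np^{\lambda k_1}$ and $c_q a m \le c_q q^{\lambda k_2} < b < (c_q+1) q^{\lambda k_2}$, the $q$-part of $X_{a,b-1}$ is at most $q^{\mu_q} < m$, so in both of the above cases $\gcd(l, X_{a,b-1}) \le \frac{l}{q^{e_q(l)}} q^{\mu_q} < \frac{\epsilon l}{m} < \epsilon p$, using $l < pm$. Feeding this into the computation of $g_{a,b}$ at the end of the proof of Theorem \ref{Theorem1} gives $g_{a,b} \ge \frac{p}{\gcd(l, X_{a,b-1})} g_{a,b-1}$, whence $v_{a,b} = L_{a,b-1}/g_{a,b} \le \frac{\gcd(l, X_{a,b-1})}{p} v_{a,b-1} < \epsilon v_{a,b-1}$, which is the claimed improvement. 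The Diophantine step is unchanged: Lemma \ref{dat}, Lemma \ref{kron} and Lemma \ref{gamm} supply $k_1, k_2$ satisfying the required string of inequalities, so such a $b$ indeed exists.

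It then remains to carry the parameter $\epsilon^{-1}m$ through the effective estimates of Sections \ref{dioph} and \ref{final}. Repeating the bounds on $n$, $pm$, $b_1$, $\gamma_D$ and on $|b_2 \log p - b_1 \log q|$ via Lemma \ref{baker}, now with $M_\epsilon$ in the roles played by $M$, the telescoping estimate $b(a) < a\,(pm)^{2\lambda b_1(\cdots)}$ from the proof of Theorem \ref{god} collapses to $c_\epsilon a$ with $c_\epsilon = e^{e^{e^{\epsilon^{-1}m(4 + \frac{11}{\log(m)})}}}$; the coefficient $11$ (rather than $7$) absorbs both the extra $\log(\epsilon^{-1})$ terms and the replacement of $m$ by $\epsilon^{-1}m$ inside the nested exponents. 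The only real obstacle is bookkeeping: checking that the substitution leaves every intermediate inequality of Sections \ref{dioph}--\ref{final} intact --- in particular the places that used $\lambda \le m-2$, $q < pm$, $k < \lambda$ and the Chebyshev bounds --- and that the accumulated constants stay inside the stated triple exponential; no idea beyond those of Section \ref{upper} is required.
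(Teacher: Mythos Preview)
Your proposal is correct and follows essentially the same route as the paper: replace $M$ by $M_\epsilon = \lfloor e^{2\epsilon^{-1}m + \frac{3\epsilon^{-1}m}{\log(m)}} \rfloor$, deduce the strengthened version of Lemma~\ref{primeorpower} so that $l$ has a prime divisor $q\notin\Sigma_3$ with $q\ge\epsilon^{-1}m$ or $q^{e_q(l)}\ge\epsilon^{-1}m^2$, conclude $\gcd(l,X_{a,b-1})<\epsilon p$ and hence $v_{a,b}<\epsilon v_{a,b-1}$, and then rerun the effective estimates of Section~\ref{final} to arrive at $c_\epsilon$. This is exactly the argument sketched in Section~\ref{remarks}.
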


\begin{vabepsilonclassical}
If $r_i = 1$ for all $i$, then there is an absolute constant $K$ such that for all $\epsilon \in (0, 1]$ and all $a \in \mathbb{N}$ there exists a $b < K\epsilon^{-10}a$ for which $v_{a,b} < \epsilon v_{a,b-1}$.
\end{vabepsilonclassical}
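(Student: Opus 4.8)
The plan is to convert the informal argument sketched just before the statement into a proof, using Linnik's Theorem, Wolstenholme's Theorem, Lemma \ref{neverodd}, and the machinery behind Theorem \ref{Theorem1}. Fix $\epsilon\in(0,1]$. If $\epsilon\ge\tfrac12$ one is already done: take $p=3$, which divides $X_2$, and run Theorem \ref{Theorem1} with $l=2$, $k=0$ (here $t=1$, so $\lambda=1$); the condition $\gcd(2,X_{a,b-1})=1<3$ holds because $X_{a,b-1}$ is odd by Lemma \ref{neverodd}, so $b(a)\le 6\max(a-1,1)\le 6a\le 6\epsilon^{-10}a$. So assume $\epsilon<\tfrac12$ and set $k_0=\lceil\log_2(1/\epsilon)\rceil\ge 2$, so that $2^{-k_0}\le\epsilon$ and $2^{k_0}<2\epsilon^{-1}$. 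Let $p$ be the least prime with $p\equiv 1\pmod{2^{k_0}}$; then $p\ge 5$, and by the current best explicit bound on Linnik's constant in \cite{lin2} there is an absolute $c_1$ with $p<c_1\,2^{5k_0}$.

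Next I would feed this $p$ into the construction of Section \ref{iflarge}: since $\gcd(p-1,p)=1$, take $n=l=p-1$ and $k=0$, and note that by Wolstenholme's Theorem $p^2$ (hence $p$) divides $X_{p-1}=X_n$. Strictly, Theorem \ref{Theorem1} asks that $n$ be the \emph{least} index with $p\mid X_n$, which is not known for $p-1$; but the only place this minimality is used in the proof of Theorem \ref{Theorem1} (via Lemma \ref{prelimprop}) is to conclude $e_p(L_{a,b})=\lambda k_1+k$, and in the classical case this is immediate, since $b=(p-1)p^{k_1}<p^{k_1+1}$ forces $e_p(L_{a,b})=k_1$. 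So the argument of Theorem \ref{Theorem1} goes through verbatim with this $n$. Because $2^{k_0}\mid(p-1)=l$ while $X_{a,b-1}$ is odd for every $b\ge a+1$ by Lemma \ref{neverodd}, the divisor $\gcd(l,X_{a,b-1})$ is odd and hence at most $l/2^{k_0}<p/2^{k_0}\le p$. Thus the hypothesis $\gcd(l,X_{a,b-1})<p$ holds for the smallest admissible $k_1$, and the second part of Theorem \ref{Theorem1} gives $b(a)\le\max(a-1,2t-1)\,l\,p^{\lambda}=\max(a-1,1)(p-1)p<ap^2$.

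It remains to upgrade the inequality $v_{a,b}<v_{a,b-1}$ to $v_{a,b}<\epsilon v_{a,b-1}$. The final display in the proof of Theorem \ref{Theorem1} in fact establishes $g_{a,b}\ge\frac{p}{\gcd(l,X_{a,b-1})}\,g_{a,b-1}$, while Lemma \ref{lisl} gives $L_{a,b}=L_{a,b-1}$; dividing the two,
\[
v_{a,b}=\frac{L_{a,b}}{g_{a,b}}\le\frac{\gcd(l,X_{a,b-1})}{p}\cdot\frac{L_{a,b-1}}{g_{a,b-1}}\le\frac{l/2^{k_0}}{p}\,v_{a,b-1}<2^{-k_0}v_{a,b-1}\le\epsilon\,v_{a,b-1}.
\]
Finally $b(a)<ap^2<a\,c_1^2\,2^{10k_0}<a\,c_1^2\,2^{10}\,\epsilon^{-10}$, using $2^{k_0}<2\epsilon^{-1}$, so the statement holds with $K=1024\,c_1^2$, which is comfortably large enough to also cover the easy range $\epsilon\ge\tfrac12$ and the endpoint $a=1$ handled above.

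There is no genuinely hard step here: every tool — Linnik's Theorem with the record exponent $5$, Wolstenholme's Theorem, Lemma \ref{neverodd}, and the proof of Theorem \ref{Theorem1} — is already available. The points that need care are purely bookkeeping: checking $p\ge 5$ so that Wolstenholme applies (equivalently, peeling off the range $\epsilon\ge\tfrac12$ by hand), making sure the implied constant in Linnik's Theorem is genuinely absolute and independent of both $\epsilon$ and $a$ (this is precisely why the explicit exponent $5$ matters, rather than an ineffective $O(q^{L})$), and confirming that the $a=1$ endpoint still satisfies $b<K\epsilon^{-10}a$. None of this requires more than the estimates above.
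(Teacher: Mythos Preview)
Your proposal is correct and follows exactly the approach the paper sketches in the paragraph preceding the corollary: pick $k_0$ with $2^{-k_0}\le\epsilon$, use Linnik with exponent $5$ to find $p\equiv 1\pmod{2^{k_0}}$ with $p<c_1 2^{5k_0}$, invoke Wolstenholme to get $p\mid X_{p-1}$, use Lemma \ref{neverodd} to bound $\gcd(l,X_{a,b-1})\le l\,2^{-k_0}$, and read off $v_{a,b}<2^{-k_0}v_{a,b-1}$ with $b<c_1^2 2^{10k_0}a$ from the quantitative display at the end of the proof of Theorem \ref{Theorem1}. Your treatment is in fact more careful than the paper's sketch, in that you explicitly address the minimality hypothesis on $n$ in Theorem \ref{Theorem1} and verify it is unnecessary here. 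One cosmetic point: in your $\epsilon\ge\tfrac12$ paragraph you only write down the bound on $b$, not the inequality $v_{a,b}<\epsilon v_{a,b-1}$; but of course the same display gives $v_{a,b}\le\tfrac13 v_{a,b-1}<\epsilon v_{a,b-1}$, so nothing is missing.
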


However, all of these upper bounds seem far from the truth. It seems likely that the much stronger bound $b(a) = a + O(a^{\epsilon})$ holds, and plausibly even $b(a) = a + O\big(\log^k(a)\big)$ for some $k$ that may or may not depend on the sequence of $r_i$. But even in the classical case it is unclear what the correct upper bound should be. One can furthermore propose the same conjectures for the quantity $b_d(a)$ that we introduced in Section \ref{powerino}. Generalizing even further, let $f_1, f_2, \ldots$ and $g_1, g_2, \ldots$ be two periodic sequences of integer-valued polynomials, with $g_i(i) \neq 0$ for all $i \in \mathbb{N}$, and consider sums $\displaystyle \frac{u_{a,b}}{v_{a,b}} = \sum_{i=a}^b \frac{f_i(i)}{g_i(i)}$ with $\gcd(u_{a,b}, v_{a,b}) = 1$ and $v_{a,b}$ positive. When does there, for every fixed $a$, exist a $b$ such that $v_{a,b} < v_{a,b-1}$? If so, what is the least such $b = b(a)$? We should point out that, in this generality, counterexamples do exist. For example, $v_{1,b} = 1$ when $g_i(i)$ divides $f_i(i)$ for all $i$, while $v_{1,b} = b$ when $g_i(i) = f_i(i)i(i+1)$. \\

As for lower bounds, we showed ${\displaystyle\liminf_{a \rightarrow \infty}} \left(\frac{b(a)-a}{\log a}\right) \ge \frac{1}{2}$ for all periodic sequences of $r_i$. However, is this lower bound optimal? Or is it the case that the limit inferior is always strictly larger than $\frac{1}{2}$? If the latter is true, is there at least for every $\epsilon > 0$ a sequence of $r_i$ with ${\displaystyle\liminf_{a \rightarrow \infty}} \left(\frac{b(a)-a}{\log a}\right) < \frac{1}{2} + \epsilon$? Similarly, in Theorem \ref{nonperiodiclower} we showed that for bounded, non-periodic sequences it is possible that ${\displaystyle\liminf_{a \rightarrow \infty}} \left(\frac{b(a)-a}{\log a}\right)$ is exactly $\frac{1}{2}$. But can this also be realized with a sequence for which $r_i \neq 0$ for all $i$? \\

In the classical case we conjecture ${\displaystyle\liminf_{a \rightarrow \infty}} \left(\frac{b(a)-a}{\log a}\right) = \frac{1}{1+c}$, with $c$ defined in Section \ref{lower2}. We furthermore conjecture that the global minimum for the quotient $\frac{b(a)-a}{\log a}$ occurs at $a = 24968370984798709551283169$ with $b(a) = a + 31$ and $\frac{b(a)-a}{\log a} \approx 0.5300989$. With a computer we have checked up to $a = 10^{300000}$ and no examples with a smaller quotient were found. In fact, the largest $a$ for which $\frac{b(a)-a}{\log a}$ is smaller than $0.54$ seems to be $a \approx 5.5890852 \cdot 10^{3458}$ with $b(a) = a + 4300$. \\

On another note, it can be conjectured that $b(a-1) > b(a)$ happens infinitely often, which might not be too hard to prove when $r_1 = t = 1$, or perhaps even in general. Other questions also remain in the classical case. For example, it is still open if $\gcd(X_n, L_n) = 1$ holds for infinitely many $n$ or not. This is equivalent to asking whether there are infinitely many $n$ such that, if $l = l(p)$ is the first digit of $n$ in base $p$, we have the inequality $\displaystyle \sum_{i=1}^l \frac{1}{i} \not \equiv 0 \pmod{p}$ for all $p < n$. Lemma $2.4$ in \mbox{\cite[p. 71]{chi0}} shows that for every prime $p$ this inequality holds for at least $p - cp^{\frac{2}{3}}$ distinct $l < p$, where $c = \left(\frac{9}{8}\right)^{\frac{1}{3}}$. \\

We end with one final question for the classical case: is it true that the inequality $v_{1,n} < v_{1,n-1}$ holds if, and only if, $n$ does not divide $v_{1,n}$?\footnote{See \mbox{\cite{oeis2}} for the sequence of $n$ such that $n \nmid v_{1,n}$.} With a computer we have tried to look for counterexamples, but have not found any for $n < 10^6$. We can at least prove one direction: if $v_{1,n} < v_{1,n-1}$, then $n$ is not a divisor of $v_{1,n}$. To see this, first note that $v_{1,n} < v_{1,n-1}$ is not possible if $n$ is a prime power. So we may assume $L_n = L_{n-1}$, which implies that there is a prime $p$ with $e_p(X_{n-1}) < \min\big(e_p(X_n), e_p(L_n)\big)$. Since $X_n = X_{n-1} + \frac{L_n}{n}$, we deduce $e_p\left(X_{n-1} + \frac{L_n}{n}\right) > e_p(X_{n-1})$, which is only possible if $e_p(X_{n-1}) = e_p\left(\frac{L_n}{n}\right)$. And we then get $e_p(v_{1,n}) = \max(e_p(L_n) - e_p(X_n), 0) < e_p(L_n) - e_p(X_{n-1}) = e_p(L_n) - \big(e_p(L_n) - e_p(n)\big) = e_p(n)$, so that $n$ does not divide $v_{1,n}$. As for the other direction, if there exist primes $p, q$ with $p < q < p^2$ and such that, with $n = pq$, we have $e_p(X_{n-1}) = 1$, $e_p(X_n) \ge 2$ and $e_q(X_{n-1}) \ge 1$, then one can check $n \nmid v_{1,n}$ but $v_{1,n} > v_{1,n-1}$. However, we have not been able to find any such $n$, and it is unclear if they should exist.

\section{Acknowledgements}
The author is thankful for interesting and inspiring requests by Ernie Croot to make the results as general as possible. Without his questions these $14$ years of research would not have happened. The author is also indebted to Johannes Huisman and Anneroos Everts for their careful reading of the start of Section \ref{upper}. They managed to find a couple of small inaccuracies and their comments greatly improved the readability of the entire section.

\addcontentsline{toc}{section}{Bibliography}

\end{document}